\newtheorem{theo}{Theorem}
\newtheorem{lema}[theo]{Lemma}
\newtheorem{cor}[theo]{Corollary}
\newtheorem{prop}[theo]{Proposition}
\newtheorem{definition}[theo]{Definition}
\newtheorem{remark}[theo]{Remark}
\newtheorem{notation}[theo]{Notation}
\newtheorem{example}[theo]{Example}
\newtheorem{problem}[theo]{Problem}
\newenvironment{customcor}[1]
  {\innercustomcor}
  {\endinnercustomcor}
\newcommand{\BB}{{\mathbb{B}}}
\newcommand{\CC}{{\mathbb{C}}}
\newcommand{\NN}{{\mathbb{N}}}
\newcommand{\PP}{{\mathbb{P}}}
\newcommand{\QQ}{{\mathbb{Q}}}
\newcommand{\RR}{{\mathbb{R}}}
\newcommand{\SSS}{{\mathbb{S}}}
\newcommand{\ZZ}{{\mathbb{Z}}}
\newcommand{\calB}{{\mathcal{B}}}
\newcommand{\calC}{{\mathcal{C}}}
\newcommand{\calD}{{\mathcal{D}}}
\newcommand{\calG}{{\mathcal{G}}}
\newcommand{\calH}{{\mathcal{H}}}
\newcommand{\calN}{{\mathcal{N}}}
\newcommand{\calS}{{\mathcal{S}}}
\newcommand{\calT}{{\mathcal{T}}}
\newcommand{\calU}{{\mathcal{U}}}
\newcommand{\calV}{{\mathcal{V}}}
\newcommand{\calY}{{\mathcal{Y}}}
\newcommand{\calZ}{{\mathcal{Z}}}
\newcommand{\comp}{{\circ}}
\newcommand{\dtr}{d_{\triangledown}}
\begin{document}
\title[]{Moderately Discontinuous Homology}

\author{J. Fern\'andez de Bobadilla}
\address{Javier Fern\'andez de Bobadilla:  
(1) IKERBASQUE, Basque Foundation for Science, Maria Diaz de Haro 3, 48013, 
    Bilbao, Bizkaia, Spain
(2) BCAM  Basque Center for Applied Mathematics, Mazarredo 14, E48009 Bilbao, 
Basque Country, Spain 
(3) Academic Colaborator at UPV/EHU} 
\email{jbobadilla@bcamath.org}

 \author{S. Heinze}
 \address{Sonja Heinze:  
BCAM  Basque Center for Applied Mathematics, Mazarredo 14, E48009 Bilbao, 
Basque Country, Spain }
\email{sheinze@bcamath.org}

 \author{M. Pe Pereira}
 \address{Facultad de Ciencias Matematicas- UCM, Plaza de Ciencias, 3,  Ciudad Universitaria, 28040 MADRID} 
\email{maria.pe@mat.ucm.es}

\author{J. E. Sampaio}
\address{J. Edson Sampaio:
	      (1) Departamento de Matem\'atica, Universidade Federal do Cear\'a,
	      Rua Campus do Pici, s/n, Bloco 914, Pici, 60440-900, 
	      Fortaleza-CE, Brazil. \newline
              (2) BCAM - Basque Center for Applied Mathematics,
	      Mazarredo, 14, E48009 Bilbao, Basque Country - Spain.
	    } 
\email{edsonsampaio@mat.ufc.br/esampaio@bcamath.org}

\thanks{The first author is supported by ERCEA 615655 NMST Consolidator Grant, MINECO by the project 
reference MTM2016-76868-C2-1-P (UCM), by the Basque Government through the BERC 2018-2021 program and Gobierno Vasco Grant IT1094-16, by the Spanish Ministry of Science, Innovation and Universities: BCAM Severo Ochoa accreditation SEV-2017-0718 and by Bolsa Pesquisador Visitante Especial (PVE) - Ciencias sem Fronteiras/CNPq Project number:  401947/2013-0. The second author is supported by a La Caixa Ph.D. grant associated to the MINECO project SEV-2011-0087, by ERCEA 615655 NMST Consolidator Grant, MINECO by the project reference MTM2016-76868-C2-1-P (UCM), by the Basque Government through the BERC 2018-2021 program and Gobierno Vasco Grant IT1094-16, by the Spanish Ministry of Science, Innovation and Universities: BCAM Severo Ochoa accreditation SEV-2017-0718, and by Bolsa Pesquisador Visitante Especial (PVE) - Ciencias sem Fronteiras/CNPq Project number:  401947/2013-0. 
The third author is supported by MINECO by the project reference MTM 2017-89420 and MTM2016-76868-C2-1-P (UCM).
The fourth author is supported by the ERCEA 615655 NMST Consolidator Grant and also by the Basque Government through the BERC 2018-2021 program and Gobierno Vasco Grant IT1094-16, by the Spanish Ministry of Science, Innovation and Universities: BCAM Severo Ochoa accreditation SEV-2017-0718.}

\subjclass[2010]{Primary 14B05,32S05,32S50,55N35,51F99}
\begin{abstract}
We introduce a new metric homology theory, which we call Moderately Discontinuous Homology, designed to capture Lipschitz properties of metric singular subanalytic germs. The main novelty of our approach is to allow ``moderately discontinuous'' chains, which are specially advantageous for capturing the subtleties of the outer metric phenomena. Our invariant is a finitely generated graded abelian group $MDH^b_\bullet$ for any $b\in [1,\infty]$ and homomorphisms $MDH^b_\bullet\to MDH^{b'}_\bullet$ for any $b\geq b'$. Here $b$ is a ``discontinuity rate''. 
The homology groups of a subanalytic germ with the inner or outer metric are proved to be finitely generated and that only finitely many homomorphisms $MDH^b_\bullet\to MDH^{b'}_\bullet$ are essential. For $b=1$ it recovers the homology of the tangent cone for the outer metric and of the Gromov tangent cone for the inner one. In general, for $b=\infty$ the $MD$- homology recovers the homology of the punctured germ. Hence, our invariant can be seen as an algebraic invariant interpolating from the germ to its tangent cone. Our homology theory is a bi-Lipschitz subanalitic invariant, is invariant by suitable metric homotopies, and satisfies versions of the relative and Mayer-Vietoris long exact sequences. Moreover, fixed a discontinuity rate $b$ we show that it is functorial for a class of discontinuous Lipschitz maps, whose discontinuities are $b$-moderated; this makes the theory quite flexible. In the complex analytic setting we introduce an enhancement called Framed MD Homology, which takes into account information from fundamental classes. As applications we prove that Moderately Discontinuous Homology characterizes smooth germs among all complex analytic germs, recovers the number of irreducible components of complex analytic germs and the embedded topological type of plane branches. Framed MD Homology recovers the topological type of any plane curve singularity and relative multiplicities of complex analytic germs.


\end{abstract}


\maketitle

\tableofcontents

\section{Introduction}

Despite the intense recent research on Lipschitz geometry of real and complex analytic singularities, and more generally, subanalytic sets, still there is a need for general theories providing algebraic and numerical invariants which capture Lipschitz phenomena, and which have sufficiently many computational tools to be used in practice. In this paper we introduce a new metric homology theory, which we call Moderately Discontinuous Homology (MD-Homology, for short) for metric germs of subanalytic sets. Our theory is different in nature that the previous ones by Birbrair-Brasselet~\cite{BirbrairBrasselet:2000},~\cite{BirbrairBrasselet:2002}, and by Valette~\cite{Valette:2010}, in the sense that it is based in chains with controlled discontinuities rather than using allowable chains based on their size. This gives some advantages, specially when dealing with the outer metric. At the end of the introduction we comment further on this. 

In the spirit of any algebro-topological invariant, MD-Homology is a functor from a category of geometric nature, to a category of algebraic nature. The geometric category has  as objects pairs $(X,Y,x_0,d_X)$ of subanalytic germs in $\RR^n$ endowed with a metric, which includes the cases of inner metric (induced by the euclidean metric in $\RR^n$ and given by the minimum of the lengths of rectifiable paths in $X$ joining two points) and the outer metric (restriction of the euclidean metric in $\RR^n$). 
A morphism $f:(X,Y,x_0,d_X)\to (X',Y',x_0,d_{X'})$ in our category is a Lipschitz subanalytic morphism from $X$ to $X'$, sending $Y$ to $Y'$, with the so called linearly vertex approaching condition, which means that there exists a constant $K\geq 1$ such that $1/K||x-x_0||\leq ||f(x)-x'_0||\leq K||x-x_0||$, where $||x||$ denotes the usual norm of a vector in $\RR^n$. In particular MD-Homology is a bi-Lipschitz subanalytic invariant. Since the inner and outer metrics of real or complex analytic germs only depend on the analytic structure we deduce that MD-Homology for the inner or outer metric is a real or complex analytic invariant of real or complex analytic germs.

MD-Homology mimics the definition of Singular Homology, but since it applies to germs, which have conical structure, the \emph{simplices} for this theory are cones of usual simplices over the vertex of the germ. In other words and more concretely, a \emph{simplex} is a subanalytic family $\sigma_t:\Delta_n\to (X,x_0,d_X)$ of singular simplexes, parametrized by $t\in (0,1)$, such that the minimal and maximal distance of the image of $\sigma_t$ to $x_0$ approach $0$ linearly in $t$. We can form a complex by introducing the usual boundary operator.
For technical reasons and to make the theory easier,  we consider the complex obtained quotienting  the complex of simplices  by the equivalence by subdivision.  Moreover, for every $b\in (0,+\infty]$, we get a complex whose homology will be  $b$ MD-Homology, quotienting the previous one by the $b$-proximity relation which goes as follows:  two simplexes $\sigma_t$ and $\sigma'_t$ as above are $b$-proximous if the maximum over $x\in\Delta_n$ of $d_X(\sigma_t(x),\sigma'_t(x))$ decreases at order strictly larger than $b$ with respect to $t$. For $b=\infty$ one imposes no equivalence relation. 
The complete definition of MD-Homology takes the whole Section~\ref{sec:defi}.

The name of ``Moderately Discontinuous Homology'' comes from the fact that the $b$-proximity relation allows ``discontinuities bounded by order $b$'' in the closed chains that represent homology classes. As it will be clear later such discontinuities are specially useful in order to capture phenomena related with the outer metric. In order to let the reader develop an intuition, let us point that when two ``leaves'' of a germ approach each other at speed higher than $b$ towards the origin, then $b$-moderately discontinuous chains are allowed to jump from one leave to the other. Furthermore, the functoriality for the group $MDH^b_\bullet(\star;A)$ is enhanced thanks to the fact that MD closed chains for parameter $b$ may have ``discontinuities bounded by order $b$''. We introduce the concept of $b$-maps, and the functoriality with respect to $b$-maps gives a lot of flexibility to the theory. A $b$-map or a $b$-moderately discontinuous map from the metric germ $(X,x_0,d_X)$ to the metric germ  $(Y,y_0,d_Y)$ is given by  a collection of subanalytic Lipschitz linearly vertex approaching maps $f_i:C_i\to Y$ where the $\{C_i\}_{i\in I}$ is a subanalytic cover of $(X,x_0)$ and that, roughly speaking, match at the intersections of the $C_i$'s only up to order strictly larger than $b$ (Definition~\ref{def:bmaps}). Therefore they do not need to glue to a continuous map. 


Given an abelian coefficient group $A$, the algebraic category where MD-homology takes values is a diagram of graded abelian groups indexed by $b\in (0,\infty]$, where $MDH^b_\bullet(X,Y,x_0,d_X;A)$ is a graded abelian group, called the $b$-MD Homology group, and for any $b\geq b'$ there is a homomorphism of graded abelian groups $MDH^b_\bullet(X,Y,x_0,d_X;A)\to MDH^{b'}_\bullet(X,Y,x_0,d_X;A).$
These homomorphisms are a very important part of the invariant, and sometimes contain the most interesting information. 

One of the main theorems of the paper is that the graded abelian groups are proved to be finitely generated over $A$ for any germ with the inner or outer metric (Theorem~\ref{theo:finiteness}). There are finitely many jumping rates, that is, finitely many $b\in (0,+\infty]$ for which the homomorphism $MDH^{b+\epsilon}_\bullet(X,x)\to MDH^{b-\epsilon}_\bullet(X,x)$ is not an isomorphism (Theorem~\ref{theo:finitejumps}), and that the set jumps are rational (see Theorem~\ref{theo:rat}, whose proof was kindly provided by A. Parusinski). As a side remark, let us notice that the countability of the rational numbers together with finite generation of the groups shows that, as long as the coefficient group is reasonable, like $\ZZ$ or $\QQ$, there are only countably many diagrams of abelian groups that can be Moderately Discontinuous Homologies of metric subanalytic germs with the inner or outer metric (this agrees with the conjecture of L. Siebenmann and D. Sullivan that there are countably many Lipschitz types of analytic sets~\cite{SS}). 

Versions of the usual properties of homology theories hold for our theory, some follow easily from the definition of the theory, and others require new ideas and constructions: relative long exact sequence (Proposition~\ref{prop:relativehomologysequence}), from which the spectral sequence for a filtration follows; invariance under Lipschitz homotopies and $b$-homotopies (Theorems~\ref{theo:homotopyinvariance} and~Theorem~\ref{theo:homotopyinvariance2}), where a $b$-homotopy is a metric homotopy with $b$-moderated discontinuities; computation of the homology of a ``point'' (the right notion of point in our category is the germ $([0,1),0)$ with the euclidean metric, see~Proposition~\ref{prop:point}). The Mayer-Vietoris sequence is quite subtle, since there is no notion of cover for which it holds for all the parameters $b\in (0,\infty]$ at the same time. Fixed a $b\in (0,\infty]$ we define a notion of $b$-cover of $(X,x_0)$ for which Mayer-Vietoris long exact sequence holds. An Excision Theorem is deduced and the \v{C}ech spectral sequence associated with a $b$-cover is also proved. This parallelism with the shape of usual homology extends to the relation between homology and homotopy: Moderately Discontinuous Homotopy has been defined in the thesis of the second author (much of this work is also contained there), and several of the usual properties of homotopy groups, including Hurewicz homomorphism have been proved in our framework; this is the subject of a forthcoming paper.  

After developing the computational tools described above we turn from Section 9 on to the applications of our theory to the detection of Lipschitz phenomena, which include some useful comparison theorems: 
\begin{enumerate}
\item 
as one can expect, for $b=\infty$, the MD-Homology of $(X,x_0,d_X)$ recovers the usual homology of the link of $X$ (see Theorem \ref{theo:link}). 
\item for any $b$ the group $MDH^b_0(X,x_0,d_X;A)$, is isomorphic to the set of $b$-connected components of $X\setminus\{x_0\}$ (Proposition~\ref{prop:0-homology}, see Definition \ref{def:b-eqcomponents})

\item for $b=1$ and for the outer metric,  MD-Homology of $(X,x_0,d_{out})$ recovers the usual homology of the link of the tangent cone at $X$ (Theorem~\ref{th:speed1}). 
\item for $b=1$, and for the inner metric it recovers the homology of the puntured Gromov tangent cone~(Theorem~\ref{th:speed1inner})
\item 
in Theorem~\ref{th:characsmooth} we prove that if the MD-Homology of a complex analytic germ $(X,x_0,d_{out})$ coincides with that of a smooth germ, then $X$ is smooth. This implies that MD-Homology is strong enough to recover the subanalytic version of the fourth author's theorem in~\cite{BirbrairFLS:2016},~\cite{Sampaio:2016}. 
\item 
we confirm an expectation formulated by L. Birbrair: the $b$-MD homology for the outer metric of a germ $(X,x)$ coincides with the ordinary homology of a suitable punctured $b$-horn neighborhood of $(X,x)$ (Corollary~\ref{cor:birbrair}). 
\item 
we also define an enrichment of the invariant, called Framed MD Homology, which takes into account distinguished basis of fundamental classes in the top homology groups and which allows to prove in Proposition~\ref{cor:mixed_multiplicities} the following: if $(X,x_0)$ is a complex analytic germ, then Framed MD-homology of $(X,x_0,d_{out})$ recovers the number of connected components of the tangent cone and its relative multiplicities (this is a quite rich set of information).
\item We fully compute MD Homology for plane curve singularities with the outer metric. 
\end{enumerate}


Item $(6)$ is quite central since we prove several important results like finitely generation and finiteness of the jumping rates as an applcation of it. In the computation $(8)$ the result is expressed in terms of the Eggers-Wall tree and can be found in Theorem~\ref{th:homology of curves}. As it turns out MD-Homology recovers most of the Eggers-Wall tree: it recovers the set of Puiseux exponents of the branches, and the set of contact exponents, but it does not tell how to distribute these exponents and contacts between the different branches (we show an explicit example that shows that MD Homology does not determine the whole outer Lipschitz geometry of reducible curves). For irreducible branches MD-Homology recovers the whole Lipschitz geometry, since it recovers all the Puiseux exponents.  It is worth to note that the most relevant information is contained in the homomorphisms $MDH^b_\bullet(C,0,d_{out};A)\to MDH^{b'}_\bullet(C,0,d_{out};A)$, rather than on the groups. We prove that Framed MD Homology does determine the whole outer Lipschitz geometry of reducible curves (Corollary~\ref{cor:fremeeggers}).



Let us comment further on the previous metric homology theories (see~\cite{BirbrairBrasselet:2000}~\cite{BirbrairBrasselet:2002}, and~\cite{Valette:2010}). The idea of these theories is, for a given parameter, to determine a set of allowable chains, where a chain is again a family of chains degenerating to the vertex, and a chain is allowable if its size degenerates fast enough with respect to the parameter. In the Birbrair-Brasselet version the size is measured in terms of volume, and in the Valette version the degeneration rate is the speed at which the chain collapses into a smaller dimensional set. In Birbrair-Brasselet theory finite generation is still an open problem and in Valette's one is already proved in his original paper. Our theory has a different flavor, since, instead of restricting the admissible chains by measuring their size, what we do is to allow moderately discontinuous chains (this is achieved technically defining an equivalence relation in the group of chains). This is specially advantageous when one studies subanalytic germs with the outer metric. The possibility of the $b$-cycles to change from leave to leave mentioned above is not present in the previous theories, and has very interesting applications that have already been mentioned: we show that if the MD-Homology of a complex analytic germ with the {\em outer metric} coincides with that of a smooth germ, then $X$ is smooth. We also compute the MD-Homology for complex plane  curves with the outer metric and show that it recovers all Puiseux exponents and most of the information of the embedded topological type. These results do not seem easy to obtain in Birbrair-Brasselet or in Valette theories. An important property of a metric germ $(X,x_0)\subset\RR^n$ is whether it is Lipschitz Normally Embedded, that is, if the inner and outer metric are Lipschitz equivalent. Observe that the identity morphism $Id_X:(X,x_0,d_{inn})\to (X,x_0,d_{out})$ is a morphism in our geometric category. Therefore non-isomorphism of MD-Homology detects non-Lipschitz Normally Embedding, as happens, for example, in the case of plane curves.

Let us add a final remark on generality: although this paper is formulated in the language of subanalytic geometry, all the results, except the rationality of the jumping numbers, work, by a word by word adaptation of the proofs, for any polynomially bounded O-minimal structure over the real numbers. Its is quite likely that there exists adaptations to arbitrary O-minimal structures, and this is subject of further investigation.   

Finally we observe that this work lays the ground for possible future work in different directions. For example, it is interesting to explore how strong our invariants are in obstructing Lipschitz equisingularity in higher dimension (see \cite{Mostowski}). Its relation with Zariski equisingularity is also worth exploring because of the work done in \cite{Neumann:2012} and \cite{Parusinski:2019}. In \cite{Kurdyka:1992}, \cite{Parusinski:1994} and \cite{Valette:2005} subanalytic spaces are decomposed into pieces which are simple from the outer Lipschitz viewpoint. It would be interesting to study the relation of such decompositions for subanalytic germs with our invariants.

We would like to thank Birbrair, Coste, Fernandes, Parusinski and Teissier for useful comments at different stages of this project.

\section{Pairs of metric subanalytic germs}\label{section:defdomaincat}

As usual in algebraic topology, our invariant will be a functor from a category of geometric nature to a category of an algebraic nature. We start defining precisely the geometric category. 

\subsection{Setting}
\label{sec:setting}
Along this paper we work in the language of subanalytic geometry. We will always work with bounded subanalytic subsets, which in particular are globally subanalytic (see \cite{Dries:1986}). Recall that the collection of all globally subanalytic sets forms an O-minimal structure (see \cite{Dries:1986}). The results and properties of the globally subanalytic O-minimal structure that we use are satisfied by any polynomially bounded O-minimal structure over the real numbers. In fact the theory developed in this paper works without any change for any of this O-minimal structures.

We use \cite{Dries:1998} and \cite{Coste:1999} as basic references in O-minimal geometry. A result that is repeatedly used in this paper is Hardt's Triviality Theorem, which is Trivialization Theorem 1.7 in \cite{Dries:1998} (see also Theorem 5.22 in \cite{Coste:1999}).

In fact in this paper we work with subanalytic germs and maps between them. Taking bounded representatives of them we can see them as objects and maps of the globally subanalytic O-minimal category.

Subanalytic triangulations are also essential for us. Given a simplicial complex $K$ we denote by $|K|$ the geometric realization of $K$. We call the subsets of $|K|$, that correspond to a simplex in $K$, the {\em faces of $|K|$}. Let $Z$ be a closed subanalytic set. A {\em subanalytic triangulation} is a finite simplicial complex $K$ of closed simplices and a subanalytic homeomorphism $\alpha:|K|\to Z$. 

\begin{remark}\label{rem:triang} Given a finite family $\calS$ of closed subanalytic subsets of  $Z$,  there exists a subanalytic triangulation $\alpha:|K|\to Z$  compatible with $\calS$, that is, such that every subset of $\calS$ is a union of images of simplices of $|K|$. See for example Theorem 4.4. in  \cite{Coste:1999} or Theorem II.2.1. in \cite{Shiota:1997}.

\end{remark}

By a subanalytic triangulation of a subanalytic germ $(X,x_0)$ we mean a subanalytic triangulation of a representative of it, which is compatible with the vertex.

Given two subanalytic triangulations $\alpha:|K|\to Z$ and $\alpha':|K'|\to Z$, we say that $\alpha'$ \emph{refines} $\alpha$ if the image by $\alpha$ of any simplex of $|K|$ is the  union of images by $\alpha'$ of simplices of $|K'|$. 
 
Given a subanalytic triangulation $\alpha:|K|\to Z$, a simplex of $|K|$ is called {\em maximal} if it is not strictly contained in another simplex. We consider the collection $\calT:=\{T_i\}_{i\in I}$ of subsets of $Z$ that are images of the maximal simplices of $|K|$. We call it the {\em collection of maximal triangles}.

Given two simplicial complexes $K$ and $K'$, a homeomorphism $f:|K|\to |K'|$ \emph{preserves  the simplicial structure}  if it the image of each simplex of $|K|$ is a simplex of $|K'|$.

A result about triangulations that will be important is the subanalytic Hauptvermutung (Chapter II, Theorem II in \cite{Shiota:1997})), which states that two subanalytic triangulations have a common refinement.

\subsection{The category of pairs of metric subanalytic germs}

\begin{definition}\label{def:germ}
A subanalytic germ $(X,x_0)$ is  a germ $(X,x_0)$  of a  subanalytic set $X\subset\RR^m$ such that $x_0\in \overline{X}$ (where $\overline{X}$ denotes the closure of $X$ in $\RR^m$). We say that $x_0$ is the \emph{vertex of $(X,x_0)$}. 

A \emph{metric subanalytic set} $(X,d_X)$  is a subanalytic set $X$ in some $\RR^m$, together with a subanalytic metric $d_X$ that induces the same topology on $X$ as the restriction of the standard topology on $\RR^m$. 

A \emph{metric subanalytic germ} $(X,x_0,d_X)$  is a subanalytic germ $(X,x_0)$ where $(X,d_X)$ is a metric subanalytic set. 
We omit $x_0$ and $d_X$ in the notation when it is clear from the context. 

A {\em metric subanalytic subgerm} of a metric subanalytic germ $(X,x_0,d_X)$ is a metric subanalytic germ $(Y,x_0,d_Y)$ with $Y\subseteq X$ and $d_Y$ equal to the restriction $d_X|_Y$ of the metric $d_X$ to $Y$, that is, the restriction to $Y\times Y$ of $d_X:X\times X\to \RR$. 

A {\em pair of metric subanalytic germs} $(X,Y,x_0,d_X)$ is the metric subanalytic germ $(X,x_0,d_X)$ together with the subgerm $(Y,x_0,d_X|_Y)$.


Given two  germs $(X,x_0)$ and $(Y,y_0)$, a \emph{subanalytic map germ} $f:(X,x_0)\to (Y,y_0)$ is a subanalytic continuous map $f:X\to Y$ that admits a continuous and subanalytic extension to a map germ  $\overline{f}:(X\cup\{x_0\},x_0)\to (Y\cup\{y_0\},y_0)$. 

\end{definition}

\begin{remark}\label{rem:germ0}
Notice that in our definition, for a subanalytic germ $(X,x_0)$ it is possible that $x_0\notin X$. These sets play an important role (see for example Definition \ref{def:semialgcover} or \ref{def:b-saturated} ).
\end{remark}

\begin{example}
\label{ex:inout}
A subanalytic germ $(X,x_0)\subset(\RR^m,x_0)$ with the outer metric (the metric induced by restriction of the euclidean metric in $\RR^m$) is a  metric subanalytic germ. We denote the associated metric subanalytic germ by $(X,x_0,d_{out})$.

We denote by $(X,x_0,d_{in})$ the metric subanalytic germ with the inner metric (defined to be the infimum of the lengths of the rectifiable paths between two points). This distance is not known to be subanalytic. However, according to \cite{KurdykaO:1997}  there is a subanalytic distance $d'$ on $X$ such that the identity $Id: (X,x_0,d_{in})\to (X,x_0,d')$ is bi-Lipschitz. This allows us to apply the theory to the germ $(X,x_0,d_{inn})$ in the following way: our homology can be calculated for $(X,x_0,d')$. Moreover if $d''$ is a different choice of subanalytic metric with the same property than $d'$, then the identity map is a subanalytic bi-Lipschitz homeomorhism between $(X,x_0,d')$ and $(X,x_0,d'')$. Hence the invariant calculated to each of the two subanaytic metric germs is the same.
See Remark \ref{rem:Cat_inner}) for an extension of this idea.

\end{example}

Some basic examples are the following: 

\begin{definition}[Standard $b$-cones and straight cones]
\label{def:stdcones}
Let $L\subset \RR^k$ be a subanalytic set and $b\in\cap (0,+\infty )$. Consider the subanalytic set
$$
C_L^b=\{(t^bx,t)\in \RR^k\times \RR;\, x\in L\mbox{ and }t\in [0,+\infty )\}.
$$
The {\em outer (respectively inner) standard $b$-cone over} $L$ is the triple $(C_L^b, (\underline{0},0), d_{out})$ (respectively $(C_L^b, (\underline{0},0), d_{in})$), where $d_{out}$ denotes the outer metric and $d_{in}$ denotes the inner metric. 

When $b=1$, we say $C_L^1$ is a \emph{straight cone over $L$} and we denote it by $(C(L),d_{out}):=(C_L^1,(\underline{0},0),d_{out})$ and $(C(L),d_{in}):=(C_L^1,(\underline{0},0),d_{in})$. 

By $C_L^b$ or $C(L)$ we always mean the germ $(C_L^b,(\underline{0},0))$ or $(C(L),(\underline{0},0))$.

If $b\in\QQ$ the standard cones are subanalytic, and otherwise they are at least definable in the $O$-minimal structure $\RR_{an}^{\RR}$ (see \cite{Miller}).
\end{definition}

\begin{definition}
\label{def:b-horn}
Let $(X,x_0,d_X)$ be a metric subanalytic germ and $Y\subset X$ a subanalytic subgerm. Let $b \in (0, \infty)$. The {\em $b$-horn neighborhood of amplitude $\eta$ of $Y$ in $X$} is the subset 
$$
\calH_{b,\eta}(Y; X):=\bigcup_{y\in Y}B(y,\eta \|y - x_0\|^b),
$$
where $B(y,\eta \|y - x_0\|^b):=\{x\in X:d_X(x,y) < \eta \|y - x_0\|^b\}$ denotes the ball in $X$ centered in $y$ of radius $\eta d_X(y,x_0)^b$. The $\infty$-horn neighborhood $\calH_{b,\eta}(Y; X)$ is defined to be $Y$.  
\end{definition}

\begin{definition}[Spherical Horn Neighborhood]
\label{def:sphhorn}
Let $X$ be a subanalytic germ embedded in $\RR^n$. We assume the vertex of the cone to be the origin in $\RR^n$. Let $b\in\RR^{+}$. The {\em spherical $b$-cone neighborhood} of amplitude $\eta$ of $X$ in $\RR^n$ is the union 
$$S\calH_{b,\eta}(X):=\bigcup_{x\in X}B(x,\eta \|x\|^b)\cap \SSS_{||x||},
$$
where $\SSS_{||x||}$ denotes the open sphere of radius $||x||$ centered at the origin.
\end{definition}

\begin{remark}\label{poly_bounded}
If $b\in\QQ$ then the $b$-horns and $b$-spherical horns are subanalytic. For an arbitrary $b\in\RR^+$,  $S\calH_{b,\eta}(X)$ they are definable in the polynomially bounded $O$-minimal structure $\RR_{an}^{\RR}$ (see \cite{Miller}).
\end{remark}


\begin{remark}\label{rem:germ1} We recall that the link of a subanalytic germ is well defined as a topological space as the intersection of $X$ with a small enough sphere centered at $x_0$; we denote it  by $Link(X,x_0)$ or simply $L_X$. Moreover, the conical structure theorem says, given a subanalytic germ $(X,x_0)$ and a family of subanalytic subgerms $(Z_1,0)$,...,$(Z_k,0)\subseteq(X,0)$, that there exists a subanalytic homeomorphism $h: C(L_X)\to (X,x_0)$ such that $||x_0-h(tx,t)||=t$ and such that $h(C(L_{Z_i}))=Z_i$ with $L_{Z_i}$ in $L_X$ (see Theorem 4.10, 5.22, 5.23 in \cite{Coste:1999} in combination with the fact that globally subanalytic sets form an O-minimal structure). We say that the conical structure $h$ is compatible with the family $\{Z_i \}$. The conical structure is why we say that $x_0$ is the vertex of $(X,x_0)$.   
\end{remark}

\begin{notation}
Let $(X, x_0, d_X)$ be a metric subanalytic germ. When we need to specify the radius, we denote by $L_{X,\epsilon}$ the link  $\{x \in X : ||x-x_0||= \epsilon \}$.
\end{notation}

Let us add that in Proposition 1 of~\cite{CosteKurdyka}, when $X$ is semialgebraic it is proved that the link is well defined up to  semialgebraic homeomorphisms. However, this fact is not used along this paper.

\begin{definition}
\label{def:lva}

A map germ  $f: (X, x_0) \rightarrow (Y, y_0)$ is said to be \emph{linearly vertex approaching} (l.v.a. for brevity) if there exists $K\geq 1$ such that $$\frac{1}{K}||x-x_0||\leq ||f(x)-y_0||\leq K||x-x_0||$$ for every $x$ in some representative of $(X,x_0)$. {The constant $K$ is called the l.v.a constant for $f$}. 
\end{definition}

\begin{remark} 
\label{rem:lvareduction}
Let $(X,x_0)$ be a subanalytic germ with compact link. Consider any subanalytic map germ $f: (X, x_0) \rightarrow (Y, y_0)$ that is a homeomorphism onto its image. Let $\{Z_j\}_{j\in J}$ be a finite collection of closed subanalytic subsets of $X$. There is a subanalytic homeomorphism germ $\phi:(X,x_0)\to (X,x_0)$ such that $\phi(Z_j)=Z_j$ for all $j\in J$ and such that $||f\comp \phi(x)-y_0||=||x-x_0||$, which is stronger than l.v.a.
\end{remark}
\proof


%
Let $h:C(L_X)\to (X,x_0)$ be a subanalytic homemomorphism defining the conical structure compatible with the $Z_i$ (which means that $h(C(L_{Z_i}))=Z_i$) and such that $||h(tx,t)-x_0||=t$ (see Remark \ref{rem:germ1}). 

Consider the mapping $g: C(L_X)\to C(L_X)$ that sends $(xt,t)\mapsto (x\cdot ||f\circ h(xt,t)-x_0||, ||f\circ h(xt,t)-x_0||)$.
It is clearly subanalytic in the coordinates $y=xt$ and $t$ for $t\neq 0$ and therefore it extends continuously and subanalytically to the closure $C(L_X)$. Note that $g$ is a  homeomorphism.

%
%
To finish, it is clear that $\phi:=h\circ{g}^{-1}\circ h^{-1}$ satisfies the statement. 
\endproof

Remark~\ref{rem:lvareduction} can also be shown adapting the following result of Shiota's:
\begin{customcor}{2 of~\cite{Shiota:1989}}
Let $f_1$ and $f_2$ be subanalytic functions on $X$ with
$$
f_1^{-1}(0) = f_2^{-1}(0), \quad \{f_1 < 0 \} = \{f_2 < 0 \}, \quad
\{f_1 > 0 \} = \{f_2 > 0 \}.
$$ 
Then there exists a subanalytic homeomorphism $\phi$ of $X$ such that
$$
f_1 \comp \phi = f_2
$$
on a neighborhood of $f_1^{-1}(0)$.
\end{customcor}

We assume $x_0 = 0$. If the family $\{Z_j\}_{j\in J}$ is empty, we simply apply Corollary 2 of~\cite{Shiota:1989} to the functions $||x||$ and $||f(x)||$. The proof of Corollary 2~\cite{Shiota:1989} only uses the  subanalytic triangulation Theorem (Theorem 1 of~\cite{Shiota:1989}) together with Lemmata 10 and 11 in the same paper, which are stated in the presence of the family $\{Z_j\}_{j\in J}$. Notice that the subanalytic triangulation Theorem (Theorem 1 of~\cite{Shiota:1989}) is valid when the family $\{Z_j\}_{j\in J}$ is non-empty (this is Theorem~II of Chapter~II of~\cite{Shiota:1997}). So Remark~\ref{rem:lvareduction} is true without the assumption that $f$ is a homeomorphism onto its image.

We have preferred to give a simple proof of the case that is used in this paper for the sake of completeness, i.e. including the assumption that $f$ is a homeomorphism onto its image.



\begin{definition}
\label{def:metricmaps}
Let $(X, x_0, d_1)$ and $(Y, y_0, d_2)$ be two metric subanalytic germs. A \emph{Lipschitz linearly vertex approaching subanalytic map germ} (\emph{Lipschitz l.v.a. subanalytic map} for short) 
\[
f: (X, x_0,d_X) \rightarrow (Y, y_0,d_Y)
\]
is a l.v.a subanalytic map germ such that there exists $K \geq 1$ and a representative $X$ of the germ such that 
\[
d_Y (f(x), f(\tilde x)) \leq K d_X (x, \tilde x) \ \forall x, \tilde x \in X.
\]

Any such $K$ {which also serves as a l.v.a. constant for $f$} will be called a \emph{Lipschitz l.v.a. constant for $f$}. 

A {\em Lipschitz l.v.a. subanalytic map of pairs} is a  map germ of pairs 
$$f:(X,Y,x_0,d_X)\to (X',Y',x'_0,d_{X'})$$
such that $f:(X,x_0,d_X)\to (X',x'_0,d_{X'})$ is a Lipschitz l.v.a. subanalytic map.  
\end{definition}

Given a subanalytic subgerm $(Y,x_0,d_X|_Y)\subset (X,x_0,d_X)$, the inclusion is an example of a Lipschitz l.v.a. map.

\begin{definition}\label{def:pairssubangerms}
The \emph{category of pairs of subanalytic metric subanalytic germs} has pairs of metric subanalytic germs as objects and Lipschitz l.v.a. subanalytic maps of pairs as morphisms.
\end{definition}

\begin{remark}\label{rem:Cat_inner}Equivalently, we can work with the bigger category of subanalytic germs $(X,x_0,d_X)$ which are endowed with a metric $d_X$ that induce the same topology as the euclidean metric, and such that  there exists a subanalytic metric $d'$ that is bi-Lipschitz equivalent to $d_X$, which means that the identity $(X,x_0,d_X)\to (X,x_0,d')$ is bi-Lipschitz. Hence we are not asking $d_X$ to be a subanalytic metric. Then, a subanalytic germ $(X,x_0)$ with the inner metric belongs to this category, see Example~\ref{ex:inout}.
\end{remark}

\section{Definition of the Moderately Discontinuous Homology}
\label{sec:defi}

The Moderately Discontinuous Homology (Moderately Discontinuous Homology, or MD-Homology, for short) is a functor from the category of pairs of metric subanalytic germs to an algebraic category whose objects are diagrams of groups. Its definition needs a series of steps.

\subsection{The pre-chain group \texorpdfstring{$MDC^{\mathrm{pre},\infty}_\bullet((X,x_0);A)$}{MDC\{pre,infinity\}((X,x0);A)}}

\begin{notation}\label{not:simplice}
For any $n\in \mathbb{N}_0$, we denote by $\Delta_n \subset \mathbb{R}^{n+1}$ the standard $n$-simplex
\[
\Delta_n := \{ (p_0, ..., p_n) \in (\mathbb{R}_{\geq 0})^{n+1} : \sum_{i=0}^n p_i = 1 \}
\]
oriented as follows: the standard orientation on $\mathbb{R}^{n+1}$ orients the convex hull of $\Delta_n \cup \underline{0}$, where $\underline{0}$ denotes the origin, which in turn induces an orientation on $\Delta_n$, viewed as part of the boundary of the convex hull. We denote by $i_{n}^k:\Delta_{n-1}\to \Delta_n$ the map sending $p_0,...,p_{n-1}$ to $p_0,...,p_{k-1},0,p_k,...,p_{n-1}$. The image of $i_{n}^k$ is the $k$-th facet of $\Delta_n$.

Consider the {oriented germ} of the cone over $\Delta_n$ and denote it as $$\hat{\Delta}_n:=(\{(tx,t)\in \RR^{n+1}\times\RR: x\in\Delta_n, t\in [0,1)\},$$
and let $j^k_n:\hat{\Delta}_{n-1}\to \hat{\Delta}_{n}$ be the map sending $(tx,t)\mapsto (ti^k_n(x),t)$. The {\em $k$-th facet} of $\hat{\Delta}_{n}$ is the image of $j^k_n$. More generally, a face of $\hat{\Delta}_n$ is the cone over a face of $\Delta_n$.

We will usually use $\hat{\Delta}_n$ to denote the germ $(\hat{\Delta}_n,(\underline{0},0))$. 
\end{notation}



 
Coherently to  Definition \ref{def:lva} we have the notion of {\em linearly vertex approaching (subanalytic) n-simplex}, which is a continuous subanalytic map germ $\sigma:\hat{\Delta}_n\to ({X},x_0)$ 
such that there is a $K \geq 1$ such that 
\[
\frac{1}{K} t \leq ||\sigma(xt,t)-x_0|| \leq K t 
\]
for any $x \in \Delta_n$ and any small enough $t$. We will say simply a \emph{l.v.a. simplex}. 

Similarly, a map $\nu:\hat{\Delta}_n\to \hat{\Delta}_n$, expressed as $\nu(xt,y)=(\nu_1(xt,t)\nu_2(xt,t),\nu_2(xt,t))$ in the coordinates $(xt,t)$ of $\hat{\Delta}_n$, is linearly vertex approaching (according to Definition \ref{def:lva}) if there is a $K \geq 1$ such that $\frac{1}{K} t \leq \nu_2(xt,t) \leq K t$.

\begin{definition}
\label{def:preinfhomology}
Given a subanalytic germ $(X,x_0)$ and an abelian group $A$, a {\em linearly vertex approaching $n$-chain in $(X,x_0)$} (l.v.a. $n$-chain, for brevity) is a finite formal sum $\sum_i a_i \sigma_i$, where 
$a_i\in A$ and $\sigma_i$ is a l.v.a subanalytic $n$-simplex in $(X,x_0)$. We define $MDC^{\mathrm{pre},\infty}_n((X,x_0);A)$ to be the abelian group of $n$-chains. 

We define the boundary of $\sigma$ to be the formal sum 
$$
\partial \sigma =\sum_{k = 0}^n (-1)^k  \sigma\comp j_n^k.
$$

The boundary extends linearly to $n$-chains and defines a complex $MDC^{\mathrm{pre},\infty}_\bullet((X,x_0);A)$ whose components are the groups $MDC^{\mathrm{pre},\infty}_n ((X,x_0);A)$ for $n\geq 0$. 

Often, when it is clear from the context we will skip the coefficients group $A$ and/or the vertex in the notation.
\end{definition}

\subsection{The homological subdivision equivalence relation in \texorpdfstring{$MDC^{\mathrm{pre},\infty}_\bullet((X,x_0);A)$}{MDC\{pre,infinity\}((X,x0);A)}} \label{subsection:subdivision}

As in Singular Homology Theory, in order to prove Excision and Mayer-Vietoris we will need to subdivide simplices. In Singular Homology, the standard procedure is to divide a simplex into a chain of smaller simplices by taking barycentric subdivisions. The existence of a Lebesgue number in that context guarantees that iterating that procedure enough times yields a chain for which all of its simplices are contained in one of the open sets of the cover. In our theory, the role of open subgerms are taken by subgerms whose representatives are open and whose closure contains the vertex, but that do not contain the vertex themselves. Observe that those are the complements of subgerms whose representatives are closed and contain the vertex. Therefore an open cover of germs does not cover the image of a l.v.a. simplex: the image of the vertex is remains outside of all the sets of the cover. As a result we do not get a Lebesgue number. That is why we do not adapt the procedure used in Singular Homology, but build a chain complex that incorporates the subdivisions from the beginning.


In the next definition we will need a representative of the germ $\hat{\Delta}_n$. By abuse of notation we denote it also by $\hat{\Delta}_n$, and consider the representative
$$\{(tx,t)\in \RR^{n+1}\times\RR: x\in\Delta_n, t\in [0,1/2]\}.$$

Recall the facts of subanalytic triangulations stated in Section~\ref{sec:setting}.

\begin{definition}
\label{def:subdiv}
 A {\em homological subdivision} of $\hat{\Delta}_n$ is a finite  family $\{\rho_i\}_{i\in I}$ of injective l.v.a. subanalytic map germs $\rho_i:\hat{\Delta}_n\to \hat{\Delta}_n$ for which there is a subanalytic triangulation $\alpha:|K|\to\hat{\Delta}_n$ with the following properties: 
\begin{itemize}
\item the triangulation $\alpha$ is compatible with the collection of all faces of $\hat{\Delta}_n$;
\item all maximal triangles of $\alpha$ meet the vertex of $\hat{\Delta}_n$;
\item the collection $\{ T_i \}$ of maximal triangles of $\alpha$ is also indexed by $I$;
\item for any $i \in I$, the image of $\rho_i$ is $T_i$ and $\alpha^{-1}|_{T_i}\comp \rho_i$ is a homeomorphism onto its image that takes faces of $\hat{\Delta}_n$ to faces of $|K|$.
\end{itemize} 

For a homological subdivision $\{\rho_i\}_{i\in I}$, the sign of $\rho_i$ for any $i \in I$ is defined to be $1$, if $\rho_i$ is orientation preserving, and $-1$, if it is orientation reversing. We denote it by $sgn(\rho_i)$.
\end{definition}

Note that this implies that, whenever the sum $\sum_{i\in I}sgn(\rho_i)\rho_i$ is a cycle in the singular homology
$H_{n+1}(\hat{\Delta}_n,\partial\hat{\Delta}_n\cup\hat{\Delta}_n^{\geq\epsilon} ;\ZZ)$ for $\epsilon=min \{\epsilon_i\}$ where $\hat{\Delta}_n^{\geq\epsilon}$ denotes the set $\{(tx,t)\in \RR^{n+1}\times\RR: x\in , t\in [\epsilon,1)\}$, then $\sum_{i\in I}sgn(\rho_i)\rho_i$ represents the fundamental class. However, in general $\sum_{i\in I}sgn(\rho_i)\rho_i$ does not have to represent a cycle in $H_{n+1}(\hat{\Delta}_n,\partial\hat{\Delta}_n\cup\hat{\Delta}_n^{\geq\epsilon} ;\ZZ)$.

\begin{definition}[Immediate equivalences]
\label{def:Inm}
Two chains $$\sum_{j\in J}a_j\sigma_j, \sum_{k\in K}b_k\tau_k\in MDC_n^{\mathrm{pre},\infty}(X;A)$$ are called {\em immediately equivalent} (and we denote it by $\sum_{j\in J}a_j\sigma_j\to_{\infty} \sum_{k\in K}b_k\tau_k$), if for any $j\in J$ there are homological subdivisions $\{\rho_{ji}\}_{i\in I_j}$ such that we have the equality
$$\sum_{j\in J}\sum_{i\in I_j} sgn(\rho_{ji}) a_j\sigma_j\comp\rho_{ji}=\sum_{k\in K} b_k\tau_k$$
in $MDC_n^{\mathrm{pre},\infty}(X;A)$.
\end{definition}

\begin{remark}
\label{rem:immediatesimplex}
The immediate equivalences can be defined as well by imposing $\sigma\to_\infty \sum_{i\in I} sgn(\rho_i) \sigma\comp\rho_i$ for any l.v.a $n$-simplex $\sigma$ and any subdivision $\{\rho_i\}_{i\in I}$, and extending the immediate equivalences by linearity. 
\end{remark}

\begin{remark}\label{rem:repar} Any l.v.a. subanalytic homeomorphism $\mu:(\hat{\Delta}_n,0) \to (\hat{\Delta}_n,0) $ which preserves the simplicial structure is a homological subdivision of $\hat{\Delta}_n$ for which the index set $I$ has just one element. As a consequence, for any $n$-simplex $\sigma$, we have $\sigma \to_\infty \sigma \comp \mu$, if $\mu$ is orientation preserving, and $\sigma \to_\infty - \sigma \comp \mu$, if $\mu$ is orientation reversing.
\end{remark}

\begin{definition}[The homological subdivision equivalence relation]\label{def:Seq}
The {\em subdivision equivalence relation} in $MDC_n^{\mathrm{pre},\infty}(X;A)$ (denoted by $\sim_{S,\infty}$) is the equivalence relation generated by immediate equivalences. That is $z\sim_{S,\infty} z'$ if there exists a sequence $w_1,...,w_k$ such that $z=w_1$, $z'=w_k$ and for any $1\leq i<k$ we have either the immediate equivalence $w_i\to_\infty w_{i+1}$ or $w_{i+1}\to_\infty w_i$. 
\end{definition}

\begin{lema}
\label{lem:Seqinversion}
Given any three chains $w_1,w_2,w_3\in MDC_n^{\mathrm{pre},\infty}(X;A)$, and immediate equivalences $w_3\to_{\infty} w_1$ and $w_3\to_{\infty} w_2$ there exists an element 
$w_4\in MDC_n^{\mathrm{pre},\infty}(X;A)$ and two immediate equivalences $w_1\to_{\infty} w_4$ and $w_2\to_{\infty} w_4$. 
\end{lema}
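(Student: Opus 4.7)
The plan is to build $w_4$ as a ``common refinement'' of $w_1$ and $w_2$ by simultaneously refining both homological subdivisions through a common refinement of the two underlying triangulations. By Remark~\ref{rem:immediatesimplex} it suffices to work simplex-by-simplex, so I reduce to the case $w_3=a\sigma$ with $\sigma:\hat\Delta_n\to X$ a single l.v.a. simplex. The hypothesis then gives, for each $k\in\{1,2\}$, a homological subdivision $\{\rho^{(k)}_i\}_{i\in I^{(k)}}$ of $\hat\Delta_n$ with associated subanalytic triangulation $\alpha^{(k)}:|K^{(k)}|\to\hat\Delta_n$, so that $w_k=a\sum_i sgn(\rho^{(k)}_i)\,\sigma\comp\rho^{(k)}_i$.

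The first and most delicate step is to produce a subanalytic triangulation $\gamma:|L|\to\hat\Delta_n$ that refines both $\alpha^{(1)}$ and $\alpha^{(2)}$, is compatible with all faces of $\hat\Delta_n$, and has all its maximal triangles meeting the vertex $v=(\underline 0,0)$. A common subanalytic refinement exists by the subanalytic Hauptvermutung (recalled in Section~\ref{sec:setting}), and compatibility with the faces is inherited from that of $\alpha^{(1)}$ and $\alpha^{(2)}$. The condition on maximal triangles meeting $v$ is geometric and is the main technical obstacle: for it, I would invoke the conical structure theorem (Remark~\ref{rem:germ1}) applied to $\hat\Delta_n$ with the family of all simplices of $\alpha^{(1)}$ and $\alpha^{(2)}$; after triangulating the link compatibly with its intersections with these simplices and coning from $v$, one obtains the desired $\gamma$, whose maximal triangles are cones over link-simplices and therefore meet $v$ by construction.

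For each maximal triangle $S$ of $\gamma$, fix a parametrization $\mu_S:\hat\Delta_n\to S$ that is a l.v.a. subanalytic homeomorphism such that $\gamma^{-1}|_S\comp\mu_S$ takes faces of $\hat\Delta_n$ to faces of $|L|$, and define
\[
w_4:=a\sum_S sgn(\mu_S)\,\sigma\comp\mu_S.
\]
To verify $w_k\to_\infty w_4$ for $k=1,2$, for each $i\in I^{(k)}$ and each maximal triangle $S$ of $\gamma$ contained in $T^{(k)}_i:=\rho^{(k)}_i(\hat\Delta_n)$, set $\tau^{(k)}_{i,S}:=(\rho^{(k)}_i)^{-1}\comp\mu_S$. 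Pulling back $\gamma|_{T^{(k)}_i}$ via $(\rho^{(k)}_i)^{-1}$ produces a subanalytic triangulation of $\hat\Delta_n$ whose maximal triangles all meet $v$ (since the l.v.a. homeomorphism $\rho^{(k)}_i$ sends $v$ to $v$) and that is compatible with the faces of $\hat\Delta_n$; the family $\{\tau^{(k)}_{i,S}\}_S$ is the corresponding homological subdivision. Applying this subdivision to the summand $\sigma\comp\rho^{(k)}_i$ of $w_k$, using the identity $\rho^{(k)}_i\comp\tau^{(k)}_{i,S}=\mu_S$ together with the multiplicativity $sgn(\rho^{(k)}_i)\cdot sgn(\tau^{(k)}_{i,S})=sgn(\mu_S)$, produces the same chain $w_4$ on the right-hand side in both cases. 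Hence $w_1\to_\infty w_4$ and $w_2\to_\infty w_4$, as required.
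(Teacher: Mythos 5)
Your proof is correct and follows essentially the same route as the paper's: reduce to the case of a single simplex, take a common subanalytic refinement of the two triangulations via the Hauptvermutung, define $w_4$ as the (signed) sum of $\sigma$ composed with parametrizations of the maximal triangles of the refinement, and verify that the composites $(\rho^{(k)}_i)^{-1}\comp\mu_S$ constitute homological subdivisions realizing $w_k\to_\infty w_4$. Your extra care in arranging that the maximal triangles of the common refinement meet the vertex addresses a point the paper leaves implicit (there one simply shrinks the representative of $\hat{\Delta}_n$ so that every maximal triangle contains the vertex), so there is nothing to object to.
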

\proof
 Since the immediate equivalences are compatible with linear combinations (see Remark~\ref{rem:immediatesimplex}) we may assume that $w_3$ is equal to an $n$-simplex $\sigma$. Then there exist two subdivisions $\{\rho_i\}_{i\in I}$ and $\{\rho'_{i'}\}_{i'\in I'}$ of $\hat{\Delta}_n$ such that we have the equalities
\begin{equation}
\label{eq:updown1}
w_1=\sum_{i\in I}sgn(\rho_i)\sigma\comp\rho_i,\quad\quad
w_2=\sum_{i'\in I'}sgn(\rho'_{i'})\sigma\comp\rho'_{i'}.
\end{equation}

Let $\alpha:|K|\to \hat{\Delta}_n$ and $\alpha':|K'|\to \hat{\Delta}_n$ be the subanalytic triangulations associated with the homological subdivisions $\{\rho_i\}_{i\in I}$ and $\{\rho'_{i'}\}_{i'\in I'}$. By the subanalytic Hauptvermutung (Chapter II, Theorem II in \cite{Shiota:1997})) there is a subanalytic triangulation $\beta:|L|\to\hat{\Delta}_n$ refining $\alpha$ and $\alpha'$. Let $\{T_j\}_{j\in J}$ be the collection of maximal triangles of the triangulation $\beta$. Let $\{\nu_j\}_{j\in J}$ be a collection of  orientation preserving l.v.a. subanalytic homeomorphisms $\nu_j:\hat{\Delta}_n\to T_j$ preserving the simplicial structure.

Consider the splitting $J=\coprod_{i\in I}J_i$, where $j\in J_i$ if and only if $T_j$ is included in the image of $\rho_i$.

Then the collection $\{\rho_i^{-1}\comp\nu_j\}_{j\in J_i}$ is a homological subdivision of $\hat{\Delta}_n$ and we have the immediate equivalence
\begin{equation}
\label{eq:updown2}
\sigma\comp\rho_i\to_{\infty}\sum_{j\in J_i}\sigma\comp\rho_i\comp\rho_i^{-1}\comp\nu_j=\sum_{j\in J_i}\sigma\comp\nu_j.
\end{equation}

The splitting $J=\coprod_{i'\in I'}J'_{i'}$ is defined considering the analogous interaction between the triangulations $\alpha'$ and $\beta$. By the same kind of arguments we have the immediate equivalence
\begin{equation}
\label{eq:updown3}
\sigma\comp\rho'_i\to_{\infty}\sum_{j\in J'_i}\sigma\comp\nu_j.
\end{equation}

Defining $w_4:=\sum_{j\in J}\sigma\comp\nu_j$ and using Equations~(\ref{eq:updown1}), ~(\ref{eq:updown2}) and ~(\ref{eq:updown3}) we complete the proof.  
\endproof

\begin{cor}
\label{cor:downup}
We have the equivalence $w\sim_{S,\infty} z$ if and only if there exist sequences of immediate equivalences $z=z_0\to_{\infty} z_1\to_{\infty} ...\to_{\infty} z_l$ and $w=w_0\to_{\infty} w_1\to_{\infty} ...\to_{\infty} w_m=z_l$.
\end{cor}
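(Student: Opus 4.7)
The plan is to derive the corollary from Lemma~\ref{lem:Seqinversion} by a standard Church--Rosser style argument. The direction ``$\Leftarrow$'' is immediate: given the two upward sequences ending at the common chain $z_l$, concatenating them yields a zigzag from $z$ to $w$ built out of immediate equivalences, so $w\sim_{S,\infty}z$.

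For ``$\Rightarrow$'' I would introduce the auxiliary binary relation $R$ on $MDC_n^{\mathrm{pre},\infty}(X;A)$ defined by $R(w,z)$ iff there exist sequences of immediate equivalences $w=w_0\to_\infty w_1\to_\infty\cdots\to_\infty w_m$ and $z=z_0\to_\infty z_1\to_\infty\cdots\to_\infty z_l$ with $w_m=z_l$ (allowing $m=0$ or $l=0$). The goal is to show that $R$ is an equivalence relation containing the generating relation $\to_\infty$ of $\sim_{S,\infty}$; since $\sim_{S,\infty}$ is the smallest such equivalence relation, this forces $\sim_{S,\infty}\subseteq R$, which is exactly the non-trivial direction. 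Reflexivity of $R$ is witnessed by $m=l=0$, symmetry is built into the definition, and containment of the generators is trivial: for an immediate equivalence $w\to_\infty z$, take the common endpoint to be $z$ with the one-step sequence $w\to_\infty z$ and the empty sequence at $z$.

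The only substantial point is transitivity of $R$. Given $R(w_1,w_2)$ with common endpoint $c$ and $R(w_2,w_3)$ with common endpoint $c'$, one obtains two upward sequences $w_2=v_0\to_\infty\cdots\to_\infty v_p=c$ and $w_2=v'_0\to_\infty\cdots\to_\infty v'_q=c'$ starting at the same chain $w_2$. It suffices to produce a chain $c''$ together with upward sequences from $c$ to $c''$ and from $c'$ to $c''$, because concatenation then gives $R(w_1,w_3)$. This reduces the problem to a general confluence statement: whenever there are upward sequences $a\to_\infty\cdots\to_\infty b$ and $a\to_\infty\cdots\to_\infty c$ starting at the same chain $a$, there exists a chain $d$ reached by upward sequences from both $b$ and $c$. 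The plan is to prove this by double induction on the lengths $p$ and $q$, filling in each elementary square of the diagram by one application of Lemma~\ref{lem:Seqinversion} (which handles the case $p=q=1$) and propagating along the two sequences.

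The main obstacle is purely bookkeeping: one has to name the intermediate chains produced by the successive applications of Lemma~\ref{lem:Seqinversion} and verify that the induction terminates on the lexicographic pair $(p,q)$. No further geometric input beyond Lemma~\ref{lem:Seqinversion} is required, and the argument is entirely formal once that lemma is in hand.
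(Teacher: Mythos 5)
Your argument is correct, and it is a legitimately different organization of the same underlying combinatorics. The paper works directly on the zigzag $z=x_1,\dots,x_k=w$ witnessing $w\sim_{S,\infty}z$ and performs surgery on it: every \emph{roof} ($x_i\to_\infty x_{i-1}$ and $x_i\to_\infty x_{i+1}$) is replaced by a \emph{valley} using Lemma~\ref{lem:Seqinversion}, until the sequence of arrows has the shape $R^mL^l$, which is exactly the desired pair of ascending chains meeting at a common endpoint. You instead run the abstract Church--Rosser argument: you show that the joinability relation $R$ is an equivalence relation containing the generators $\to_\infty$, so it contains $\sim_{S,\infty}$ by minimality, and you isolate the only nontrivial point (transitivity of $R$) as a confluence statement for $\to_\infty^*$, proved by tiling a $p\times q$ grid with unit diamonds supplied by Lemma~\ref{lem:Seqinversion}. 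Both proofs use Lemma~\ref{lem:Seqinversion} as their sole input. What your version buys is a completely transparent termination argument: since the lemma joins one step against one step in one step each, the grid tiles exactly and the double induction on $(p,q)$ manifestly terminates. The paper's one-line claim that ``repeated applications of the previous lemma allow to replace every roof by a valley'' quietly needs its own termination measure, because replacing a roof at position $i$ can create new roofs at positions $i\pm 1$; one has to observe, e.g., that the replacement acts as $LR\mapsto RL$ on the word of arrow directions, so the number of $(L,R)$-inversions strictly decreases. Your route avoids having to notice this; the paper's route is shorter and stays closer to the given witnessing sequence.
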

\proof
The sequence $z = x_1,...,x_k = w$ predicted in Definition~\ref{def:Seq} is {\em monotonous} at the $i$-th position if we have either $x_{i-1}\to_\infty x_i\to_\infty x_{i+1}$ or $x_{i+1}\to_\infty x_i\to_\infty x_{i-1}$. The sequence $x_1,...,x_k$ {\em has a roof} at the $i$-th position if we have $ x_{i}\to_\infty x_{i-1}$ and $x_i\to_\infty x_{i+1}$. The sequence $x_1,...,x_k$ {\em has a valley} at the $i$-th position if we have $ x_{i-1}\to_\infty x_{i}$ and $ x_{i+1}\to_\infty x_{i}$. Repeated applications of the previous lemma  allow to replace every roof by a valley.
\endproof

\subsection{\texorpdfstring{$\infty$}{infinity}-Moderately discontinuous homology}

\begin{lema}
\label{lem:compatpartialsum}
 The homological subdivision equivalence relation is compatible with the boundary $\partial$ in $MDC_n^{\mathrm{pre},\infty}(X;A)$ in the following sense: given a simplex $\sigma\in MDC_n^{\mathrm{pre},\infty}(X;A)$  and $\{\rho_i\}_{i\in I}$ a homological subdivision of $\hat{\Delta}_n$, then the chains
$\partial\sigma$ and $\sum_{i\in I}\partial( sgn(\rho_i)\sigma\comp\rho_i)$ are $\sim_{S,\infty}$-equivalent. 
\end{lema}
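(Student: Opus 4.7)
The plan is to classify every summand on the right-hand side by the location in $\hat{\Delta}_n$ of the image of $\rho_i\comp j_n^k$, and then to verify that internal summands cancel in pairs while boundary summands assemble into homological subdivisions of each face $\sigma\comp j_n^l$ of $\partial\sigma$. Expanding
$$
\sum_{i \in I}\partial\bigl(sgn(\rho_i)\,\sigma\comp\rho_i\bigr)=\sum_{i\in I}\sum_{k=0}^{n}sgn(\rho_i)(-1)^{k}\,\sigma\comp(\rho_i\comp j_n^k),
$$
and letting $\alpha\colon|K|\to\hat{\Delta}_n$ denote the triangulation underlying $\{\rho_i\}_{i\in I}$, each image $\rho_i(j_n^k(\hat{\Delta}_{n-1}))$ is a codim-$1$ simplex of $|K|$, lying either (a) on some facet $j_n^l(\hat{\Delta}_{n-1})$ of $\hat{\Delta}_n$ or (b) strictly in the interior of $\hat{\Delta}_n$.

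For case (b), an internal codim-$1$ simplex of $|K|$ is a face of exactly two maximal triangles $T_i$ and $T_{i'}$, so the sum contains exactly two summands whose image equals that simplex, coming from some pairs $(i,k)$ and $(i',k')$. The two parametrizations $\rho_i\comp j_n^k$ and $\rho_{i'}\comp j_n^{k'}$ of $\hat{\Delta}_{n-1}\to\hat{\Delta}_n$ differ by a l.v.a. subanalytic self-homeomorphism $\mu$ of $\hat{\Delta}_{n-1}$ that preserves the simplicial structure, so Remark~\ref{rem:repar} gives
$$\sigma\comp\rho_{i'}\comp j_n^{k'}\sim_{S,\infty} sgn(\mu)\,\sigma\comp\rho_i\comp j_n^k.$$
The classical orientation analysis used in the proof that $\partial^2=0$ (an internal face inherits opposite orientations from its two neighboring maximal triangles) yields $sgn(\rho_i)(-1)^k+sgn(\rho_{i'})(-1)^{k'}sgn(\mu)=0$, so the two-term contribution is $\sim_{S,\infty}$-equivalent to $0$.

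For case (a), I group the terms by the facet index $l$. Since $j_n^l(\hat{\Delta}_{n-1})$ is the cone over $i_n^l(\Delta_{n-1})$ with apex $(\underline{0},0)$, and every maximal triangle $T_i$ of $\alpha$ meets that vertex, any codim-$1$ face of $T_i$ lying on $j_n^l(\hat{\Delta}_{n-1})$ automatically contains the vertex: otherwise the star-shapedness of $j_n^l(\hat{\Delta}_{n-1})$ with respect to the vertex would force all of $T_i$ into the $n$-dimensional set $j_n^l(\hat{\Delta}_{n-1})$, contradicting that $T_i$ is $(n+1)$-dimensional. Consequently, as $(i,k)$ ranges over the pairs contributing to facet $l$, the family $\tilde\rho_{(i,k),l}:=(j_n^l)^{-1}\comp\rho_i\comp j_n^k\colon\hat{\Delta}_{n-1}\to\hat{\Delta}_{n-1}$ satisfies all axioms of a homological subdivision of $\hat{\Delta}_{n-1}$ in the sense of Definition~\ref{def:subdiv}. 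The corresponding immediate equivalence identifies the total $l$-th contribution with $(-1)^l\,\sigma\comp j_n^l$, and summing over $l=0,\dots,n$ recovers $\partial\sigma$.

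The main technical obstacle is the sign bookkeeping: confirming that the induced orientations on an internal face from its two neighboring maximal triangles are indeed opposite (so the pair sums to $0$ modulo $\sim_{S,\infty}$ in case (b)) and that the signs $sgn(\rho_i)(-1)^k$ on the boundary side match $sgn(\tilde\rho_{(i,k),l})(-1)^l$ after transport along $j_n^l$. Both points reduce to classical facts about induced orientations on facets of oriented convex bodies under the outward-pointing-normal convention.
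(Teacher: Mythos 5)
Your proposal is correct and follows essentially the same route as the paper's (much terser) proof: the interior faces cancel in pairs up to reparametrization, and the boundary faces assemble, facet by facet, into induced homological subdivisions of the facets of $\hat{\Delta}_n$. Your extra detail is sound, though note that the fact that $\rho_i(j_n^k(\hat{\Delta}_{n-1}))$ contains the vertex is immediate (both $j_n^k$ and $\rho_i$ are vertex-preserving germ maps), so the star-shapedness argument is not needed.
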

\proof

A homological subdivision $\{\rho_i\}_{i \in I}$ of $\hat{\Delta}_n$ induces a homological subdivision $\{\rho_i^k\}_{i \in I_k}$ of the $k$-th facet of $\hat{\Delta}_n$. So 
$
\partial(\sum_{i\in I} sgn(\rho_i)\sigma\circ\rho_i)
$
splits as the sum, with appropriate signs, of the facets of $\sigma$ (expressed after the corresponding homological subdivision) and the sum of the interior facets of all $\sigma\circ\rho_i$ which cancels in pairs. 
\endproof

\begin{definition}[$\infty$-MD Homology]
\label{def:infty}
We define the {\em $\infty$-moderately discontinuous chain complex of} $(X,x_0,d_X)$ with coefficients in $A$ ($\infty$-MD complex for short) to be the quotient of $MDC^{\mathrm{pre},\infty}_\bullet((X,x_0,d_X);A)$ by the homological subdivision equivalence relation. We denote it by $MDC^\infty_\bullet((X,x_0,d_X);A)$. Its homology is called the {\em $\infty$-moderately discontinuous homology with coefficients in $A$} and is denoted by $MDH^\infty_\bullet((X,x_0,d_X);A)$.
\end{definition}

Note that this homology does not depend on a metric. As we will see in Section \ref{sec:link}, the $\infty$-moderately discontinuous homology coincides with the homology of the link of the germ $(X,x_0)$. 


\subsection{\texorpdfstring{$b$}{b}-Moderately discontinuous homology}\label{subsection:b-equrel}

Given a metric subanalytic germ $(X,x_0,d_X)$, for each $b\in (0,+\infty)$, we define the following equivalence relation in the set of l.v.a. subanalytic $n$-simplices:

\begin{definition}
\label{def:bequivalent} Let $b\in (0,\infty)$.
Let $\sigma_1,\sigma_2$ be $n$-simplices in $MDC^{\mathrm{pre},\infty}_\bullet(X,x_0,d_X)$. We say that $\sigma_1$ and $\sigma_2$ are {\em $b$-equivalent} (we write $\sigma_1\sim_b\sigma_2$) if 
$$
\lim\limits_{t\to 0^+}\frac{\max\limits\{d_X(\sigma_1(tx,t),\sigma_2(tx,t));x\in \Delta_n\}}{t^b}=0.
$$

We extend the relation to $MDC_n^{\mathrm{pre},\infty}(X;A)$ by linearity.
\end{definition}

\begin{remark}Note that in the case of a subanalytic germ with the outer metric, the $b$-equivalence can be interpreted in terms of the contact order of the arcs $\sigma_i(tx,x)$ for $x\in \Delta_n$.
\end{remark}

The importance of $b$-(spherical) horn neighbourhoods in our theory is due to the following:
\begin{remark}\label{rem: b-equ b-horn}
Any l.v.a. simplex that is $b$-equivalent to a l.v.a. simplex whose image is contained in $Y$ is contained in any $b$-(spherical) horn neighborhood  $Y$ in $X$.
\end{remark}

\begin{remark}\label{rem:splitindexset}
The quotient of the free group $MDC^{\mathrm{pre},\infty}_\bullet((X,x_0,d_X);A)$ by the {$\sim_b$-equivalence} relation is the free group generated by the $\sim_b$-equivalence classes of simplices with coefficients in $A$. As a consequence we have the following: let $w=\sum_{j\in J}b_{j}\tau_{j}$ and $w'=\sum_{j\in J'}b'_{j}\tau'_{j}$ be chains in $MDC^{\mathrm{pre},\infty}_\bullet((X,x_0,d_X);A)$. Split the index sets $J=\coprod_{k\in K}J_k$ and $J'=\coprod_{k\in K}J'_k$ in the unique way that satisfies the following properties:
\begin{itemize}
 \item any two $j_1,j_2\in J$ belong to the same $J_k$ if and only if we have $\tau_{j_1}\sim_b\tau_{j_2}$,
 \item any two $j'_1,j'_2\in J$ belong to the same $J'_k$ if and only if we have $\tau'_{j'_1}\sim_b\tau'_{j'_2}$,
 \item for any $k\in K$ and $j\in J_k$ and $j'\in J'_k$ we have $\tau_{j}\sim_b\tau'_{j'}$.
\end{itemize}
Then $w\sim_b w'$ if and only if for any $k\in K$ we have the equality
\begin{equation}
\label{eq:splitident}
\sum_{j\in J_k}b_j=\sum_{j'\in J'_k}b_{j'}.
\end{equation}

\end{remark}

The following arc interpretation of the $b$-equivalence relation will be useful later. 

\begin{lema}
\label{lem:arccharacterization}
Let $\sigma_1,\sigma_2$ be $n$-simplices in $MDC^{\mathrm{pre},\infty}_\bullet(X,x_0,d_X)$. Then we have that the following statements are equivalent:
\begin{itemize}
 \item [(i)] $\sigma_1\sim_b\sigma_2$;
 \item [(ii)] for any  continuous subanalytic arc $\gamma:(0,\epsilon)\to \hat{\Delta}_n$ such that $\gamma(0)$ is equal to the vertex and $\gamma(t)$ is different to the vertex for $t\neq 0$ we have the equality
\begin{equation}
\label{eq:arclimit_2}
\lim\limits_{t\to 0^+}\frac{d(\sigma_1(\gamma(t)),\sigma_2(\gamma(t)))}{\gamma_2(t)^b}=0,
\end{equation}
where $\gamma(t)=(\gamma_2(t)\gamma_1(t),\gamma_2(t))$ is the expression of the arc in the coordinates $(tx,t)$ of $\hat{\Delta}_n$;
 \item [(iii)] for any  l.v.a continuous subanalytic arc $\gamma:(0,\epsilon)\to \hat{\Delta}_n$ we have the equality
\begin{equation}
\label{eq:arclimit}
\lim\limits_{t\to 0^+}\frac{d(\sigma_1(\gamma(t)),\sigma_2(\gamma(t)))}{t^b}=0.
\end{equation}
\end{itemize}
\end{lema}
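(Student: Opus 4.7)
The plan is to prove the circular chain (i) $\Rightarrow$ (ii) $\Rightarrow$ (iii) $\Rightarrow$ (i). The first two implications are direct estimates, while (iii) $\Rightarrow$ (i) is where the genuine work lies and requires subanalytic curve selection together with a reparametrization trick.

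For (i) $\Rightarrow$ (ii), given a continuous subanalytic arc $\gamma(t) = (\gamma_2(t)\gamma_1(t), \gamma_2(t))$ with $\gamma_1(t) \in \Delta_n$ and $\gamma_2(t) \to 0^+$, I would bound
$$
\frac{d_X(\sigma_1(\gamma(t)),\sigma_2(\gamma(t)))}{\gamma_2(t)^b} \leq \frac{\max_{x \in \Delta_n}d_X(\sigma_1(\gamma_2(t) x,\gamma_2(t)),\sigma_2(\gamma_2(t) x,\gamma_2(t)))}{\gamma_2(t)^b},
$$
which tends to $0$ by (i) upon substituting $t' = \gamma_2(t) \to 0$. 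For (ii) $\Rightarrow$ (iii) I would simply note that an l.v.a. arc satisfies $t/K \leq \gamma_2(t) \leq Kt$, so $t^b$ and $\gamma_2(t)^b$ differ by a bounded factor, and the limit condition in (ii) transfers verbatim.

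The heart of the argument is (iii) $\Rightarrow$ (i). I would argue by contradiction. The function $f(t) := \max_{x \in \Delta_n} d_X(\sigma_1(tx,t), \sigma_2(tx,t))$ is continuous and subanalytic (since $\sigma_1, \sigma_2$ are subanalytic and $\Delta_n$ is compact), so by the Monotonicity Theorem for subanalytic functions the one-sided limit $\ell := \lim_{t \to 0^+} f(t)/t^b$ exists in $[0,\infty]$. If (i) fails then $\ell > 0$. Consider the subanalytic set
$$
S := \{(t,x) \in (0,\epsilon) \times \Delta_n : d_X(\sigma_1(tx,t),\sigma_2(tx,t)) = f(t)\},
$$
which is nonempty over every $t$. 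Its closure $\overline S$ contains a point $(0,\bar x)$, and by the subanalytic Curve Selection Lemma there is a continuous subanalytic arc $s \mapsto (t(s), x(s)) \in \overline S$ with $(t(0), x(0)) = (0,\bar x)$ and $(t(s), x(s)) \in S$ for $s > 0$.

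Finally, I would reparametrize to obtain an l.v.a. arc to feed into (iii). Since $t(s)$ is subanalytic with $t(0)=0$ and $t(s)>0$ for $s>0$, the Monotonicity Theorem gives that $t$ is strictly increasing on some $(0,\delta)$, hence invertible there. Setting $\tilde\gamma(u) := (u\, x(t^{-1}(u)), u)$ produces a continuous subanalytic l.v.a. arc into $\hat\Delta_n$ (with constant $1$) such that
$$
\frac{d_X(\sigma_1(\tilde\gamma(u)), \sigma_2(\tilde\gamma(u)))}{u^b} = \frac{f(u)}{u^b} \longrightarrow \ell > 0,
$$
contradicting (iii). The main obstacle is the curve selection step together with the verification that the quantity $f(t)$ really does get attained along a subanalytic curve in $(t,x)$-space; this is standard but needs the combination of Monotonicity plus Curve Selection in the subanalytic category, which is what makes the final reparametrization to an l.v.a. arc legitimate.
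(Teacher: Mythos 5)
Your proof is correct and follows essentially the same route as the paper's: the easy implications are handled by the same direct estimates, and for (iii) $\Rightarrow$ (i) both arguments exploit the subanalyticity of $t\mapsto\max_{x\in\Delta_n}d_X(\sigma_1(tx,t),\sigma_2(tx,t))$ and the subanalytic Curve Selection Lemma to produce an l.v.a.\ arc realizing the maximal discrepancy. The only cosmetic differences are that the paper selects the curve inside the set where the distance exceeds $(C/2)t^{b'}$ (using the Puiseux expansion of the max function) and reparametrizes via Remark~\ref{rem:lvareduction}, whereas you select inside the exact argmax set and reparametrize by hand via the Monotonicity Theorem; both are legitimate.
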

\proof
If we have the equivalence $\sigma_1\sim_b\sigma_2$ it is obvious that the limit vanishes for any arc as in the statement of (ii). Let $\gamma(t)=(\gamma_2(t)\gamma_1(t),\gamma_2(t))$ be any subanalytic l.v.a continuous arc. Then the limit $\lim\limits_{t\to 0^+}\frac{\gamma_2(t)}{t}$ is finite, and therefore condition $(ii)$ implies condition $(iii)$.


So, to finish the proof, we only need to prove that (iii) $\Rightarrow$ (i). Assume that the condition on arcs in (iii) is satisfied.
The function $$t\mapsto \max\limits\{d(\sigma_1(tx,t),\sigma_2(tx,t));x\in \Delta_n\}$$ is subanalytic. Therefore it admits an expansion of the form  $$\max\limits\{d(\sigma_1(tx,t),\sigma_2(tx,t));x\in \Delta_n\}=Ct^{b'}+o(t^{b'})$$ 
for a certain $b'\in\QQ$ and $C>0$. Then the subset 
$$Z:=\{(tx,t)\in\hat{\Delta}_n:d(\sigma_1(tx,t),\sigma_2(tx,t))\geq (C/2)t^{b'}\}$$
is subanalytic and contains sequences converging to the vertex of $\hat{\Delta}_n$. Therefore, by the subanalytic Curve Selection Lemma there exists a  continuous subanalytic arc $\gamma:(0,\epsilon)\to Z$ such that $\gamma(0)$ is equal to $x_0$ and $\gamma(t)$ is different to the vertex for $t\neq 0$. By Remark \ref{rem:lvareduction} we can assume that $||\gamma(t)-x_0||=t$. Thus, we have the following inequality 
$$
\lim\limits_{t\to 0^+}\frac{d(\sigma_1(\gamma(t)),\sigma_2(\gamma(t)))}{t^{b'}}\geq C/2.
$$
The equivalence $\sigma_1\sim_b\sigma_2$ holds if and only if we have the strict inequality $b'>b$. The previous inequality implies that if $b'\leq b$ then the  continuous subanalytic arc $\gamma$ contradicts the arc condition in the statement of (iii).
\endproof


\begin{lema}
\label{lem:repar}
Let $\sigma, \sigma'$ be   simplices in $MDC_n^{\mathrm{pre},\infty}(X;A)$ such that $\sigma\sim_b\sigma'$. If $\{\rho_i\}_{i\in I}$ is a homological subdivision of $\hat{\Delta}_n$, then we have 
$$\sum_{i\in I} sgn(\rho_i)\sigma\comp\rho_i\sim_b \sum_{i\in I} sgn(\rho_i)\sigma'\comp\rho_i.$$
\end{lema}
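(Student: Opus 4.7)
The plan is to reduce the statement to showing, for each individual index $i\in I$, that $\sigma\comp\rho_i\sim_b\sigma'\comp\rho_i$, and then to invoke the fact that $\sim_b$ is the linear extension of its definition on simplices, as recorded in Remark~\ref{rem:splitindexset}. Once each summand is equivalent to the corresponding summand on the other side, passing to the quotient by $\sim_b$ gives the same element, which is precisely what it means for the two chains to be $b$-equivalent. The signs $sgn(\rho_i)$ do not interfere because the argument is completely termwise.

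The heart of the argument is therefore the following single-simplex statement: if $\sigma\sim_b\sigma'$ and $\rho:\hat{\Delta}_n\to\hat{\Delta}_n$ is l.v.a., then $\sigma\comp\rho\sim_b\sigma'\comp\rho$. To prove it, I write $\rho(xt,t)=(\rho_1(xt,t)\rho_2(xt,t),\rho_2(xt,t))$ and use the l.v.a. constant $K\geq 1$ for $\rho$ guaranteeing $t/K\leq \rho_2(xt,t)\leq Kt$. Setting
\[
F(s):=\max\{d_X(\sigma(sx,s),\sigma'(sx,s)):x\in\Delta_n\},
\]
the hypothesis $\sigma\sim_b\sigma'$ is that $F(s)/s^b\to 0$ as $s\to 0^+$. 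For any $x\in\Delta_n$, the image $\rho(xt,t)$ lies on a slice at height $s=\rho_2(xt,t)\in[t/K,Kt]$, so
\[
d_X(\sigma\comp\rho(xt,t),\sigma'\comp\rho(xt,t))\leq F(\rho_2(xt,t))\leq\max_{s\in[t/K,Kt]}F(s).
\]
Dividing by $t^b$ and using $t\leq Ks$, we get
\[
\frac{\max_{x\in\Delta_n}d_X(\sigma\comp\rho(xt,t),\sigma'\comp\rho(xt,t))}{t^b}\leq K^b\cdot\max_{s\in[t/K,Kt]}\frac{F(s)}{s^b},
\]
and the right-hand side tends to zero as $t\to 0^+$ because $F(s)/s^b\to 0$. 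Hence $\sigma\comp\rho\sim_b\sigma'\comp\rho$.

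Applying this to each $\rho_i$ in the homological subdivision gives $\sigma\comp\rho_i\sim_b\sigma'\comp\rho_i$ for every $i\in I$. By the linear extension of $\sim_b$ (Definition~\ref{def:bequivalent} and Remark~\ref{rem:splitindexset}), the chains $\sum_{i\in I}sgn(\rho_i)\sigma\comp\rho_i$ and $\sum_{i\in I}sgn(\rho_i)\sigma'\comp\rho_i$ project to the same element in the quotient by $\sim_b$, and so are $b$-equivalent, which completes the proof.

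I do not expect any significant obstacle; the only mildly delicate point is keeping track of the direction of the l.v.a. inequality when comparing $t^b$ with $s^b$, which is resolved by using the upper bound $s\leq Kt$ (equivalently $1/t^b\leq K^b/s^b$) in the estimate above. No properties of the subdivision beyond the l.v.a. condition on each $\rho_i$ are used.
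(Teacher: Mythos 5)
Your proof is correct, and its overall skeleton matches the paper's: both reduce the lemma to the termwise statement $\sigma\comp\rho_i\sim_b\sigma'\comp\rho_i$ for a single l.v.a.\ map $\rho_i$, and then conclude by the linearity of $\sim_b$ on chains (Remark~\ref{rem:splitindexset}); note that no property of the homological subdivision beyond the l.v.a.\ condition is used in either argument. Where you diverge is in how the termwise statement is proved. The paper invokes the arc characterization of Lemma~\ref{lem:arccharacterization}: since $\rho_i$ is l.v.a., it sends l.v.a.\ arcs to l.v.a.\ arcs, so the arc condition for $\sigma\sim_b\sigma'$ transfers immediately to $\sigma\comp\rho_i$ and $\sigma'\comp\rho_i$ — a one-line argument, but one that leans on a lemma whose proof uses the subanalytic Curve Selection Lemma. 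You instead work directly with the $\sup$ in Definition~\ref{def:bequivalent}, controlling how $\rho_i$ distorts the height coordinate via the sandwich $t/K\leq\rho_2(xt,t)\leq Kt$; this is more elementary and self-contained, at the cost of the small bookkeeping with $F(s)$. One slip worth fixing: in the displayed estimate you say you are ``using $t\leq Ks$'', but the inequality you actually need (and correctly identify in your closing paragraph) is the other half of the l.v.a.\ sandwich, $s\leq Kt$, which gives $1/t^b\leq K^b/s^b$; the two are not equivalent for $K>1$. The displayed inequality itself is correct, so this is only a mislabel, not a gap.
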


\proof
Let $\gamma$ be any subanalytic l.v.a continuous arc in $\hat{\Delta}_n$. Since $\rho_i$ is l.v.a then  $\rho_i\comp\gamma$ is also a subanalytic l.v.a continuous arc. Since we have the equivalence $\sigma\sim_b\sigma'$, Lemma~\ref{lem:arccharacterization} implies the vanishing of the limit $$\lim\limits_{t\to 0^+}\frac{d(\sigma(\rho_i(\gamma(t))),\sigma'(\rho_i(\gamma(t))))}{t^b}=0.$$
Again by Lemma~\ref{lem:arccharacterization} this implies the equivalence $\sigma\comp\rho_i\sim_b\sigma'\comp\rho_i$.
\endproof

In order to define the complex of $b$-moderately discontinuous chains we introduce the $b$-subdivision equivalence relation.  

\begin{definition}[The $b$-subdivision equivalence relation]
\label{def:Seqb}
Two chains 
$$\sum_{j\in J}a_j\sigma_j, \sum_{k\in K}b_k\tau_k\in MDC_n^{\mathrm{pre},\infty}(X;A)$$
are called {\em $b$-immediately equivalent} (and we denote it by $\sum_{j\in J}a_j\sigma_j\to_{b} \sum_{k\in K}b_k\tau_k$), if for any $j\in J$ there is a homological subdivision $\{\rho_i\}_{i\in I_j}$ such that we have the $b$-equivalence

$$\sum_{j\in J}\sum_{i\in I_j} sgn(\rho_i) a_j\sigma_j\comp\rho_i\sim_b\sum_{k\in K} b_k\tau_k$$
in $MDC_n^{\mathrm{pre},\infty}(X;A).$

The {\em $b$-subdivision equivalence relation} in $MDC_n^{\mathrm{pre},\infty}(X;A)$ is the equivalence relation generated by the $b$-immediate equivalences, and is denoted by $\sim_{S,b}$. The equivalence classes are called $b$-moderately discontinuous chains or $b$-chains.

We denote by $MDC^b_n(X;A)$ the quotient group   of $MDC_n^{\mathrm{pre},\infty}(X;A)$ by the $\sim_{S,b}$- equivalence relation. It is the group of $b$-moderately discontinuous chains. 
\end{definition}

\begin{prop}
\label{prop:downupb}
$w\sim_{S,b} z$ if and only if there exist sequences of $b$-immediate equivalences $w=w_0\to_{b} w_1\to_{b} ...\to_{b} w_l$ and $z=z_0\to_{b} z_1\to_{b} ...\to_{b} z_m=w_l$.
\end{prop}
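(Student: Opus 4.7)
The strategy is to follow the template of the proof of Corollary~\ref{cor:downup}: establish a $b$-analog of Lemma~\ref{lem:Seqinversion} for the relation $\to_b$, and then deduce the proposition by the standard confluence argument that flattens any $\sim_{S,b}$-zigzag into a valley. The essential difference from the $\infty$-case is that the $b$-analog must be stated in two-step form, because $\to_b$ does not interact cleanly with $\sim_b$.

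The $b$-version of Lemma~\ref{lem:Seqinversion} I would prove states: if $w_3\to_b w_1$ and $w_3\to_b w_2$, there exists $w_4$ together with two-step chains $w_1\to_b \widetilde{w_3}^{(1)}\to_b w_4$ and $w_2\to_b \widetilde{w_3}^{(2)}\to_b w_4$. Here $\widetilde{w_3}^{(i)}$ is the subdivided version of $w_3$ witnessing the hypothesis $w_3\to_b w_i$; by definition $\widetilde{w_3}^{(i)}\sim_b w_i$, and the first arrow $w_i\to_b \widetilde{w_3}^{(i)}$ is then immediate using the identity as the (trivial) subdivision of each simplex of $w_i$. For $w_4$ I would take, simplex-by-simplex, the common refinement of the two subdivisions of $w_3$ via the subanalytic Hauptvermutung, exactly as in the proof of Lemma~\ref{lem:Seqinversion}; this produces $\widetilde{w_3}^{(1)}\to_\infty w_4$ and $\widetilde{w_3}^{(2)}\to_\infty w_4$, and since equality of chains trivially implies $\sim_b$-equivalence, the second arrows $\widetilde{w_3}^{(i)}\to_b w_4$ follow.

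With this two-step confluence in hand, the proposition is obtained by a standard confluence argument. Let $\Rightarrow_b$ denote the transitive closure of $\to_b$; a short induction on chain length upgrades the two-step statement above first to semi-confluence ($u\to_b v_1$ and $u\Rightarrow_b v_2$ admit a common $\Rightarrow_b$-descendant) and then to full confluence of $\Rightarrow_b$. Given any zigzag $w=x_0,x_1,\ldots,x_k=z$ witnessing $w\sim_{S,b} z$, a further induction on $k$ finishes the proof: the inductive hypothesis supplies $c'$ with $w\Rightarrow_b c'$ and $x_{k-1}\Rightarrow_b c'$; if the last step satisfies $z\to_b x_{k-1}$ then transitivity already gives $z\Rightarrow_b c'$, and otherwise $x_{k-1}\to_b z$, and confluence applied to $x_{k-1}\to_b z$ and $x_{k-1}\Rightarrow_b c'$ yields the required common descendant.

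The main obstacle is the impossibility of strengthening the key lemma to a single-step form. The tempting sublemma ``$w\sim_b w'$ and $w'\to_b v$ imply $w\to_b v$'' in fact fails: taking $w=0$ and $w'=\sigma'_1-\sigma'_2$ with $\sigma'_1\sim_b\sigma'_2$ but subdivided by \emph{different} subdivisions $\{\rho_{1,k}\}$, $\{\rho_{2,k}\}$ in a witness of $w'\to_b v$, the resulting $v$ is typically not $\sim_b 0$, so $0\not\to_b v$. This reflects the structural fact that subdivision does not preserve $\sim_b$, and hence that the subdivision data witnessing $w'\to_b v$ cannot simply be transported through the $\sim_b$-equivalence to the $w$-side. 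The two-step formulation we adopt is the minimal one that allows the subanalytic Hauptvermutung to refine both subdivision witnesses to a common subdivision, after which the standard Church-Rosser machinery takes over.
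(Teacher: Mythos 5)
Your overall architecture---a $b$-analog of Lemma~\ref{lem:Seqinversion} obtained via the subanalytic Hauptvermutung, followed by a confluence argument that flattens zigzags into valleys---is the approach the paper intends. But there are two linked problems. First, your counterexample to the single-step transport statement (``$w\sim_b w'$ and $w'\to_b v$ imply $w\to_b v$'') does not stand under the paper's conventions: a chain is an element of the free $A$-module and may be rewritten as a formal sum with repeated simplices, each copy receiving its own homological subdivision (this is the ``extension by linearity'' of Remark~\ref{rem:immediatesimplex}, and it is exactly the maneuver performed later in the paper's proof of Lemma~\ref{lema:banulacion}). In your example one writes $0=\sigma'_1-\sigma'_1$, subdivides the first copy by $\{\rho_{1,k}\}$ and the second by $\{\rho_{2,k}\}$; Lemma~\ref{lem:repar} applied to $\sigma'_1\sim_b\sigma'_2$ shows the result is $\sim_b v$, so $0\to_b v$ after all. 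In general, grouping the simplices of $w$ and $w'$ into $\sim_b$-classes as in Remark~\ref{rem:splitindexset}, replacing each class of $w$ by a single representative up to an error $\sim_b 0$, and transporting the subdivisions of $w'$ across each class via Lemma~\ref{lem:repar} proves the single-step statement. Note that Lemma~\ref{lem:repar} is the ingredient the paper explicitly flags for this proof, and your argument never invokes it---a sign that your local confluence statement is weaker than what is required.

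Second, and this is the genuine gap: having retreated to the two-step form, the appeal to ``standard Church--Rosser machinery'' fails. Two-step local confluence ($u\to_b v_1$ and $u\to_b v_2$ admit a common descendant reachable in two steps from each $v_i$) does not imply confluence of the transitive closure: $\to_b$ is not terminating (identity subdivisions give $u\to_b u$), so Newman's lemma is unavailable, and the abstract rewriting system with rules $b\to a$, $b\to c$, $c\to b$, $c\to d$ is two-step locally confluent yet not confluent. Your proposed induction for semi-confluence does not close---after the first tile you must join a two-step descent out of $u_1$ with the remaining chain out of $u_1$, which is an instance of the full confluence you are trying to establish. The repair is precisely the single-step transport lemma above: it shows that ``$\sim_b$ followed by $\to_\infty$'' can be rewritten as ``$\to_\infty$ followed by $\sim_b$'', i.e.\ as a single $\to_b$, so your two-step chains collapse to one step, $\to_b$ acquires the genuine diamond property, and the strip-lemma and zigzag inductions then go through exactly as you describe them.
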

\proof  
%
The proof is an adaptation of the proofs of Lemma~\ref{lem:Seqinversion} and Corollary \ref{cor:downup}, taking into account Lemma~\ref{lem:repar}.
\endproof

\begin{remark}
\label{rem:checkindependence}
Often we will need to define homomorphisms 
$$h:MDC_\bullet^b((X,x_0,d_X);A)\to G,$$ 
where $G$ is an abelian group. The usual procedure is to define first a homomorphism $\bar{h}:MDC^{\mathrm{pre},\infty}_\bullet((X,x_0,d_X);A)\to G$, and check that it descends to a well defined $h$. It is convenient to record that $\bar{h}$ descends if and only if the following two conditions hold:
\begin{itemize}
 \item For any $\sigma, z\in MDC^{\mathrm{pre},\infty}_\bullet((X,x_0,d_X);A)$, where $\sigma$ is a simplex and $z$ is a chain such that we have the immediate equivalence $\sigma\to_\infty z$, we have the equality $h(\sigma)=h(z)$.  
 \item For any two simplices $\sigma, \sigma'\in MDC^{\mathrm{pre},\infty}_\bullet((X,x_0,d_X);A)$ such that we have the equivalence $\sigma\sim_b\sigma'$, we have the equality $h(\sigma)=h(\sigma')$. 
\end{itemize}
\end{remark}

\begin{lema}
\label{lem:compatpartialsumb}
The boundary operator $\partial$ in $MDC_\bullet^{\mathrm{pre},\infty}(X;A)$ descends to a well defined boundary operator in $MDC_\bullet^b(X;A)$.
\end{lema}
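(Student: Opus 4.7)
The plan is to apply the criterion in Remark \ref{rem:checkindependence} to the composition
\[
\bar\partial \;\colon\; MDC_n^{\mathrm{pre},\infty}(X;A) \xrightarrow{\partial} MDC_{n-1}^{\mathrm{pre},\infty}(X;A) \twoheadrightarrow MDC_{n-1}^b(X;A).
\]
By that remark it suffices to check (a) that for any $\infty$-immediate equivalence $\sigma \to_\infty z$ one has $\partial \sigma \sim_{S,b} \partial z$, and (b) that for any two simplices with $\sigma \sim_b \sigma'$ one has $\partial \sigma \sim_{S,b} \partial \sigma'$. Once both are granted, $\bar\partial$ factors through $MDC_n^b(X;A)$, and the identity $\partial^2 = 0$ is inherited from the pre-chain level.

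Condition (a) is essentially Lemma \ref{lem:compatpartialsum}, which produces a $\sim_{S,\infty}$-equivalence between $\partial \sigma$ and $\sum_i \partial(sgn(\rho_i)\sigma \comp \rho_i)$. Since any $\infty$-immediate equivalence (which demands equality of the subdivided sum with the target) is automatically a $b$-immediate equivalence (equality is stronger than $\sim_b$), the relation $\sim_{S,\infty}$ is contained in $\sim_{S,b}$, so (a) comes for free.

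The genuinely new content is (b). I would reduce it to showing that for every face inclusion $j_n^k:\hat{\Delta}_{n-1}\to \hat{\Delta}_n$, the equivalence $\sigma \sim_b \sigma'$ implies $\sigma \comp j_n^k \sim_b \sigma' \comp j_n^k$; summing the signed facets and extending linearly then yields $\partial \sigma \sim_b \partial \sigma'$, which in particular gives $\partial \sigma \sim_{S,b} \partial \sigma'$. The key observation is that in the coordinates $(tx,t)$ the face map reads $j_n^k(tx,t) = (t\, i_n^k(x),t)$, and thus preserves the $t$-coordinate. Consequently, for every l.v.a. subanalytic continuous arc $\tilde\gamma$ in $\hat{\Delta}_{n-1}$, the composition $\gamma := j_n^k \comp \tilde\gamma$ is an l.v.a. subanalytic continuous arc in $\hat{\Delta}_n$ with the same $t$-coordinate. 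The arc characterization of $b$-equivalence in Lemma \ref{lem:arccharacterization}(iii) then transfers the vanishing of $\lim_{t\to 0^+} d_X(\sigma(\gamma(t)),\sigma'(\gamma(t)))/t^b$ from arcs of the form $j_n^k\comp \tilde\gamma$ to arbitrary l.v.a. arcs $\tilde\gamma$ in $\hat{\Delta}_{n-1}$, which is exactly what is needed.

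There is no serious obstacle here; the only point worth making is the trivial remark that face maps are $t$-preserving and hence l.v.a., which is what allows the arc condition of Lemma \ref{lem:arccharacterization} to pull back along $j_n^k$. Combining (a) and (b), Remark \ref{rem:checkindependence} produces the desired boundary $\partial \colon MDC_n^b(X;A) \to MDC_{n-1}^b(X;A)$.
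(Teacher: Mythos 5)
Your proof is correct and follows essentially the same route as the paper: the paper also verifies the two conditions of Remark~\ref{rem:checkindependence}, invoking Lemma~\ref{lem:compatpartialsum} for the first and noting that the second is handled exactly as in Lemma~\ref{lem:repar}, i.e.\ by the same arc-pullback argument you spell out for the (l.v.a., $t$-preserving) face maps $j_n^k$.
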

\proof
We have to check the conditions of Remark~\ref{rem:checkindependence}. The first condition is exactly Lemma~\ref{lem:compatpartialsum}. The second condition is similar to the proof of Lemma~\ref{lem:repar}.
\endproof

\begin{definition}[$b$-Moderately discontinuous homology]
We define the {\em $b$-moderately discontinuous chain complex of} $(X,x_0,d_X)$ with coefficients in $A$ (the $b$-MD complex for short) to be the complex  $MDC^{b}_\bullet((X,x_0,d_X);A)$ with the boundary operator defined in the previous lemma. Its homology is called the {\em $b$-moderately discontinuous homology with coefficients in $A$} ({\em $b$-MD homology}, for short) and is denoted by $MDH^b_\bullet((X,x_0,d_X);A)$.

\end{definition}

To have a quick grasp of the meaning of $b$-MD Homology see Figure~\ref{fig:toro2puntos}.

\begin{figure}\includegraphics[scale=0.30]{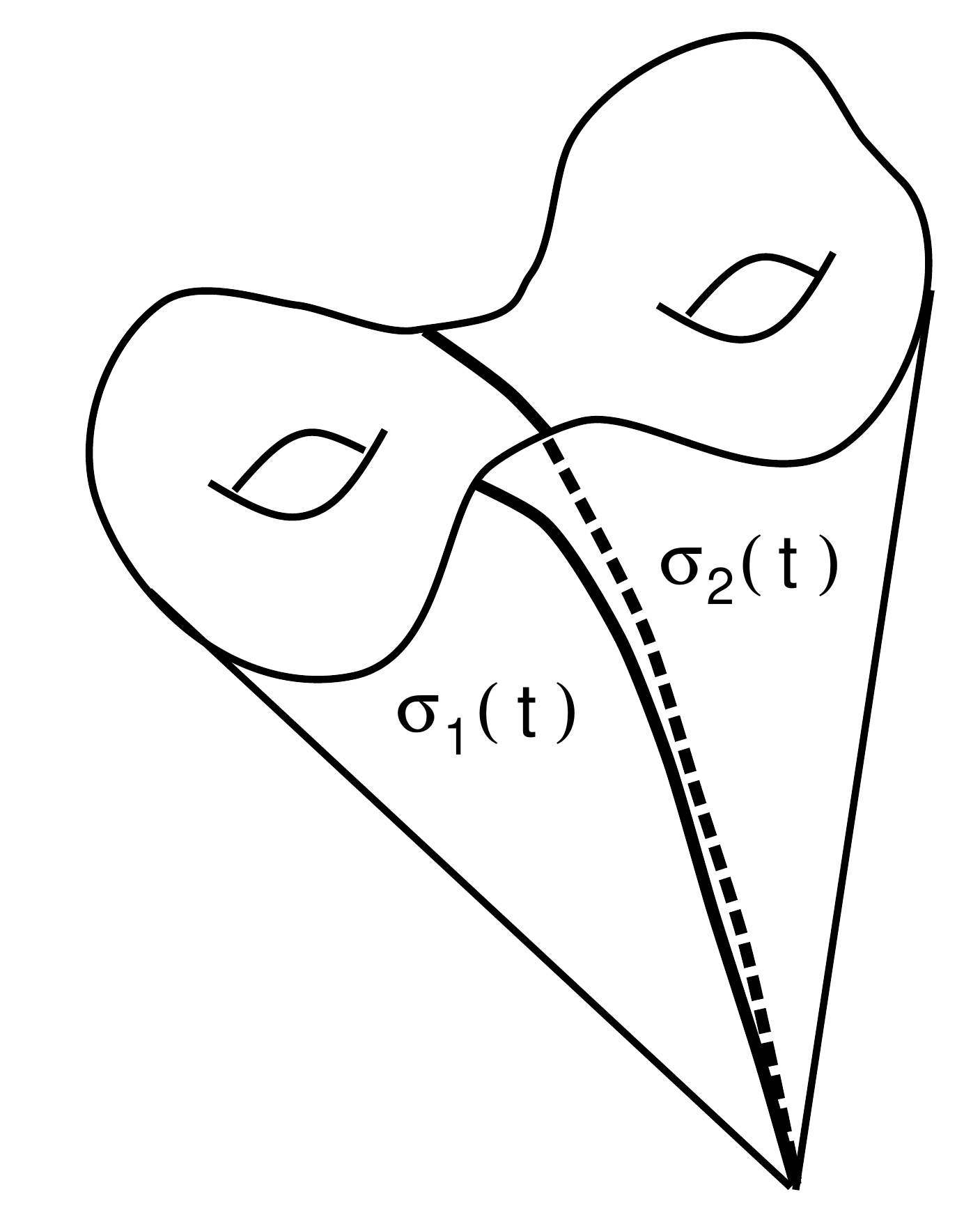}
\includegraphics[scale=0.60]{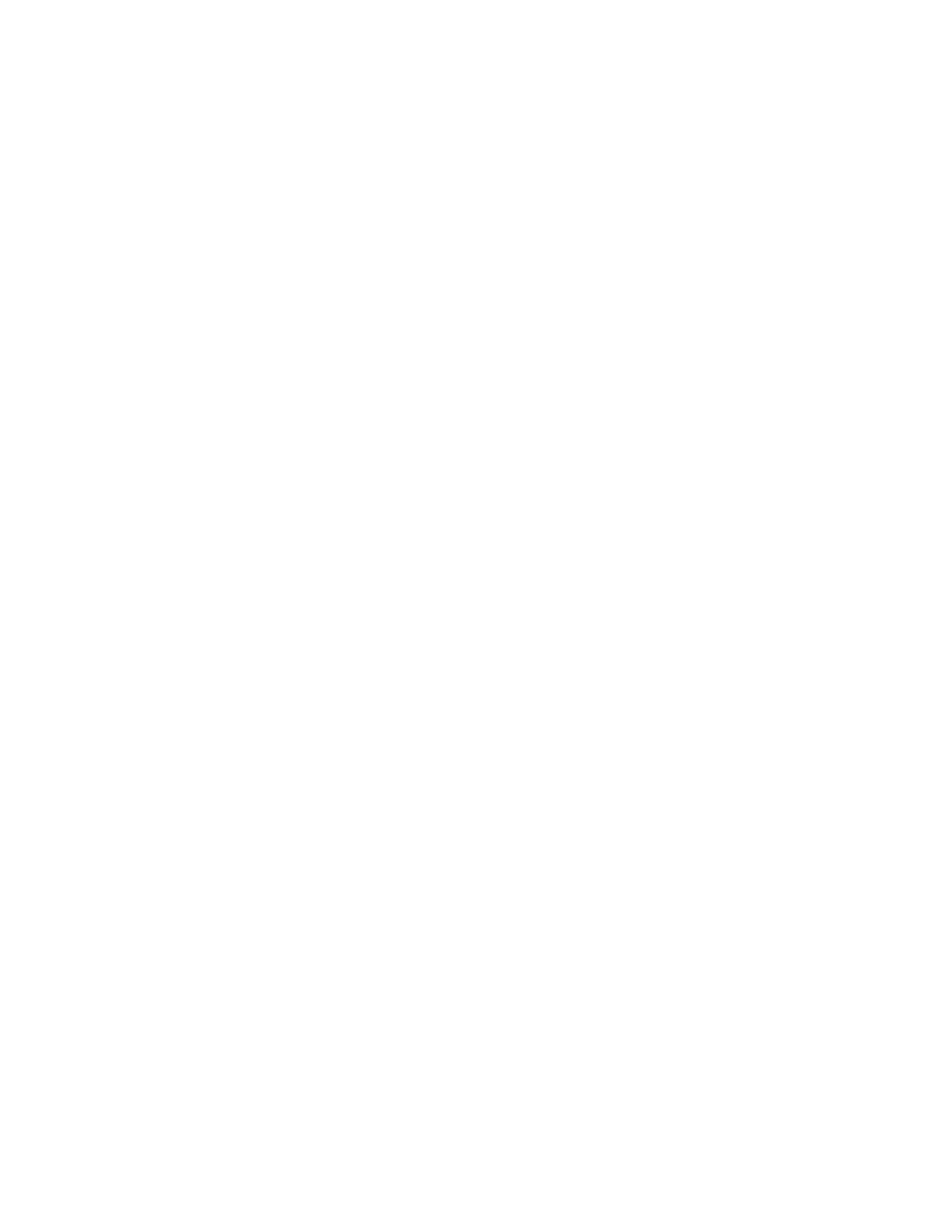}
\includegraphics[scale=0.30]{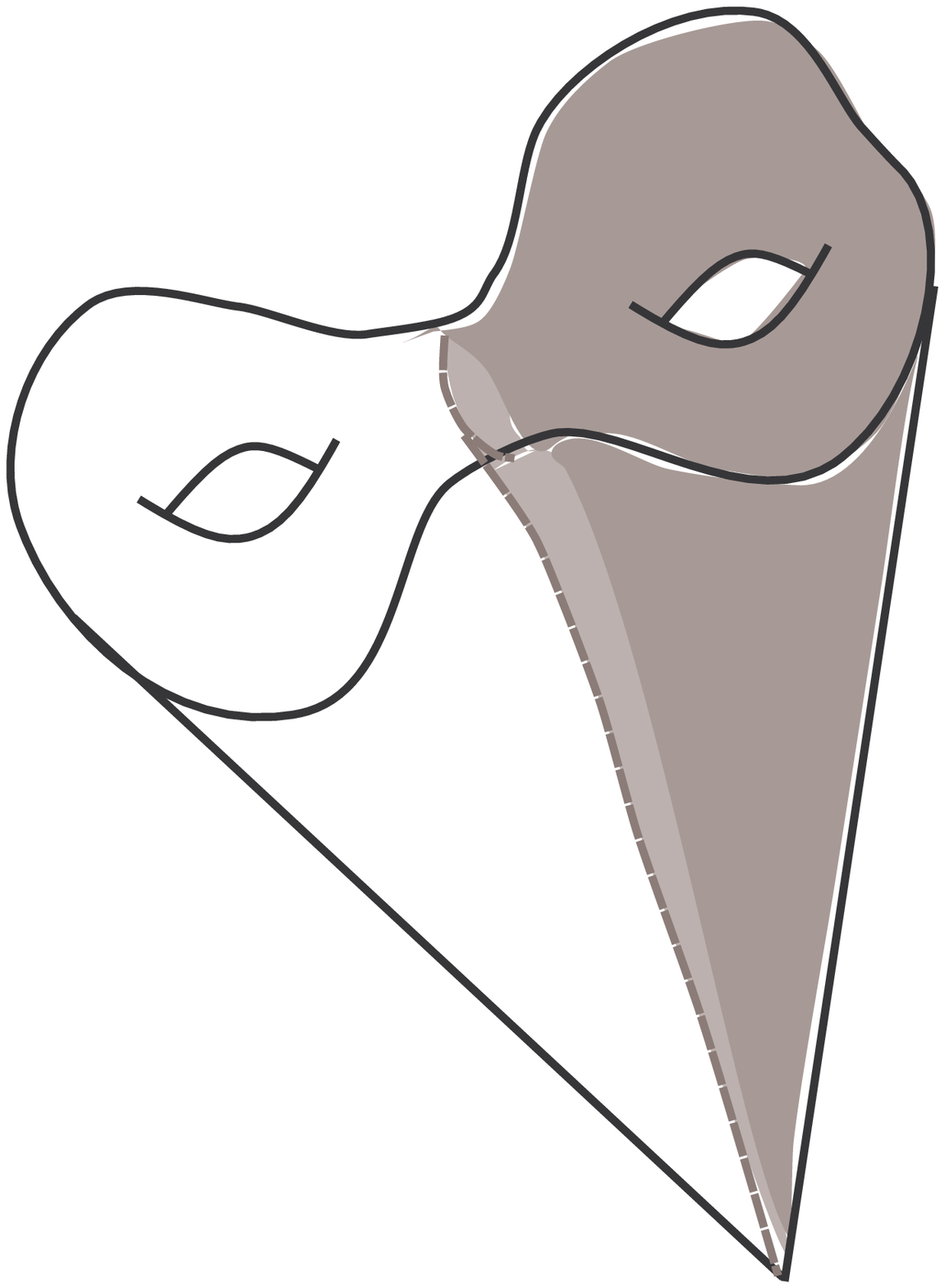}
\caption{Consider the figure with the outer metric where the thiner part collapses with tangency of order 4 while the thicker part collapses linearly. On the left, the two 0-simplexes $\sigma_1$ and $\sigma_2$ are 2-equivalent. On the right, we have shadowed what can be the image of a non-zero 2-cycle in the 3-MD Homology. Note that in singular homology it doesn't even represent a cycle because it has non-trivial boundary.}
\label{fig:toro2puntos}
\end{figure}

The following consequence of Proposition~\ref{prop:downupb} will be used repeatedly:

\begin{lema}
\label{lema:banulacion}
Given an element $z\in MDC^{\mathrm{pre},\infty}_\bullet((X,x_0,d_X);A)$, the class $[z]$ in $MDC^b_\bullet((X,x_0,d_X);A)$ vanishes if and only if there exists a sequence of immediate equivalences $z=z_0\to_{\infty}z_1\to_{\infty}...\to_\infty z_r$ such that $z_r \sim_b 0$. Notice that, by Remark~\ref{rem:splitindexset}, the chain $z_r = \sum_{i\in I}a_i\sigma_i$ is as follows: consider the subdivision of the index set $I=\coprod_{j\in J}I_j$ so that $i,i'$ belong to the same $I_j$ if and only if $\sigma_i\sim_b \sigma_{i'}$. Then for any $j\in J$ we have the equality
\begin{equation}
\label{eq:banulacion}
\sum_{i\in I_j}a_i=0.
\end{equation}
\end{lema}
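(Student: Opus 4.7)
For the ($\Leftarrow$) direction: every immediate equivalence $\to_\infty$ is in particular a $b$-immediate equivalence, and $z_r \sim_b 0$ gives $z_r \to_b 0$ via the trivial subdivision, so concatenating produces a chain of $\to_b$-steps from $z$ to $0$; hence $z \sim_{S,b} 0$.

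For ($\Rightarrow$) I would exploit the abelian-group structure of $MDC^{\mathrm{pre},\infty}_n$. Both $\sim_{S,\infty}$ and $\sim_b$ are congruences, with kernel subgroups $K_\infty$ and $K_b$, and I claim the congruence $\sim_{S,b}$ corresponds exactly to $K_\infty + K_b$. Indeed, each $u \to_b v$ factors as $u \to_\infty \tilde u \sim_b v$, which forces $u - v = (u - \tilde u) + (\tilde u - v) \in K_\infty + K_b$; conversely, if $u - v = k_\infty + k_b$ then the chain $w := u - k_\infty$ satisfies $u \sim_{S,\infty} w \sim_b v$. Applied to $z \sim_{S,b} 0$, this furnishes an intermediate chain $w$ with $z \sim_{S,\infty} w$ and $w \sim_b 0$. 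Corollary~\ref{cor:downup} then produces a common upward refinement $u$ with $z \to_\infty^* u$ and $w \to_\infty^* u$.

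What remains is the auxiliary claim: if $w \sim_b 0$ and $w \to_\infty^* u$, then $u \to_\infty^* u'$ for some $u' \sim_b 0$. The subtlety is that $\sim_b 0$ is \emph{not} preserved under arbitrary subdivision: distinct simplices in the same $\sim_b$-class of $w$ may be refined by incompatible triangulations of $\hat{\Delta}_n$ on the way to $u$, and the resulting pieces may end up in different new $\sim_b$-classes. To repair this I would write $w = \sum_k a_k \sigma_k$, partition its simplices into $\sim_b$-classes $C_1, \dots, C_p$ (so that $\sum_{k \in C_j} a_k = 0$), record for each $\sigma_k$ the composite subdivision $\{\xi_{k,l}\}_l$ of $\hat{\Delta}_n$ by which it has been refined in passing to $u$, and, inside each class $C_j$, invoke the subanalytic Hauptvermutung to obtain a common refinement $\{\nu_{j,m}\}_m$ of all the $\{\xi_{k,l}\}_l$ with $k \in C_j$. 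Applying $\{\nu_{j,m}\}_m$ simultaneously to every piece of $u$ stemming from a simplex in $C_j$ yields $u \to_\infty^* u'$, and Lemma~\ref{lem:repar} guarantees that for each fixed $(j,m)$ the pieces $\{\sigma_k \comp \nu_{j,m}\}_{k \in C_j}$ are mutually $\sim_b$-equivalent; so each new $\sim_b$-class of $u'$ absorbs total coefficient $\mathrm{sgn}(\nu_{j,m}) \sum_{k\in C_j} a_k = 0$, proving $u' \sim_b 0$.

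Concatenating $z \to_\infty^* u \to_\infty^* u'$ furnishes the desired sequence $z = z_0 \to_\infty z_1 \to_\infty \cdots \to_\infty z_r = u'$ with $z_r \sim_b 0$. The main obstacle is the auxiliary claim above: the naive expectation that $\sim_b 0$ is preserved under subdivision fails, and what saves the day is the class-consistent common-refinement construction, which leans crucially on Lemma~\ref{lem:repar} to force pieces coming from the same original $\sim_b$-class to land in a common new class, keeping the vanishing of class-wise coefficient sums intact.
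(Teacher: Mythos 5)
Your proof is correct, but it is organized quite differently from the paper's. The paper starts from Proposition~\ref{prop:downupb} to obtain a monotone chain $z=z_0\to_b\cdots\to_b z_r=0$ and then runs an induction on $r$ whose engine is a one-step commutation lemma: if $z_1'\sim_b z_1\to_\infty z_2$ then there is $z_2'$ with $z_1'\to_\infty z_2'\sim_b z_2$, proved by swapping each simplex of $z_1'$ for a fixed representative of its $\sim_b$-class (the discrepancy being a chain $\tilde z_1\sim_b 0$ carried along unsubdivided) and then invoking Lemma~\ref{lem:repar}. You instead observe that $\sim_{S,\infty}$, $\sim_b$ and $\sim_{S,b}$ are congruences and that the kernel of $\sim_{S,b}$ is exactly $K_\infty+K_b$; this algebraic step replaces the induction entirely and hands you a single intermediate chain $w$ with $z\sim_{S,\infty}w$ and $w\sim_b 0$, after which Corollary~\ref{cor:downup} plus a class-wise common refinement (Hauptvermutung together with Lemma~\ref{lem:repar}) finishes the job. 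Both arguments rest on the same two inputs, namely Lemma~\ref{lem:repar} and the existence of common refinements, and your version separates the algebra from the geometry more cleanly. The price is concentrated in your auxiliary claim: since $w\to_\infty^{*}u$ may take several steps, you must make sense of ``the composite subdivision by which $\sigma_k$ has been refined in passing to $u$'', which requires (i) checking that an iterated homological subdivision of $\hat{\Delta}_n$ is again (or at least refines to) a homological subdivision, and (ii) choosing the presentations of the intermediate chains compatibly, since cancellation between pieces coming from different $\sigma_k$ can obscure the provenance of the summands of $u$. Both points can be handled (for instance, apply the same subdivision to all unreduced occurrences of a cancelling simplex, whose coefficients still sum to zero), and they are of the same nature as steps the paper treats informally elsewhere, but be aware that the paper's one-step-at-a-time induction is designed precisely to avoid them.
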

\proof
If there exists a sequence $z=z_0\to_{\infty}z_1\to_{\infty}...\to_\infty z_r \sim_b 0$, then the class $[z]\in MDC^b_\bullet((X,x_0,d_X);A)$ vanishes obviously. Let us prove the converse.
Suppose that $z\in MDC^{\mathrm{pre},\infty}_\bullet((X,x_0,d_X);A)$ is such that its class $[z]$ vanishes in $MDC^b_\bullet((X,x_0,d_X);A)$. By Proposition~\ref{prop:downupb} there exists a sequence of $b$-immediate equivalences
\begin{equation}\label{eq:seqequivalences}
z=z_0\to_{b} z_1 \to_{b}...\to_{b} z_r=0,
\end{equation}
We proceed by induction over $r$. If $r=1$, there is nothing to show.

For the induction step we prove the following: if $z_1'$, $z_1$ and $z_2$ are chains in the complex $MDC^{\mathrm{pre},\infty}_\bullet((X,x_0,d_X);A)$ such that $z_1' \sim_b z_1$ and $z_1 \to_\infty z_2$, 
then there is a $z_2' \in MDC^{\mathrm{pre},\infty}_\bullet((X,x_0,d_X);A)$ such that $z_1' \to_\infty z_2'$ and $z_2' \sim_b z_2$. To show that, write $z_1 = \sum_{i\in I}a_i\sigma_i$ and let $\{\rho_{i,l}\}_{l \in L_i}$ be homological subdivisions for which 
$$
z_2 = \sum_{i\in I}\sum_{l \in L_i} a_i sgn(\rho_{i,l})\sigma_i \comp \rho_{i,l}.
$$
Let $z_1' = \sum_{j\in J}a'_j\sigma'_j$. Let $I = \coprod_{k \in K} V_k$ and $J = \coprod_{k \in K} V_k'$ be the splitting in accordance with Remark~\ref{rem:splitindexset} applied to $z_1$ and $z_1'$. For any $j \in J$, choose a fixed $k_j \in  V_k'$. Then it is
$$
z_1' = \sum_{k \in K} \sum_{j \in V_k'}a_j' \comp \sigma_{j_k}' + \tilde{z}_1 = \sum_{k \in K} \sum_{i \in V_k}  a_i \comp \sigma_{j_k}' + \tilde{z}_1
$$
where $\tilde{z}_1 \sim_b 0$. Set
$$
z_2' := \sum_{k \in K} \sum_{i \in V_k} \sum_{l \in L_i} sgn(\rho_{i,l})a_i \comp \sigma_{j_k}' \comp \rho_{i,l} + \tilde{z}_1.
$$
By Lemma~\ref{lem:repar}, it is $z_2' \sim_b z_2$.

Now suppose $r>1$. By what we have just shown and the induction hypothesis, sequence (\ref{eq:seqequivalences}) can be transformed into
$$
z=z_0\to_\infty z_0' \to_\infty z_1' \to_\infty...\to_\infty z_r' \sim_b z_r \sim_b 0.
$$
\endproof

\subsection{Relative \texorpdfstring{$b$}{b}-Moderately Discontinuous Homology}\label{subsection:relhomology}

In our setting relative homology exists in two different levels of generality. Let us start with the less general one, which is analogue to the classical Singular Homology Theory (See Subsection \ref{sec:relX} for the other one, which is essential for the formulation of the relative Mayer-Vietoris Theorem in our theory).

Consider $b\in (0,+\infty]$. Given  a subanalytic subgerm $(Y,x_0,d_{X_{|{Y}}} )\hookrightarrow (X,x_0,d_X)$, we denote by $K_\bullet$ the minimal subcomplex of  $MDC^b_\bullet(X,x_0,d_{X})$ which contains the classes $[\sigma]$, where $\sigma$ is a l.v.a. simplex in $Y$. An easy application of Lemma~\ref{lema:banulacion} shows that the obvious epimorphism of complexes $MDC^b_\bullet(Y,x_0,d_{X_{|Y}})\to K_\bullet$ is an isomorphism. Therefore we have an inclusion of complexes 
\begin{equation}
\label{eq:inclusionrelative}
MDC^b_\bullet(Y,x_0,d_{X_{|Y}})\hookrightarrow MDC^b_\bullet(X,x_0,d_{X}).
\end{equation}

\begin{definition}\label{def:rel} Consider $b\in (0,+\infty]$. Given  a subanalytic subgerm $(Y,x_0,d_{X_{|{Y}}} )\hookrightarrow (X,x_0,d_X)$, we define the {\em complex of relative $b$-moderately discontinuous chains with coefficients in $A$}, denoted by  
$MDC^b_\bullet((X,Y,x_0,d_X);A)$ as the following quotient: 
$$MDC^b_\bullet((X,x_0,d_X);A)\biggm/ MDC^b_\bullet((Y,x_0,d_X|_Y);A),$$
which makes sense by inclusion~(\ref{eq:inclusionrelative}).


The {\em $b$-moderately discontinuous homology $MDH^b_{*}((X,Y,x_0,d_X);A)$ with coefficients in $A$} is the homology of the complex $MDC^b_\bullet((X,Y,x_0,d_X);A)$. 

We abbreviate calling these complexes and graded abelian groups the {\em $b$-MD complex} and {\em $b$-MD homology} of  the pair $(X,Y,x_0,d_X)$. When it is clear from the context we will denote it  simply by $MDC^\infty_\bullet(X,Y;A)$ and similarly for homology.
\end{definition}

\begin{notation}
Denote by $Kom(Ab)^-$ the category of complexes of abelian groups bounded from the right.  
Denote by $\calD(Ab)^-$ the bounded above derived category of abelian groups. It is the localization at quasi-isomorphisms of the category whose objects are complexes bounded from the right and whose morphisms are homotopy classes of morphisms of complexes. 
Since we will deal with homology we will index the complexes as $...\to C_k\to C_{k-1}\to...$.  
There is a functor denoted by $H_*$ from $Kom(Ab)^-$ to the category $GrAb$ of graded abelian groups, which consists in taking the homology of a complex.
\end{notation}

At this point we check functoriality for the first time:

\begin{prop}
\label{prop:funct1} For every $b\in (0,+\infty]$, the assignments $$(X,Y,x_0,d_X)\mapsto MDC^b_\bullet((X,Y,x_0,d_X);A)\ \ and$$ 
$$(X,Y,x_0,d_X)\mapsto MDH^b_\bullet((X,Y,x_0,d_X);A)$$ are functors from the category of pairs of  metric subanalytic germs to $Kom(Ab)^-$ resp. $GrAb$.
\end{prop}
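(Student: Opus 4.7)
The plan is to define the functor on morphisms by pre-composition at the level of simplices, then verify it descends through all the identifications (subdivision and $b$-equivalence) and preserves the relative structure.

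\medskip

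First I would define, for a Lipschitz l.v.a.\ subanalytic morphism of pairs $f:(X,Y,x_0,d_X)\to(X',Y',x'_0,d_{X'})$ and any l.v.a.\ simplex $\sigma:\hat{\Delta}_n\to(X,x_0)$, the pushforward $f_*(\sigma):=f\circ\sigma$, extended $A$-linearly to $MDC^{\mathrm{pre},\infty}_\bullet((X,x_0);A)$. Since composition of l.v.a.\ map germs is l.v.a.\ (with l.v.a.\ constant the product of the individual constants), $f\circ\sigma$ is again a l.v.a.\ simplex in $(X',x'_0)$, so the assignment lands in $MDC^{\mathrm{pre},\infty}_\bullet((X',x'_0);A)$. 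It is a chain map at the pre-level because $f\circ(\sigma\circ j^k_n)=(f\circ\sigma)\circ j^k_n$, so $\partial(f_*\sigma)=f_*(\partial\sigma)$.

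\medskip

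Next I would check that $f_*$ descends to $MDC^b_\bullet$ using Remark~\ref{rem:checkindependence}. For immediate equivalences: if $\{\rho_i\}_{i\in I}$ is a homological subdivision of $\hat{\Delta}_n$, then it is the same subdivision that realizes $f\circ\sigma\to_\infty\sum_i\mathrm{sgn}(\rho_i)(f\circ\sigma)\circ\rho_i=\sum_i\mathrm{sgn}(\rho_i)f\circ(\sigma\circ\rho_i)$; so immediate equivalences are preserved. For $b$-equivalence: if $\sigma_1\sim_b\sigma_2$ and $K$ is a Lipschitz l.v.a.\ constant for $f$, then
\[
\frac{\max_{x\in\Delta_n}d_{X'}(f\sigma_1(tx,t),f\sigma_2(tx,t))}{t^b}\leq K\cdot\frac{\max_{x\in\Delta_n}d_X(\sigma_1(tx,t),\sigma_2(tx,t))}{t^b}\xrightarrow{t\to 0^+}0,
\]
so $f\circ\sigma_1\sim_b f\circ\sigma_2$. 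Hence $f_*$ is a well-defined chain map $MDC^b_\bullet((X,x_0,d_X);A)\to MDC^b_\bullet((X',x'_0,d_{X'});A)$. Because $f(Y)\subseteq Y'$, it restricts to the subcomplex $MDC^b_\bullet((Y,x_0,d_X|_Y);A)\to MDC^b_\bullet((Y',x'_0,d_{X'}|_{Y'});A)$, and therefore induces a chain map on the quotients $MDC^b_\bullet((X,Y,\cdot);A)\to MDC^b_\bullet((X',Y',\cdot);A)$.

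\medskip

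Finally, functoriality itself (identity goes to identity, $(g\circ f)_*=g_*\circ f_*$) is a direct consequence of $(g\circ f)\circ\sigma=g\circ(f\circ\sigma)$ and $A$-linearity. Since $H_*:Kom(Ab)^-\to GrAb$ is a functor, applying it yields functoriality of $MDH^b_\bullet$.

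\medskip

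I do not expect any serious obstacle here: the proof is entirely formal once the two well-definedness checks on the subdivision and $b$-equivalence relations are performed. The only non-trivial step is the preservation of $b$-equivalence, which is exactly where the Lipschitz hypothesis on morphisms is used; the l.v.a.\ hypothesis is needed to keep the simplices l.v.a.\ after composition, and to ensure the composition is well-defined at the level of germs.
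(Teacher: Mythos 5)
Your proposal is correct and follows the same route as the paper: pushforward by composition on pre-chains, descent via the two conditions of Remark~\ref{rem:checkindependence} (the subdivision condition being automatic and the $b$-equivalence condition using the Lipschitz constant), and then passage to the relative quotient. The paper leaves these checks as "straightforward"; you have simply written them out.
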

\proof A Lipschitz l.v.a. subanalytic map $f:(X,x_0,d_X)\to (X',x'_0,d_{X'})$ induces morphisms $MDC_\bullet^{pre,\infty}(X,x_0.d_x)\to MDC_\bullet^b(X',x'_0,d_{X'})$ for every $b\in (0,+\infty]$, by taking $\sigma\mapsto f\circ\sigma$ for every l.v.a. simplex $\sigma$ and extending by linearity. One can check that it descends to a well defined  morphism from $MDC^{b}_\bullet((X,x_0,d_X);A)$ to $MDC^{b}_\bullet((X',x'_0,d_{X'});A)$ because it satisfies the two conditions of Remark~\ref{rem:checkindependence}, which are straightforward. 

If $f$ takes a subanalytic subgerm $Y$ into a subanalytic subgerm $Y'$, then the homomorphism defined above transforms the subcomplex $MDC^{b}_\bullet((Y,x_0,d_X|_Y);A)$ into $MDC^{b}_\bullet((Y',x'_0,d_{X'}|_{Y'});A)$, and hence descends to the relative homology groups. 
\endproof

\begin{notation}\label{not:f*C}
Given a Lipschitz l.v.a. subanalytic map $$f:(X,Y,x_0,d_X)\to (X',Y',x'_0,d_{X'})$$ we denote by $f_*$ the induced map at the level of $b$-MD chains for every $b\in (0,+\infty]$.
\end{notation}

\subsection{The final definition of Moderately Discontinuous Homology}\label{subsection:finaldef}

In this section we introduce the complete definition of Moderately Discontinuous Chain Complexes/Homology as a functor from the category of pairs of metric subanalytic germs, to a category of diagrams of complexes/groups. 

The starting observation is the following: for $b_1\geq b_2$ with $b_i\in (0,+\infty]$ there are natural epimorphisms (see Section~\ref{sec:BB} for the associated long exact sequence):
\begin{equation}\label{eq:Cb_1-b_2}
h^{b_1, b_2}: MDC^{b_1}_\bullet((X,Y,x_0,d_X);A) \rightarrow MDC^{b_2}_\bullet((X,Y,x_0,d_X);A)
\end{equation} 
which induces a map in homology: 
\begin{equation}\label{eq:Hb_1-b_2}
h^{b_1, b_2}_*: MDH^{b_1}_\bullet((X,Y,x_0,d_X);A) \rightarrow MDH^{b_2}_\bullet((X,Y,x_0,d_X);A).
\end{equation} 

\begin{notation} 
We define the category $\BB$, where the set of objects is $(0,\infty]$ and there is a unique morphism from $b$ to $b'$ if and only if $b\geq b'$.
\end{notation}

\begin{definition}[Categories of $\BB$-complexes and $\BB$-graded abelian groups]\label{def:B-categories}
The \em{category $\BB-Kom(Ab)^-$ of $\BB$-complexes} is the category whose objects are functors from $\BB$ to $Kom(Ab)^-$ and the morphisms are natural transformations of functors. The {\em category}  $\BB-\calD(Ab)^-$ is the category whose objects are functors from $\BB$ to $\calD(Ab)^-$ and the morphisms are natural transformations of functors. The {\em category $\BB-GrAb$ of $\BB$-graded abelian groups} is the category whose objects are functors from $\BB$ to the category $GrAb$ and the morphisms are natural transformations of functors. Concatenation of objects in $\BB-Kom(Ab)^-$ with the homology functor $H_*$ yields a functor $\BB-H_*:\BB-Kom(Ab)^-\to \BB-GrAb$ which factorizes through $\BB-\calD:\BB-\calD(Ab)^-\to \BB-GrAb$.
\end{definition}

\begin{prop}
\label{prop:functoriality} 
The assignments $(X,Y,x_0,d_X)\mapsto MDC^{\star}_\bullet((X,Y,x_0,d_X);A)$ and $(X,Y,x_0,d_X)\mapsto MDH^{\star}_*((X,Y,x_0,d_X);A)$ are functors from the category of pairs of metric subanalytic germs to $\BB-Kom(Ab)^-$ and $\BB-GrAb$ respectively.
\end{prop}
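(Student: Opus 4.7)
My plan is to unpack what has to be checked to make the assignments into $\BB$-diagrams, and then verify naturality with respect to Lipschitz l.v.a.\ subanalytic maps of pairs.

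\textbf{Step 1: The epimorphisms $h^{b_1,b_2}$ are well-defined chain maps.} I would begin by making precise the maps (\ref{eq:Cb_1-b_2}) which are just asserted. The starting point is the elementary observation that for $b_1 \geq b_2$, if two l.v.a.\ simplices satisfy $\sigma_1 \sim_{b_1} \sigma_2$ then $\sigma_1 \sim_{b_2} \sigma_2$, because $t^{b_1} \leq t^{b_2}$ for small $t$ forces the corresponding limit in Definition~\ref{def:bequivalent} to vanish as well. Consequently every $b_1$-immediate equivalence is a $b_2$-immediate equivalence, so the identity on $MDC^{\mathrm{pre},\infty}_\bullet$ descends (through the criterion of Remark~\ref{rem:checkindependence}) to a surjection $h^{b_1,b_2}: MDC^{b_1}_\bullet \to MDC^{b_2}_\bullet$. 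This surjection commutes with $\partial$ because $\partial$ is already defined at the pre-chain level and descends to both quotients by Lemma~\ref{lem:compatpartialsumb}. Restricted to the subcomplex $MDC^b_\bullet(Y,x_0,d_X|_Y)$ it lands in the corresponding subcomplex for $Y$, so it induces a chain map on the relative complexes of Definition~\ref{def:rel}.

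\textbf{Step 2: Functoriality in $\BB$ for a fixed pair.} For $b_1 \geq b_2 \geq b_3$ the equality $h^{b_2,b_3} \circ h^{b_1,b_2} = h^{b_1,b_3}$ is immediate because all three maps are induced from the identity on $MDC^{\mathrm{pre},\infty}_\bullet$; and $h^{b,b}$ is the identity for the same reason. This gives a functor $\BB \to Kom(Ab)^-$, and passing to homology (which is itself a functor) gives a functor $\BB \to GrAb$, i.e.\ objects of $\BB\text{-}Kom(Ab)^-$ and $\BB\text{-}GrAb$.

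\textbf{Step 3: Naturality in the pair.} Given a morphism $f:(X,Y,x_0,d_X)\to(X',Y',x'_0,d_{X'})$, Proposition~\ref{prop:funct1} supplies chain maps $f_*:MDC^b_\bullet(X,Y;A)\to MDC^b_\bullet(X',Y';A)$ for every $b\in(0,\infty]$. To produce a morphism in $\BB\text{-}Kom(Ab)^-$ I need the square
\[
\begin{CD}
MDC^{b_1}_\bullet(X,Y;A) @>f_*>> MDC^{b_1}_\bullet(X',Y';A) \\
@Vh^{b_1,b_2}VV @VVh^{b_1,b_2}V \\
MDC^{b_2}_\bullet(X,Y;A) @>f_*>> MDC^{b_2}_\bullet(X',Y';A)
\end{CD}
\]
to commute for every $b_1\geq b_2$. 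But both $f_*$ and $h^{b_1,b_2}$ are induced from maps defined on $MDC^{\mathrm{pre},\infty}_\bullet$, where $f_*$ is the honest post-composition with $f$ and $h^{b_1,b_2}$ is the identity. Commutativity on the nose at the pre-chain level therefore descends to commutativity on the quotients, so the collection $\{f_*\}_b$ is a natural transformation, i.e.\ a morphism in $\BB\text{-}Kom(Ab)^-$. Applying the homology functor $\BB\text{-}H_*$ gives the corresponding statement in $\BB\text{-}GrAb$.

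\textbf{Step 4: Composition and identity.} For morphisms $f,g$ composable in our geometric category, $(g\circ f)_* = g_* \circ f_*$ on the pre-chain level by definition of post-composition, and the identity pair is sent to the identity natural transformation. Both equalities pass to the quotients.

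The only genuinely substantive step is Step~1, the compatibility of $\sim_{S,b_1}$ with $\sim_{S,b_2}$ for $b_1\geq b_2$; after that, every square in sight commutes because every relevant map is induced from something on $MDC^{\mathrm{pre},\infty}_\bullet$ where $h^{b_1,b_2}$ is the identity. I do not anticipate any real obstacle beyond careful bookkeeping.
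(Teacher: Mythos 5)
Your proposal is correct and follows essentially the same route as the paper, which simply observes that the per-$b$ functoriality of Proposition~\ref{prop:funct1} commutes with the epimorphisms~(\ref{eq:Cb_1-b_2}) and~(\ref{eq:Hb_1-b_2}) "which is clear"; your Steps 1--4 just spell out why it is clear (both $f_*$ and $h^{b_1,b_2}$ are induced from the pre-chain level, where $h^{b_1,b_2}$ is the identity). Your Step 1 correctly isolates the one substantive point, namely that $\sim_{b_1}$ implies $\sim_{b_2}$ for $b_1\geq b_2$.
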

\proof
One only needs to check that the functoriality for Lipschitz l.v.a subanalytic maps for each $b\in\BB$ commutes with the epimorphisms (\ref{eq:Cb_1-b_2}), (\ref{eq:Hb_1-b_2}) which is clear.
\endproof

It is interesting to record that in the case of complex (resp. real) analytic germs our homology theory gives complex (resp. real) analytic invariants.

\begin{cor} Given a complex (resp. real) analytic germ $(X,x_0)$, the $\BB$-moderately discontinuous homology $MDH^b_\bullet(X,x_0,d_{out})$ and $MDH^b_\bullet(X,x_0,d_{inn})$  for the outer and inner metrics are complex (resp. real) analytic invariants.
\end{cor}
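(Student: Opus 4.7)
The plan is to deduce this corollary directly from functoriality (Proposition \ref{prop:functoriality}) by showing that a complex (resp.\ real) analytic isomorphism of germs is an isomorphism in the category of pairs of metric subanalytic germs, both when equipped with the outer and with the inner metric.

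First I would fix two complex analytic germs $(X,x_0)\subset (\CC^n,x_0)$ and $(X',x'_0)\subset (\CC^m,x'_0)$ and an analytic isomorphism $\phi:(X,x_0)\to (X',x'_0)$. By the definition of morphism of analytic germs, $\phi$ is given by the restriction of an analytic map $\tilde\phi:(\CC^n,x_0)\to(\CC^m,x'_0)$ defined on an ambient neighborhood, and its inverse $\phi^{-1}$ is given analogously by an analytic extension $\widetilde{\phi^{-1}}:(\CC^m,x'_0)\to (\CC^n,x_0)$. Since $\tilde\phi$ is $C^1$ with $\tilde\phi(x_0)=x'_0$, the mean value inequality on a small neighborhood yields a constant $K\geq 1$ such that
\[
\|\tilde\phi(x)-x'_0\|\leq K\|x-x_0\|
\]
on that neighborhood, and the analogous bound with constant $K$ holds for $\widetilde{\phi^{-1}}$. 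Restricting to $X$ and $X'$ and combining both inequalities shows that $\phi$ is bi-Lipschitz and linearly vertex approaching for the outer metric; the same argument applied to $\phi^{-1}$ gives its counterpart. In particular $\phi$ is a morphism in the category of metric subanalytic germs for the outer metric, with $\phi^{-1}$ as inverse, so $\phi$ is an isomorphism in that category. The real analytic case is identical.

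For the inner metric I would use the standard fact that if $\phi$ is bi-Lipschitz with respect to the ambient (outer) metric with constant $K$, then it distorts lengths of rectifiable arcs by at most a factor $K$, hence also distorts the inner metrics by at most a factor $K$. Combined with the linearly vertex approaching property (which only involves distances to $x_0$ and $x'_0$ and hence can be read off either metric up to the bi-Lipschitz comparison of inner and outer), this shows $\phi$ is also a Lipschitz l.v.a. map $(X,x_0,d_{inn})\to (X',x'_0,d_{inn})$, with inverse a map of the same kind. As explained in Example~\ref{ex:inout} and Remark~\ref{rem:Cat_inner}, the inner metric is covered by our setting after replacing $d_{inn}$ by a bi-Lipschitz equivalent subanalytic metric, and this replacement does not affect the argument.

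Applying the functor $MDH^\star_\bullet(\,\cdot\,;A)$ of Proposition~\ref{prop:functoriality} to the isomorphisms $\phi$ and $\phi^{-1}$ now yields mutually inverse isomorphisms in $\BB-GrAb$ between $MDH^\star_\bullet(X,x_0,d_{out};A)$ and $MDH^\star_\bullet(X',x'_0,d_{out};A)$, and analogously for $d_{inn}$. The only step that requires any verification beyond straightforward bookkeeping is the extension of $\phi$ to an ambient analytic map together with the Lipschitz bound from the mean value inequality; all remaining content is formal functoriality.
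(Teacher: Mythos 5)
Your argument is correct and is essentially the paper's own proof: the paper simply records that an analytic isomorphism of germs is ``well known to be bi-Lipschitz both for the inner and outer metric and is clearly l.v.a.''\ and then invokes functoriality, while you supply the standard details (ambient analytic extension, mean value inequality, length distortion of rectifiable arcs) behind that assertion. No gap; the two proofs coincide in substance.
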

\proof

A real or complex analytic diffeomorphism is well known to be bi-Lipschitz both for the inner and outer metric  and it is clearly l.v.a.
\endproof
\subsection{Bi-Lipschitz invariance of \texorpdfstring{$b$}{b}-MD homology with respect to the inner distance}
We check that a subanalytic homeomorphism between two germs $(X,x_0)$
and $(Y,y_0)$ that is bi-Lipschitz for the inner metric is l.v.a..  Then we conclude  that the MD Homology for $d_{inn}$ is a bi-Lipschitz invariant. 

\begin{prop}\label{top_inner} Let $(X,x_0)$ and $(Y,y_0)$ be two germs of subanalytic sets. Let $d_{X,inn}$ (resp. $d_{Y,inn}$) be the inner distance of $X$ (resp. $Y$). Then we have the following:
\begin{itemize}
\item [(a)] $d_{X,inn}$ (resp. $d_{Y,inn}$) induces the same topology on $X$ (resp. $Y$) as the topology induced by the standard topology on $\RR^m$;
\item [(b)] If there exists an inner bi-Lipschitz homeomorphism $h\colon (X,x_0)\to (Y,y_0)$ then there exists $K>0$ satisfying the inequalities
$$
\frac{1}{K}\|x-x_0\| \leq \|h(x)-y_0\| \leq K \|x-x_0\|.
$$
\end{itemize}
\end{prop}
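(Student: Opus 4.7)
The whole proposition hinges on the following auxiliary lemma: for any subanalytic germ $(X,x_0)$, there exists $C > 0$ and a representative on which $d_{X,inn}(x,x_0)\leq C\|x-x_0\|$ for every $x$. Once this is available, both (a) and (b) follow easily, because the inequality $d_{out}\leq d_{inn}$ holds tautologically, so the only content is a linear upper bound in the opposite direction near the vertex.

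\textbf{Proof of the auxiliary lemma.} I use the subanalytic conical structure of Remark~\ref{rem:germ1}, which provides a subanalytic homeomorphism $h\colon C(L_X)\to (X,x_0)$ with $\|h(sx',s)-x_0\|=s$. For a point $x=h(tx',t)$, the map $\gamma_{x'}\colon[0,t]\to X$, $s\mapsto h(sx',s)$, is a subanalytic arc from $x_0$ to $x$ lying entirely in $X$. Writing $\gamma_{x'}(s)=x_0+s\cdot u_{x'}(s)$ with $u_{x'}(s)\in S^{m-1}$ subanalytic and continuous on $[0,t]$, the length of $\gamma_{x'}$ is bounded above by
\[
\int_0^t \|\gamma_{x'}'(s)\|\,ds \;\leq\; t + \int_0^t s\,\|u_{x'}'(s)\|\,ds \;\leq\; t\bigl(1+V(u_{x'})\bigr),
\]
where $V(u_{x'})$ denotes the total variation of $u_{x'}$. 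Since $u_{x'}$ is a continuous subanalytic arc on a compact interval, $V(u_{x'})$ is finite. To get a bound uniform in $x'\in L_X$, I apply Hardt triviality to the family $\{\gamma_{x'}\}_{x'\in L_X}$ (indexed by the compact subanalytic set $L_X$) to see that $V(u_{x'})$ is a subanalytic function of $x'$; by compactness of $L_X$ it attains a maximum $M$. This yields $d_{X,inn}(x,x_0)\leq (1+M)\|x-x_0\|$.

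\textbf{Proof of (a).} The inequality $\|x-y\|\leq d_{X,inn}(x,y)$ is immediate from the definition of the inner metric, and it shows that every set open in the standard topology is $d_{X,inn}$-open. For the converse inclusion I apply the auxiliary lemma not only to the vertex $x_0$ but to an arbitrary point $x\in X$, viewed as the vertex of its own subanalytic germ $(X,x)$: the lemma produces $C_x>0$ such that $d_{X,inn}(y,x)\leq C_x\|y-x\|$ for $y$ close enough to $x$, so that $d_{X,inn}$-convergence is implied by Euclidean convergence at $x$. The same argument applies to $Y$.

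\textbf{Proof of (b).} Apply the auxiliary lemma once to $(X,x_0)$ with constant $C_X$ and once to $(Y,y_0)$ with constant $C_Y$. Combining with the hypothesis that $h$ is inner $K$-bi-Lipschitz and the trivial bound $\|\cdot\|\leq d_{\cdot,inn}$,
\[
\|h(x)-y_0\|\;\leq\;d_{Y,inn}(h(x),y_0)\;\leq\;K\,d_{X,inn}(x,x_0)\;\leq\;KC_X\,\|x-x_0\|,
\]
and, symmetrically, applied to $h^{-1}$ after setting $y=h(x)$,
\[
\|x-x_0\|\;\leq\;d_{X,inn}(x,x_0)\;\leq\;K\,d_{Y,inn}(h(x),y_0)\;\leq\;KC_Y\,\|h(x)-y_0\|.
\]
Setting $K':=\max\{KC_X,KC_Y\}$ proves the inequality. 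The main obstacle is the auxiliary lemma, specifically the uniform control of $V(u_{x'})$ as $x'$ varies over $L_X$; everything else is formal.
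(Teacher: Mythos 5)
Your reduction of both (a) and (b) to the auxiliary lemma ``$d_{X,inn}(x,x_0)\leq C\|x-x_0\|$ near the vertex'' is sound and parallels the paper, which obtains exactly this estimate (with $C=2$) by citing Proposition~3 of Kurdyka--Orro \cite{KurdykaO:1997}: $X$ decomposes into finitely many subanalytic pieces $\Gamma_j$ whose closures are $(1+\varepsilon)$-normally embedded, and one takes the inner distance inside the piece containing $x$. Your derivations of (a) (applying the lemma at every point $x$, not just the vertex) and of (b) (combining the lemma for $X$ and $Y$ with the inner bi-Lipschitz bounds and $\|\cdot\|\leq d_{inn}$) match the paper's. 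The difference is that you try to prove the auxiliary lemma from scratch via the conical structure and a length estimate on the radial arcs $\gamma_{x'}(s)=h(sx',s)$.

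That is where there is a genuine gap: the uniform bound $M$ on $V(u_{x'})$ is not justified by what you wrote. First, Hardt triviality gives a finite partition of $L_X$ over which the family of arcs is \emph{topologically} trivial; it says nothing about lengths, and the total variation $V(u_{x'})=\int_0^t\|u_{x'}'(s)\|\,ds$ is \emph{not} in general a subanalytic function of the parameter $x'$ (parametrized integrals of subanalytic functions leave the subanalytic category). Second, even if it were subanalytic, a subanalytic function on a compact set need not be bounded, let alone attain a maximum -- it need not be continuous (e.g.\ $f(x)=1/x$ on $(0,1]$, $f(0)=0$, has subanalytic graph) -- so ``by compactness of $L_X$ it attains a maximum'' is a non sequitur precisely at the point where uniformity is the whole issue. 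The statement you want is nevertheless true, but the correct route is uniform finiteness in definable families: since $(x',s)\mapsto u_{x'}(s)$ is globally subanalytic, cell decomposition with parameters gives an $N$ such that each coordinate of $u_{x'}$ is piecewise monotone with at most $N$ pieces for every $x'$, whence $V(u_{x'})\leq 2mN$ since $u_{x'}$ takes values in the unit sphere. With that substitution your argument closes; alternatively, simply quote the Kurdyka--Orro decomposition as the paper does, which packages this uniformity once and for all.
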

In order to prove Proposition \ref{top_inner}, we recall the following result.
\begin{prop}[Proposition 3 in \cite{KurdykaO:1997}]\label{lne_decomp}
Let $X\subset\RR^m$ be a subanalytic set and $\varepsilon>0$. Then there exists a finite decomposition 
$X = \bigcup_{j =1}^k \Gamma_j$  such that:
\begin{enumerate}
\item each $ \Gamma_j$ is a subanalytic connected analytic submanifold of  
$\RR^m$,
\item each $ \overline \Gamma_j$ satisfies $d_{\overline \Gamma_\nu, inn}(p,q)\leq(1+\varepsilon)\|p-q\|$ for any $p,q\in\overline \Gamma_j$. 
\end{enumerate}
\end{prop}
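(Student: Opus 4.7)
The plan is to derive both (a) and (b) from Proposition~\ref{lne_decomp}. The unifying observation is that, since any rectifiable path in $X$ has Euclidean length at least the Euclidean distance between its endpoints, we always have $\|p-q\|\leq d_{X,inn}(p,q)$. Consequently, the identity $(X,d_{X,inn})\to (X,\|\cdot\|)$ is automatically continuous, and in (b) the lower bound on $\|h(p)-y_0\|$ will follow once a local upper bound of the form $d_{Y,inn}(\cdot,y_0)\leq C_Y\|\cdot-y_0\|$ is established near $y_0$. Thus both parts reduce to producing, at selected base points, upper bounds for $d_{\bullet,inn}$ in terms of the Euclidean norm.

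For (a), the remaining direction requires showing that at every $p\in X$ the inner distance is bounded above by a constant multiple of the Euclidean one in a sufficiently small neighborhood. I will apply Proposition~\ref{lne_decomp} with $\varepsilon=1$ to obtain $X=\bigcup_{j=1}^k \Gamma_j$ with $d_{\overline{\Gamma}_j,inn}(u,v)\leq 2\|u-v\|$ on $\overline{\Gamma}_j\times\overline{\Gamma}_j$. Set $J_p:=\{j:p\in\overline{\Gamma}_j\}$ and pick $r_p>0$ small enough that $B(p,r_p)\cap\overline{\Gamma}_j=\emptyset$ for every $j\notin J_p$; such $r_p$ exists because the finitely many closed sets $\overline{\Gamma}_j$ with $j\notin J_p$ avoid $p$. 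For any $q\in B(p,r_p)\cap X$ there is some $j\in J_p$ with $q\in\Gamma_j\subseteq\overline{\Gamma}_j$, and the Kurdyka--Orro estimate inside that piece yields $d_{X,inn}(p,q)\leq d_{\overline{\Gamma}_j,inn}(p,q)\leq 2\|p-q\|$. Hence Euclidean convergence at $p$ implies inner convergence at $p$, finishing (a).

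Specializing the previous reasoning to $p=x_0$ yields $r>0$ and $C_X\geq 1$ with $d_{X,inn}(p,x_0)\leq C_X\|p-x_0\|$ on $B(x_0,r)\cap X$; symmetrically $d_{Y,inn}(q,y_0)\leq C_Y\|q-y_0\|$ holds near $y_0$. For (b), let $K_0$ be the inner bi-Lipschitz constant of $h$. The upper bound is then
$$\|h(p)-y_0\|\leq d_{Y,inn}(h(p),h(x_0))\leq K_0\, d_{X,inn}(p,x_0)\leq K_0 C_X\, \|p-x_0\|,$$
and the lower bound, using the opposite side of the bi-Lipschitz inequality together with the estimate near $y_0$, is
$$\|h(p)-y_0\|\geq \tfrac{1}{C_Y}\, d_{Y,inn}(h(p),h(x_0))\geq \tfrac{1}{K_0 C_Y}\, d_{X,inn}(p,x_0)\geq \tfrac{1}{K_0 C_Y}\, \|p-x_0\|.$$
Setting $K:=\max\{K_0 C_X,\, K_0 C_Y\}$ proves (b).

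The main obstacle is the step that localizes the Kurdyka--Orro estimate: passing from piece-by-piece bounds on each $\overline{\Gamma}_j$ to a bound inside $X$ near a chosen point $p$ (including $p=x_0$). The subtlety is that pieces whose closure does not contain $p$ could a priori accumulate on $p$ and destroy the linear comparison; ruling this out uses only finiteness of the decomposition and the closedness of the bad pieces, but it is the one place where the full force of Proposition~\ref{lne_decomp} is needed, and in particular where the choice of $J_p$ and of the radius $r_p$ is essential.
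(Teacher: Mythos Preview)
Your proposal does not address the stated Proposition~\ref{lne_decomp}, which is a result quoted from Kurdyka--Orro and carries no proof in the paper; what you actually prove is Proposition~\ref{top_inner}, the statement that the paper derives \emph{from} Proposition~\ref{lne_decomp} immediately afterward. Assuming Proposition~\ref{top_inner} was the intended target, your argument is correct and follows the same route as the paper's: apply the decomposition with $\varepsilon=1$ to obtain $d_{\overline{\Gamma}_j,inn}\leq 2\|\cdot\|$ on each piece, localize to compare inner and outer distances near any chosen base point, and then chain the resulting estimates with the inner bi-Lipschitz constant of $h$. Your handling of the localization via the index sets $J_p$ and radii $r_p$ is more explicit than the paper's terse instruction to ``apply Proposition~\ref{lne_decomp} to $(X,x)$'', but the underlying idea is identical.
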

\proof[Proof of Proposition~\ref{top_inner}]
Let us consider $X = \bigcup_{j =1}^k \Gamma_j$ as in Proposition \ref{lne_decomp} and $\varepsilon =1$. Thus, if $x\in X$, there exists a $j$ such that $x\in \Gamma_j$ and, moreover, we get
\begin{equation}\label{eq:inner_one}
\frac{1}{2}\|x-x_0\| \leq d_{X,inn} (x, x_0)\leq d_{\Gamma_j, inn} (x, x_0) \leq 2 \|x-x_0\|.
\end{equation}
Since $\|x-y\| \leq d_{X,inn} (x, y)$ for any $x,y\in X$, to prove item (a) it is enough to prove that for any $x\in X$ and any ball $B_{inn,\eta}(x)$ with respect to the inner distance, we can find a ball $B_{\delta}(x)$ with respect to the outer distance such that $B_{\delta}(x)\subset B_{inn,\eta}(x)$. But to do this, we just apply Proposition \ref{lne_decomp} to $(X,x)$ and $\varepsilon =1$, and we get that $B_{\eta/2}(x)\subset B_{inn,\eta}(x)$.

Obviously we have the same result for $Y$ and, in particular, we have
\begin{equation}\label{eq:inner_two}
\frac{1}{2}\|y-y_0\| \leq d_{Y,inn} (y, y_0) \leq 2 \|y-y_0\|.
\end{equation}
In order to get item (b), we just need to apply the Lipschitz properties of $h$ and Eq. (\ref{eq:inner_one}) in Eq. (\ref{eq:inner_two}).
\endproof

Thus, by considering Remark~\ref{rem:Cat_inner}, the following is immediate.
\begin{cor}\label{func_inner} Let $(X,x_0)$ and $(Y,y_0)$ be two subanalytic germs. If there exists a subanalytic bi-Lipschitz homeomorphism $h\colon (X,x_0,d_{X,inn})\to (Y,y_0,d_{Y,inn})$, then $(X,x_0,d_{X,inn})$ and $(Y,y_0,d_{Y,inn})$ have the same MD homology. In particular, $h$ induces isomorphisms $$h_n\colon MDH^b_n(X,x_0,d_{X,inn})\to MDH^b_n(Y,y_0,d_{Y,inn})$$ for all $b\in (0,+\infty]$ and $n\in\mathbb{N}$.
\end{cor}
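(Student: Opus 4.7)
The plan is to deduce the corollary directly from the functoriality statement in Proposition~\ref{prop:functoriality} together with the linearly vertex approaching property established in Proposition~\ref{top_inner}(b). The only subtle point is that the inner metric itself is not known to be subanalytic, so the map $h$ is not a priori a morphism of the category of Definition~\ref{def:pairssubangerms}; this is handled via the enlargement of the category described in Remark~\ref{rem:Cat_inner}.

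More precisely, I would proceed as follows. First, by Proposition~\ref{top_inner}(b) applied to both $h$ and $h^{-1}$, there exists a constant $K\geq 1$ with
$$
\tfrac{1}{K}\|x-x_0\|\leq \|h(x)-y_0\|\leq K\|x-x_0\|,
$$
so that $h$ (and similarly $h^{-1}$) is l.v.a.\ in the sense of Definition~\ref{def:lva}. Combined with the bi-Lipschitz hypothesis with respect to the inner metrics, this shows that $h\colon (X,x_0,d_{X,inn})\to (Y,y_0,d_{Y,inn})$ and its inverse are Lipschitz l.v.a.\ subanalytic map germs in the extended sense of Remark~\ref{rem:Cat_inner}, i.e.\ they are morphisms in the larger category that accommodates metrics which are only bi-Lipschitz equivalent to a subanalytic one.

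Next, I would invoke Remark~\ref{rem:Cat_inner} together with the results from~\cite{KurdykaO:1997} recalled in Example~\ref{ex:inout}: there exist subanalytic metrics $d'_X$ on $X$ and $d'_Y$ on $Y$ which are bi-Lipschitz equivalent to $d_{X,inn}$ and $d_{Y,inn}$ respectively via the identity. Composing with these identities, the map
$$
h\colon (X,x_0,d'_X)\to (Y,y_0,d'_Y)
$$
is Lipschitz l.v.a.\ subanalytic in the strict sense of Definition~\ref{def:metricmaps}, and so is $h^{-1}$. Moreover, by the last assertion in Example~\ref{ex:inout}, the MD homology groups computed from $(X,x_0,d'_X)$ and $(Y,y_0,d'_Y)$ coincide with $MDH^b_\bullet(X,x_0,d_{X,inn})$ and $MDH^b_\bullet(Y,y_0,d_{Y,inn})$ respectively.

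Finally, applying Proposition~\ref{prop:functoriality} to $h$ and $h^{-1}$ yields $\BB$-graded morphisms between the corresponding MD homology groups, and functoriality applied to the identities $h\comp h^{-1}=\Id_Y$ and $h^{-1}\comp h=\Id_X$ forces these morphisms to be inverse to one another. Hence $h_n$ is an isomorphism for every $b\in(0,+\infty]$ and every $n$. I do not anticipate a serious obstacle here; the only point that requires a moment of care is the passage through Remark~\ref{rem:Cat_inner}, since $d_{inn}$ is not itself subanalytic, but this is a purely formal manipulation.
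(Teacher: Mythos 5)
Your proposal is correct and follows exactly the paper's route: the paper deduces the corollary as an immediate consequence of Proposition~\ref{top_inner}(b) (the l.v.a.\ property) together with Remark~\ref{rem:Cat_inner} and the Kurdyka--Orro substitution of a subanalytic metric, followed by functoriality applied to $h$ and $h^{-1}$. You have simply written out the details that the paper leaves implicit.
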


\section{Basic properties of MD-Homology}

In this section we prove properties of MD-Homology in analogy with usual homology theories (relative exact sequence, its value at a ``point'' and sufficiency of chains which are small with respect to a cover). The analogues of homotopy invariance, Mayer-Vietoris and Excision are more subtle and are treated later in the paper. We introduce also a long exact sequence measuring the relation of the $b$-MD homologies for different $b$.

\subsection{The relative MD-Homology sequence}

The relative homology sequence comes quite easily from the definition. 

\begin{prop}\label{prop:relativehomologysequence}
Let $(X,x_0,d_X)$ be a metric subanalytic germ. Let $Z\subset Y\subset X$ be subanalytic subgerms. For any $b\in\BB$ there is a long exact sequence 
\begin{align}\label{long exact sequence: subspace relative homology}
\begin{split}
... &\rightarrow MDH_n^{b}(Y,Z;A) \rightarrow MDH_n^{b}(X,Z;A) \rightarrow MDH_n^{b}(X,Y;A) \\
& \rightarrow MDH_{n-1}^{b}(Y,Z;A) \rightarrow MDH_{n-1}^{b}(X,Z;A) \rightarrow MDH_{n-1}^{b}(X,Y;A) \rightarrow ...
\end{split}
\end{align}
This exact sequence is functorial in $Z\subset Y\subset X$ and in $b\in\BB$.
\end{prop}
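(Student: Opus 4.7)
The approach is entirely standard homological algebra: I will produce a short exact sequence of chain complexes and then invoke the zig-zag lemma. Concretely, the plan is to establish, for each fixed $b\in \BB$, the short exact sequence of complexes
\begin{equation*}
0 \to MDC^b_\bullet(Y,Z;A) \to MDC^b_\bullet(X,Z;A) \to MDC^b_\bullet(X,Y;A) \to 0
\end{equation*}
and then obtain~(\ref{long exact sequence: subspace relative homology}) as the associated long exact sequence in homology.

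To build this short exact sequence, I first need the inclusions of subgerms to give genuine inclusions of chain complexes. The excerpt already records (just above Definition~\ref{def:rel}) that the natural map $MDC^b_\bullet(Y,x_0,d_{X|_Y};A)\to K_\bullet\subset MDC^b_\bullet(X,x_0,d_X;A)$ is an isomorphism, via an application of Lemma~\ref{lema:banulacion}; the same argument applied to the chain $Z\subset Y$ and to $Z\subset X$ produces compatible inclusions $MDC^b_\bullet(Z;A)\hookrightarrow MDC^b_\bullet(Y;A)\hookrightarrow MDC^b_\bullet(X;A)$ of subcomplexes. Once these inclusions are in hand, the short exact sequence above is a formal consequence of the third isomorphism theorem applied degree by degree: if $C''\subset C'\subset C$ are subcomplexes of a chain complex, then $0\to C'/C''\to C/C''\to C/C'\to 0$ is short exact and the differentials are automatically compatible. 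Applying the usual snake/zig-zag lemma (the connecting homomorphism sends the class of a relative cycle $[c]\in MDH^b_n(X,Y;A)$ to the class in $MDH^b_{n-1}(Y,Z;A)$ of $\partial c$, which lies in $MDC^b_{n-1}(Y;A)$ modulo $MDC^b_{n-1}(Z;A)$) yields the long exact sequence in the statement.

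For the functoriality assertions, there is essentially nothing to check beyond naturality of the snake lemma. A Lipschitz l.v.a.\ subanalytic map of triples $f\colon (X,Y,Z)\to (X',Y',Z')$ induces $f_*$ (Notation~\ref{not:f*C}) between each of the three relative complexes, and the induced maps commute with the inclusions of subcomplexes by the discussion in Proposition~\ref{prop:funct1}; hence they commute with the connecting homomorphism. For functoriality in $b\in \BB$, the epimorphisms $h^{b_1,b_2}$ of~(\ref{eq:Cb_1-b_2}) are defined on representatives by the identity on l.v.a.\ simplices and thus automatically restrict to the subcomplexes $MDC^{b_i}_\bullet(Y;A)$ and $MDC^{b_i}_\bullet(Z;A)$, giving a map of short exact sequences of complexes and, via naturality of the snake lemma, a map of the corresponding long exact sequences.

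The only genuinely non-formal ingredient is the injectivity of $MDC^b_\bullet(Y;A)\hookrightarrow MDC^b_\bullet(X;A)$, which is where Lemma~\ref{lema:banulacion} is essential: a chain supported in $Y$ that becomes trivial as a $b$-chain in $X$ must already be trivial as a $b$-chain in $Y$, because the cancellation conditions~(\ref{eq:banulacion}) only involve $\sim_b$-classes of simplices whose images lie in $Y$. Since this step is already handled in the text preceding Definition~\ref{def:rel}, no further obstacle remains, and the proof reduces to invoking the short exact sequence construction above together with the snake lemma.
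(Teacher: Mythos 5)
Your proposal is correct and follows exactly the route the paper intends: the paper's proof is simply ``obvious from the definitions,'' meaning precisely the short exact sequence of complexes obtained from the inclusions established before Definition~\ref{def:rel} (via Lemma~\ref{lema:banulacion}) followed by the zig-zag lemma and its naturality. You have merely written out the details the authors omitted, including correctly identifying the injectivity of $MDC^b_\bullet(Y;A)\hookrightarrow MDC^b_\bullet(X;A)$ as the one non-formal ingredient.
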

\proof
The proof is obvious from the definitions.
\endproof

Similarly we obtain the spectral sequence of a filtration of pairs of metric subanalytic germs:

\begin{prop}
 Let $Z_0\subset Z_1\subset ...\subset Z_r=X$ be a filtration by closed subanalytic subgerms of $(X,x_0)$. Let $Y$ be another closed subanalytic subgerm of $(X,x_0,d_X)$. For each $b$, the induced filtration in $MDC_{\bullet}^b(X,Y;A)$ yields a spectral sequence abutting to $MDH^b_{p+q}(X,Y;A)$ with $E^1$ page equal to $$E[b]^1_{p,q}=MDH^b_{p+q}(Z_p\cup Y,Z_{p-1}\cup Y;A).$$
 The spectral sequence is functorial in $b\in\BB$.  
 \end{prop}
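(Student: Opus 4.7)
The plan is to realize the desired spectral sequence as the standard spectral sequence of a filtered chain complex, then identify the $E^1$ page with the asserted relative MD-homologies.

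First I would define, for every $p\in \{0,\dots,r\}$, the subcomplex
\[
F_p\, MDC^b_\bullet(X,Y;A) \;:=\; \mathrm{image}\bigl(MDC^b_\bullet(Z_p\cup Y,Y;A)\longrightarrow MDC^b_\bullet(X,Y;A)\bigr),
\]
where the map is the one induced by the inclusion $Z_p\cup Y\hookrightarrow X$ and Proposition~\ref{prop:funct1}. Since $Z_p\cup Y$ is closed subanalytic and contains $Y$, the relative MD-chain complex is defined as in Definition~\ref{def:rel}, and by the same lemma that gives the inclusion~(\ref{eq:inclusionrelative}) the map $MDC^b_\bullet(Z_p\cup Y;A)\hookrightarrow MDC^b_\bullet(X;A)$ is injective and descends to an inclusion on the quotients by $MDC^b_\bullet(Y;A)$. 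Thus $\{F_p\}$ is an increasing filtration of subcomplexes of $MDC^b_\bullet(X,Y;A)$ with $F_{-1}=0$ and $F_r=MDC^b_\bullet(X,Y;A)$, and it is automatically bounded.

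Next I would identify the associated graded pieces. By the third isomorphism theorem,
\[
F_p/F_{p-1} \;=\; \frac{MDC^b_\bullet(Z_p\cup Y;A)/MDC^b_\bullet(Y;A)}{MDC^b_\bullet(Z_{p-1}\cup Y;A)/MDC^b_\bullet(Y;A)} \;\cong\; \frac{MDC^b_\bullet(Z_p\cup Y;A)}{MDC^b_\bullet(Z_{p-1}\cup Y;A)},
\]
and this last complex is by Definition~\ref{def:rel} precisely $MDC^b_\bullet(Z_p\cup Y,Z_{p-1}\cup Y;A)$. Applying the classical spectral sequence of a bounded filtered complex, I obtain a (convergent, strongly convergent since the filtration is finite) spectral sequence with
\[
E[b]^0_{p,q}\;=\;MDC^b_{p+q}(Z_p\cup Y,Z_{p-1}\cup Y;A),
\]
whose $d^0$ differential is the induced MD-boundary, and therefore
\[
E[b]^1_{p,q}\;=\;MDH^b_{p+q}(Z_p\cup Y,Z_{p-1}\cup Y;A),
\]
abutting to $MDH^b_{p+q}(X,Y;A)$, as desired.

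Finally, functoriality in $b\in \BB$ follows from Proposition~\ref{prop:functoriality}: for $b\geq b'$ the surjection $h^{b,b'}$ of~(\ref{eq:Cb_1-b_2}) sends $MDC^b_\bullet(Z_p\cup Y,Y;A)$ to $MDC^{b'}_\bullet(Z_p\cup Y,Y;A)$, hence respects the filtration, yielding a morphism of spectral sequences that on $E^1$ is the natural map $h^{b,b'}_*$ on relative MD-homology. No step here is really delicate, since the bulk of the content was already built into Definition~\ref{def:rel} and the inclusion~(\ref{eq:inclusionrelative}); the only thing one must be slightly careful about is checking that the inclusions on chain complexes are honestly injective so that the filtration has the expected subquotients, which is precisely what the discussion preceding Definition~\ref{def:rel} guarantees.
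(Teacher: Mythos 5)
Your proposal is correct and follows essentially the same route the paper intends: the paper offers no explicit proof, treating the result as the standard spectral sequence of the bounded filtration of $MDC^b_\bullet(X,Y;A)$ induced by the $Z_p$, with the graded pieces identified via the injectivity of the inclusions of chain complexes established for~(\ref{eq:inclusionrelative}). Your careful check that the filtration has the expected subquotients and that $h^{b,b'}$ respects the filtration is exactly the content the paper leaves implicit.
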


\subsection{The Moderately Discontinuous Homology of a ``point''}\label{subsection:point}

Like in any homology theory the point plays a special role. In the next definition we clarify the notion of point in our category.

\begin{definition}
\label{def:point}
A point in the category of metric subanalytic germs is a metric subanalytic germ isomorphic to $((0,\epsilon), 0, d)$, where $d$ is the Euclidean metric.
 \end{definition}

\begin{prop}
\label{prop:point}
For any $b\in [1,\infty)$ the complex $MDC^b_{\bullet}((0,\epsilon);A)$ is quasi-isomorphic to the complex $A[0]$, i.e. $MDH_0^b((0,\epsilon); A)=A$ and $MDH_n^b((0,\epsilon); A)=0$ for all $n>0$. 
\end{prop}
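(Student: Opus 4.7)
The plan is to construct an explicit contraction of the augmented chain complex
\(\cdots \to MDC^b_1 \to MDC^b_0 \to A \to 0\),
in the spirit of the standard cone-to-a-basepoint argument in singular homology. The key simplification is that the target \((0,\epsilon)\subset\RR\) is one-dimensional, so convex combinations in \(\RR\) provide a natural way to ``cone'' any l.v.a.\ simplex to a fixed reference arc while preserving both subanalyticity and the l.v.a.\ property. This works uniformly for every \(b\in[1,\infty)\).

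First I would define the augmentation \(\varepsilon: MDC^b_0((0,\epsilon);A) \to A\) by \(\varepsilon(\sum_i a_i \sigma_i) := \sum_i a_i\) and check its well-definedness via Remark~\ref{rem:checkindependence}: \(0\)-simplices admit only the trivial homological subdivisions (the unique maximal triangle of \(\hat\Delta_0\) must meet the vertex and hence equals \(\hat\Delta_0\) itself), and \(b\)-equivalence preserves coefficient sums of \(\sim_b\)-classes by Remark~\ref{rem:splitindexset}. Clearly \(\varepsilon\comp\partial=0\) on \(1\)-chains. Next I would fix an l.v.a.\ subanalytic arc \(\alpha(t):=t/2\) and, for each l.v.a.\ \(n\)-simplex \(\sigma:\hat\Delta_n\to(0,\epsilon)\), define the cone simplex \(K\sigma:\hat\Delta_{n+1}\to(0,\epsilon)\) by the convex-combination formula
\[
K\sigma(tp_0,tp_1,\ldots,tp_{n+1},t)\;:=\;p_0\,\alpha(t)+(1-p_0)\,\sigma\Bigl(\tfrac{tp_1}{1-p_0},\ldots,\tfrac{tp_{n+1}}{1-p_0},t\Bigr),
\]
extended by \(\alpha(t)\) at the apex \(p_0=1\) and then by \(A\)-linearity. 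Since \((0,\epsilon)\subset\RR\), one verifies at once that \(K\sigma\) is continuous, subanalytic, and l.v.a. A direct facet computation yields \(K\sigma\comp j_{n+1}^0=\sigma\) and \(K\sigma\comp j_{n+1}^k=K(\sigma\comp j_n^{k-1})\) for \(k\ge 1\), giving the chain-homotopy identity \(\partial K\sigma=\sigma-K(\partial\sigma)\) for \(n\ge 1\) and \(\partial K\sigma=\sigma-\alpha\) for \(n=0\); setting \(K_{-1}(a):=a\,\alpha\) extends this to \(\partial K+K\partial=\mathrm{id}\) on the augmented complex.

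The remaining task is to check that \(K\) descends to \(MDC^b_\bullet\), using the two conditions of Remark~\ref{rem:checkindependence}. Compatibility with \(\sim_b\) is straightforward: the pointwise inequality \(|K\sigma_1-K\sigma_2|(tp,t)\le(1-p_0)\,\max_{(tq,t)\in\hat\Delta_n}|\sigma_1-\sigma_2|\) shows that \(\sigma_1\sim_b\sigma_2\) implies \(K\sigma_1\sim_b K\sigma_2\). The delicate point, and the main obstacle, is compatibility with the homological subdivision relation. Given a subdivision \(\{\rho_i\}_{i\in I}\) of \(\hat\Delta_n\) with associated triangulation \(\alpha_L:|L|\to\hat\Delta_n\), I plan to build a cone subdivision \(\{\tilde\rho_i\}_{i\in I}\) of \(\hat\Delta_{n+1}\) whose underlying triangulation is the simplicial join with the apex vertex \(v^*=(1,0,\ldots,0)\) of \(\Delta_{n+1}\). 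This join is compatible with every facet of \(\hat\Delta_{n+1}\): the facet \(p_0=0\) is the base \(\hat\Delta_n\), while each facet \(p_k=0\) for \(k\ge1\) is the cone from \(v^*\) over the \((k-1)\)-th facet of \(\hat\Delta_n\), which the refined triangulation meets correctly by hypothesis on \(\{\rho_i\}\). One can choose the \(\tilde\rho_i\) so that, unwinding the formulas, \(K\sigma\comp\tilde\rho_i=K(\sigma\comp\rho_i)\), up to a reparametrization of the \(t\)-coordinate that can be rectified using Remark~\ref{rem:lvareduction} and absorbed by a further orientation-preserving subdivision (Remark~\ref{rem:repar}); the signs agree because coning from \(v^*\) preserves orientation. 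This produces the immediate equivalence \(K\sigma\to_\infty\sum_i sgn(\rho_i)\,K(\sigma\comp\rho_i)\) in \(MDC^{\mathrm{pre},\infty}_{n+1}\), so \(K\) descends.

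With \(K\) well-defined on \(MDC^b_\bullet\) and \(\partial K+K\partial=\mathrm{id}\), every \(b\)-MD cycle of positive degree is a boundary, giving \(MDH^b_n((0,\epsilon);A)=0\) for \(n\ge 1\). In degree \(0\), \(\varepsilon\) is surjective (any l.v.a.\ arc maps to \(1\in A\)), and its kernel lies in the image of \(\partial\): for any two l.v.a.\ \(0\)-simplices \(\sigma_1,\sigma_2\), the linear-interpolation \(1\)-simplex \(\tau(tp_0,tp_1,t):=p_0\sigma_2(t)+p_1\sigma_1(t)\) satisfies \(\partial\tau=\sigma_2-\sigma_1\). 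This yields \(MDH^b_0((0,\epsilon);A)=A\) and hence the claimed quasi-isomorphism with \(A[0]\). The heart of the argument is therefore the subdivision-descent of \(K\): constructing the cone triangulation of \(\hat\Delta_{n+1}\) subanalytically and compatibly with every face, and tracking orientations and \(t\)-reparametrizations, is where the otherwise formal cone-to-a-basepoint argument requires genuine work.
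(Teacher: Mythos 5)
Your proof is correct and follows essentially the same route as the paper: both construct an explicit cone/suspension operator to a fixed reference arc, verify the facet identities to get the chain homotopy $\partial K+K\partial=\mathrm{id}$ on the augmented complex, descend through the $\sim_b$ relation by a convexity estimate, and descend through homological subdivisions by coning the triangulation of $\hat{\Delta}_n$ to one of $\hat{\Delta}_{n+1}$. The only differences are cosmetic (the paper places the apex at the last barycentric coordinate and absorbs a sign $(-1)^{n+1}$ into the definition of $H$, whereas you place it first and keep the signs in the boundary formula).
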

\proof
We show that the augmented chain complex of $C^b_{\bullet}((0,\epsilon);A)$ by $A$ in degree $-1$ has trivial homology by constructing a chain homotopy $H$ from the identity to the $0$-map: denote by $\sigma_0$ the identity map on $(0,\epsilon)$. On degree $-1$, we define $H(a) = a \sigma_0$. For $n \in \mathbb{N}_0$, given $\sigma:\hat{\Delta}_n\to (0,\epsilon)$ in $MDC^{\mathrm{pre},\infty}_{n}((0,\epsilon);A)$      define $H(\sigma)\in MDC^{\mathrm{pre},\infty}_{n+1}((0,\epsilon);A)$ to be the suspension of $\sigma$ by $\sigma_0$ given by the formula 
\[
H(\sigma)(ts_0,...,ts_{n+1},t):=(-1)^{n+1}(S\sigma(\frac{ts_0}{S},...,\frac{ts_n}{S},t)+s_{n+1}(\sigma_0(t)))
\]
where $(s_0,...,s_{n+1})$ are barycentric coordinates in $\Delta_{n+1}$ and $S:= s_0+...+s_n$. If $S = 0$, define $H(\sigma)(ts_0,...,ts_{n+1},t):=(-1)^{n+1}\sigma_0(t)$. Observe that for an $n$-simplex $\sigma$ with $n \geq 1$ it is $H(\sigma \circ j_n^k) = - H(\sigma) \circ j_{n+1}^k$ for $k\leq n$ and $H(\sigma) \circ j_{n+1}^{n+1} = (-1)^{n+1}\sigma$. This defines the chain homotopy in the augmentation of $MDC^{\mathrm{pre},\infty}_{\bullet}((0,\epsilon);A)$. 

In order to finish the proof, we use Remark~\ref{rem:checkindependence} in order to show that the chain homotopy descends to a chain homotopy defined in the augmentation of $MDC^b_{\bullet}((0,\epsilon);A)$. 

Let $\{\rho_i\}_{i\in I}$ be a homological subdivision of $\hat{\Delta}_n$ associated with a triangulation $\alpha:|K|\to\hat{\Delta}_n$. Notice that $\Delta_{n+1}$ is the cone over $\Delta_n$, with vertex 
$p=(0,...,0,1)$; this allows us to see $\hat{\Delta}_{n+1}$ as the cone over $\hat{\Delta}_n$. Let $C(K)$ be the cone over the simplicial complex $K$ and let $\beta:|C(K)|\to \hat{\Delta}_{n+1}$ be the triangulation obtained by taking the cone over the triangulation $\alpha$. Define $\rho'_i:\hat{\Delta}_{n+1}\to \hat{\Delta}_{n+1}$ to be the cone over the mapping $\rho_i$. Then the collection $\{\rho'_i\}_{i\in I}$ is a homological subdivision of $\hat{\Delta}_{n+1}$ associated with the triangulation $\beta$, such that for any $i\in I$ we have the equality
$$H(\sigma\comp\rho_i)=H(\sigma)\comp\rho'_i.$$
This shows that the homotopy descends to  $MDC^{\infty}_{\bullet}((0,\epsilon);A)$.

In order to prove that it descends to $MDC^{b}_{\bullet}((0,\epsilon);A)$ it only remains to show that it preserves the $b$-equivalence relation. Let $\sigma_1$ and $\sigma_2$ be $b$-equivalent l.v.a. $n$-simplices. Then $H(\sigma_1)$ and $H(\sigma_2)$ are $b$-equivalent, since we have the inequality
\begin{align*}
&|(S\sigma_1(\frac{ts_0}{S},...,\frac{ts_n}{S},t) - (S\sigma_2(\frac{ts_0}{S},...,\frac{ts_n}{S},t) |  \\
\leq &S \max\{ | \sigma_1(u_0t,\dots u_n t, t) - \sigma_2(u_0t,\dots u_n t, t)| : (u_0,\dots, u_n) \in \Delta_n \}
\end{align*}
for every $(s_0,\dots, s_n) \in \Delta_n$ and $S \leq 1$.
\endproof

\subsection{Relative homology with respect to \texorpdfstring{$b\in [0,+\infty )$}{b in [0,+infty)}}
\label{sec:BB}
In our theory we have also a notion of relative homology with respect to $b\in[0,+\infty )$. 

\begin{definition}
\label{def: b-relative homology}
Let $(X,Y)$ be a pair of metric subanalytic germs, $A$ an abelian group and $b_1\geq b_2\in Obj(\BB)$. 
We define the chain complex $MDC_{\bullet}^{b_1, b_2}(X,Y;A)$ to be the kernel of the epimorphism 
\[
h^{b_1, b_2}:MDC_\bullet^{b_1}(X,Y;A) \rightarrow MDC_\bullet^{b_2}(X,Y;A)
\]
The \emph{$n$-th $(b_1, b_2)$-moderately discontinuous homology} is defined to be the homology of $MDC_{\bullet}^{b_1, b_2}(X,Y;A)$.
\end{definition}

\begin{prop}
The following long exact sequence is an immediate consequence of the last definition:
\begin{align}\label{long exact sequence: r-relative homology}
\begin{split}
... &\rightarrow MDH_n^{b_1, b_2}(X,Y;A)\rightarrow MDH_n^{b_1}(X,Y;A) \rightarrow  MDH_n^{b_2}(X,Y;A)\rightarrow \\
& \rightarrow MDH_{n-1}^{b_1, b_2}(X,Y;A) \rightarrow MDH_{n-1}^{b_1}(X,Y;A) \rightarrow MDH_{n-1}^{b_2}(X,Y;A) \rightarrow ...
\end{split}
\end{align}
Its association to $(X,Y)$ is functorial.
\end{prop}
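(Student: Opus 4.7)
The plan is to derive the stated long exact sequence as the standard long exact sequence in homology associated to a short exact sequence of chain complexes, and then to check that each of the three complexes involved is functorial under Lipschitz l.v.a.\ subanalytic maps of pairs so that the resulting long exact sequence is natural.

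First I would spell out the short exact sequence of complexes. By Definition~\ref{def: b-relative homology}, $MDC^{b_1,b_2}_\bullet(X,Y;A)$ is defined as the kernel of the canonical projection $h^{b_1,b_2}$, so by construction there is a tautological inclusion $MDC^{b_1,b_2}_\bullet(X,Y;A)\hookrightarrow MDC^{b_1}_\bullet(X,Y;A)$ whose cokernel is (at worst) $MDC^{b_2}_\bullet(X,Y;A)$. To conclude that
\[
0\to MDC^{b_1,b_2}_\bullet(X,Y;A)\to MDC^{b_1}_\bullet(X,Y;A)\xrightarrow{h^{b_1,b_2}} MDC^{b_2}_\bullet(X,Y;A)\to 0
\]
is short exact, one only has to verify that $h^{b_1,b_2}$ is surjective. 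This is true levelwise: the equivalence relation $\sim_{S,b_2}$ is coarser than $\sim_{S,b_1}$ when $b_1\geq b_2$ (since a $\sim_{b_1}$-equivalence is in particular a $\sim_{b_2}$-equivalence by the trivial inequality $t^{b_2}\geq t^{b_1}$ for small $t$, up to a sign issue handled by $b_1 \geq b_2 \geq 0$), so every $b_2$-chain is represented by a $b_1$-chain, namely any of its pre-chain representatives. The same argument works in the relative setting because the subcomplex $MDC^b_\bullet(Y;A)$ is compatible with the projections.

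The key step is then purely homological algebra: apply the zig-zag/snake lemma to the above short exact sequence of chain complexes to obtain the long exact sequence in homology
\[
\cdots\to MDH_n^{b_1,b_2}(X,Y;A)\to MDH_n^{b_1}(X,Y;A)\to MDH_n^{b_2}(X,Y;A)\to MDH_{n-1}^{b_1,b_2}(X,Y;A)\to\cdots
\]
with the connecting homomorphism given by the standard recipe: lift a cycle in $MDC^{b_2}_n$ to a chain in $MDC^{b_1}_n$, take its boundary (which lies in the kernel of $h^{b_1,b_2}$ at level $n-1$ since the original class was closed), and read it as a cycle in $MDC^{b_1,b_2}_{n-1}$.

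Finally I would check functoriality. A Lipschitz l.v.a.\ subanalytic map of pairs $f:(X,Y,x_0,d_X)\to (X',Y',x_0',d_{X'})$ induces (by Proposition~\ref{prop:funct1} and Proposition~\ref{prop:functoriality}) a morphism of the $\BB$-complex, and in particular commutes with each $h^{b_1,b_2}$; consequently $f_*$ restricts to a morphism of kernels, giving a morphism of the short exact sequences of complexes. Naturality of the long exact sequence in homology produced by the snake lemma then delivers the claimed functoriality in $(X,Y)$. I do not anticipate any genuine obstacle here, since the only non-formal ingredient is surjectivity of $h^{b_1,b_2}$, which is immediate from the way the equivalence relation $\sim_{S,b}$ was built.
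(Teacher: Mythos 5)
Your proposal is correct and matches the paper's (implicit) argument: the paper offers no proof beyond declaring the sequence an immediate consequence of Definition~\ref{def: b-relative homology}, and the intended reasoning is exactly what you wrote — the short exact sequence of complexes $0\to\ker(h^{b_1,b_2})\to MDC^{b_1}_\bullet\to MDC^{b_2}_\bullet\to 0$ (with surjectivity of $h^{b_1,b_2}$ already built into the paper's assertion that these are epimorphisms, since $\sim_{S,b_2}$ is coarser than $\sim_{S,b_1}$), followed by the snake lemma and naturality. Nothing to add.
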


\subsection{MD-chains which are small with respect to a subanalytic cover and to a dense subgerm}
\label{sec:denseqis}

We will need to use chains which are small with respect to covers as in the classical development of singular homology (see for example ~\cite{GH}, Ch 15) as a technical tool. Moreover, our theory becomes easier if we can assume that the chains have support in a dense subanalytic subset of $X$. A consequence of this, for example, is a proof, under a mild condition, that the $b$-MD Homology of a subanalytic subgerm 
$(X,x_0,d)$ of $\RR^n$ coincides with with the $b$-MD Homology of its closure, provided that the closure has a subanalytic metric restricting to $d$.  

\begin{definition}
\label{def:smallchain}
Let $(X,x_0)$ be a subanalytic  germ. Let $\calD=\{D_i\}$ be a collection of subanalytic subsets of $(X,x_0)$. 
A chain $\sum_{j\in J} a_j\sigma_j\in MDC^{\mathrm{pre},\infty}_{\bullet}(X,x_0;A)$ is called \emph{small with respect to $\calD$}, if for any $j$ the image of $\sigma_j$ is contained in some  $D_i$. We denote by $MDC^{\mathrm{pre},\infty,\calD}_{\bullet}(X,x_0;A)$ the subcomplex of $MDC^{\mathrm{pre},\infty}_{\bullet}(X,x_0;A)$ formed by the chains which are small with respect to the $\calD$.

Let $(X,x_0,d_X)$ be a metric subanalytic germ. 
The complexes $MDC^{\infty,\calD}_{\bullet}(X,x_0;A)$ and $MDC^{b,\calD}_{\bullet}(X,x_0,d;A)$ are defined by restricting the equivalence relations $\sim_{S,\infty}$ and $\sim_{S,b}$ to $MDC^{\mathrm{pre},\infty,\calD}_{\bullet}(X,x_0;A)$.

Given a subanalytic subgerm $Y\subset X$, we define the complexes $MDC^{\infty,\calD}_{\bullet}(X,Y;A)$ and $MDC^{b,\calD}_{\bullet}(X,Y,d;A)$  as the quotients of $MDC^{\infty,\calD}_{\bullet}(X,x_0;A)$ and $MDC^{b,\calD}_{\bullet}(X,x_0,d;A)$ by $MDC^{\infty,\calD}_{\bullet}(Y,x_0;A)$ and $MDC^{b,\calD}_{\bullet}(Y,x_0,d|_Y;A)$ respectively (as in Definition~\ref{def:rel}, we may assume that the complexes we quotient by are subcomplexes).

\end{definition}
\begin{definition}
\label{def:semialgcover}
Let $(X,x_0)$ be a subanalytic 
germ. A {\em finite closed subanalytic cover of $X$} is a finite collection of closed subanalytic subsets $\calC:=\{C_i\}_{i\in I}$ of $X$ such that $X=\bigcup_{i\in I}C_i$. 

Given a collection of sets $\calD=\{D_i\} $ and a subset $U$ we denote by $\calD\cap U$ the collection $\{D_i\cap U\}$. 
\end{definition}

The following proposition is the main result of this section:

\begin{prop}\label{prop:smallqis} 
Let $(X,x_0,d)$ be a metric subanalytic germ, $Y$ a subgerm, and $\calC$ be a finite closed subanalytic cover of $X$.
The natural morphism of complexes $$g:MDC^{b,\calC}_{\bullet}(X,Y;A)\to MDC^{b}_{\bullet}(X,Y;A)$$ is an isomorphism.

Assume furthermore that the metric $d$ extends to a subanalytic metric $\overline{d}$ in the closure $\overline{X}$ of $X$ in $\RR^n$. Let $U$ be a dense subanalytic subset of $X$ such that $U\cap Y$ is dense in $Y$.
 For any $b<\infty$ the natural morphism of complexes 
$$g:MDC^{b,\calC\cap U}_{\bullet}(X,Y;A)\to MDC^{b}_{\bullet}(X,Y;A)$$
is an isomorphism.
\end{prop}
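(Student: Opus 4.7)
The plan is to prove surjectivity and injectivity separately for each of the two statements, treating the first as a warm-up for the second. In both statements the map $g$ is induced by the inclusion $MDC^{\mathrm{pre},\infty,\calC}_\bullet\hookrightarrow MDC^{\mathrm{pre},\infty}_\bullet$ followed by quotienting by $\sim_{S,b}$, so what is really at stake is (a) the representability of arbitrary $b$-chains by small chains, and (b) the fact that two small chains that agree in $MDC^b_\bullet(X,Y;A)$ already agree in the small subcomplex.

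For representability in the first statement, let $\sigma\colon\hat\Delta_n\to X$ be a l.v.a.\ simplex. The pullbacks $\{\sigma^{-1}(C_i)\}_i$ together with all faces of $\hat\Delta_n$ form a finite collection of closed subanalytic subsets. By the conical structure theorem (Remark \ref{rem:germ1}) there is a subanalytic l.v.a.\ homeomorphism $h\colon\hat\Delta_n\to\hat\Delta_n$ preserving the simplicial structure, after which each $h^{-1}(\sigma^{-1}(C_i))$ is a genuine subcone from the vertex; Remark \ref{rem:repar} gives $\sigma\to_\infty\pm\,\sigma\circ h$. Now take a subanalytic triangulation of $\Delta_n$ compatible with the links of these subcones and with the faces of $\Delta_n$, and cone it from the vertex of $\hat\Delta_n$. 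The result is a triangulation of a representative of $\hat\Delta_n$ all of whose maximal simplices meet the vertex and which is compatible with $h^{-1}(\sigma^{-1}(C_i))$ and with the faces. The associated homological subdivision $\{\rho_j\}$ has the property that every $(\sigma\circ h)\circ\rho_j$ has image inside some $C_i$. Extending by linearity, every chain admits a small representative in its $\sim_{S,\infty}$-class, hence in its $\sim_{S,b}$-class.

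Injectivity in the first statement is the more delicate point. Suppose $z$ is small with $[z]=0$ in $MDC^b_\bullet(X,Y;A)$. By Lemma \ref{lema:banulacion} there is a sequence $z=z_0\to_\infty z_1\to_\infty\cdots\to_\infty z_r$ with $z_r\sim_b 0$. I would inductively construct small chains $w_i$ with $w_0=z$, with immediate equivalences $w_{i-1}\to_\infty w_i$, and with $z_i\to_\infty w_i$: assuming $w_{i-1}$ is built, the two immediate equivalences $z_{i-1}\to_\infty w_{i-1}$ and $z_{i-1}\to_\infty z_i$ are combined via Lemma \ref{lem:Seqinversion} to produce a common refinement $w_i$ with $w_{i-1}\to_\infty w_i$ and $z_i\to_\infty w_i$. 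The chain $w_i$ is automatically small, because it is a homological refinement of the already small $w_{i-1}$, and sub-simplices of a simplex with image in $C_j$ still have image in $C_j$. At the last step Lemma \ref{lem:repar} yields $w_r\sim_b z_r\sim_b 0$, and applying Lemma \ref{lema:banulacion} inside the small subcomplex gives $[z]=0$ in $MDC^{b,\calC}_\bullet(X,Y;A)$.

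For the second statement the first reduces us to working with chains small with respect to $\calC$; the core new ingredient is an approximation lemma which uses $b<\infty$, the density of $U$, and the subanalytic extension $\bar d$: every l.v.a.\ simplex $\sigma\colon\hat\Delta_n\to C_i$ is $\sim_b$-equivalent to some l.v.a.\ simplex $\tilde\sigma\colon\hat\Delta_n\to C_i\cap U$. I would prove it by fixing a rational $b'>b$, taking a subanalytic triangulation of $\overline X$ compatible with $X$, $Y$, $U$, and the $C_i$, using $\bar d$ to write $d_X(\cdot,C_i\setminus U)$ and $\|\cdot-x_0\|$ as subanalytic functions, and producing a subanalytic continuous selection $\tilde\sigma(p)\in B(\sigma(p),\|p-x_0\|^{b'})\cap(C_i\cap U)$ cell by cell along the triangulation; density of $U$ guarantees the selection set is non-empty at every scale, and the triangulation makes the multi-valued map locally trivial enough for a subanalytic selection. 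Then $\tilde\sigma$ is l.v.a.\ with $\sup_{x\in\Delta_n}d_X(\sigma(tx,t),\tilde\sigma(tx,t))=O(t^{b'})=o(t^b)$, so $\sigma\sim_b\tilde\sigma$; surjectivity follows, and injectivity is obtained by repeating the inductive argument of the first statement while inserting approximations at each step. The main obstacle is this subanalytic continuous selection: the extension $\bar d$ is needed precisely so all the distance estimates stay in the subanalytic category, and $b<\infty$ is what gives the non-trivial tolerance $o(t^b)$ that simultaneously allows moving into $U$ and preserves the $\sim_b$-class.
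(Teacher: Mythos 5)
Your treatment of the first statement is essentially the paper's: surjectivity by subdividing each simplex compatibly with the preimages of the cover (your detour through the conical structure theorem is a harmless variant of triangulating a representative of $\hat{\Delta}_n$ directly), and injectivity is fine --- in fact simpler than you make it. You do not need the inductive construction of the chains $w_i$ via Lemma~\ref{lem:Seqinversion}: the witness sequence $z=z_0\to_\infty\cdots\to_\infty z_r$ with $z_r\sim_b 0$ produced by Lemma~\ref{lema:banulacion} already consists of small chains, because an immediate equivalence only replaces simplices by sub-simplices of themselves, and these inherit containment in the $C_i$. So the entire vanishing witness lives in $MDC^{b,\calC}_\bullet$ and injectivity is immediate. (You should also say a word about passing from the absolute to the relative statement, e.g.\ via the $5$-lemma, which is where the hypothesis that $U\cap Y$ is dense in $Y$ enters.)

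The second statement is where there is a genuine gap. Your ``approximation lemma'' is misstated: from $U$ dense in $X$ it does not follow that $C_i\cap U$ is dense in $C_i$ --- it can even be empty (take $C_i$ a proper closed subset contained in $X\setminus U$) --- so the selection set $B(\sigma(p),\|p-x_0\|^{b'})\cap(C_i\cap U)$ may be empty and no $\tilde\sigma$ into $C_i\cap U$ exists. The correct claim is that $\sigma$ is $\sim_{S,b}$-equivalent to a chain of simplices with images in $U$, after which one subdivides again to land in the various $C_j\cap U$. More seriously, the existence of the ``subanalytic continuous selection cell by cell'' is precisely the hard technical content, and it is asserted rather than proved. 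A continuous selection into $U$ that is $o(t^b)$-close to $\sigma$ generally does \emph{not} exist on a closed cell: Hardt triviality applied to the incidence set $\{(p,u)\}$ only yields continuous sections over the open pieces of a partition, and these sections need not extend to the closures; the locus where they fail to extend is lower-dimensional and must be handled separately. This is exactly why the paper's proof builds a genuinely discontinuous, piecewise-defined approximation (Lemmas~\ref{lem:retracciondiscontinua} and~\ref{lem:defopenstratum}), runs an induction on $\dim(X)$, and splits into the two cases where the interior of $\hat{\Delta}_n$ already maps into $U$ (handled by pushing the boundary inward along a retraction, not by pointwise selection) and where $\sigma$ maps entirely into $X\setminus U$ (handled by the discontinuous retraction plus the induction). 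Without an argument replacing this machinery, your proof of surjectivity in the $U\neq X$ case does not go through.
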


Before proving the proposition let us extract two consequences.

\begin{cor}
\label{cor:closure}
Let $(X,x_0,d)$ be a metric subanalytic germ such that the metric $d$ extends to a subanalytic metric $\overline{d}$ in the closure $\overline{X}$ of $X$ in $\RR^n$. Then for any $b<\infty$ we have an isomorphism
$$MDH_\bullet^b(X,x_0,d;A)\cong MDH_\bullet^b(\overline{X},x_0,\overline{d};A).$$
\end{cor}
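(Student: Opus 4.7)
The plan is to deduce this directly from Proposition~\ref{prop:smallqis} by taking the bigger germ to be the closure $(\overline{X},x_0,\overline{d})$ and using $X$ itself as the dense subanalytic subset. Concretely, I would apply the second part of Proposition~\ref{prop:smallqis} to the metric subanalytic germ $(\overline{X},x_0,\overline{d})$ with the trivial subanalytic cover $\calC=\{\overline{X}\}$ and the dense subanalytic subset $U:=X\subset\overline{X}$ (taking $Y=\emptyset$, so the density condition on $Y$ is vacuous). This gives an isomorphism of complexes
\[
MDC^{b,\{\overline{X}\}\cap X}_{\bullet}(\overline{X},x_0,\overline{d};A)\xrightarrow{\,\cong\,} MDC^{b}_{\bullet}(\overline{X},x_0,\overline{d};A)
\]
for every $b<\infty$.

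Next I would identify the left-hand side with $MDC^{b}_{\bullet}(X,x_0,d;A)$. The key observation is that a l.v.a. simplex in $\overline{X}$ whose image is contained in $\{\overline{X}\}\cap X=X$ is literally the same datum as a l.v.a. simplex in $X$, because linearly vertex approaching is a condition on $\|\sigma(xt,t)-x_0\|$, which is expressed purely in terms of the ambient Euclidean norm and does not depend on the metric $\overline{d}$. Homological subdivisions are also metric-independent, and for two such simplices $\sigma_1,\sigma_2$ the quantities $\overline{d}(\sigma_1(tx,t),\sigma_2(tx,t))$ and $d(\sigma_1(tx,t),\sigma_2(tx,t))$ coincide because $\overline{d}$ restricts to $d$ on $X$. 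Hence the $\sim_b$ and $\sim_{S,b}$ equivalence relations on chains supported on $X$ agree whether computed in $\overline{X}$ or in $X$, and we obtain an equality of complexes
\[
MDC^{b,\{\overline{X}\}\cap X}_{\bullet}(\overline{X},x_0,\overline{d};A)= MDC^{b}_{\bullet}(X,x_0,d;A).
\]

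Combining the two displayed identifications yields a quasi-isomorphism $MDC^{b}_{\bullet}(X,x_0,d;A)\cong MDC^{b}_{\bullet}(\overline{X},x_0,\overline{d};A)$ for every $b<\infty$, and taking homology gives the claimed isomorphism. The proof contains essentially no obstacles: all the real work is hidden in Proposition~\ref{prop:smallqis}; the only thing to be a bit careful about is that the definitions of l.v.a. simplex, subdivision, and $\sim_b$ for chains whose images sit in $X$ are intrinsic to $X$ once $\overline{d}|_X=d$, so that the small-chain subcomplex really is the chain complex computing $MDH^b_\bullet(X,x_0,d;A)$.
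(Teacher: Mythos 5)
Your proposal is correct and is exactly the deduction the paper intends: the corollary is stated as an immediate consequence of Proposition~\ref{prop:smallqis}, applied to $(\overline{X},x_0,\overline{d})$ with the trivial cover and $U=X$ dense in $\overline{X}$, followed by the (metric-independent, since $\overline{d}|_X=d$) identification of the small-chain subcomplex with $MDC^b_\bullet(X,x_0,d;A)$. Nothing is missing.
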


Proposition~\ref{prop:smallqis} allows us to improve Remark~\ref{rem:checkindependence} in the following manner:

\begin{remark}
\label{rem:checkindependence small chains}
In order to define a homomorphism
$$MDC^b_\bullet((X,x_0,d);A)\to G,$$ 
where $G$ is an abelian group, we will often proceed as follows: we take a finite closed subanalytic cover $\calC=\{C_i\}$ and a dense subanalytic subset $U$ of $X$ (if $b=\infty$ we need to impose $U=X$), define a homomorphism $\bar{h}:MDC^{\mathrm{pre},\infty,\calC\cap U}_\bullet((C_i,x_0,d);A)\to G$, check that the two conditions of Remark~\ref{rem:checkindependence} hold and compose with $g^{-1}$ on the right, where $g$ is the isomorphism of Proposition~\ref{prop:smallqis}. 
\end{remark}

Before proving Proposition~\ref{prop:smallqis} in the general case where $U\neq X$ we need some preparation, (see Figure \ref{fig:SQ_Estratif} for an example).  

\begin{lema}
\label{lem:retracciondiscontinua}
Suppose that $S\supset Q$ are compact subanalytic subsets in $\RR^n$. Let $d$ be a subanalytic metric in $S$. There exists a partition of $S$ into finitely many disjoint subanalytic subsets $\{S_i\}_{i\in I}$, such that there exists continuous subanalytic maps $f_i:S_i\to Q$ with the property that for any $z\in S_i$ we have the equality
\begin{equation}
\label{eq:closest}
 d(z,f_i(z))=d(z,Q).
\end{equation}
In particular $f_i(z)=z$ for any $z\in Q$. 

Moreover if $S\setminus Q$ is dense in $S$ then there exists a subanalytic stratification of $Q$ by smooth manifolds such that the union of maximal strata of the stratification by the closure relation is included in $\cup f_i(S_i\setminus Q)$. In particular $\cup f_i(S_i\setminus Q)$ is dense in $S$.
\end{lema}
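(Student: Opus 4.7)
The plan is to obtain the maps $f_i$ as the restrictions of a single subanalytic (possibly discontinuous) selection $f\colon S\to Q$ of the multivalued ``nearest point in $Q$'' assignment, and to address the moreover clause by a density/maximality argument after stratifying $Q$ compatibly with the image of~$f$.

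First I would consider the subanalytic set
\[
W:=\{(z,q)\in S\times Q:d(z,q)=d(z,Q)\}\subset S\times Q,
\]
whose first projection $\pi_1\colon W\to S$ is surjective by compactness of $Q$. Subanalytic definable choice (see~\cite{Coste:1999,Dries:1998}) then yields a subanalytic section $f\colon S\to Q$ of $\pi_1$, so that $d(z,f(z))=d(z,Q)$ for every $z\in S$; since $(z,z)\in W$ for every $z\in Q$, I may modify $f$ there and assume $f|_Q=\mathrm{id}_Q$ without destroying subanalyticity. Next, by the standard generic continuity statement for subanalytic maps (a consequence of subanalytic cell decomposition, see e.g.\ Chapter~4 of~\cite{Coste:1999}) applied to $f$ with $Q$ as a distinguished subset, I obtain a finite partition of $S$ into disjoint subanalytic pieces $\{S_i\}_{i\in I}$ refining $\{Q,\,S\setminus Q\}$ and such that each restriction $f_i:=f|_{S_i}$ is continuous and subanalytic. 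Identity~\eqref{eq:closest} holds by construction, and $f_i=\mathrm{id}$ on those $S_i$ contained in $Q$, which settles the first assertion.

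For the moreover clause, assume $S\setminus Q$ is dense in $S$ and set $B:=\bigcup_i f_i(S_i\setminus Q)\subset Q$, a subanalytic subset of $Q$. I would fix a subanalytic stratification $\mathcal{Q}$ of $Q$ by smooth manifolds which is compatible with $B$, so that every stratum is either entirely in $B$ or entirely in $Q\setminus B$. Let $\Sigma\in\mathcal{Q}$ be a maximal stratum for the closure order, meaning $\Sigma\not\subset\overline{\Sigma'}$ for any stratum $\Sigma'\neq\Sigma$, and suppose for contradiction that $\Sigma\subset Q\setminus B$. Fix $q\in\Sigma$; by density of $S\setminus Q$ pick $z_n\in S\setminus Q$ with $z_n\to q$, so that $d(z_n,f(z_n))=d(z_n,Q)\le d(z_n,q)\to 0$ and hence $f(z_n)\to q$ in the compact set $Q$. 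Each $f(z_n)$ lies in $B$, hence in a stratum distinct from $\Sigma$; after extracting a subsequence all $f(z_n)$ lie in a single stratum $\Sigma'\neq\Sigma$, which forces $\Sigma\subset\overline{\Sigma'}$ and contradicts maximality. Therefore every maximal stratum is contained in $B$, and since in any finite subanalytic stratification the union of maximal strata is dense in the underlying set, $B$ is dense in $Q$. The main obstacle is combining definable choice with generic continuity in the subanalytic category so as to produce simultaneously a finite subanalytic partition of $S$ and a continuous subanalytic selection on each piece; once this technical step is in place, the density argument that concludes the moreover clause is essentially topological.
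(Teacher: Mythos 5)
Your proposal is correct, and the first half is essentially the paper's construction: both form the incidence set $W=\{(z,q)\in S\times Q: d(z,q)=d(z,Q)\}$ and extract continuous subanalytic sections of the projection over a finite subanalytic partition of $S$. The only difference there is technical — you invoke definable choice followed by generic (piecewise) continuity, while the paper applies Hardt's Trivialization Theorem to $\pi\colon W\to S$ and picks a continuous section over each trivializing piece; both are standard and equivalent in the o-minimal setting, and your modification of $f$ on $Q$ is harmless but unnecessary, since \eqref{eq:closest} already forces $f_i(z)=z$ for $z\in Q$. Where you genuinely diverge is the \emph{moreover} clause. The paper works near a point $q$ of a maximal stratum of an auxiliary stratification, uses that $B\cap Q$ is a manifold in a small ball so that nearby points have a \emph{unique} nearest point in $Q$, and deduces that the nearest points $f_i(s_n)$ converge to $q$; it then refines by $\bigcup_i f_i(S_i\setminus Q)$. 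You instead stratify compatibly with $B:=\bigcup_i f_i(S_i\setminus Q)$ from the outset and run a direct triangle-inequality estimate, $d(f(z_n),q)\le d(z_n,Q)+d(z_n,q)\le 2\,d(z_n,q)\to 0$, to get $f(z_n)\to q$ regardless of which nearest point the selection picks. This is cleaner and more robust: it makes no uniqueness claim and uses only that the subanalytic metric $d$ induces the ambient topology (part of the paper's standing conventions), whereas the paper's uniqueness step is really a Euclidean-flavoured argument. Two small points you should make explicit: $B$ is subanalytic because it is the image of a bounded subanalytic set under a subanalytic map, so a compatible stratification exists; and the step ``$q\in\overline{\Sigma'}$ with $\Sigma'\neq\Sigma$ forces $\Sigma\subset\overline{\Sigma'}$'' uses the frontier condition, which one must (and always can) arrange for a finite subanalytic stratification.
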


\begin{figure}
\includegraphics[scale=0.2]{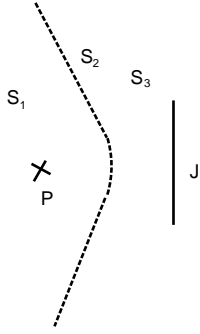}
\caption{Taking $Q=\{P\}\cup J$ the union of the point $P$ and the segment $J$, we stratify $\RR^2$ following Lemma \ref{lem:retracciondiscontinua} in three pieces where $S_2$ is the dotted curved line and $S_1$ and $S_3$ are the components of its complement.  }
\label{fig:SQ_Estratif}
\end{figure}
\proof
The function $\rho:S\to\RR$ defined by $\rho(z):=d(z,Q)$ is subanalytic. Therefore the subset
$$A:=\{(z,x)\in S\times Q: d(z,x)=\rho(z)\}$$ 
is also subanalytic. Denote by $\pi:A\to S$ the restriction of the projection to $S$. By Hardt's Trivialization Theorem there exists a partition of $S$ into finitely many disjoint subanalytic subsets $\{S_i\}_{i\in I}$ such that $\pi|_{\pi^{-1}(S_i)}:\pi^{-1}(S_i)\to S_i$ is trivial for any $i$. By compactness, $\pi^{-1}(S_i)$ is not empty for any $i\in I$. Choose a continuous subanalytic section $\tau_i:S_i\to \pi^{-1}(S_i)$, and define $f_i$ to be the composition of $\tau_i$ with the projection to $Q$. 

Let $\calV'$ be a stratification of $Q$ by smooth manifolds and let $q\in Q$ be a point in a maximal stratum. Then the distance from $q$ to the union of all the other strata is positive. Choose a ball $B$ in $\RR^n$ centered in $q$ of very small radius. Then $B\cap Q$ is a manifold and any point in $b\in B$ has a unique closest point in $Q$ which belongs to $B\cap Q$. Since $S\setminus Q$ is dense in $S$ there exists a sequence $\{s_n\}$ of points in $S\cap B\setminus Q$ converging to $q$. Let $q_n$ be the unique closest point to $s_n$ in $Q$. Then the sequence $\{q_n\}$ also converges to $q$. Since any $q_n$ is at the image of $f_i$ for a certain $i$ we have that the union $\cup f_i(S_i\setminus Q)$ is dense in $S$.
Any stratification $\calV$ of $Q$ by smooth manifolds stratification refining $\calV'$ and the union $\cup f_i(S_i\setminus Q)$ satisfies the property we need. 
\endproof

\begin{lema}
\label{lem:defopenstratum}
Let $(S,O,d)$ be a closed metric subanalytic germ (we stress that the metric $d$ is required to be subanalytic by definition) and $(Q,O)$ be a closed subanalytic subgerm of $(S,O)$ such that $S\setminus Q$ is dense in $S$. Fix any positive $b>0$. There exists a dense subanalytic subgerm $(U,O)\subset (Q,O)$ and a partition $\{U_j\}_{j\in J}$ of $(U,O)$ into finitely many disjoint subanalytic subgerms such that for any $j\in J$ there exists a continuous subanalytic map $g_j:U_j\to S\setminus Q$ satisfying
\begin{equation}
\label{eq:goodapprox}
d(g_j(x),x)<||x||^b
\end{equation}
for any $x\in U_j$.
\end{lema}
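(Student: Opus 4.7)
The plan is to construct, for a suitable representative, a subanalytic set of pairs $(x,y)$ with $x\in Q$, $y\in S\setminus Q$ that witness the required approximation, then apply Hardt's Trivialization Theorem to the projection onto $Q$ in order to obtain the partition and the maps $g_j$ from continuous subanalytic sections. The key observation is that the density of $S\setminus Q$ in $S$ trivially forces such approximants to exist away from the vertex $O$.

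First I would define the subanalytic set
\[
A:=\{(x,y)\in Q\times (S\setminus Q)\,:\, d(x,y)<\|x\|^b\},
\]
and let $\pi_1:A\to Q$ and $\pi_2:A\to S\setminus Q$ be the two projections. Set $U:=\pi_1(A)$. For any $q\in Q$ with $q\neq O$ one has $\|q\|^b>0$, and since $S\setminus Q$ is dense in $S$ there is a sequence $y_n\in S\setminus Q$ with $y_n\to q$; for $n$ large enough $d(q,y_n)<\|q\|^b$, so $q\in U$. Hence $U\supset Q\setminus\{O\}$, which is a dense subanalytic subgerm of $(Q,O)$.

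Next, apply Hardt's Trivialization Theorem to the subanalytic map $\pi_1:A\to Q$: there is a finite partition of $Q$ into subanalytic subsets $\{V_k\}_{k\in K}$ such that $\pi_1^{-1}(V_k)\to V_k$ is subanalytically trivial, say via a subanalytic homeomorphism $h_k:\pi_1^{-1}(V_k)\to V_k\times F_k$ commuting with the projection onto $V_k$. Let $J\subset K$ be the indices $k$ for which $F_k\neq\emptyset$ (equivalently, $V_k\subset U$), and let $U_j:=V_j$ for $j\in J$. Replacing the representative of $(Q,O)$ by a small enough one, we may discard those $U_j$ whose closure does not contain $O$, so that each $U_j$ is a subanalytic subgerm of $(Q,O)$, and $\{U_j\}_{j\in J}$ is a finite partition of (a representative of) $U$.

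For each $j\in J$ pick a point $p_j\in F_j$ and define
\[
g_j(u):=\pi_2\!\left(h_j^{-1}(u,p_j)\right),
\]
which is a continuous subanalytic map $U_j\to S\setminus Q$ (the inverse of a subanalytic homeomorphism is subanalytic). By construction $(u,g_j(u))\in A$, so $d(g_j(u),u)<\|u\|^b$ for every $u\in U_j$, which is the required inequality~(\ref{eq:goodapprox}). The only step requiring any care is the density argument, which is immediate once one has the correct definition of $A$; the rest is a direct bookkeeping of Hardt's theorem, so I do not expect a serious obstacle.
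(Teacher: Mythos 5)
Your proof is correct, and it takes a genuinely more direct route than the paper's. You apply Hardt's theorem once, to the projection $\pi_1:A\to Q$ of the incidence set $A=\{(x,y)\in Q\times(S\setminus Q): d(x,y)<\|x\|^b\}$, and read off both the partition and the maps $g_j$ from continuous definable sections of the trivialization; density of $S\setminus Q$, together with the fact that $d$ induces the ambient topology, gives $\pi_1(A)\supset Q\setminus\{O\}$, so your $U$ is as large as possible. The paper instead first builds closest-point retractions $f_i:S_i\to Q$ (Lemma~\ref{lem:retracciondiscontinua}), restricts to the maximal strata of a stratification of $Q$ reached by $\bigcup f_i(S_i\setminus Q)$, applies Hardt to the maps $z\mapsto(f_{i(k)}(z),d(z,Q))$, and finally picks the approximant $g_j(u)=s_j(u,\beta_j(u))$ at a definable height $\beta_j(u)<\|u\|^b$ in the fibre over $u$. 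Both arguments rest on the same two tools---Hardt trivialization of a definable graph/incidence set plus definable choice of sections---but yours eliminates the retraction and stratification stages entirely; what you give up is only the extra structure of the paper's approximants (namely $g_j(u)\in f_{i(k)}^{-1}(u)$), which is used neither in the statement nor in its application in Proposition~\ref{prop:smallqis}. Two caveats you share with the paper rather than introduce: for irrational $b$ your set $A$ (like the paper's function $\beta_j$) is definable in $\RR_{an}^{\RR}$ rather than subanalytic, which the paper's setting explicitly allows; and one should pass to bounded representatives so that all sets are globally subanalytic before invoking Hardt.
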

\proof
Along this proof we will denote by $Q$ and $S$ compact representatives of $Q$ and $S$ in a small enough ball. 

Let $\{S_i\}_{i\in I}$ and $\{f_i\}_{i\in I}$ be the partition and subanalytic maps predicted in Lemma~\ref{lem:retracciondiscontinua}. Let $V$ be the union of maximal strata of a stratification of $Q$ with the property predicted in Lemma~\ref{lem:retracciondiscontinua}. Choose a partition $V=\coprod_{k\in K}V_k$ into finitely many disjoint subanalytic subsets such that for any $k$ there exists an index $i(k)\in I$ satisfying that $V_k\subset S_{i(k)}$, and that, if we define $W_k:=f_i(k)^{-1}(V_k)\setminus Q$, then the closure of $W_k$ contains $V_k$.

For any $k\in K$ consider the subanalytic map
$$h_k:W_k\to V_k\times [0,\infty)$$
defined by $h_k(z):=(f_{i(k)}(z),d(z,X))$. By Hardt Triviality Theorem we can split $V_k\times [0,\infty)=\coprod_{j\in J_k}G_j$ into finitely many subanalytic subsets such that $h_k$ is trivial over each $G_j$. Let $s_j:G_j\to h_k^{-1}(G_j)$ be a continuous subanalytic section of $h_k|_{G_j}$.  

Let $\pi_1:V_k\times [0,\infty)$ be the first projection. An application of Hardt Triviality Theorem to each of the restrictions $\pi_1|_{G_j}$, and further subdivision allows us to assume that for each $j\in J_k$ the mapping $\pi_1|_{G_j}:G_j\to \pi(G_j)$ is trivial with connected fibre (so the fibre is either an interval or a point). Define $U_j$ to be the interior of $\overline{G_j}\cap (V_k\times \{0\})$
and $G'_j:=G_j\cap\pi_1^{-1}(U_j)$. The collection $\{U_j\}_{j\in J_k}$ is formed by mutually disjoint subanalytic subsets of $Q$ and its union is dense in $V_k$.

Define $J:=\cup_{k\in K}J_k$. Then the finite collection $\{U_j\}_{j\in J}$ is formed by mutually disjoint subanalytic subsets of $Q$ and its union $U:=\cup_{j\in J}U_j$ is dense in $Q$. The germ at the origin of $U$ and each of the $U_j$ provide the subgerms claimed in the statement of the lemma. It remains to construct the subanalytic maps $g_j$.

For any $j\in J_k$ the projection $\pi_1|_{G'_j}:G'_j\to U_j$ is trivial. Consequently for any $u\in U_j$ the fibre $\pi_1|_{G'_j}$ is the interval $(0,\alpha_j(u))$, where $\alpha_j:U_j\to (0,\infty)$ is a continuous subanalytic function. Define the continuous subanalytic function $\beta_j:U_j\to (0,\infty)$ by the formula $\beta_j(u):=\min\{||u||^b,\alpha_j(u)\}/2$. By construction, the collection of mappings $\{g_j\}_{j\in J}$, defined by $g_j(u):=s_j(u,\beta_j(u))$, have the property claimed in the lemma.
\endproof

\proof[Proof of Proposition~\ref{prop:smallqis}]
By the $5$-Lemma it is enough to prove the proposition for absolute homology, that is to prove the isomorphism $MDC^{b,\calC\cap U}_{\bullet}(X;A)\to MDC^{b}_{\bullet}(X;A)$ for any metric subanalytic germ $(X,x_0,d_X)$ (when $b=\infty$ we have $U=X$).

Injectivity is an immediate consequence of Lemma~\ref{lema:banulacion}.

For the surjectivity, we do first the case where $U=X$. Let $\sigma:\hat{\Delta}_n\to (X,x_0)$ be an $n$-simplex. We consider the collection $\calD$ of closed subanalytic subsets of $\hat{\Delta}_n$ given by the preimages by $\sigma$ of $Z$, of the subsets of $\calC$ and the collection of all the faces of $\hat{\Delta}_n$. Let $\alpha:|K|\to \hat{\Delta}_n$ be a triangulation of a representative of $\hat{\Delta}_n$ compatible with $\calD$ (see Remark~\ref{rem:triang}). Let $\{T_i\}_{i\in I}$ be the collection of maximal triangles of $\alpha$. By restricting the representative of $\hat{\Delta}_n$ we may assume that each maximal triangle $T_i$ contains the vertex. For each $i\in I$ choose a subanalytic homeomorphism $\rho_i:\hat{\Delta}_n\to T_i$ sending the vertex to the vertex, and which preserves the simplicial structure. By Remark~\ref{rem:lvareduction} we may assume $\rho_i$ to be l.v.a. Then the collection $\{\rho_i\}_{i\in I}$ is a homological subdivision of $\hat{\Delta}_n$, and we have the equivalence $\sigma\sim_{S,b}\sum_{i\in I}sgn(\rho_i)\sigma\comp\rho_i$. Since the chain on the right hand side is small with respect to $\calC$ surjectivity is proven in case $U=X$.

If $U\neq X$, first notice that we can assume $U$ to be open in $X$: otherwise we can replace it by its interior in $X$, and the surjectivity for the interior implies the surjectivity for $U$. So, from now on we assume $Z:=X\setminus U$ to be closed in $X$.  Now we proceed as follows. Since in this case $b<\infty$, everything boils down to prove the following assertion:

\textsc{Claim}: let $\sigma:\hat{\Delta}_n\to X$ be any l.v.a. simplex. Then there exists a homological subdivision $\{\rho_i\}_{i\in I}$ of $\hat{\Delta}_n$, and a $n$-simplex $\sigma'_i:\hat{\Delta}_n\to X$ for any $i$ whose image is contained in $U$ and such that $\sigma\comp\rho_i\sim_b\sigma'_i$. 

Indeed, if the assertion holds we can replace $\sigma$ by $\sum_{i\in I}\sigma_i'$ in $MDC^{b}_{\bullet}(X;A)$, and after apply the subdivision procedure explained in the paragraph above to each $\sigma'_i$ in order to obtain a sum of simplexes which is small both with respect to $\calC$ and $U$.

Now we prove the claim. First we reduce to the case in which $X$ is closed in $\RR^n$. Let $\overline{X}$ be the closure of $X$. By hypothesis it admits a subanalytic metric extending $d$. The interior $V$ of $U$ in $X$ is an open dense subanalytic subset of the closure $\overline{X}$. The claim for $\overline{X}$ and $V$ implies the claim for $X$ and $U$. 

Assume that $X$ is closed in $\RR^n$ and $U$ open in $X$. Define $Z:=X\setminus U$. The proof goes by induction on $\dim(X)$, where $\dim(X)$ is the maximal dimension of the irreducible components of $X$. 

Let $\sigma:\hat{\Delta}_n\to X$ be any l.v.a. simplex. By the subdivision procedure given in the third paragraph of this proof we can reduce to one of the following two cases:
\begin{enumerate}
 \item the image by $\sigma$ of the interior of $\hat{\Delta}_n$ is contained in $U$.  
 \item The image of $\sigma$ is fully contained in $Z$
\end{enumerate}

Let us start by case (1) (see Figure \ref{fig:lemaDense}). Let $p\in\Delta_n$ be the barycenter. Let $r:\Delta_n\setminus\{p\}\to\partial\Delta_n$ be the retraction along the straight lines connecting the boundary with the barycenter. Consider any subanalytic homeomorphism $g:\partial\Delta_n\times [0,1)\to\Delta_n\setminus\{p\}$ which sends
$\{x\}\times [0,1)$ into the retraction line meeting $x$. Fix $b'>b$ and $\eta>0$. Define $\Omega\subset\hat{\Delta}_n$ by 
$$\Omega:=\{(tx,t):x\in \Delta_n\setminus\{p\}, d_X(\sigma_i(tx,t),\sigma_i(tr(x),t)\leq \eta t^{b'}\}.$$
Obviously $\Omega$ is a subanalytic open subset containing $\partial\hat{\Delta}_n$. Now we construct a subanalytic mapping $h: \hat{\Delta}_n\to \hat{\Delta}_n$ that is a homeomorphism onto its image and such that $h(\partial\hat{\Delta}_n)\subseteq \Omega$.  

Choose a continuous subanalytic function $\theta:\partial\hat{\Delta}_n\to [0,1)$ such that for any $x\in \partial \Delta_n$ 
we have that $g((tx,t),s)$ belongs to $U$ for any $s\leq \theta(tx,t)$, and so that $\theta$ is strictly positive outside the vertex. The subanalytic mapping 
$f:\partial\Delta_n\times [0,1)\to \partial\Delta_n\times [0,1)$ defined by the formula $f((tx,t),s)=((tx,t),\mathrm{max}\{s,\theta(tx,t)\})$
induces a subanalytic mapping $h:\hat{\Delta}_n\to\hat{\Delta}_n$ in the following way: if $y$ is the barycenter of $\Delta_n$ then $h(ty,t)=(ty,t)$ for all $t$; if $y$ is different from the barycenter let $g^{-1}(y)=(x,s)$ and define $h(ty,t):=g(f(tx,t),s))$.

\begin{figure}
\includegraphics[scale=0.4]{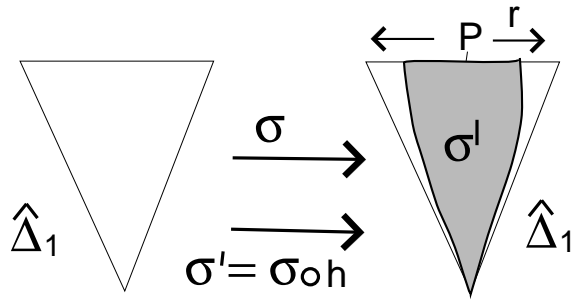}
\caption{We can see the 1-simplex $\sigma'$, constructed in the proof, that is 4-close to $\sigma$  but without touching the boundary of $\hat{\Delta}_1$.}
\label{fig:lemaDense}
\end{figure}
Define $\sigma':=\sigma\comp h$. Since $\theta$ is strictly positive outside the vertex and $\sigma$ verifies that the image of the interior of $\hat{\Delta}_n$ lies completely into $U$, we obtain that the image by $\sigma'$ of $\hat{\Delta}_n$ minus its vertex is contained in $U$. On the other hand we have $\sigma\sim_b\sigma'$ by the way that $\Omega$ and $\theta$ are defined. 

Notice that in the proof of Case (1) the induction procedure was not necessary. 

Now we treat case (2). Here the induction procedure is needed. Start by noticing that the pair of subsets $X\supset Z$ meet the hypothesis of Lemma~\ref{lem:defopenstratum} for $X=S$ and $Z=Q$. Fix $b'>b$ Let $\{U_j\}_{j\in J}$ and $\{g_j\}_{j\in J}$ be the partition of $Z$ and the functions $g_j:U_j\to X\setminus Z$ predicted in Lemma~\ref{lem:defopenstratum} for the parameter $b'$. For any $j\in J$ let $V_j$ denote the interior of $U_j$ in $Z$. Then the union $\cup_{j\in J}V_j$ is open and dense in $Z$. Since $\sigma$ has image in $Z$ and $\dim(Z)<\dim(X)$, by induction, up to $\sim_b$-equivalence we can replace $\sigma$ by a sum of simplexes $\sigma_i$ such that each of them has image contained in a $U_j$. By inequality~(\ref{eq:goodapprox}) for parameter $b'$, we have the equivalence $\sigma_i\sim_b g_j\comp\sigma_i$, and $g_j\comp\sigma_i$ has image in $U$. 
\endproof

\subsection{Triviality for \texorpdfstring{$b<1$}{b<1}}

\begin{prop}
\label{prop:b<1trivial}
Let $(X,x_0,d)$ be a metric subanalytic subgerm of $\RR^n$ with the inner or the outer metric. Then $MDH^b_0(X,x_0,d;A)=A$ and $MDH^b_k(X,x_0,d;A)=0$ for $k>0$. 
\end{prop}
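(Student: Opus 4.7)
The plan is to observe that for $b<1$ the $b$-equivalence relation is so coarse that it collapses the $b$-chain complex drastically. The key step is to prove that any two l.v.a.\ subanalytic $n$-simplices $\sigma_1,\sigma_2\colon\hat{\Delta}_n\to X$ satisfy $\sigma_1\sim_b\sigma_2$ for $b<1$. By the arc characterization (Lemma~\ref{lem:arccharacterization}) it suffices to show $d(\sigma_1(\gamma(t)),\sigma_2(\gamma(t)))=O(t)$ for every l.v.a.\ subanalytic arc $\gamma$ in $\hat{\Delta}_n$, since then $d/t^b=O(t^{1-b})\to 0$. For $d=d_{out}$ this is immediate from $\|\sigma_i(\gamma(t))-x_0\|\leq Kt$ and the triangle inequality in $\RR^m$. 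For $d=d_{in}$, each $\sigma_i\circ\gamma$ is a subanalytic l.v.a.\ arc in $X$ approaching $x_0$; a subanalytic Puiseux expansion gives $\sigma_i(\gamma(t))=v_it+o(t)$ with bounded euclidean derivative, so the arc length of $\sigma_i\circ\gamma$ from $\sigma_i(\gamma(t))$ back to $x_0$ is $O(t)$. Hence $d_{in}(\sigma_i(\gamma(t)),x_0)=O(t)$ in the continuous extension to $X\cup\{x_0\}$ of the subanalytic bi-Lipschitz approximation $d'$ of $d_{in}$ from Example~\ref{ex:inout}, and the triangle inequality through $x_0$ yields the required bound.

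Second, I would compute the complex using the key step. For $n=0$, the only homological subdivision of $\hat{\Delta}_0$ is the trivial one, since any compatible triangulation of the $1$-dimensional germ whose maximal triangles all contain the vertex must consist of a single $1$-simplex; thus $MDC^b_0(X;A)$ is the free abelian group on $\sim_b$-classes of l.v.a.\ $0$-simplices, and by the key step there is a single such class, yielding $MDC^b_0=A$. For $n\geq 1$, split $\Delta_n$ into two sub-simplices along a median and cone over this decomposition to obtain a homological subdivision $\{\rho_1,\rho_2\}$ of $\hat{\Delta}_n$ with both $\rho_i$ orientation-preserving. Given any l.v.a.\ $n$-simplex $\sigma$, the immediate equivalence $\sigma\to_\infty\sigma\circ\rho_1+\sigma\circ\rho_2$ combined with $\sigma\circ\rho_i\sim_b\sigma$ from the key step forces $[\sigma]_b=2[\sigma]_b$ in $MDC^b_n$, hence $[\sigma]_b=0$, so $MDC^b_n=0$. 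The boundary operators then vanish, and we read off $MDH^b_0=A$ and $MDH^b_n=0$ for $n\geq 1$.

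The main obstacle is the inner metric case of the key step: while the Puiseux-based arc-length bound is robust, the triangle inequality through $x_0$ presupposes the extended inner metric to be finite between $\sigma_1(\gamma(t))$ and $\sigma_2(\gamma(t))$, which fails if $X$ is disconnected near $x_0$ (points in distinct inner components have infinite $d_{in}$). One handles this either by restricting to the inner-connected case, or by working throughout with the subanalytic bi-Lipschitz approximation $d'$ from~\cite{KurdykaO:1997}, which provides a finite subanalytic metric on $X$ globally.
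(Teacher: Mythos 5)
Your proof is correct, and it reaches the same key step as the paper --- namely that for $b<1$ any two l.v.a.\ $n$-simplices are $\sim_b$-equivalent, which for the outer metric is exactly the paper's one-line triangle-inequality argument --- but it diverges on two points. For the inner metric the paper does not estimate anything directly: it invokes the normal re-embedding theorem of Birbrair--Mostowski (\cite{reembedding}, Theorem 2.1) together with Corollary~\ref{func_inner} to replace $(X,x_0,d_{inn})$ by a subanalytic germ with the outer metric, and then runs the outer-metric argument once. Your direct route --- bounding $d_{in}(\sigma_i(\gamma(t)),x_0)$ by $O(t)$ and passing through the vertex --- is also fine; note that the bound you derive from the Puiseux expansion is already available in the paper as Equation~(\ref{eq:inner_one}) of Proposition~\ref{top_inner} (via Kurdyka--Orro), so you could cite that instead of re-deriving it, and the connectivity caveat you flag is genuinely present but is equally implicit in the paper's route (the inner metric is only admissible in the category when it is finite, i.e.\ bi-Lipschitz to a real-valued subanalytic metric, cf.\ Remark~\ref{rem:Cat_inner}). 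The second divergence is that the paper stops at ``all simplices are $\sim_b$-equivalent'' and leaves the passage to the homology groups implicit; your second paragraph supplies exactly the missing bookkeeping --- that $\hat{\Delta}_0$ admits no nontrivial homological subdivision, so $MDC^b_0\cong A$, while for $n\geq 1$ a two-piece subdivision forces $[\sigma]=2[\sigma]$ (or $[\sigma]=0$ directly if one piece reverses orientation), hence $MDC^b_n=0$ --- and this is a worthwhile addition, since the vanishing of the higher chain groups is what actually makes the complex compute to $A[0]$.
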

\proof
By Theorem 2.1 of \cite{reembedding} there exists a subanalytic subgerm  $(X',x'_0)\subset \RR^m$ such that $(X,x_0,d)$ bi-Lipschitz homeomorphic to $(X',x'_0)$ both with the inner and outer metric. This jointly with Corollary \ref{func_inner} reduce the problem to the outer metric. By the l.v.a. condition and the triangle inequality any two $n$-simplexes are $\sim_b$ equivalent for $b<1$. 
\endproof

\section{Moderately discontinuous functoriality}
\label{section:MDfunctoriality}

In this section we improve functoriality properties of the $b$-MD homology for a fixed $b$ by allowing a certain class of non-continuous maps, which we call $b$-maps. This makes our theory quite flexible. The discontinuities that we allow are {\em moderated} in a Lipschitz sense. This may be seen as a motivation for the name of our homology.


\subsection{Functoriality for \texorpdfstring{$b$}{b}-maps}

\begin{definition}\label{def:point2}
Let $(X, x_0, d_X)$ be a metric subanalytic germ. In line with Definition~\ref{def:point}, we define a {\em point in $X$} to be a continuous l.v.a. subanalytic map germ $p:(0,\epsilon) \to X$. For any subanalytic $Y\subseteq X$, we say that {\em $p$ is contained in Y}, if Im$(p)\subseteq Y$. Observe that a point in $X$ is the same as a  l.v.a. $0$-simplex of $X$. 

Two points $p$ and $q$ are called {\em $b$-equivalent}, for $b\in (0, +\infty)$, and we write $p \sim_b q$, if 
$$
\lim_{t \to 0} \frac{d_X(p(t), q(t))}{t^b} = 0.
$$
\end{definition}

We can restate the equivalence of $(i)$ and $(iii)$ of  Lemma~\ref{lem:arccharacterization} as follows:.
\begin{remark}
\label{rem:point characterization of b-equivalence}
Let $\sigma_1,\sigma_2$ be $n$-simplices in $MDC^{\mathrm{pre},\infty}_\bullet(X,x_0,d_X)$. It follows from Lemma \ref{lem:arccharacterization} that we have the equivalence $\sigma_1\sim_b\sigma_2$ if and only if for any point $p$ in $\hat{\Delta}_n$, $\sigma_1 \circ p$ and $\sigma_2 \circ p$ are $b$-equivalent.
\end{remark}

\begin{definition}
\label{def:bmaps}
Let $(X, x_0, d_X)$ and $(Y, y_0, d_Y)$ be metric subanalytic germs, $b \in (0, \infty)$. A {\em $b$-moderately discontinuous subanalytic map ($b$-map, for abbreviation)} from $(X, x_0, d_X)$ to  $(Y, y_0, d_Y)$ is a finite collection $\{(C_i,f_i)\}_{i\in I}$, where $\{C_i\}_{i\in I}$ is a finite closed subanalytic  cover of $X$  and $f_i: C_i \to Y$ is a l.v.a. subanalytic map satisfying the following: for any $b$-equivalent pair of points $p$ and $q$ contained in $C_i$ and $C_j$ respectively ($i$ and $j$ may be equal), the points $f_i\circ p$ and $f_j\circ q $ are b-equivalent in $Y$.

Two $b$-maps $\{(C_i,f_i)\}_{i\in I}$ and $\{(C'_i,f'_i)\}_{i\in I'}$ are called {\em $b$-equivalent} if for any $b$-equivalent pair of points $p$, $q$ with Im$(p) \subseteq C_i$ and Im$(q) \subseteq C'_{i'}$, the points $f_i\circ p$ and $f'_{i'}\circ q$ are b-equivalent in $Y$.

We make an abuse of language and we also say that a $b$-map from $(X,x_0,d_X)$ to $(Y,y_0,d_Y)$ is an equivalence class as above. 

For $b = \infty$, a $b$-map from $X$ to $Y$ is a l.v.a. subanalytic map from $X$ to $Y$.
\end{definition}

\begin{prop}[Definition of composition of $b$-maps]
\label{prop:compbmaps}
Let $\{(C_i,f_i)\}_{i\in I}$ be a $b$-map from $X$ to $Y$ and let $\{(D_j,g_j)\}_{j\in J}$ be a $b$-map from $Y$ to $Z$.  Then the composition of the two b-maps is well defined by $\{ (f_i^{-1}(D_j) \cap C_i, g_j \circ f_{i| f_i^{-1}(D_j) \cap C_i})\}_{(i,j) \in I \times J}$. 

\end{prop}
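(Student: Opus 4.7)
The plan is to verify the four things required by Definition~\ref{def:bmaps}, namely that $\{f_i^{-1}(D_j)\cap C_i\}_{(i,j)\in I\times J}$ is a finite closed subanalytic cover of $X$, that each $g_j\circ f_i|_{f_i^{-1}(D_j)\cap C_i}$ is a l.v.a.\ subanalytic map, and that the family is $b$-compatible in the sense required, plus to check independence under $b$-equivalence of the two input $b$-maps.

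First, I would verify the covering properties. Finiteness is clear from $|I\times J|<\infty$. Subanalyticity of $f_i^{-1}(D_j)\cap C_i$ follows from the subanalyticity of $f_i$, $D_j$ and $C_i$. Closedness of $f_i^{-1}(D_j)\cap C_i$ in $X$ follows because each $f_i$ extends continuously to the vertex (being a subanalytic map germ), $D_j$ is closed in $Y$ and $C_i$ is closed in $X$. For the covering property, since $\{D_j\}_{j\in J}$ covers $Y$ we have $\bigcup_j f_i^{-1}(D_j)=C_i$ for each $i$, and hence $\bigcup_{(i,j)}(f_i^{-1}(D_j)\cap C_i)=\bigcup_i C_i=X$. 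The l.v.a.\ property of $g_j\circ f_i|_{f_i^{-1}(D_j)\cap C_i}$ follows at once from the l.v.a.\ condition for $f_i$ and $g_j$: if $K_1$ and $K_2$ are l.v.a.\ constants for $f_i$ and $g_j$ respectively, then $K_1K_2$ is an l.v.a.\ constant for the composition.

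The main point to check is $b$-compatibility. Let $p$ be a point in $f_i^{-1}(D_j)\cap C_i$ and $q$ a point in $f_{i'}^{-1}(D_{j'})\cap C_{i'}$ with $p\sim_b q$. Because $p\subset C_i$ and $q\subset C_{i'}$ and $\{(C_i,f_i)\}$ is a $b$-map from $X$ to $Y$, we obtain $f_i\circ p\sim_b f_{i'}\circ q$ as points in $Y$. Since $p\subset f_i^{-1}(D_j)$ and $q\subset f_{i'}^{-1}(D_{j'})$, the points $f_i\circ p$ and $f_{i'}\circ q$ are contained in $D_j$ and $D_{j'}$ respectively. Applying the $b$-compatibility of $\{(D_j,g_j)\}$ to this $b$-equivalent pair of points yields $g_j\circ f_i\circ p\sim_b g_{j'}\circ f_{i'}\circ q$ in $Z$, which is exactly what is required.

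Finally, I would show that the composition descends to $b$-equivalence classes. If $\{(C_i,f_i)\}\sim_b\{(\widetilde{C}_{\tilde\imath},\widetilde{f}_{\tilde\imath})\}$ and $\{(D_j,g_j)\}\sim_b\{(\widetilde{D}_{\tilde\jmath},\widetilde{g}_{\tilde\jmath})\}$, the same two-step argument as above, applied to $b$-equivalent (rather than identical) source and target $b$-maps, shows that the resulting compositions are $b$-equivalent. There is no real obstacle here; the only thing to be careful about is using Remark~\ref{rem:point characterization of b-equivalence} correctly, and the reason the definition works at all is precisely that $b$-compatibility is formulated point-wise so it composes without needing continuity of the $f_i$ across the boundaries of the $C_i$. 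The case $b=\infty$ reduces to ordinary composition of continuous l.v.a.\ subanalytic maps and is trivial.
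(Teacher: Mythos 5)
Your proof is correct and takes essentially the same route as the paper's: the two-step argument (a $b$-equivalent pair lands under the $f_i$'s on a $b$-equivalent pair contained in the respective $D_j$'s, to which the $b$-compatibility of $\{(D_j,g_j)\}$ then applies) and the identical reasoning for independence of the chosen representatives are exactly the paper's proof. The extra routine verifications you include (covering, closedness, the l.v.a.\ constant of the composite) are omitted in the paper as obvious but are all accurate.
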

\begin{proof} 
Any pair of $b$-equivalent points $p$ and $q$ that are contained in $f_{i_1}^{-1}(D_{j_1}) \cap C_{i_1}$ resp. $f_{i_2}^{-1}(D_{j_2}) \cap C_{i_2}$ are sent by $f_{i_1}$ resp. $f_{i_2}$ to $b$-equivalent points in $Y$ contained in $D_{j_1}$ resp. $D_{j_2}$. Those are sent by $g_{j_1}$ resp. $g_{j_2}$ to $b$-equivalent points in $Z$.

Let $\{(\hat{C}_i, \hat{f}_i)\}_{i \in \hat I}$ and $\{(\hat{D}_j, \hat{g}_j)\}_{j \in \hat{J}}$ be $b$-equivalent to $\{(C_i,f_i)\}_{i\in I}$ and $\{(D_j,g_j)\}_{j\in J}$ respectively. Let $p$ and $q$ be $b$-equivalent points contained in $f_i^{-1}(D_j) \cap C_i$ and $\hat {f}_{\hat {i}}^{-1}(\hat {D}_{\hat {j}}) \cap \hat {C}_{\hat {i}}$ respectively. By the exact same reasoning $p$ and $q$ are sent to $b$-equivalent points in $Z$ by $g_j\circ f_i$ and $\hat{g}_{\hat j}\circ \hat{f}_{\hat{i}}$ respectively.
\end{proof}

\begin{cor}
The category of metric subanalytic germs with $b$-maps is well defined.
\end{cor}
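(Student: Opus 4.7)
The plan is to verify the remaining categorical axioms beyond what is already established in Proposition~\ref{prop:compbmaps}. Proposition~\ref{prop:compbmaps} already guarantees that composition is well defined on $b$-equivalence classes and lands in the set of $b$-maps, so it remains to exhibit identity morphisms and to check associativity of composition.

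For each object $(X, x_0, d_X)$ I would take the identity morphism to be the class of the $b$-map $\{(X, \mathrm{id}_X)\}$, where $\mathrm{id}_X$ is trivially Lipschitz l.v.a.\ subanalytic; the compatibility condition in Definition~\ref{def:bmaps} is vacuous since $\mathrm{id}_X\comp p = p$ for any point. Given any $b$-map $\{(C_i, f_i)\}_{i\in I}$ from $(X,x_0,d_X)$ to $(Y,y_0,d_Y)$, the composition formula of Proposition~\ref{prop:compbmaps} with $\{(Y,\mathrm{id}_Y)\}$ yields precisely $\{(C_i\cap f_i^{-1}(Y), \mathrm{id}_Y\comp f_i)\}_{i\in I} = \{(C_i, f_i)\}_{i\in I}$, and analogously on the other side, so the identity laws hold on the nose.

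For associativity, given three composable $b$-maps $\{(C_i, f_i)\}_{i\in I}$, $\{(D_j, g_j)\}_{j\in J}$, $\{(E_k, h_k)\}_{k\in K}$, I would compute both $(h\comp g)\comp f$ and $h\comp(g\comp f)$ using the formula of Proposition~\ref{prop:compbmaps}. Both procedures produce the cover indexed by $I\times J\times K$ with piece
\[
C_i \cap f_i^{-1}(D_j) \cap f_i^{-1}\bigl(g_j^{-1}(E_k)\bigr)
\]
carrying the map $h_k\comp g_j\comp f_i$ (restricted to that piece). Hence the two compositions coincide as representatives, not merely as $b$-equivalence classes.

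The mildly delicate point, which I would state carefully, is that the whole construction must descend to $b$-equivalence classes; this is exactly the second assertion of Proposition~\ref{prop:compbmaps}, so composition of classes is well defined, and associativity and unitality at the level of representatives (shown above) therefore pass to classes. I do not anticipate a real obstacle here: the corollary is essentially a bookkeeping check that Proposition~\ref{prop:compbmaps} plus the tautological identity morphism assemble into a category, and the only thing worth writing down explicitly is the coincidence of the two triple composites and the trivial verification that $\mathrm{id}_Y\comp f_i = f_i$ and $f_i\comp \mathrm{id}_X = f_i$ on the respective covering pieces.
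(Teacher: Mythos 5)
Your proposal is correct and matches the paper's (implicit) argument: the paper states this corollary without proof, treating it as an immediate consequence of Proposition~\ref{prop:compbmaps}, and your writeup simply makes explicit the routine verifications of identities, unitality, and associativity that the paper leaves to the reader. Your computation of the two triple composites and the identity laws is accurate, so there is nothing to add.
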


\begin{figure}
\includegraphics[scale=0.3]{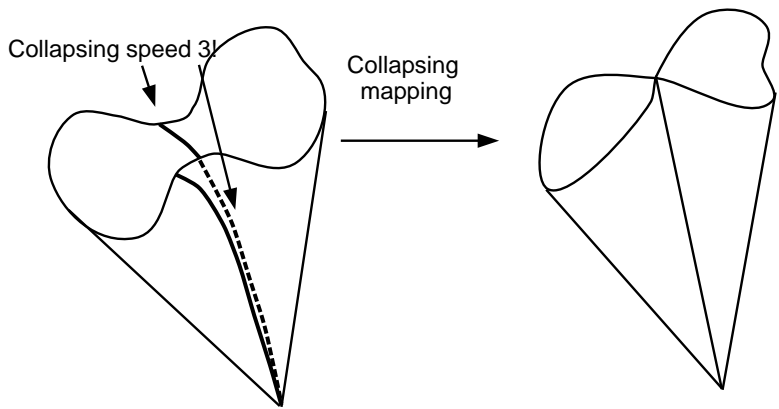}
\caption{Consider the surface on the left with the outer metric: it has a thin part collapsing at rate 3 and a thicker part collapsing linearly. The collapsing mapping in the figure has inverse as a 2-map. Then it induces an isomorphism in the 2-MD homology groups.  }
\label{fig:col}
\end{figure}

\begin{definition}
A $b$-map between pairs of metric subanalytic germs $(X,Y,x_0, d_X)$ and $(\tilde{X}, \tilde{Y},\tilde{x}_0, d_{\tilde{X}})$ is a $b$-map from $X$ to $\tilde{X}$ admitting a representative $\{(C_i, f_i )\}_{i \in I}$  for which the image of $C_i \cap Y$ under $f_i$ is contained in $\tilde{Y}$ for any $i$.
\end{definition}

Let $\phi:=\{(C_i, f_i)\}_{i \in I}$ be a $b$-map between two pairs $(X,Y,x_0, d_X)$ and $(\tilde{X}, \tilde{Y},\tilde{x}_0, d_{\tilde{X}})$. We are going to define a homomorphism
$$
\phi^{b}_\bullet: MDC^{b}_\bullet((X, x_0,d_X);A)\to MDC^{b}_\bullet((\tilde{X}, \tilde{x}_0, d_{\tilde{X}});A)
$$
depending on $\{(C_i, f_i)\}_{i \in I}$ that clearly descends to a homomorphism on the relative chain complexes.
Following Remark~\ref{rem:checkindependence small chains}, to define $\phi^{b}_\bullet$, we define a homomorphism
$$
\phi^{pre,b}_\calC: MDC^{pre, \infty ,\calC}_\bullet((X, x_0,d_X);A)\to MDC^{b}_\bullet((\tilde{X}, \tilde{x}_0, d_{\tilde{X}});A)
$$
where $\calC$ is a finite closed subanalytic refinement of $\{C_i\}_i$ as follows: the image of any $\sigma\in MDC^{\mathrm{pre},\infty,\calC}_\bullet((X, x_0,d_X);A)$ is contained in some $C_i$. We define the image of $\sigma$ under $\phi^{pre,b}_\calC$ to be $f_i \circ \sigma$ and extend this definition linearly.

There are five things to be checked to guarantee that $\phi^{pre,b}_\calC$ and $\phi^{b}_\bullet$ are well-defined:
\begin{enumerate}
\item\label{b-map functoriality item1} If the image of $\sigma$ is also contained in a different $C_j$, $f_j \circ \sigma$ is $b$-subdivision equivalent to $f_i \circ \sigma$;
\item\label{b-map functoriality item2} $\phi^{pre,b}_\calC$ is compatible with the $b$-equivalence relation;
\item\label{b-map functoriality item3} $\phi^{pre,b}_\calC$ is compatible with $\infty$-immediately equivalences;
\item\label{b-map functoriality item4} If $\calC$ and $\tilde{\calC}$ are different refinements of $\{C_i\}_i$, $\phi^{pre,b}_\calC$ and $\phi^{pre,b}_{\tilde{\calC}}$ define the same $\phi$;  
\item\label{b-map functoriality item5} If $\{(\tilde{C}_{i},\tilde{f}_i) \}_{i \in \tilde{I}}$ is $b$-equivalent to $\{(C_i, f_i) \}_{i \in I}$, consider a refinement $\calC$ that refines both $\{C_i\}_{i \in I}$ and $\{\tilde{C}_{i}\}_{i \in \tilde{I}}$. The image of any $\sigma\in MDC^{\mathrm{pre},\infty,\calC}_\bullet((X, Y, x_0,d_X);A)$ is contained both in some $C_i$ and in some $\tilde{C}_j$. The two simplices $f_i \circ \sigma$ and $\tilde{f}_j \circ \sigma$ are $b$-subdivision equivalent.
\end{enumerate}

For (\ref{b-map functoriality item1}), we are going to show that $f_i \circ \sigma$ and $f_j \circ \sigma$ are $b$-equivalent, where $\sigma$ is an $n$-simplex whose image is contained both in $C_i$ and $C_j$. Let $p$ be a point in $\hat{\Delta}_n$. By definition of $b$-map, $f_i \circ \sigma \circ p$ and $f_j \circ \sigma \circ p$ are $b$-equivalent. So the statement follows from Remark~\ref{rem:point characterization of b-equivalence}. For (\ref{b-map functoriality item5}), we can use the exact same argument to show that $f_i \circ \sigma$ and $\tilde{f}_j \circ \sigma$ are $b$-equivalent.
Statement~(\ref{b-map functoriality item3}) is obvious.
 For (\ref{b-map functoriality item2}), 
let $\sigma$ and $\sigma'$ be l.v.a. simplices that are $b$-equivalent whose images are contained in $C_{i_1}$ resp. $C_{i_2}$. 
We have to show that $f_{i_1}\comp\sigma$ and $f_{i_2}\comp\sigma'$ are $b$-equivalent. Suppose they were not. Then there would be a point $p$ in $\hat{\Delta}_n$ for which $f_{i_1}\circ\sigma\circ p$ and $f_{i_2}\circ\sigma'\circ p$ are not $b$-equivalent. So $\sigma\circ p$ and $\sigma'\circ p$ would not be $b$-equivalent.
To show (\ref{b-map functoriality item4}), take a common refinement $\calD$ of $\calC$ and $\tilde{\calC}$. Then, $\calD$ defines the same $\phi^{b}_\bullet$ as $\calC$ and the same as $\tilde{\calC}$.


Then, we have proved the following:

\begin{prop}[Functoriality for $b$-maps]\label{prop:b-discfunctoriality} 
For a fixed $b\in (0, \infty]$, there are well defined functors  
$$(X,Y,x_0,d_X)\mapsto MDC^{b}_*((X,Y,x_0,d_X);A)$$
$$(X,Y,x_0,d_X)\mapsto MDH^{b}_*((X,Y,x_0,d_X);A)$$
from the category of pairs of metric subanalytic germs with $b$-maps to $Kom(Ab)^-$ and $GrAb$ respectively.
\end{prop}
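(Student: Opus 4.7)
The plan is to construct the functor by instantiating the framework of Remark~\ref{rem:checkindependence small chains}. Given a $b$-map $\phi=\{(C_i,f_i)\}_{i\in I}$ between pairs, I would pick a finite closed subanalytic refinement $\calC$ of $\{C_i\}_{i\in I}$ and, on a l.v.a.\ simplex $\sigma$ whose image lies inside some $C_i$, set $\phi^{pre,b}_\calC(\sigma):=f_i\circ\sigma$, extended linearly. Proposition~\ref{prop:smallqis} then transports this into the desired homomorphism $\phi^b_\bullet:MDC^b_\bullet(X;A)\to MDC^b_\bullet(\tilde X;A)$, provided the five compatibility conditions (1)--(5) enumerated just before the statement are verified. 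The condition that $Y$ is sent to $\tilde Y$ follows from the hypothesis that $f_i(C_i\cap Y)\subseteq\tilde Y$, so $\phi^b_\bullet$ restricts to the subcomplex associated with $Y$ and descends to the quotient. Boundary compatibility is automatic since $(f_i\circ\sigma)\circ j_n^k=f_i\circ(\sigma\circ j_n^k)$ and facets of $\sigma$ still have image in $C_i$.

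The central reduction step for items (1), (2) and (5) is to use Remark~\ref{rem:point characterization of b-equivalence}: a $b$-equivalence $\tau\sim_b\tau'$ between two $n$-simplices is characterised by the condition that $\tau\circ p\sim_b\tau'\circ p$ for every point $p$ of $\hat\Delta_n$. For item (1), if the image of $\sigma$ lies in both $C_{i_1}$ and $C_{i_2}$, then for any point $p$ the pair $(\sigma\circ p,\sigma\circ p)$ is trivially a pair of $b$-equivalent points lying in $C_{i_1}$ and $C_{i_2}$; the $b$-map axiom gives that $f_{i_1}\circ\sigma\circ p$ and $f_{i_2}\circ\sigma\circ p$ are $b$-equivalent, hence by Remark~\ref{rem:point characterization of b-equivalence} we get $f_{i_1}\circ\sigma\sim_b f_{i_2}\circ\sigma$. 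Item~(5), independence from the chosen representative of the class of $b$-maps, is handled by passing to a common refinement of the two covers and running the same argument.

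The main obstacle is item~(2): compatibility with the $b$-equivalence relation on simplices. I expect this to be the delicate step because, unlike item~(1), it compares two genuinely distinct simplices. If $\sigma\sim_b\sigma'$ with images in $C_{i_1}$ and $C_{i_2}$, then for every point $p$ in $\hat\Delta_n$ the arcs $\sigma\circ p$ and $\sigma'\circ p$ are $b$-equivalent points contained in $C_{i_1}$ and $C_{i_2}$; the $b$-map axiom then forces $f_{i_1}\circ\sigma\circ p\sim_b f_{i_2}\circ\sigma'\circ p$, and a final application of Remark~\ref{rem:point characterization of b-equivalence} delivers $f_{i_1}\circ\sigma\sim_b f_{i_2}\circ\sigma'$. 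Item~(3) is trivial since for a homological subdivision $\{\rho_j\}$ of $\hat\Delta_n$ one has $f_i\circ(\sigma\circ\rho_j)=(f_i\circ\sigma)\circ\rho_j$, and each $\sigma\circ\rho_j$ still maps into $C_i$. Item~(4) reduces to a common refinement.

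Finally, functoriality on chain complexes, i.e.\ $(\psi\circ\phi)^b_\bullet=\psi^b_\bullet\circ\phi^b_\bullet$ and $\mathrm{Id}^b_\bullet=\mathrm{Id}$, follows from Proposition~\ref{prop:compbmaps} by choosing a closed subanalytic refinement of the cover $\{f_i^{-1}(D_j)\cap C_i\}_{(i,j)}$ associated with the composition; on a simplex small with respect to it the formula $(\psi\circ\phi)^{pre,b}_{\calC}(\sigma)=g_j\circ f_i\circ\sigma$ matches the iterated application of $\phi^{pre,b}$ and $\psi^{pre,b}$. Passage to homology is then automatic, yielding the functor to $\BB$-GrAb.
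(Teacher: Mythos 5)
Your proposal is correct and follows essentially the same route as the paper: defining $f_i\circ\sigma$ on chains small with respect to a closed refinement of the cover, verifying the five compatibility conditions via the point characterization of $b$-equivalence (Remark~\ref{rem:point characterization of b-equivalence}), and obtaining functoriality from Proposition~\ref{prop:compbmaps}. The only cosmetic difference is that for condition (2) you argue directly while the paper argues by contraposition, which is the same content.
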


\begin{cor}
The $b$-moderately discontinuous homology is invariant by isomorphisms in the category of $b$-maps.
\end{cor}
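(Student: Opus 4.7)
The plan is to observe that this corollary is an immediate formal consequence of Proposition \ref{prop:b-discfunctoriality}, since any functor preserves isomorphisms. The only thing to unpack is that an isomorphism in the category of $b$-maps is, by definition, a $b$-map $\phi\colon (X,Y,x_0,d_X)\to (\tilde X,\tilde Y,\tilde x_0,d_{\tilde X})$ admitting a $b$-map $\psi$ in the opposite direction such that $\psi\circ\phi$ and $\phi\circ\psi$ are $b$-equivalent to the respective identity $b$-maps (the composition being the one defined in Proposition \ref{prop:compbmaps}).

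First I would record that the identity $b$-map $\mathrm{id}_{(X,x_0,d_X)}$ is represented by the single pair $\{(X,\mathrm{id}_X)\}$, and the induced map $(\mathrm{id})^b_\bullet$ on $MDC^b_\bullet((X,x_0,d_X);A)$ sends each l.v.a.\ simplex $\sigma$ to $\mathrm{id}_X\circ \sigma=\sigma$, hence is the identity homomorphism. Since functoriality is already established (Proposition \ref{prop:b-discfunctoriality}), we have $(\psi\circ\phi)^b_\bullet=\psi^b_\bullet\circ\phi^b_\bullet$ and similarly in the other order. By hypothesis $\psi\circ\phi$ is $b$-equivalent to $\mathrm{id}_X$ as a $b$-map, and functoriality in the category of $b$-maps (which by construction only sees the $b$-equivalence class, cf.\ point (\ref{b-map functoriality item5}) in the verifications before Proposition \ref{prop:b-discfunctoriality}) gives $(\psi\circ\phi)^b_\bullet=(\mathrm{id}_X)^b_\bullet=\mathrm{id}$ at the level of chain complexes, and similarly $(\phi\circ\psi)^b_\bullet=\mathrm{id}$. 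Passing to homology via the functor $H_*$ yields $\psi^b_*\circ\phi^b_*=\mathrm{id}$ and $\phi^b_*\circ\psi^b_*=\mathrm{id}$, so $\phi^b_*$ is an isomorphism of graded abelian groups, and the same holds for the relative groups and for chain complexes.

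There is no real obstacle here; the content of the corollary is entirely contained in Proposition \ref{prop:b-discfunctoriality}. The only point that deserves a brief mention is that the statement is meaningful for all $b\in(0,\infty]$, including $b=\infty$, where the category of $b$-maps degenerates to the category of Lipschitz l.v.a.\ subanalytic maps (as stipulated at the end of Definition \ref{def:bmaps}); in that case the corollary recovers the bi-Lipschitz l.v.a.\ subanalytic invariance already implicit in Proposition \ref{prop:functoriality}.
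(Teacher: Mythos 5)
Your proposal is correct and matches the paper's (implicit) argument: the corollary is stated without proof precisely because it is the formal fact that the functor of Proposition~\ref{prop:b-discfunctoriality} preserves isomorphisms, which is exactly what you spell out. The unpacking of the identity $b$-map and of $b$-equivalence of compositions is accurate and adds nothing beyond what the paper intends.
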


\subsection{A sufficient geometric condition}


We have the following geometric condition that is sufficient for a collection $\{(C_i,f_i)\}_{i\in I}$ to define a $b$-map:

\begin{lema}
\label{geometric b-maps}
Let $(X,x_0, d_X)$ and $(Y,y_0, d_Y)$ be  metric subanalytic germs, $b \in (0, \infty)$. Let $\{(C_i,f_i)\}_{i\in I}$ be a finite collection, where $\{C_i\}_{i\in I}$ is a finite closed subanalytic  cover of $X$, the maps $f_i:C_i\to Y$ are  Lipschitz l.v.a. subanalytic and admit an extension $\overline{f}_i:\calH_{b,\eta}(C_i; X)\to Y$ that are Lipschitz (non-necessarily subanalytic) l.v.a. maps for some $\eta \in \mathbb{R}_{>0}$, and the following condition is satisfied for any pair of indices $i,j\in I$:
\begin{equation}\label{eq:b-map}
\lim\limits_{\epsilon\to 0}\frac{\sup\limits\{d_Y(\overline{f}_i(x),\overline{f}_j(x));x\in L_{X,\epsilon}\cap \calH_{b,\eta}(C_i; X) \cap \calH_{b,\eta}(C_{j}; X)\}}{\epsilon^b}=0
\end{equation}
Then, $\{(C_i, f_{i}) \}_{i \in I}$ is a $b$-map.
\end{lema}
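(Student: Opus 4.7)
The plan is straightforward: starting from a pair of $b$-equivalent points $p,q$ contained in $C_i,C_j$ respectively, I want to bound $d_Y(f_i\comp p(t), f_j\comp q(t))$ by $o(t^b)$ via a two-step triangle inequality, one step using the hypothesis~(\ref{eq:b-map}) on the extensions $\overline{f}_i,\overline{f}_j$, and the other using the Lipschitz property of $\overline{f}_j$ together with the $b$-equivalence $p\sim_b q$. Concretely, for $t$ small enough I will write
\[
d_Y\bigl(f_i(p(t)),\,f_j(q(t))\bigr)\;\leq\; d_Y\bigl(\overline{f}_i(p(t)),\,\overline{f}_j(p(t))\bigr)\;+\;d_Y\bigl(\overline{f}_j(p(t)),\,\overline{f}_j(q(t))\bigr),
\]
using that $\overline{f}_i$ and $\overline{f}_j$ agree with $f_i$ on $p(t)\in C_i$ (as extensions) and that $\overline{f}_j$ is defined at $p(t)$ because, as explained below, $p(t)\in\calH_{b,\eta}(C_j;X)$.

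The first preparatory step is to check that $p(t)\in\calH_{b,\eta}(C_j;X)$ for all sufficiently small $t$. Since $q$ is l.v.a., there exists a constant $K\geq 1$ such that $\|q(t)-x_0\|\geq t/K$, hence $\eta\|q(t)-x_0\|^b\geq \eta t^b/K^b$. Since $p\sim_b q$ we have $d_X(p(t),q(t))=o(t^b)$, so eventually $d_X(p(t),q(t))<\eta \|q(t)-x_0\|^b$; since $q(t)\in C_j$, this puts $p(t)$ in the ball $B(q(t),\eta\|q(t)-x_0\|^b)$ and hence in $\calH_{b,\eta}(C_j;X)$. Also $p(t)\in C_i\subseteq \calH_{b,\eta}(C_i;X)$, so $p(t)$ lies in both horns.

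For the second summand, the Lipschitz constant $K'$ of $\overline{f}_j$ gives
\[
d_Y\bigl(\overline{f}_j(p(t)),\,\overline{f}_j(q(t))\bigr)\leq K'\,d_X(p(t),q(t))=o(t^b),
\]
using once more that $p\sim_b q$ in $(X,d_X)$. For the first summand I set $\epsilon(t):=\|p(t)-x_0\|$, which satisfies $\epsilon(t)/t\to c\in[1/K,K]$ by the l.v.a. condition on $p$; then $p(t)\in L_{X,\epsilon(t)}\cap\calH_{b,\eta}(C_i;X)\cap\calH_{b,\eta}(C_j;X)$ by the previous paragraph, so
\[
\frac{d_Y(\overline{f}_i(p(t)),\overline{f}_j(p(t)))}{t^b}\;\leq\;\Bigl(\tfrac{\epsilon(t)}{t}\Bigr)^b\cdot\frac{\sup\{d_Y(\overline{f}_i(x),\overline{f}_j(x)): x\in L_{X,\epsilon(t)}\cap\calH_{b,\eta}(C_i;X)\cap\calH_{b,\eta}(C_j;X)\}}{\epsilon(t)^b}
\]
which tends to $0$ by hypothesis~(\ref{eq:b-map}). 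Combining both bounds yields $d_Y(f_i(p(t)),f_j(q(t)))/t^b\to 0$, i.e. $f_i\comp p\sim_b f_j\comp q$, which is exactly the defining condition of a $b$-map in Definition~\ref{def:bmaps}.

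The only mildly delicate point is the bookkeeping in the first preparatory step (getting $p(t)$ into $\calH_{b,\eta}(C_j;X)$ for small $t$), but this is just the combination of l.v.a.\ of $q$ with $p\sim_b q$. Everything else is a direct triangle-inequality argument, so I expect no serious obstacle.
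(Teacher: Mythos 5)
Your proof is correct and is essentially the paper's argument: the same two-step triangle inequality, with one summand controlled by the Lipschitz property of an extension $\overline{f}_k$ applied to the $b$-equivalent pair $p,q$ and the other by hypothesis~(\ref{eq:b-map}), the only difference being that the paper routes through the intermediate point $\overline{f}_i(q(t))$ (placing $q$ in $\calH_{b,\eta}(C_i;X)$) while you route through $\overline{f}_j(p(t))$. Your additional bookkeeping --- verifying that $p(t)$ lies in both horn neighborhoods and converting the $\epsilon^b$ normalization of~(\ref{eq:b-map}) into the $t^b$ normalization of $b$-equivalence via the l.v.a.\ constants --- makes explicit details the paper leaves implicit.
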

\begin{proof}
We suppose $x_0 = 0$. Let $p$ and $q$ be two $b$-equivalent points contained in $C_i$ and $C_j$ respectively. We have to show that $f_i\comp p$ and $f_j\comp q$ are $b$-equivalent. 

Since $p$ and $q$ are $b$-equivalent, the image of $q$ is contained in $\calH_{b,\eta}(C_i; X)$. By the triangle inequality we have
$$
\frac{d_X(f_{i}\circ p(t), f_{j}\circ q(t))}{t^b} 
\leq \frac{d_X(\overline{f}_i\circ p(t), \overline{f}_i\circ q(t))}{t^b} + \frac{d_X(\overline{f}_i\circ q(t), \overline{f}_j\circ q(t))}{t^b}.
$$

Since $\overline{f}_i$ is Lipschitz and $p$ and $q$ are $b$-equivalent, the first summand of the right hand side converges to $0$ as $t$ approaches $0$. The second summand converges to $0$ by the equation~(\ref{eq:b-map}).

\end{proof}

\begin{lema}
Let $\{(C_i,f_i)\}_{i\in I}$ and $\{(C'_i,f'_i)\}_{i\in I'}$ be two collections fulfilling the conditions of Lemma \ref{geometric b-maps}. If for any $i\in I$ and $i'\in I'$, it is 
\begin{equation}\label{eq:b-map equivalence}
\lim\limits_{\epsilon\to 0}\frac{\sup\limits\{d_Y(\bar{f}_i(x),\bar{f}'_{i'}(x));x\in L_{X,\epsilon}\cap \calH_{b,\eta}(C_i; X) \cap \calH_{b,\eta}(C'_{i'}; X)\}}{\epsilon^b}=0,
\end{equation}
the two $b$-maps defined by them are $b$-equivalent.
\end{lema}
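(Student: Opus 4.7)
The plan is to mirror the proof of Lemma~\ref{geometric b-maps} essentially verbatim, with the two maps coming from different collections rather than the same one. We have to check the defining condition of $b$-equivalence of $b$-maps (Definition~\ref{def:bmaps}): given a pair of $b$-equivalent points $p$ and $q$ with $\mathrm{Im}(p)\subseteq C_i$ and $\mathrm{Im}(q)\subseteq C'_{i'}$, we must show that $f_i\circ p$ and $f'_{i'}\circ q$ are $b$-equivalent points in $Y$.

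I would apply the triangle inequality in $Y$ to split the quotient:
\[
\frac{d_Y(f_i\circ p(t),\, f'_{i'}\circ q(t))}{t^b}
\leq \frac{d_Y(\bar{f}_i\circ p(t),\, \bar{f}_i\circ q(t))}{t^b}
   + \frac{d_Y(\bar{f}_i\circ q(t),\, \bar{f}'_{i'}\circ q(t))}{t^b}.
\]
The first summand tends to $0$ as $t\to 0$ because $\bar{f}_i$ is Lipschitz (with some constant $K$) and $p,q$ are $b$-equivalent, so the quotient is bounded by $K\cdot d_X(p(t),q(t))/t^b\to 0$. For the second summand, I note that $q(t)\in C'_{i'}\subseteq \calH_{b,\eta}(C'_{i'};X)$ by definition, and that the $b$-equivalence of $p$ and $q$ together with $\mathrm{Im}(p)\subseteq C_i$ forces $q(t)\in \calH_{b,\eta}(C_i;X)$ for all sufficiently small $t$ (this is exactly Remark~\ref{rem: b-equ b-horn} at the level of points). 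Moreover $q$ is l.v.a., so $\|q(t)\|$ is comparable to $t$, and $q(t)\in L_{X,\|q(t)\|}$. Hence condition~(\ref{eq:b-map equivalence}) applied to $\epsilon=\|q(t)\|\sim t$ makes the second summand tend to $0$ as well.

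The only mildly delicate point, which I would state carefully, is the comparison between the spherical scale $\epsilon$ appearing in the hypothesis and the parameter $t$ of the point $q$: using the l.v.a.\ constant of $q$ one has $\tfrac{1}{K}t\leq \|q(t)\|\leq Kt$, so $t^b$ and $\|q(t)\|^b$ are comparable up to the uniform constant $K^b$, which absorbs harmlessly into the vanishing limit in~(\ref{eq:b-map equivalence}). With these observations both summands vanish in the limit, giving $f_i\circ p\sim_b f'_{i'}\circ q$ as required, which is exactly the condition of Definition~\ref{def:bmaps} for the two $b$-maps to be $b$-equivalent. No step looks like a serious obstacle; the main thing to keep straight is that one must use $\bar{f}_i$ (defined on the $b$-horn of $C_i$) rather than $f_i$ to evaluate at $q(t)$, which lies in $C'_{i'}$ but a priori not in $C_i$.
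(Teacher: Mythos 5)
Your proposal is correct and is essentially the paper's own argument: the paper simply states that the proof is analogous to that of Lemma~\ref{geometric b-maps}, and your write-up is exactly that adaptation (triangle inequality, Lipschitz control of the first summand, hypothesis~(\ref{eq:b-map equivalence}) for the second, with $q(t)\in\calH_{b,\eta}(C_i;X)$ coming from the $b$-equivalence of $p$ and $q$). The extra remark about comparing $\|q(t)\|$ with $t$ via the l.v.a.\ constant is a harmless and correct refinement.
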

\begin{proof}
The proof is analogous to the one of Lemma \ref{geometric b-maps}.
\end{proof}

\subsection{Applications using b-maps}

\begin{definition}
A {\em section of a $b$-map $\varphi:X\to Y$ ($b$-section for short)} is a $b$-map $\psi:Y\to X$ such that $\varphi\comp \psi=Id_Y$ in the category of $b$-maps.
\end{definition}

\begin{remark}
Notice that admitting sections in the category of $b$-maps is much less restrictive than in the category of continuous subanalytic maps, since $b$-maps are only piecewise continuous, and piecewise univalued.
\end{remark}

\begin{theo}
\label{theo:b-map sections}
Let $\varphi:X\to Y$ be a Lipschitz l.v.a. subanalytic map between two metric subanalytic germs so that there exists a finite closed subanalytic  cover $\{Y_i\}_{i\in I}$ of $Y$ so that 
$$
\varphi|_{\varphi^{-1}(Y_i)}:\varphi^{-1}(Y_i)\to Y_i
$$
admits a $b$-section $\{(Y_{i,j},\psi_{i,j})\}_{j\in J_i}$ for any $i\in I$. Suppose that for any two points $p$ and $q$ in $X$ for which $\varphi\circ p$ and $\varphi\circ q$ are $b$-equivalent in $Y$, $p$ and $q$ are $b$-equivalent in $X$. 
Then, $\varphi$ induces an isomorphism  
$$\varphi_*\colon MDC^b_{\bullet}(X; A)\to MDC^b_{\bullet}(Y; A).$$ Consequently $\varphi_*$ induces an isomorphism in $b$-MD homology.
\end{theo}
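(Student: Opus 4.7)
The plan is to construct an explicit inverse $\psi_*$ to $\varphi_*$ by patching together all the local $b$-sections into a single $b$-map $\psi\colon Y\to X$, and then to verify that $\psi\circ\varphi$ and $\varphi\circ\psi$ are $b$-equivalent to the respective identity $b$-maps. Once this is done, Proposition~\ref{prop:b-discfunctoriality} immediately gives that $\varphi_*$ is an isomorphism at the level of chain complexes, hence also in $b$-MD homology.

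Concretely, I would first define $\psi:=\{(Y_{i,j},\psi_{i,j})\}_{i\in I,\,j\in J_i}$, a candidate $b$-map from $Y$ to $X$. The cover $\{Y_{i,j}\}$ of $Y$ is finite closed subanalytic because $\{Y_i\}_{i\in I}$ is and each $\{Y_{i,j}\}_{j\in J_i}$ is. To see that $\psi$ satisfies the compatibility condition in Definition~\ref{def:bmaps}, let $p,q$ be $b$-equivalent points in $Y$ with $\mathrm{Im}(p)\subseteq Y_{i_1,j_1}$ and $\mathrm{Im}(q)\subseteq Y_{i_2,j_2}$. Because each $\psi_{i,j}$ is a $b$-section of $\varphi|_{\varphi^{-1}(Y_i)}$, we get $\varphi\circ\psi_{i_1,j_1}\circ p\sim_b p$ and $\varphi\circ\psi_{i_2,j_2}\circ q\sim_b q$; combining with $p\sim_b q$ yields $\varphi\circ\psi_{i_1,j_1}\circ p\sim_b \varphi\circ\psi_{i_2,j_2}\circ q$. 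The hypothesis that $b$-equivalence of images under $\varphi$ implies $b$-equivalence of the points themselves then gives $\psi_{i_1,j_1}\circ p\sim_b\psi_{i_2,j_2}\circ q$, as required.

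Next I would check the two compositions. For $\varphi\circ\psi$, by Proposition~\ref{prop:compbmaps} its representative cover is $\{Y_{i,j}\}$ with maps $\varphi\circ\psi_{i,j}$, and these are $b$-equivalent to the identity on $Y$ pointwise (again by the $b$-section property and Remark~\ref{rem:point characterization of b-equivalence}), so $\varphi\circ\psi$ equals $\mathrm{Id}_Y$ as $b$-maps. For $\psi\circ\varphi$, its representative is $\{\varphi^{-1}(Y_{i,j}),\psi_{i,j}\circ\varphi\}$. Given a point $p$ in $\varphi^{-1}(Y_{i,j})$ and a $b$-equivalent point $q$ in $X$, I want $\psi_{i,j}\circ\varphi\circ p\sim_b q$. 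Since $\varphi$ is Lipschitz l.v.a., $\varphi\circ p\sim_b\varphi\circ q$; since $\psi_{i,j}$ is a $b$-section, $\varphi\circ\psi_{i,j}\circ\varphi\circ p\sim_b \varphi\circ p$; hence $\varphi\circ(\psi_{i,j}\circ\varphi\circ p)\sim_b\varphi\circ q$, and the standing hypothesis on $\varphi$ gives $\psi_{i,j}\circ\varphi\circ p\sim_b q$. Thus $\psi\circ\varphi=\mathrm{Id}_X$ as $b$-maps.

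Finally, by the functoriality statement of Proposition~\ref{prop:b-discfunctoriality} applied to the equalities $\varphi\circ\psi=\mathrm{Id}_Y$ and $\psi\circ\varphi=\mathrm{Id}_X$ in the category of $b$-maps, the induced chain maps satisfy $\varphi_*\circ\psi_*=\mathrm{Id}$ and $\psi_*\circ\varphi_*=\mathrm{Id}$; hence $\varphi_*$ is an isomorphism of chain complexes and induces an isomorphism on $b$-MD homology. The only delicate point is the verification of the $b$-map condition for $\psi$ and of the compatibility of $\psi\circ\varphi$ with the identity; both steps crucially use the converse implication built into the hypothesis (that $b$-equivalence of images under $\varphi$ detects $b$-equivalence in $X$), without which patching disjoint local sections into a genuine $b$-map fails.
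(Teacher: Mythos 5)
Your proposal is correct and follows essentially the same route as the paper's proof: glue the local $b$-sections into a global $b$-map using the hypothesis that $\varphi$ detects $b$-equivalence, then verify that the two compositions are the identity as $b$-maps and conclude by functoriality (Proposition~\ref{prop:b-discfunctoriality}). The only cosmetic difference is that you spell out the check $\varphi\circ\psi=\mathrm{Id}_Y$, which the paper leaves implicit in calling the glued object a global $b$-section.
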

\begin{proof}
The $b$-sections glue to a global $b$-section $(Y_{i,j},\psi_{i,j})_{i \in I, j\in J_i}$: let $p_1$ and $p_2$ be $b$-equivalent points in $Y_{i_1, j_1}$ and $Y_{i_2,j_2}$ respectively. Then, $\varphi\circ \psi_{i_l, j_l}\circ p_l$ is $b$-equivalent to $p_l$ for $l=1,2$ and therefore $\varphi\circ \psi_{i_l, j_l}\circ p_1$ and $\varphi\circ \psi_{i_l, j_l}\circ p_2$ are $b$-equivalent. Therefore, by hypothesis so are $\psi_{i_l, j_l}\circ p_1$ and $\psi_{i_l, j_l}\circ p_1$.

To show that the global $b$-section is in fact the inverse of $(X,\varphi)$, we have to show that $\{(\varphi^{-1}(Y_{i,j}), \psi_{i,j} \circ \varphi)\}_{i \in I, j\in J_i}$ is $b$-equivalent to $(X, id_X)$. Let $p$ and $q$ be $b$-equivalent points in $\varphi^{-1}(Y_{i,j})$ and $X$ respectively. Then $\varphi\circ\psi_{i,j}\circ\varphi\circ p$ is $b$-equivalent to $\varphi \circ p$, which is $b$-equivalent to $\varphi\circ q$ as $\varphi$ is Lipschitz. Therefore, $\psi_{i,j}\circ\varphi\circ p$ is $b$-equivalent to $q$.
\end{proof}

%

\begin{cor}
\label{cor:b-maps colapsediameter}
Let $\varphi:(X,x_0,d_X)\to (Y,y_0,d_Y)$ be a Lipschitz l.v.a. subanalytic map between two metric subanalytic germs so that there exists a  finite closed subanalytic  cover $\{Y_i\}_{i\in I}$ of $Y$ and open sets $U_i$ containing $Y_i$ for every $i\in I$ such that there is a $b$-horn neighborhood $\calH_{b,\eta}(Y_i;Y)$ contained in $U_i$, and  
$$\varphi|_{\varphi^{-1}(U_i)}:\varphi^{-1}(U_i)\to U_i$$
admits a section $\psi_i$ in the category of Lipschitz l.v.a. subanalytic maps for any $i\in I$. Suppose that 
$$\lim\limits _{t\to 0^+}\frac{\sup\{diam(\varphi^{-1}(y)):y\in L_{Y,t}\}}{t^b}=0,$$
Then, $\varphi$ fulfills the hypothesis of Theorem~\ref{theo:b-map sections} and therefore induces an isomorphism in $Kom(Ab)^-$ and $GrAb$.
\end{cor}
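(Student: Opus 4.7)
The plan is to verify the two hypotheses of Theorem~\ref{theo:b-map sections} for $\varphi$ and the given cover $\{Y_i\}_{i \in I}$ of $Y$. For the first hypothesis, fix $i \in I$ and consider the restriction $\psi_i|_{Y_i}\colon Y_i \to \varphi^{-1}(Y_i)$, which is a Lipschitz l.v.a.\ subanalytic map satisfying $\varphi \comp \psi_i|_{Y_i} = \mathrm{Id}_{Y_i}$. Viewed as the single-piece collection $\{(Y_i,\,\psi_i|_{Y_i})\}$, it is trivially an $\infty$-map (and hence a $b$-map), and it sections $\varphi|_{\varphi^{-1}(Y_i)}$ in the category of $b$-maps. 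Thus the first hypothesis is immediate.

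For the second hypothesis, let $p, q\colon (0,\epsilon) \to X$ be points in $X$ (i.e.\ l.v.a.\ subanalytic arcs) with $\varphi \comp p \sim_b \varphi \comp q$. Since $\{Y_i\}_{i\in I}$ is a finite closed subanalytic cover of $Y$, the sets $\{t \in (0,\epsilon) : \varphi(p(t)) \in Y_i\}$ are subanalytic and cover $(0,\epsilon)$; by o-minimality, shrinking $\epsilon$ we may fix $i$ so that $\varphi(p(t)) \in Y_i$ for all $t$ in this interval. Because $\varphi$ is l.v.a., $\|\varphi(p(t)) - y_0\|$ is of order $t$, so the $b$-equivalence $d_Y(\varphi(p(t)), \varphi(q(t))) = o(t^b)$ yields
$$d_Y(\varphi(p(t)), \varphi(q(t))) < \eta\, \|\varphi(p(t)) - y_0\|^b$$
for $t$ small enough, placing $\varphi(q(t))$ inside $\calH_{b,\eta}(Y_i;Y) \subseteq U_i$. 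In particular $\psi_i(\varphi(q(t)))$ is defined for such $t$.

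Now apply the triangle inequality to the three comparisons. First, $\psi_i$ being Lipschitz with some constant $K$ gives
$$d_X\bigl(\psi_i(\varphi(p(t))),\, \psi_i(\varphi(q(t)))\bigr) \leq K\, d_Y(\varphi(p(t)), \varphi(q(t))) = o(t^b).$$
Second, $p(t)$ and $\psi_i(\varphi(p(t)))$ both lie in the fiber $\varphi^{-1}(\varphi(p(t)))$, and since $\|\varphi(p(t)) - y_0\|$ is comparable to $t$, the diameter hypothesis forces $d_X(p(t), \psi_i(\varphi(p(t)))) = o(t^b)$; the analogous estimate holds for $q$. Adding the three bounds gives $d_X(p(t), q(t)) = o(t^b)$, i.e.\ $p \sim_b q$. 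With both hypotheses of Theorem~\ref{theo:b-map sections} verified, we obtain the asserted isomorphisms at the level of chain complexes and of $b$-MD homology. No step is genuinely obstructive; the only mild technical point is the uniform choice of the index $i$ along an arc, which is handled by finiteness and subanalyticity of the cover.
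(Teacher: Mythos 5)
Your proposal is correct and follows essentially the same route as the paper: it verifies the two hypotheses of Theorem~\ref{theo:b-map sections}, obtaining the second one by combining the Lipschitz property of $\psi_i$ with the fiber-diameter decay and the l.v.a.\ comparability of $\|\varphi(p(t))-y_0\|$ with $t$, exactly as in the paper's triangle-inequality estimate. The only (harmless) extra care you take is the explicit o-minimal reduction to a single index $i$ along a germ of arc, and note that the restriction $\psi_i|_{Y_i}$ is a $b$-map because it is Lipschitz (not merely because it is an $\infty$-map).
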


\begin{proof}
Let $p_1$ and $p_2$ be points in $X$ for which $\varphi\circ p_1$ and $\varphi\circ p_2$ are $b$-equivalent and contained in $Y_i$ and $Y_j$ respectively. By Remark~\ref{rem: b-equ b-horn}, $\varphi\circ p_2$ is contained in $\calH_{b,\eta}(Y_i;Y)$. 
As $\psi_i$ is Lipschitz, $\psi_i \circ \varphi \circ p_1$ and $\psi_i \circ \varphi \circ p_2$ are $b$-equivalent. Further, if $K_l$ is a l.v.a. constant for $\varphi\circ p_l$, $l \in \{1,2\}$, we have
$$
\frac{d_X(p_l(t), \psi_i \circ \varphi \circ p_l(t))}{t^b}
\leq K_l^b \frac{\sup \{ diam(\varphi^{-1}(y)): y\in L_{Y, \|\varphi\circ p_l(t)\|} \} }{\|\varphi\circ p_l(t)\|^b}
$$
and therefore $p_l$ and $\psi_i \circ \varphi \circ p_l$ are $b$-equivalent. Using the triangle inequality, we get that $p_1$ and $p_2$ are $b$-equivalent.
\end{proof}

The following corollary is an example of how $b$-maps and Theorem~\ref{theo:b-map sections} can be used concretely.

\begin{cor}\label{cor:colapsdiam point}
Let $X$ be a metric subanalytic germ such that 
$$\lim\limits _{t\to 0^+}\frac{diam(L_{X,t})}{t^b}=0$$
Then $X$ has the $b$-MD homology of a point in the category of metric subanalytic germs (recall Definition~\ref{def:point}).
\end{cor}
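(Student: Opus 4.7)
The plan is to apply Corollary~\ref{cor:b-maps colapsediameter} directly, taking as the target the point $((0,\epsilon), 0, d_{Euc})$ and as the map
$$
\varphi: X \longrightarrow (0,\epsilon), \qquad \varphi(x) := \|x-x_0\|,
$$
on a sufficiently small representative of $X$. This map is subanalytic, and it is l.v.a.\ with constant $K=1$ by construction. For the Lipschitz condition one uses the triangle inequality when $d_X=d_{out}$, and when $d_X$ is (bi-Lipschitz equivalent to) the inner metric or to any subanalytic metric in the sense of Remark~\ref{rem:Cat_inner}, one replaces $\varphi$ by the subanalytic function $x\mapsto d_X(x,x_0)$, which is $1$-Lipschitz by the triangle inequality and still l.v.a.

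For the hypotheses of Corollary~\ref{cor:b-maps colapsediameter}, I would take the trivial one-element cover $\{Y_1\}=\{(0,\epsilon)\}$ with $U_1 = (0,\epsilon)$, so that the $b$-horn neighborhood $\calH_{b,\eta}(Y_1;(0,\epsilon))$ is trivially contained in $U_1$. A section $\psi\colon (0,\epsilon)\to\varphi^{-1}(U_1)=X$ is produced via the conical structure of Remark~\ref{rem:germ1}: fix $p\in L_{X,\epsilon}$ and let $h:C(L_X)\to (X,x_0)$ be a subanalytic homeomorphism with $\|h(tx,t)-x_0\|=t$; then $\psi(t):=h\bigl(\tfrac{t}{\epsilon}p,\,t\bigr)$ is a subanalytic arc with $\varphi\comp\psi=\mathrm{id}$. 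The arc $\psi$ is l.v.a.\ with constant $1$, and because it is a subanalytic arc with $\|\psi(t)-x_0\|=t$ it admits a Puiseux expansion whose leading term is of order $t$, hence $\psi$ is $C^1$ with bounded derivative near $0$ and therefore Lipschitz in the relevant metric.

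The collapse-of-diameter condition is then immediate: for every $t\in(0,\epsilon)$ we have $L_{(0,\epsilon),t}=\{t\}$ and $\varphi^{-1}(t)=L_{X,t}$, so
$$
\frac{\sup\{\mathrm{diam}(\varphi^{-1}(y))\,:\,y\in L_{(0,\epsilon),t}\}}{t^b}
\;=\;\frac{\mathrm{diam}(L_{X,t})}{t^b}\;\xrightarrow[t\to 0^+]{}\;0
$$
by hypothesis. Hence Corollary~\ref{cor:b-maps colapsediameter} yields an isomorphism $MDH^b_\bullet(X,x_0,d_X;A)\cong MDH^b_\bullet((0,\epsilon),0,d_{Euc};A)$, and the right-hand side is the $b$-MD homology of a point by Proposition~\ref{prop:point} (together with Proposition~\ref{prop:b<1trivial} to cover the case $b<1$).

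The main obstacle is purely bookkeeping: one has to make sure that both $\varphi$ and the section $\psi$ really are Lipschitz in the metric under consideration (outer, inner, or a general subanalytic metric equivalent to one of them); once the correct model function is chosen, the argument is a direct plug-in into Corollary~\ref{cor:b-maps colapsediameter}, with the diameter hypothesis doing all the geometric work.
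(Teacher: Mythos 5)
Your proposal is correct and is essentially the paper's own proof: the paper also maps $X$ to an interval by distance to the vertex, takes the trivial one-set cover, and uses as section the parametrization of an arc in $X$ by its distance to the origin, then invokes Corollary~\ref{cor:b-maps colapsediameter}. Your write-up merely supplies the routine verifications (Lipschitz/l.v.a.\ properties of $\varphi$ and $\psi$, the diameter computation) that the paper leaves implicit.
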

\proof
Map $X$ to $[0,1)$ by outer distance to the vertex of $X$ and use the previous corollary. Considering the trivial cover of $X$ by the single open subset $X$, the required section is the parametrization of an arc in $X$ by its distance to the origin. 
\endproof

\section{A useful isomorphism of \texorpdfstring{$b$}{b}-MD Homology} 

In this Section we prove a very useful improvement of Corollary~\ref{cor:b-maps colapsediameter}, which holds in a slightly more special situation. Its proof does not use $b$-maps, and the technique that we use also shows the flexibility of $b$-MD homology.

In the next theorem let $\mathbb{S}_t^{m-1}$ denote the sphere of radius $t$ centered at a point $x_0$ in $\RR^m$.

\begin{theo}
\label{theo:colapsediameter2}
Let $(Y,x_0), (X,x_0)\subset(\RR^m,x_0)$ be closed subanalytic germs, which we endow with the outer metric $d_{out}$ from $\RR^m$. Let $\varphi:(X,x_0)\to (Y,x_0)$ be a l.v.a. (non-necessarily Lipschitz) subanalytic map germ which satisfies 
\begin{equation}
\label{eq:diamcond_one}
 \lim\limits _{t\to 0^+}\frac{sup\{diam(\varphi^{-1}(y)\cup \{y\}):y\in Y_t\}}{t^b}=0,
\end{equation}
where $Y_t=Y\cap \mathbb{S}_t^{m-1}$. 
Then $\varphi_*$ induces an isomorphism  
$$\varphi_*\colon MDC^b_{\bullet}(X,d_{out}; A)\to MDC^b_{\bullet}(Y,d_{out}; A).$$
Consequently $\varphi_*$ induces isomorphism in $b$-MD homology.
\end{theo}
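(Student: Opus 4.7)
The plan is to construct a chain-level two-sided inverse $\psi: MDC^b_\bullet(Y, d_{out}; A) \to MDC^b_\bullet(X, d_{out}; A)$ to $\varphi_*$, using hypothesis~(\ref{eq:diamcond_one}) to absorb the inherent discontinuities of the construction into the $b$-equivalence relation. First I would apply Hardt's Trivialization Theorem to $\varphi$ on a bounded representative to obtain a finite subanalytic partition $\{Y_j\}_{j \in J}$ of $Y$ over which $\varphi$ is trivial, yielding continuous subanalytic sections $s_j: Y_j \to X$; the l.v.a. property of $\varphi$ transfers to the $s_j$, and since $s_j(y) \in \varphi^{-1}(y)$ for $y \in Y_j$, hypothesis~(\ref{eq:diamcond_one}) provides the key pointwise estimate
\[
\|s_j(y) - y\| \le \operatorname{diam}(\varphi^{-1}(y)\cup\{y\}) = o(\|y\|^b).
\]

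For a l.v.a. simplex $\tau: \hat{\Delta}_n \to Y$, I would take a subanalytic triangulation of $\hat{\Delta}_n$ compatible with the pullback under $\tau$ of a subanalytic triangulation of $Y$ adapted to $\{Y_j\}$; this produces a homological subdivision $\{\rho_i\}_{i \in I}$ such that $\tau \circ \rho_i(\hat{\Delta}_n) \subseteq \overline{Y_{j(i)}}$ for some $j(i)\in J$. Following case~(1) of the proof of Proposition~\ref{prop:smallqis}, I then replace $\tau \circ \rho_i$ by a $b$-equivalent simplex $\tilde{\tau}_i$ with image in $Y_{j(i)} \cup \{x_0\}$, and set
\[
\psi(\tau) := \sum_{i \in I} \operatorname{sgn}(\rho_i)\, s_{j(i)} \circ \tilde{\tau}_i.
\]
Well-definedness in $MDC^b$ (independence of all the choices, including the Hardt partition and sections) is checked via Remark~\ref{rem:checkindependence}, using the estimate above and the triangle inequality: for any two candidate lifts $s_j \circ \sigma_1,\, s_{j'} \circ \sigma_2$ with $\sigma_1 \sim_b \sigma_2$,
\[
\|s_j \circ \sigma_1 - s_{j'} \circ \sigma_2\| \le \|s_j \circ \sigma_1 - \sigma_1\| + \|\sigma_1 - \sigma_2\| + \|\sigma_2 - s_{j'} \circ \sigma_2\| = o(t^b).
\]
The same $b$-closeness of lifts shows that internal facets of the homological subdivision cancel in $\partial \psi(\tau)$ modulo $b$-equivalence, so $\psi$ is indeed a morphism of complexes.

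Finally, to see that $\varphi_*$ and $\psi$ are mutual inverses: the computation $\varphi_* \circ \psi(\tau) = \sum_i \operatorname{sgn}(\rho_i)\, \tilde{\tau}_i$ uses $\varphi \circ s_{j(i)} = \operatorname{id}_{Y_{j(i)}}$, and this chain is $\sim_b \sum_i \operatorname{sgn}(\rho_i)\, \tau \circ \rho_i \sim_{S,\infty} \tau$. Conversely, for a simplex $\sigma: \hat{\Delta}_n \to X$, both $\sigma(\rho_i(xt,t))$ and $s_{j(i)}(\tilde{\sigma}_i(xt,t))$ lie in fibers of $\varphi$ over points at distance $o(t^b)$ of one another, so~(\ref{eq:diamcond_one}) yields $s_{j(i)} \circ \tilde{\sigma}_i \sim_b \sigma \circ \rho_i$ and hence $\psi \circ \varphi_*(\sigma) \sim_b \sigma$. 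The main technical obstacle is that the Hardt sections $s_j$ are continuous only on the (possibly non-closed) pieces $Y_j$, so no continuous global lift of $\tau$ to $X$ exists; the construction is inherently piecewise, with jumps on the boundaries between Hardt pieces, and it is precisely the diameter condition~(\ref{eq:diamcond_one}) that makes these jumps $o(t^b)$-small and hence invisible in $MDC^b$. No Lipschitz hypothesis on $\varphi$ is used, which is the improvement over Corollary~\ref{cor:b-maps colapsediameter}.
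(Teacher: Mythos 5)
Your argument is sound in substance and rests on the same two ingredients as the paper's proof --- Hardt trivialization of $\varphi$ to produce subanalytic sections over the strata, and the diameter condition~(\ref{eq:diamcond_one}) combined with the triangle inequality to make every discontinuity of the construction $o(t^b)$ --- but it is organized differently. The paper never builds a two-sided chain inverse: it proves injectivity directly via Lemma~\ref{lema:banulacion}, by taking a sequence of immediate equivalences witnessing $\varphi_*(z)\sim_{S,b}0$ downstairs and running the \emph{same} homological subdivisions on $z$ upstairs, then using $\sigma\sim_b\varphi\comp\sigma$ to transport the final $\sim_b 0$ relation back to $X$; surjectivity is then a one-sided lifting of simplices that are small with respect to the Hardt stratification, after invoking Proposition~\ref{prop:smallqis}. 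Your route buys a cleaner ``isomorphism by explicit inverse'' statement, at the price of a heavier well-definedness check for $\psi$ (independence of the subdivision, of the perturbations $\tilde\tau_i$, of the Hardt partition and of the chosen sections, plus compatibility with $\to_\infty$ and with $\sim_b$); your displayed triangle inequality is exactly the right estimate for all of these, so this is bookkeeping rather than a missing idea. The paper's injectivity argument is the device that lets it avoid precisely this bookkeeping.

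One point you must add: you treat $\varphi_*$ as already defined on $MDC^b_\bullet$, but since $\varphi$ is only l.v.a.\ and explicitly \emph{not} assumed Lipschitz, the functoriality of Proposition~\ref{prop:funct1} does not apply, and the compatibility of $\sigma\mapsto\varphi\comp\sigma$ with the $b$-equivalence relation has to be proved. This is where the paper's proof starts: condition~(\ref{eq:diamcond_one}) gives $\sigma\sim_b\varphi\comp\sigma$ for every l.v.a.\ simplex $\sigma$ in $X$, because $d_{out}(\sigma(tx,t),\varphi(\sigma(tx,t)))$ is bounded by the diameter of the fiber over $\varphi(\sigma(tx,t))$ union its base point; hence $\sigma_1\sim_b\sigma_2$ if and only if $\varphi\comp\sigma_1\sim_b\varphi\comp\sigma_2$, and Remark~\ref{rem:checkindependence} yields the morphism. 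The estimate is the same one you already use for the sections, so the fix is immediate --- but without it the map whose bijectivity you are proving has not been shown to exist.
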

\proof

Let $\sigma_1$, $\sigma_2$ be simplices in $MDC^{\mathrm{pre},\infty}_{\bullet}(X; A)$. 
The condition~(\ref{eq:diamcond_one}) implies the equivalence $\sigma_i\sim_b\varphi\comp\sigma_i$ for $i=1,2$ in $X$. Then we have the equivalence $\varphi\comp\sigma_1\sim_b\varphi\comp\sigma_2$ if and only if we have the equivalence $\sigma_1\sim_b\sigma_2$. 

Then, since the composition with $\varphi$ is compatible with the immediate equivalences, by Remark~\ref{rem:checkindependence} we obtain a well-defined morphism of complexes 
$$\varphi_*\colon MDC^b_{\bullet}(X; A)\to MDC^b_{\bullet}(Y; A).$$

Next, we show that $\varphi$ induces an injective morphism of complexes 
$$\varphi_*\colon MDC^b_{\bullet}(X; A)\to MDC^b_{\bullet}(Y; A).$$

Let $[z]\in MDC^{b}_{\bullet}(X; A)$ be such that $\varphi_*([z])=0$. Express $z=\sum_{i\in I}a_i\sigma_i$.
By Lemma~\ref{lema:banulacion} there exists a sequence of immediate equivalences $w_1\to_\infty ...\to_\infty w_r$ in $MDC^{\mathrm{pre},\infty}_{\bullet}(Y; A)$ such that we have $w_1=\sum_{i\in I}a_i\varphi\comp\sigma_i$ and $w_r\sim_b 0$. Each of the immediate equivalences $w_i\to_\infty w_{i+1}$ involves a family of homological subdivisions. Using the same families of homological subdivisions we produce a sequence $z_1\to_\infty ...\to_\infty z_r$ in $MDC^{\mathrm{pre},\infty}_{\bullet}(Y; A)$ such that we have the equalities $z_1=z=\sum_{i\in I}a_i\sigma_i$ and  $\varphi_*(z_j)=w_j$ for all $j$. 

Express $z_r=\sum_{j\in J}b_j\psi_j$. Then we have the equivalence $w_r=\sum_{j\in J}b_j\varphi\comp\psi_j\sim_b 0$. Since the diameter condition~(\ref{eq:diamcond_one}) implies the equivalence $\psi_i\sim_b\varphi\comp\psi_i$ for all $i\in J$, we have that $z_r\sim_{b}\varphi_*(z_r)=w_r$. Thus, we obtain the equivalence $z_r\sim_b 0$ and this implies the vanishing $[z]=0$ in $MDC^b_{\bullet}(X; A)$ by Lemma~\ref{lema:banulacion}. This finishes the proof of injectivity. 

Now we show surjectivity. By Hardt's Triviality Theorem, there exists a subanalytic stratification $\calY$ of $Y$, such that the restriction of $\varphi$ to the preimage of any stratum is a subanalytic trivial fibration. 

Denote by $\calC$ the collection of closed analytic subsets of $Y$ given by the closures of the strata of the stratification $\calY$. Let $U\subset Y$ be the union of the interiors of the strata in $Y$. Then $U$ is a dense subanalytic subset of $Y$. By Proposition~\ref{prop:smallqis} it is enough to show that any simplex $\sigma$ in $MDC^{\mathrm{pre},\infty,\calC\cap U}_{\bullet}(Y;A)$ has a subanalytic lifting to $X$.

For such a simplex $\sigma:\hat{\Delta}_n\to Y$ the image of $\hat{\Delta}_n\setminus\{(0,0)\}$ is completely contained in an stratum of $\calY$. Then, using the triviality given by Hardt's Triviality Theorem, we construct a subanalytic lifting $\widetilde{\sigma}:\hat{\Delta}_n\setminus\{(0,0)\}\to X$ of $\sigma$ outside the vertex. We define the extension of $\widetilde{\sigma}$ to $\hat{\Delta}_n$ by $\widetilde{\tau}_i(0,0):=x_0$. We denote the extension to $\hat{\Delta}_n$ also by $\widetilde{\sigma}$. The  graph of $\widetilde{\tau}_i$ is the closure of the graph of $\tau_i$, which is subanalytic. Hence $\widetilde{\sigma}$ defines a l.v.a subanalytic lifting whose image by $\varphi_*$ equals $\sigma_i$ in $MDC^b(Y,d_{out})$. 
\endproof

\section{Metric homotopy and \texorpdfstring{$b$}{b}-homotopy invariance}

Now we prove the invariance of MD Homology by different kinds of metric homotopies. Here the theory differs if we consider actual (Lipschitz l.v.a. subanalytic) maps or $b$-maps. For actual maps the notion of metric homotopy is simply a family of Lipschitz l.v.a subanalytic maps   with uniform Lipschitz and l.v.a. constant. For $b$-maps the definition is slightly more elaborated. 

In this section $I$ denotes the unit interval $[0,1]$.

\subsection{Metric homotopy}

\begin{definition}[Metric homotopy]
\label{def:1homotopy}
Let $(X,x_0,d_X)$ and $(Y,y_0,d_Y)$ be metric subanalytic germs. Let $f,g:(X,x_0,d_X)\to (Y,y_0,d_Y)$ be Lipschitz l.v.a. subanalytic maps. A continuous subanalytic map $H:X \times I \to Y$ is called a {\em metric homotopy between $f$ and $g$}, if there is a uniform constant $K\geq 0$ such that for any $s$ the mapping $H_s := H(-,s)$ is  Lipschitz l.v.a. subanalytic  with  Lipschitz l.v.a. constant $K$  and $H_0=f$ and $H_1=g$. 

\end{definition}

\begin{theo}\label{theo:homotopyinvariance}
Let $(X,x_0,d_X)$ and $(Y,y_0,d_Y)$ be metric subanalytic germs. Let $A$ be an abelian group. 
\begin{enumerate}
\item Let $f,g:(X,x_0)\to (Y,y_0)$ be  l.v.a. subanalytic maps such that there exists a continuous subanalytic mapping $H:X\times I\to Y$ with $H_0=f$ and $H_1=g$ satisfying that there exists a uniform constant $K>0$ such that for every $s$, the mapping $H_s$ is l.v.a. for the constant $K$. Then we have that both $f^\infty,g^\infty:MDC^\infty_\bullet(X;A)\to MDC^\infty_\bullet(Y;A)$ are the same in $\calD(Ab)^-$.
 \item\label{metric homotopy invariance} Let $f,g:(X,x_0,d_X)\to (Y,y_0,d_Y)$ be Lipschitz l.v.a. subanalytic maps that are metrically homotopic. Then 
 $$f_{\bullet},g_{\bullet}:MDC^{\star}_\bullet(X;A)\to MDC^{\star}_\bullet(Y;A)$$
 represent the same map in the category $\BB-\calD(Ab)^-$. As a consequence they induce the same homomorphism in MD Homology.
 
\end{enumerate}
\end{theo}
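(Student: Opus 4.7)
I would adapt the classical prism/chain-homotopy argument from singular homology to the conical setting. Given a l.v.a.\ simplex $\sigma: \hat{\Delta}_n \to X$, form the subanalytic map $\tilde{H}_\sigma : \hat{\Delta}_n \times I \to Y$ by $\tilde{H}_\sigma(z, s) := H(\sigma(z), s)$. The uniform l.v.a.\ condition on $H_s$ makes every ``time slice'' an l.v.a.\ simplex and forces the edge $\{(0,0)\} \times I$ to be sent to $y_0$. The standard simplicial decomposition of the prism $\Delta_n \times I$ into $n+1$ simplices $[v_0, \dots, v_i, v'_i, \dots, v'_n]$ (with $v'_j = (v_j, 1)$) extends, via the cone construction on the first factor, to $n+1$ subanalytic l.v.a.\ parametrizations $\pi_i : \hat{\Delta}_{n+1} \to \hat{\Delta}_n \times I$ of the form $(ty, t) \mapsto (t\pi_i^1(y,t), t, \pi_i^2(y,t))$, so that the cone parameter of the source $\hat{\Delta}_{n+1}$ controls the outer distance in the target linearly. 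Define
$$P_n(\sigma) := \sum_{i=0}^n (-1)^i\, \tilde{H}_\sigma \circ \pi_i \in MDC^{\mathrm{pre}, \infty}_{n+1}(Y; A),$$
extended linearly. A direct computation identical to the classical one yields the chain-homotopy identity $\partial \circ P_n + P_{n-1} \circ \partial = g_\bullet - f_\bullet$ in $MDC^{\mathrm{pre}, \infty}_\bullet$.

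The rest consists in showing that $P$ descends to the relevant quotient complexes. By Remark~\ref{rem:checkindependence}, I need two compatibilities: (a) with immediate equivalences and (b) with the $b$-equivalence relation. For (a), any homological subdivision $\{\rho_j\}_{j \in J}$ of $\hat{\Delta}_n$ together with the prism decomposition, refined if necessary using the subanalytic Hauptvermutung as in the proof of Lemma~\ref{lem:Seqinversion}, yields homological subdivisions of each $\hat{\Delta}_{n+1}$ appearing in $P_n(\sigma)$, giving an immediate equivalence $P_n(\sigma) \to_\infty P_n\bigl(\sum_j sgn(\rho_j)\, \sigma \circ \rho_j\bigr)$. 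This already settles part (1), since no $b$-compatibility is required in the $\infty$ case. For (b), under the metric homotopy hypothesis the uniform Lipschitz bound $K$ gives, for any $\sigma_1 \sim_b \sigma_2$, the estimate
$$d_Y\bigl(\tilde{H}_{\sigma_1}(z,s), \tilde{H}_{\sigma_2}(z,s)\bigr) = d_Y\bigl(H_s(\sigma_1(z)), H_s(\sigma_2(z))\bigr) \leq K\, d_X(\sigma_1(z), \sigma_2(z)),$$
uniformly in $s$, which after composition with each $\pi_i$ (itself l.v.a.) yields $P_n(\sigma_1) \sim_b P_n(\sigma_2)$ via the arc characterization of Lemma~\ref{lem:arccharacterization}. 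Hence $P$ descends to a chain homotopy in $MDC^b_\bullet$ for every $b$, and since $P$ commutes with the connecting maps $h^{b_1, b_2}$ by construction, the identity $\partial P + P \partial = g_\bullet - f_\bullet$ places $f_\bullet$ and $g_\bullet$ in the same morphism of $\BB\text{-}\calD(Ab)^-$, proving part (2).

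The main technical obstacle will be the clean construction of the subanalytic l.v.a.\ parametrizations $\pi_i$: since $\hat{\Delta}_n \times I$ is not literally a cone (its ``vertex'' is the edge $\{(0,0)\} \times I$), the classical affine prism decomposition must be carried out in a way that keeps the cone parameter of $\hat{\Delta}_{n+1}$ linked to the cone parameter $t$ of $\hat{\Delta}_n$, so that the l.v.a.\ property is inherited. Equally subtle is the choice, for each homological subdivision of $\hat{\Delta}_n$, of a common refinement of the subdivided prism and the prism decomposition that exhibits the required immediate equivalence; once this refinement is in place, the remainder is a routine adaptation of the classical singular argument.
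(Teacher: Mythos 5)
Your proposal is correct and follows essentially the same route as the paper: the paper also runs the prism argument on the cone $\widehat{\Delta_n\times I}:=\{(t(x,s),t)\}$ over the prism (which resolves exactly the ``vertex edge'' issue you flag), descends via Remark~\ref{rem:checkindependence}, uses the subanalytic Hauptvermutung for compatibility with homological subdivisions, and checks $b$-compatibility through the uniform Lipschitz constant and the arc characterization of Lemma~\ref{lem:arccharacterization}. The only cosmetic difference is that the paper takes an arbitrary subanalytic triangulation of $\widehat{\Delta_n\times I}$ and proves independence of that choice by common refinement, rather than fixing the classical decomposition into $n+1$ simplices as you do.
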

\begin{proof}
Let us prove Assertion (2). The proof of Assertion (1) is completely similar, disregarding metric considerations. 

A common proof for the analogue statement in singular homology uses the inclusions $x \to (x,0)$ and $x \to (x,1)$ from $X$ to $X\times I$ and constructs a chain homotopy between the maps they induce on the singular chain complex; functoriality then yields the desired result. To prove Assertion~(2), we imitate the idea behind that chain homotopy, but as $X\times I$ is not an object of the category of  metric subanalytic germs, we directly construct a chain homotopy $\eta^b$ from $f_{\bullet}^b$ to $g_{\bullet}^b$. Such a chain homotopy will be clearly compatible with the homomorphisms connecting the complexes for different $b$'s.

Let $H$ be a metric homotopy from $f$ to $g$. In order to construct the chain homotopy $\eta^b_\bullet:MDC^b_\bullet(X;A) \to MDC_{\bullet+1}^b(Y;A)$, by Remark~\ref{rem:checkindependence}, it is enough to construct a homomorphism $h_\bullet:MDC^{pre, \infty}_\bullet(X;A) \to MDC_{\bullet+1}^b(Y;A)$ fulfilling the two conditions of the remark. 

Define 
$$
\widehat{\Delta_n\times I}:=\{(t(x,s),t): (x,s)\in\Delta_n\times I, t\in [0,1)\} \subset \mathbb{R}^{n+2}.
$$
The parameter $s$ is the ``homotopy parameter'', and the parameter $t$ measures the proximity to the vertex, as usually along this paper. We have the notion of l.v.a. maps from $\widehat{\Delta_n\times I}$ to a metric germ $(X,x_0,d_X)$, in an analogous way with the case of maps from $\hat{\Delta}_n$. Moreover Definition~\ref{def:subdiv} extends in an obvious way to a notion of homological subdivision of $\widehat{\Delta_n\times I}$.

Let $\sigma:\hat{\Delta}_n\to X$ be a l.v.a. simplex. Define $\hat{h}_n(\sigma):\widehat{\Delta_n\times I}\to Y$ to be the continuous subanalytic extension of the map given by $(t(x,s),t) \mapsto H(\sigma(tx,t),s)$ for $t\neq 0$. The map $\hat{h}_n(\sigma)$ is subanalytic and l.v.a..

Let $\alpha_j:|K|\to \widehat{\Delta_n\times I}$ be triangulations of $\widehat{\Delta_n\times I}$ for $j=1,2$, and let 
$\{\rho_{j,i}\}_{i\in I_j}$ be an orientation preserving homological subdivisions of $\widehat{\Delta_n\times I}$ associated with each of the triangulations. For $j=1,2$ the sum $z_j:=\sum_{i\in I_j}\hat{h}_n(\sigma)\comp\rho_{j,i}$ is an element of $MDC_\bullet^{\mathrm{pre},\infty}(Y,A)$. By choosing a common refinement of the subanalytic triangulations $\alpha_1$ and $\alpha_2$ and arguing like in the proof of Lemma~\ref{lem:Seqinversion}, we show that there exists an element $z_3\in MDC_\bullet^{\mathrm{pre},\infty,}(Y,A)$ and immediate equivalences $z_1\to_\infty z_3$ and  $z_2\to_\infty z_3$.
This shows that the assignment $h_n(\sigma):=z_j$ in  $MDC_\bullet^{b}(Y,A)$ gives, extending by linearity, a well defined homomorphism
$$h_n:MDC^{pre, \infty}_n(X;A) \to MDC_{n+1}^b(Y;A).$$
Now we check that the conditions of Remark~\ref{rem:checkindependence} are satisfied. 

If we have two $b$-equivalent simplices $\sigma\sim_b\sigma'$, in order to prove the equivalence $h_n(\sigma)\sim_b h_n(\sigma')$, using the arc characterization of Lemma~\ref{lem:arccharacterization}, it is enough to prove that for any subanalytic l.v.a continuous arc $\gamma:[0,\epsilon)\to \widehat{\Delta}_n$, with coordinates $\gamma(t)=(\gamma_2(t)\gamma_1(t),\gamma_2(t))$, and for any subanalytic function $\rho:[0,\epsilon)\to I$, we have the vanishing of the limit
$$
\lim\limits_{t\to 0^+}\frac{d(H(\sigma(\gamma(t)),\rho(s)),\sigma'(\gamma(t)),\rho(s)))}{t^b}=0.
$$
Since the numerator is bounded by $Kd(\sigma(\gamma(t),\sigma'(\gamma(t))$, and we have $\sigma\sim_b\sigma'$ and $\gamma$ is l.v.a. the limit vanishes as needed.  

Let $\sigma$ be a $n$-simplex, and $\{\rho_i\}_{i\in I}$ be a homological subdivision of $\hat{\Delta}_n$ associated with a subanalytic triangulation $\alpha:|K|\to \hat{\Delta}_n$. The triangulation $\alpha$ induces a decomposition of $\widehat{\Delta_n\times I}$ that can be refined to a subanalytic triangulation $\beta$ of $\widehat{\Delta_n\times I}$. Let $\{\mu_k\}_{k\in K}$ be a homological subdivision associated to  $\beta$. Then we have that $h_n(\sigma)$, previously defined, coincides with $\sum_{k}sgn(\mu_k)\hat{h}_n(\sigma)\comp\mu_{k}$. 

Thus, we have constructed for every $n$ a well defined map 
$$\eta^b_n:MDC^{b}_n(X;A) \to MDC_{n+1}^b(Y;A).$$
In order to prove that it is a chain homotopy we have to check the equation $\partial\eta^b_{n}+\eta^b_{n-1}\partial=g^b_\bullet-f^b_\bullet$. For this we only need a canceling of interior boundaries very similar to the proof of Lemma~\ref{lem:compatpartialsum}.
\end{proof}

\subsection{\texorpdfstring{$b$}{b}-Homotopies.}

For the definition of $b$-homotopies we need a notion of product of a metric subanalytic germ $(X,0,d_X)$ with the interval $I$, which lives in the category of metric subanalytic germs. Moreover we need the hypothesis $d_{X,out}\leq d_X$ which in particular holds for the inner and the outer metrics.

\begin{definition}

Let $(X, 0, d_X)$ be a metric subanalytic germ. For $x\in X$ we denote by $||x||$ the usual euclidean norm of $x$, which may differ from $d_X(x,0)$. By $X \times_p I$, we denote the following metric subanalytic germ $(\tilde{X}, \tilde{v}, \tilde{d})$:
\begin{align*}
& \tilde{X} := \{ (x, \|x \| s) : x \in X, s \in I \}\subset X\times\RR \\
& \tilde{v} := (0,0)\\
& \tilde{d}((x_1, ||x_1||s_1), (x_2, ||x_2||s_2)) := \sup \{ d_X(x_1, x_2), \text{ } d_{X, \triangledown}((x_1, ||x_1||s_1), (x_2, ||x_2||s_2)) \}
\end{align*}
where $d_{X, \triangledown}$ is defined as follows: let $T$ denote the straight cone over the unit interval:

$$T := \{(d,ds) \in \mathbb{R}^2 : d \in [0,1], s \in [0, 1] \}$$
Let $\dtr$ denote the maximum metric on $T$. We define 
 
$$d_{X, \triangledown}((x_1, |x_1\|s_1), (x_2, \|x_2\|s_2)) := \dtr ( \|x_1\|, \|x_1\|s_1), (\|x_2\|, \|x_2\|s_2))$$



\end{definition}

\begin{lema} Let $d_X$  be a metric on a subanalytic germ $(X,x_0)$ such that $d_{X,out}\leq d_X$.  
 The following inequality holds
\begin{equation}\label{eq:homotopy}
d_{X, \triangledown}((x_1, s),(x_2, s))\leq M\sqrt{2} d_X(x_1, x_2)
\end{equation}
for any $x_1, x_2 \in X, s \in I$, where $M$ is the bi-Lipschitz constant between the maximum and the Euclidean norm on $T$.

Moreover, 
\begin{equation}\label{eq:tilded}
\tilde{d}((x_1, s),(x_2, s))\leq M\sqrt{2} d_X(x_1, x_2).
\end{equation}
\end{lema}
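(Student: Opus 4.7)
The proof will be a direct computation after unfolding the definitions, so the main task is to check that nothing more is needed than the reverse triangle inequality and the hypothesis $d_{X,out}\leq d_X$.

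First I would unpack the left hand side of (\ref{eq:homotopy}): by definition
\[
d_{X,\triangledown}((x_1,\|x_1\|s),(x_2,\|x_2\|s)) \;=\; \dtr\bigl((\|x_1\|,\|x_1\|s),(\|x_2\|,\|x_2\|s)\bigr).
\]
Since $M$ is the bi-Lipschitz constant between $\dtr$ and the Euclidean metric on $T$, the right-hand side is bounded above by $M$ times the Euclidean distance between the two points $(\|x_1\|,\|x_1\|s)$ and $(\|x_2\|,\|x_2\|s)$ of $T\subset \mathbb{R}^2$. That Euclidean distance equals
\[
\bigl|\|x_1\|-\|x_2\|\bigr|\sqrt{1+s^{2}} \;\leq\; \sqrt{2}\,\bigl|\|x_1\|-\|x_2\|\bigr|,
\]
using $s\in [0,1]$. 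By the reverse triangle inequality in $\mathbb{R}^m$, $\bigl|\|x_1\|-\|x_2\|\bigr|\leq \|x_1-x_2\|=d_{X,out}(x_1,x_2)$, and by the standing hypothesis $d_{X,out}\leq d_X$, this is at most $d_X(x_1,x_2)$. Chaining these inequalities gives (\ref{eq:homotopy}).

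For (\ref{eq:tilded}) the only remaining observation is that $\tilde d((x_1,s),(x_2,s))$ is defined as the supremum of $d_X(x_1,x_2)$ and $d_{X,\triangledown}((x_1,\|x_1\|s),(x_2,\|x_2\|s))$. The first term is obviously bounded by $M\sqrt{2}\,d_X(x_1,x_2)$ because $M\geq 1$, and the second term is bounded by the same quantity by (\ref{eq:homotopy}). Taking the supremum yields (\ref{eq:tilded}). There is no genuine obstacle in the argument; the only thing worth double-checking is that $M\geq 1$, which is automatic since $\dtr$ and the Euclidean metric coincide up to a factor that is at least $1$ in one direction.
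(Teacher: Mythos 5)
Your proof is correct and follows essentially the same route as the paper's: the same chain $d_{X,\triangledown}\leq M\sqrt{1+s^2}\,\bigl|\|x_1\|-\|x_2\|\bigr|\leq M\sqrt{1+s^2}\,\|x_1-x_2\|\leq M\sqrt{2}\,d_X(x_1,x_2)$ using $s\leq 1$, followed by taking the supremum in the definition of $\tilde d$. Your explicit check that $M\geq 1$ (so the $d_X$ term in the supremum is also dominated) is a small point the paper handles by writing the bound as $d_X\cdot\max\{M\sqrt{1+s^2},1\}$, but the content is identical.
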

\proof
We have the following easy chain of inequalities:
$$d_{X, \triangledown}((x_1, s),(x_2, s))\leq M \sqrt{1 + s^2} |\|x_1 \| - \|x_2\|| \leq$$
$$\leq M\sqrt{1 + s^2} \|x_1 - x_2 \| \leq M\sqrt{1 + s^2} d_X(x_1, x_2).$$
Notice that $s\leq 1$.
To prove (\ref{eq:tilded}) we just use the previous inequality and the definition to get that $\tilde{d}\leq d_X\cdot max{M\sqrt{1 + s^2},1}$.  
\endproof

\begin{definition}[$b$-homotopy]\label{def:b-homotopy}
Let $(X, x_0, d_X)$ and $(Y, y_0, d_Y)$ be metric subanalytic germs. A {\em b-homotopy} is a $b$-map from $X \times_p I$ to $Y$.
\end{definition}

\begin{theo}\label{theo:homotopyinvariance2} If there is a $b$-homotopy $H$ with $H_0=f$ and $H_1=g$,  then  
 $$f_{\bullet},g_{\bullet}:MDC^{b}_\bullet(X;A)\to MDC^{b}_\bullet(Y;A)$$
 represent the same map in the category $\calD(Ab)^-$. As a consequence they induce the same homomorphism in MD homology.
\end{theo}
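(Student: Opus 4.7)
The plan is to adapt the proof of Theorem~\ref{theo:homotopyinvariance} to the $b$-map setting, constructing a chain homotopy $\eta^b_\bullet:MDC^b_\bullet(X;A) \to MDC^b_{\bullet+1}(Y;A)$ from $f_\bullet$ to $g_\bullet$, but now using a piecewise representative $H = \{(C_i, H_i)\}_{i\in I}$ of the $b$-homotopy in place of a single continuous metric homotopy.

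For a l.v.a. $n$-simplex $\sigma:\hat{\Delta}_n \to X$, first define an auxiliary continuous subanalytic l.v.a. map $\tilde{\sigma}: \widehat{\Delta_n\times I}\to X\times_p I$ by the formula
$$
\tilde{\sigma}(tx,ts,t) := \bigl(\sigma(tx,t),\, \|\sigma(tx,t)\|\, s\bigr).
$$
Pull back the cover $\{C_i\}$ to obtain a finite closed subanalytic cover $\{\tilde{\sigma}^{-1}(C_i)\}$ of $\widehat{\Delta_n\times I}$, and pick a subanalytic triangulation compatible with this cover and with the facets $\widehat{\Delta_n\times\{0\}}$, $\widehat{\Delta_n\times\{1\}}$ and $\widehat{F_k\times I}$ (where $F_k$ is the $k$-th facet of $\Delta_n$), with all maximal triangles meeting the vertex. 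Let $\{\mu_j\}_{j\in J}$ be the associated homological subdivision and pick $i(j)\in I$ with $\mu_j(\hat{\Delta}_{n+1})\subseteq \tilde{\sigma}^{-1}(C_{i(j)})$. Set
$$
\eta_n(\sigma) := \sum_{j\in J} sgn(\mu_j)\, H_{i(j)}\circ \tilde{\sigma}\circ \mu_j \in MDC^b_{n+1}(Y;A),
$$
and extend linearly to $MDC^{\mathrm{pre},\infty}_n(X;A)$. Morally this is the chain assigned to the composite $b$-map $H\circ\tilde{\sigma}$ provided by Proposition~\ref{prop:compbmaps}.

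To see that $\eta_n$ descends to $MDC^b_n(X;A)$, I verify the two conditions of Remark~\ref{rem:checkindependence} together with independence of the auxiliary choices. Independence of the triangulation uses a common-refinement argument as in Lemma~\ref{lem:Seqinversion}. Independence of the choice of $i(j)$ and of the representative $\{(C_i, H_i)\}$ of $H$ both follow from the $b$-map property of $H$ (applied to the trivial $b$-equivalence of a point with itself) together with Remark~\ref{rem:point characterization of b-equivalence}, which upgrades the resulting pointwise $b$-equivalence to $\sim_b$-equivalence of simplices. Compatibility with $\infty$-immediate equivalences on $\sigma$ is obtained by lifting homological subdivisions of $\hat{\Delta}_n$ to homological subdivisions of $\widehat{\Delta_n\times I}$, in strict analogy with the proof of Theorem~\ref{theo:homotopyinvariance}. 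The crucial compatibility with $\sim_b$-equivalence of simplices in $X$ rests on inequality~(\ref{eq:tilded}): if $\sigma_1\sim_b\sigma_2$ then $\tilde{d}(\tilde{\sigma}_1(tx,ts,t),\tilde{\sigma}_2(tx,ts,t)) \leq M\sqrt{2}\, d_X(\sigma_1(tx,t),\sigma_2(tx,t))$, so $\tilde{\sigma}_1\sim_b\tilde{\sigma}_2$ as $(n+1)$-simplices in $X\times_p I$, and the $b$-map property of $H$ (with Remark~\ref{rem:point characterization of b-equivalence}) transports this into the desired $\sim_b$-equivalence of the chains $\eta_n(\sigma_1)$ and $\eta_n(\sigma_2)$.

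Finally the chain homotopy identity $\partial\eta_n + \eta_{n-1}\partial = g_\bullet - f_\bullet$ is obtained by the standard cancellation of interior faces of the triangulation of $\widehat{\Delta_n\times I}$, using that its boundary decomposes into the bottom face $\widehat{\Delta_n\times\{0\}}$ (contributing $f\circ\sigma$ via $H_0=f$), the top face $\widehat{\Delta_n\times\{1\}}$ (contributing $g\circ\sigma$ via $H_1=g$), and the side faces $\widehat{F_k\times I}$ (contributing the pieces of $\eta_{n-1}(\sigma\circ j_n^k)$). The main obstacle will be the careful bookkeeping of the various $b$-equivalences: unlike in Theorem~\ref{theo:homotopyinvariance} the homotopy is piecewise, so well-definedness requires simultaneously reconciling different choices of $H_i$ on overlaps, different representatives of the $b$-homotopy $H$, and the $\sim_b$-equivalence on simplices in $X$. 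The hypothesis $d_{X,\mathrm{out}}\leq d_X$ underlying inequality~(\ref{eq:tilded}) is exactly what makes this last compatibility step go through.
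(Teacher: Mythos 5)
Your construction is correct, and the geometric core (the prism $\widehat{\Delta_n\times I}$, the map $\tilde{\sigma}$ into $X\times_p I$, inequality~(\ref{eq:tilded}), subdivision and cancellation of interior faces) is exactly the paper's. The organization differs in one significant way, though. The paper first observes that it suffices to show that the two \emph{continuous} Lipschitz l.v.a. inclusions $i_0,i_1:X\to X\times_p I$, $i_s(x)=(x,\|x\|s)$, induce chain homotopic maps $MDC^b_\bullet(X)\to MDC^b_\bullet(X\times_p I)$; since $f_\bullet=H^b_\bullet\circ (i_0)_\bullet$ and $g_\bullet=H^b_\bullet\circ (i_1)_\bullet$ by the functoriality for $b$-maps already established in Proposition~\ref{prop:b-discfunctoriality}, composing the chain homotopy with the chain map $H^b_\bullet$ finishes the argument. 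This confines all piecewise/discontinuous bookkeeping to the previously proven functoriality statement, and the chain homotopy itself only ever involves continuous maps. You instead inline the application of $H=\{(C_i,H_i)\}$ into the definition of $\eta_n$, which forces you to re-verify, inside the well-definedness check, essentially items (1), (2), (4) and (5) of the proof of Proposition~\ref{prop:b-discfunctoriality} (independence of the choice of $i(j)$, of the refinement, of the representative of $H$, and compatibility with $\sim_b$), and to note that the interior faces of adjacent prisms only cancel up to $b$-equivalence when their assigned indices $i(j)$ differ — which is fine in the quotient $MDC^b_\bullet(Y;A)$ but is precisely the bookkeeping the paper's factorization avoids. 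Both routes work; the paper's is the more economical, yours makes the dependence on the $b$-map property more explicit at each step.
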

\begin{proof}
 For this proof we can follow the classical proof for singular homotopy much more closely: denote by $i_s:X\to X \times_p I$ the inclusion given by $i_s(x):=(x,||x||s)$ (which is a Lipschitz l.v.a.  subanalytic map by (\ref{eq:tilded})). It is enough to prove that $i_0$ and $i_1$ induce chain homotopic homomorphisms from $MDC^b_*(X)$ to $MDC^b_*(X\times_p I)$. 

Given any l.v.a. simplex $\sigma:\hat{\Delta}_n\to X$ we define $\hat{\eta}_n(\sigma):\widehat{\Delta_n\times I}\to X\times_p I$ to be the map $(t(x,s),t) \mapsto (\sigma(tx,t),||\sigma(tx,t)||s)$. 

In order to define the homomorphism $\eta_n:MDC^b_n(X)\to MDC^b_n(X\times_p I)$ we proceed as in the proof of Theorem~\ref{theo:homotopyinvariance}: choose an orientation preserving homological subdivision $\{\rho_k\}_{k\in K}$ of $\widehat{\Delta_n\times I}$ associated with a triangulation and define $\eta_n(\sigma):=\sum_{k\in K}\hat{\eta}_n(\sigma)\comp\rho_i$. Independence of the subdivision and compatibility with immediate equivalences is checked in the same way. Compatibility with $b$-equivalences 
follows by the inequality~(\ref{eq:tilded}).

Checking that the collection of maps $\eta_n$ for $n$ varying is a chain homotopy between the homomorphisms induced by $i_0$ and $i_1$ is like in Theorem~\ref{theo:homotopyinvariance}.

\end{proof}

\begin{definition}
\label{def:conicalretract}
Let $\iota:X\hookrightarrow Y$ be a Lipschitz l.v.a. map of  metric subanalytic germs which on the level of sets is an injection. A {\em$b$-retraction} is a $b$-map $r:Y\to X$ such that 
$r\comp\iota$ is the identity as a $b$-map. A {\em $b$-deformation retraction} is a $b$-retraction such that $\iota\comp r$ is $b$-homotopic to the identity. In those cases $X$ is called a {\em $b$-retract} or {\em $b$-deformation retract of $Y$}, respectively. A metric subanalytic germ is called {\em $b$-contractible} if it admits $[0,\epsilon)$ as a $b$-deformation retract.   
\end{definition}

The usual consequences of the existence of retracts and deformation retracts in topology hold trivially in our theory

\begin{cor}
\label{cor:retracts}
If $\iota:X\hookrightarrow Y$ admits a $b$-retraction the connecting homomorphisms in the long exact sequence of relative $b$-MD homology vanishes. If $\iota:X\hookrightarrow Y$ admits a $b$-deformation retraction, $\iota$ induces a quasi-isomorphism of $b$-MD chain complexes.
If $X$ is $b$-contractible then it has the $b$-MD homology of the metric subanalytic germ $[0,\epsilon)$.
\end{cor}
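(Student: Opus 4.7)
The plan is to reduce each of the three assertions to the functoriality of $b$-MD homology for $b$-maps (Proposition~\ref{prop:b-discfunctoriality}) combined with the $b$-homotopy invariance theorem (Theorem~\ref{theo:homotopyinvariance2}), in the standard way familiar from ordinary algebraic topology.

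For the first assertion I would write down the long exact sequence of the pair $(Y,X)$ from Proposition~\ref{prop:relativehomologysequence},
\[
\cdots\to MDH^b_n(X;A)\xrightarrow{\iota_*} MDH^b_n(Y;A)\to MDH^b_n(Y,X;A)\xrightarrow{\delta} MDH^b_{n-1}(X;A)\to\cdots,
\]
and argue that the equality $r\circ\iota=\Id_X$ of $b$-maps is carried by Proposition~\ref{prop:b-discfunctoriality} to the \emph{strict} identity $r_\bullet\circ\iota_\bullet=\Id$ at the chain level, and in particular to $r_*\circ\iota_*=\Id$ on homology. Hence $\iota_*$ is injective in every degree, and exactness forces $\mathrm{Im}(\delta)\subseteq\ker(\iota_*)=0$, so $\delta$ vanishes.

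For the second assertion I would additionally invoke the hypothesis that $\iota\circ r$ is $b$-homotopic to $\Id_Y$. By Theorem~\ref{theo:homotopyinvariance2}, this yields that $\iota_\bullet\circ r_\bullet$ and $\Id$ agree in $\calD(Ab)^-$, i.e.\ they are chain-homotopic. Combined with the strict identity $r_\bullet\circ\iota_\bullet=\Id$ from the previous step, $\iota_\bullet$ is thereby exhibited as a chain homotopy equivalence, and in particular a quasi-isomorphism of $b$-MD chain complexes.

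The third assertion falls out as an immediate corollary: by Definition~\ref{def:conicalretract}, $b$-contractibility of $X$ means exactly that $[0,\epsilon)\hookrightarrow X$ admits a $b$-deformation retraction, and the second assertion then gives $MDH^b_\bullet(X;A)\cong MDH^b_\bullet([0,\epsilon);A)$. There is essentially no obstacle here: everything is formal once the chain-level functoriality (Proposition~\ref{prop:b-discfunctoriality}) and chain-level $b$-homotopy invariance (Theorem~\ref{theo:homotopyinvariance2}) are invoked. The only point worth being attentive to is that functoriality produces the equality $r_\bullet\circ\iota_\bullet=\Id$ strictly rather than up to chain homotopy, which is precisely what upgrades the argument in the second assertion from ``$\iota_*$ is an isomorphism in homology'' to ``$\iota_\bullet$ is a chain homotopy equivalence''.
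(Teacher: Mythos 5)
Your proposal is correct and is exactly the standard formal argument the paper has in mind: the paper gives no explicit proof, stating only that ``the usual consequences of the existence of retracts and deformation retracts in topology hold trivially,'' and your chain of reasoning (strict chain-level identity $r_\bullet\comp\iota_\bullet=\Id$ from functoriality for $b$-maps, chain homotopy $\iota_\bullet\comp r_\bullet\simeq\Id$ from Theorem~\ref{theo:homotopyinvariance2}, then exactness of the relative sequence) is precisely how these consequences are obtained. Your closing observation that functoriality yields $r_\bullet\comp\iota_\bullet=\Id$ strictly, upgrading $\iota_\bullet$ to a chain homotopy equivalence rather than merely an isomorphism on homology, is the right point to be attentive to.
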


\section{\texorpdfstring{$b$}{b}-covers, Mayer-Vietoris, Excision and the \texorpdfstring{\v{C}}{C}ech spectral sequence}

\subsection{An extension of relative homology}\label{sec:relX}

For the proof and statement of the relative Mayer-Vietoris exact sequence we need to generalize the concept of relative homology.

\begin{definition}[Category of pairs of metric  subanalytic subgerms]
 
A {\em pair of metric  subanalytic subgerms} $(Y_1,Y_2,x_0,d_X)_{rel\ X}$ is given by two metric subanalytic subgerms $(Y_i,x_0)$ of a certain metric subanalytic germ $(X,x_0,d_X)$. Recall that on each $Y_i$ we consider the restriction metric $d_X|_{Y_i}$.

A {\em Lipschitz l.v.a. subanalytic map between the pairs of subgerms} $(Y_1,Y_2,x_0,d_X)_{rel\ X}$ and $(Y_1',Y_2',x_0,d_{X'})_{rel\ X'}$ is a  Lipschitz l.v.a. subanalytic  map $$(Y_1\cup Y_2,x_0,d_{X|_{Y_1\cup Y_2}})\to (Y_1'\cup Y_2',x_0,d_{X'|_{Y'_1\cup Y'_2}})$$ that carries $Y_i$ into $Y_i'$.

The \emph{category of pairs of metric subanalytic subgerms} has,  as objects,  pairs of metric subanalytic subgerms, and as morphisms,  Lipschitz subanalytic l.v.a. maps between them, as defined above.
\end{definition}

\begin{definition}\label{def:relX} Consider $b\in (0,+\infty]$. Given a pair of subanalytic subgerms \\
$(Y_1,Y_2,x_0,d_X)_{rel\ X}$, we identify  $MDC^b_\bullet(Y_i,x_0,d_{X|_{Y_i}})$ with the subgroup of\\ $MDC^b_\bullet(X,x_0,d_{X})$ generated by all l.v.a. simplices in $X$ that are $b$-equivalent to a representative fully contained in $Y_i$. We define the {\em complex of relative $b$-moderately discontinuous chains of the pair  $(Y_1,Y_2,x_0,d_X)_{rel\ X}$ with coefficients in $A$}, denoting it by $MDC^b_\bullet((Y_1,Y_2,x_0,d_X);A)_{rel X}$, as the quotient 
$$MDC^b_\bullet((Y_1,x_0,d_{X|_{Y_1}});A)+ MDC^b_\bullet((Y_2,x_0,d_{X|_{Y_2}});A)\biggm/ MDC^b_\bullet((Y_2,x_0,d_{X|_{Y_2}});A).$$


The {\em $b$-moderately discontinuous homology of the pair  $(Y_1,Y_2,x_0,d_X)_{rel\ X}$ is   denoted by $MDH^b_{*}((Y_1,Y_2,x_0,d_X);A)_{rel\ X}$} and it is the homology of the complex defined above. 

We abbreviate calling these complexes and graded abelian groups the {\em $b$-MD complex} and {\em $b$-MD homology} of  the pair $(Y_1,Y_2,x_0,d_X)_{rel\ X}$ .
\end{definition}

It is straightforward that a Lipschitz subanalytic l.v.a. map $f$ between pairs of subanalytic subgerms of some $(X,x_0,d_X)$, $(X',x'_0,d_{X'})$ induces morphisms at the level of $b$-MD chains for every $b\in (0,+\infty]$ (we denote by $f_*$ the morphism at the level of $b$-MD chains similarly to Notation \ref{not:f*C}). Moreover,  morphisms (\ref{eq:Cb_1-b_2}) and (\ref{eq:Hb_1-b_2}) also hold.
So, the following proposition is obvious from the definitions:

\begin{prop}\label{prop:b-functoriality} 
The assignments $$(Y_1,Y_2,x_0,d_X)_{rel\ X}\mapsto MDC^{\star}_\bullet((Y_1,Y_2,x_0,d_X);A)_{rel\ X}$$ and $$(Y_1,Y_2,x_0,d_X)_{rel\ X}\mapsto MDH^{\star}_*((Y_1,Y_2,x_0,d_X);A)_{rel\ X}$$ are functors from the category of pairs of metric subanalytic subgerms to $\BB-Kom(Ab)^-$ and $\BB-GrAb$ respectively.
\end{prop}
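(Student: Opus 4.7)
The plan is to reduce the functoriality of the relative construction to the already-established functoriality of the absolute construction (Proposition~\ref{prop:functoriality}). Given a Lipschitz l.v.a. subanalytic morphism
$$f: (Y_1, Y_2, x_0, d_X)_{rel\,X} \to (Y_1', Y_2', x'_0, d_{X'})_{rel\,X'},$$
by definition we have a Lipschitz l.v.a. subanalytic map of metric subanalytic germs
$$f: (Y_1\cup Y_2, x_0, d_X|_{Y_1\cup Y_2}) \to (Y_1'\cup Y_2', x'_0, d_{X'}|_{Y_1'\cup Y_2'})$$
which moreover sends each $Y_i$ into $Y_i'$. Proposition~\ref{prop:functoriality} then provides an induced morphism of $\BB$-complexes
$$f_*: MDC^\star_\bullet(Y_1\cup Y_2; A) \to MDC^\star_\bullet(Y_1'\cup Y_2'; A),$$
compatible with the surjections $h^{b,b'}$ of~(\ref{eq:Cb_1-b_2}) for all $b\geq b'$.

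Next, under the identification of Definition~\ref{def:relX}, the subcomplex $MDC^b_\bullet(Y_i, x_0, d_X|_{Y_i})$ is generated by classes of l.v.a. simplices which are $b$-equivalent to simplices whose image lies in $Y_i$. On any representative $\sigma$ with image in $Y_i$, the simplex $f\comp\sigma$ has image in $Y_i'$ because $f(Y_i)\subseteq Y_i'$, hence $f_*[\sigma]$ lies in $MDC^b_\bullet(Y_i', x'_0, d_{X'}|_{Y_i'})$. Consequently $f_*$ carries $MDC^b_\bullet(Y_1)+MDC^b_\bullet(Y_2)$ into the analogous sum for the target, and sends $MDC^b_\bullet(Y_2)$ into $MDC^b_\bullet(Y_2')$. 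Therefore $f_*$ descends to a morphism of quotient complexes
$$f_*: MDC^\star_\bullet((Y_1,Y_2,x_0,d_X);A)_{rel\,X}\to MDC^\star_\bullet((Y_1',Y_2',x'_0,d_{X'});A)_{rel\,X'},$$
still compatible with the $\BB$-structure. Compatibility with composition and identities is immediate from the corresponding properties on the absolute complexes given by Proposition~\ref{prop:funct1}, and the second assertion for graded abelian groups follows by postcomposing with the homology functor $\BB\text{-}H_*$.

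The only mild subtlety is ensuring that the identification in Definition~\ref{def:relX} is well-defined, i.e. that the natural map $MDC^b_\bullet(Y_i,x_0,d_X|_{Y_i})\to MDC^b_\bullet(X,x_0,d_X)$ is injective with image the subgroup generated by classes of simplices $b$-equivalent to ones in $Y_i$. Since $d_X|_{Y_i\times Y_i}$ agrees with the restricted metric, the $b$-equivalence relation for simplices in $Y_i$ is intrinsic, and injectivity follows from an application of Lemma~\ref{lema:banulacion} entirely analogous to the one giving inclusion~(\ref{eq:inclusionrelative}). Once this is in place, picking a representative in $Y_i$ and applying $f_*$ is unambiguous, and the whole argument is purely formal --- essentially, ``obvious from the definitions''.
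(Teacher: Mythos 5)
Your argument is correct and is essentially the paper's: the paper dispatches this proposition as ``obvious from the definitions,'' noting only that a Lipschitz l.v.a. map between pairs of subgerms induces maps on $b$-MD chains compatible with the epimorphisms $h^{b_1,b_2}$, and your proposal simply spells out those routine checks (reduction to Proposition~\ref{prop:functoriality} on $Y_1\cup Y_2$, preservation of the subcomplexes $MDC^b_\bullet(Y_i)$ under the identification of Definition~\ref{def:relX}, and descent to the quotient). The extra care you take with the injectivity of $MDC^b_\bullet(Y_i,x_0,d_X|_{Y_i})\to MDC^b_\bullet(X,x_0,d_X)$ via Lemma~\ref{lema:banulacion} is exactly the analogue of inclusion~(\ref{eq:inclusionrelative}) that the paper implicitly relies on.
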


\[
\]

We have also the obvious generalizations of the definitions of small chain complexes with respect to a finite closed subanalytic covering $\calC$, and a dense subanalytic subset $U$. We denote them by $MDC^{pre, +\infty,\calC\cap U}_{\bullet}(Y_1,Y_2;A)_{rel X}$, $MDC^{b,\calC\cap U}_{\bullet}(Y_1,Y_2;A)_{rel X}$. We also have the analogue to Proposition \ref{prop:smallqis}: 
\begin{equation}\label{eq:smallqisMV}g:MDC^{b,\calC\cap U}_{\bullet}(Y_1,Y_2,d_X;A)_{rel X}\to  MDC^b_{\bullet}(Y_1,Y_2,d_X,A)_{rel X}
\end{equation}
is an isomorphism for every $b\in (0,\infty)$, and an isomorphism when $b=\infty$ and $U$ is the whole space.

\begin{remark}\label{re:equal_rel_X}
Note that when $Y_2\subset Y_1$ then $MDC^{b,\calC\cap U}_{\bullet}(Y_1,Y_2,d_X;A)_{rel X}$ coincides with $MDC^{b,\calC}_{\bullet}(Y_1,Y_2,d_X;A)$.
\end{remark}

\subsection{\texorpdfstring{$b$}{b}-covers}
\label{sec:bcovers}

In our theory we need a special notion of covers in order to prove Mayer-Vietoris, Excision, and computation of homology via a \v{C}ech spectral sequence.  

\begin{definition}
\label{def:b-saturated}
Let $(X,x_0,d_X)$ be a metric  germ and $Y_1$, $Y_2$ subanalytic subgerms, consider $b \in (0, \infty]$. 
A finite collection $\{U_i\}_{i \in I}$ of subanalytic subgerms is called  {\em a $b$-cover of $(Y_1,Y_2)$}, if it is a finite cover of $Y_1\setminus\{x_0\}$ and for any $i$ there is a subanalytic subset $\hat{U}_i\subseteq Y_1$ such that 
\begin{itemize}
\item for any two $b$-equivalent points $p,q:[0,\epsilon)\to ({Y_1},x_0)$, if $p$ has image in $U_i$ then $q$ has image in $\hat{U}_i$.  
\item For any finite $J \subseteq I$ there is a 
 {subanalytic map $r_J:\cap_{i\in J}\hat{U}_i \to \cap_{i \in J}  U_i$ which induces an inverse in the derived category of the  morphism of complexes:} 
$$MDC^b_\bullet((\cap_{i\in J} U_i,Y_2,x_0,d_X);A)_{rel X}\to MDC^b_\bullet((\cap_{i\in J}\hat{U}_i,Y_2,x_0,d_X);A)_{rel X}.$$
\end{itemize}
We call the collection $\{\hat{U}_i\}_{i \in I}$ a {\em $b$-extension of $\{U_i\}_{i \in I}$}.

\end{definition}

Observe that a $b$-deformation retract would be an example of a map $r_J$ in the previous definition, but other subanalytic maps may serve the same purpose.  

Notice that for $b=\infty$ any finite subanalytic cover of $X$ is a $b$-cover.

\begin{figure}%

\includegraphics[scale=0.3]{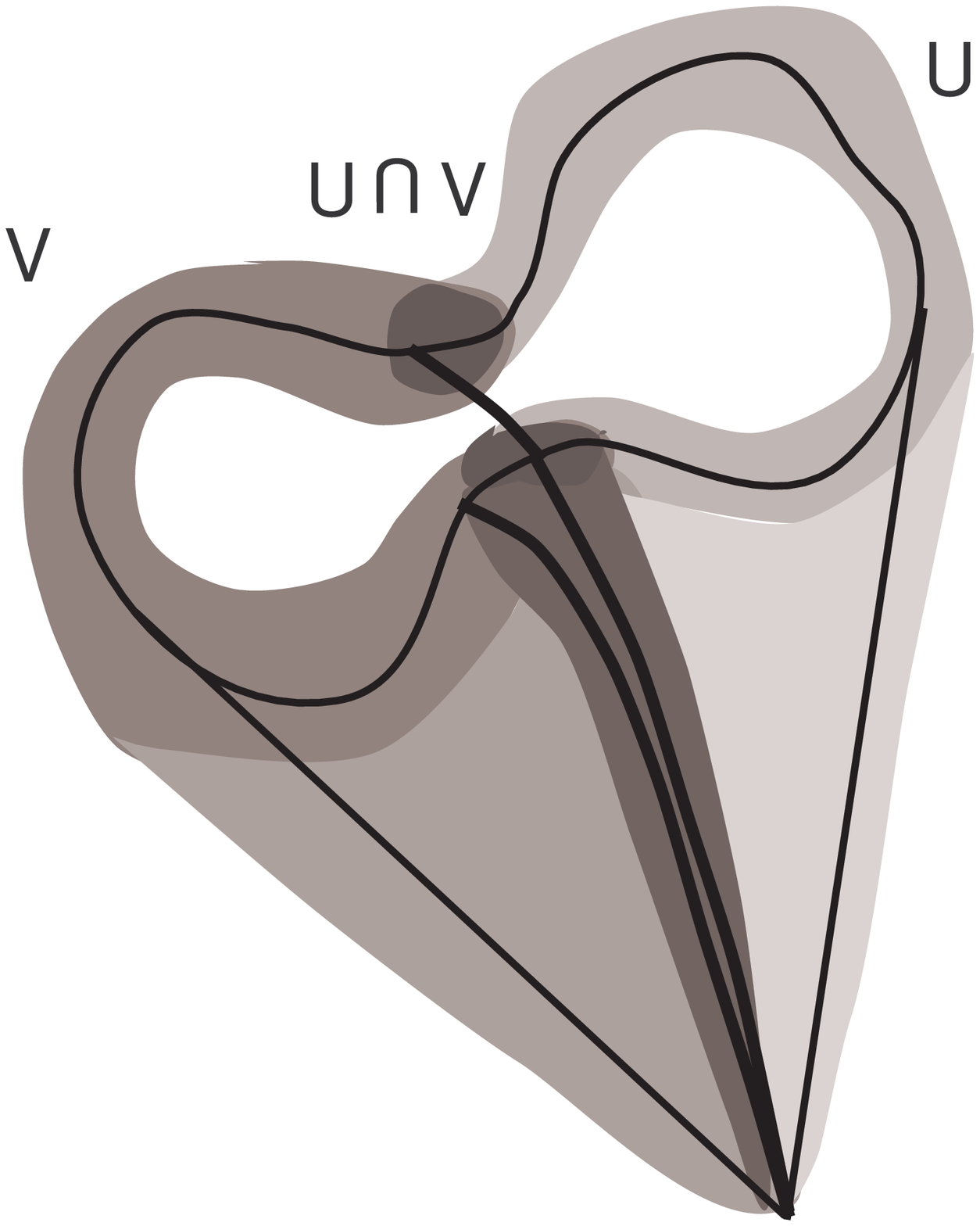}
\includegraphics[scale=0.8]{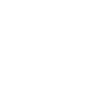}\includegraphics[scale=0.3]{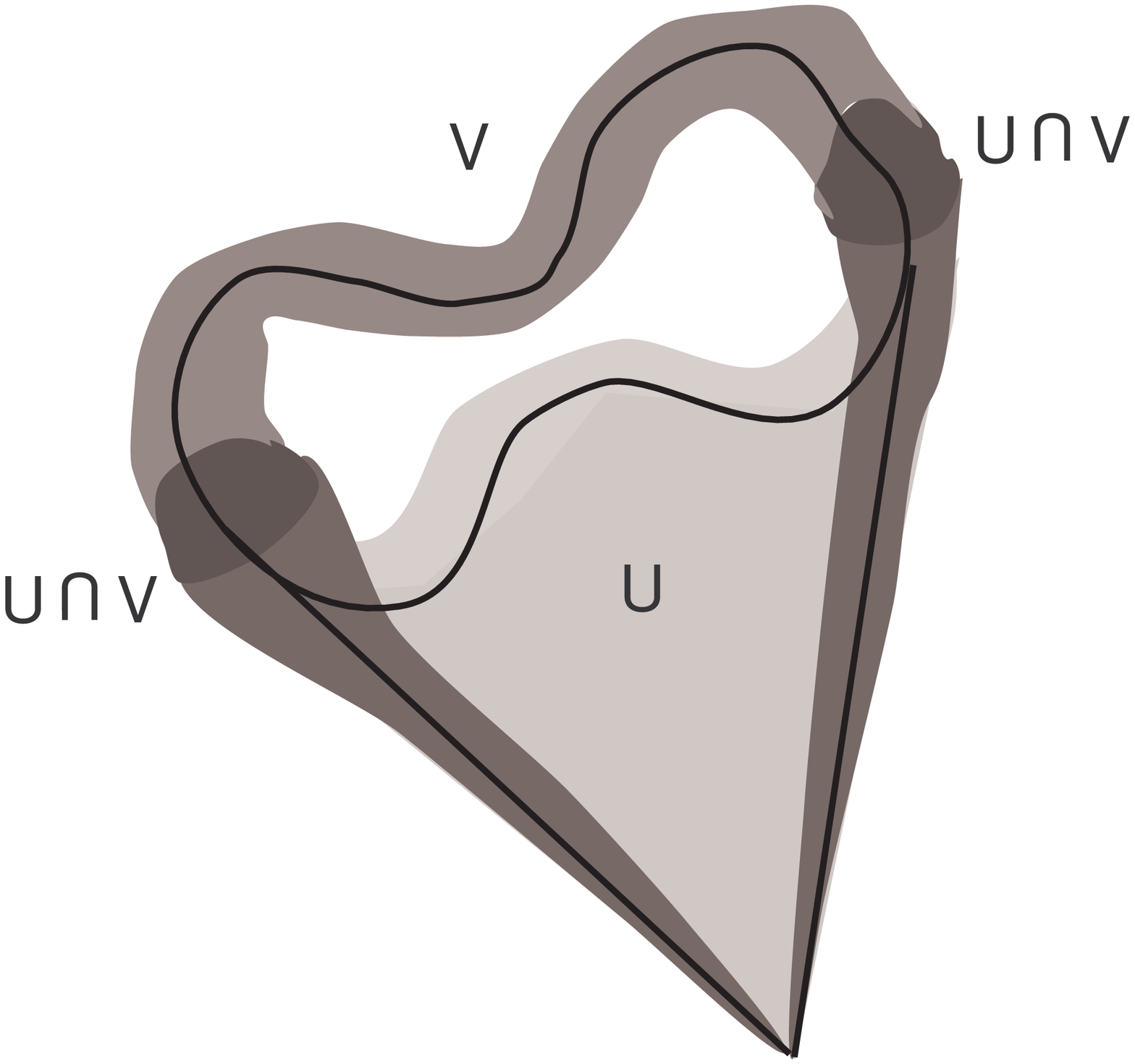}
\caption{In the same example of Figure \ref{fig:col} we can consider a 2-covering $U$, $V$ as in the figure on the left. On the right some covering $U$, $V$ that is not a 2-cover.} 
\end{figure}

The following remark is a consequence of the definition of $b$-horn neighborhood and of Theorem~\ref{theo:homotopyinvariance}.

\begin{remark}
\label{lem:b-saturated2}
In the terminology of the previous definition, when $Y_1=X$ and $Y_2=\emptyset$, the following two conditions imply the two conditions of the previous definition respectively:
\begin{itemize}
\item there is a $b$-horn neighborhood $\calH_{b,\eta}(U_i; X)$ contained in $\hat{U}_i$ for any $i \in I$ (see  Definition \ref{def:b-horn}).
\item For any finite $J \subseteq I$, the intersection $\cap_{i\in J} U_i$ is a $b$-deformation retract of $\cap_{i \in J} \hat{U}_i$ (see \ref{def:conicalretract}).
\end{itemize}
\end{remark}

\begin{lema}
\label{lem:inclusionsimplex}
Let $(X,x_0,d_X)$ be a metric  germ, $b \in (0, \infty]$ and $U\subset\hat{U}\subset X$ be subanalytic subsets such that for any two $b$-equivalent points $p,q:[0,\epsilon)\to (X,x_0)$, if $p$ has image in $U_i$ then $q$ has image in $\hat{U}_i$. If $\sigma_i:\hat{\Delta}_n\to X$ are $b$-equivalent  l.v.a simplices for $i=1,2$ and $\sigma_1$ is a simplex in $U$ then $\sigma_2$ is a simplex in $\hat{U}$. 
\end{lema}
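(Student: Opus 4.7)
The plan is to reduce the simplex-level statement to the point-level hypothesis via Remark \ref{rem:point characterization of b-equivalence}. Concretely, I want to show that every $y \in \hat{\Delta}_n$ (other than the vertex) lies on the image of some l.v.a.\ arc $\gamma:(0,\epsilon)\to\hat{\Delta}_n$, so that the compositions $\sigma_1\comp\gamma$ and $\sigma_2\comp\gamma$ become a pair of points in $X$ to which the hypothesis applies.

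First, I would fix an arbitrary $y\in\hat{\Delta}_n$ with $y$ not the vertex, and write $y=(tx,t)$ for some $x\in\Delta_n$ and $t>0$ using the cone coordinates of Notation~\ref{not:simplice}. Then I would define the radial arc $\gamma:(0,\epsilon)\to\hat{\Delta}_n$ by $\gamma(s):=(sx,s)$. This is plainly a continuous subanalytic l.v.a.\ map germ (its norm equals $s\|(x,1)\|$), hence a point in $\hat{\Delta}_n$ in the sense of Definition~\ref{def:point2}, and it satisfies $\gamma(t)=y$.

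Next, by Remark~\ref{rem:point characterization of b-equivalence}, the $b$-equivalence $\sigma_1\sim_b\sigma_2$ implies that the two points $\sigma_1\comp\gamma$ and $\sigma_2\comp\gamma$ are $b$-equivalent in $X$. Since $\sigma_1$ is a simplex in $U$, the image of $\sigma_1\comp\gamma$ lies in $U$. The hypothesis on the pair $(U,\hat{U})$ then forces the image of $\sigma_2\comp\gamma$ to lie in $\hat{U}$. In particular $\sigma_2(y)=(\sigma_2\comp\gamma)(t)\in\hat{U}$. As $y$ was arbitrary and the vertex (if in the image of $\sigma_2$) is handled as usual in our setting where simplices are checked off the vertex, we conclude that $\sigma_2$ is a simplex in $\hat{U}$.

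I do not expect any real obstacle here: the only subtlety is the passage from a single point $y\in\hat{\Delta}_n$ to an l.v.a.\ subanalytic arc through it, which is resolved by the canonical radial arc in the cone $\hat{\Delta}_n$, and the main work has already been done in Remark~\ref{rem:point characterization of b-equivalence} that characterizes $\sim_b$ for simplices via composition with points.
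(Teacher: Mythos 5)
There is a genuine gap in the step ``In particular $\sigma_2(y)=(\sigma_2\comp\gamma)(t)\in\hat{U}$''. The hypothesis on the pair $(U,\hat{U})$, and the notion of a point ``having image'' in a subset, are statements about \emph{germs}: knowing that the point $\sigma_2\comp\gamma$ has image in $\hat{U}$ only tells you that $\sigma_2(\gamma(s))\in\hat{U}$ for all $s$ smaller than some threshold $\epsilon'$, which depends on the direction $x$ and may well be smaller than the particular value $t$ at which $\gamma$ passes through your chosen $y=(tx,t)$. So you cannot evaluate at $s=t$ and conclude $\sigma_2(y)\in\hat{U}$. What your argument legitimately yields is only this: for each direction $x\in\Delta_n$ there is some $\epsilon'(x)>0$ with $\sigma_2(sx,s)\in\hat{U}$ for $s<\epsilon'(x)$.

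That weaker conclusion is not enough, because the thresholds $\epsilon'(x)$ need not be uniformly bounded below, and the ``bad set'' $B:=\sigma_2^{-1}(X\setminus\hat{U})$ can accumulate at the vertex without containing the germ of any radial ray. For instance, in $\hat{\Delta}_1$ the subanalytic curve $\{(s^2,s): s>0\}$ accumulates at the vertex but meets each radial germ $s\mapsto(sx_0,s)$ only at the single parameter $s=x_0$, hence is invisible to every radial arc germ. This is exactly why the paper argues by contradiction instead: if $\sigma_2$ is not a simplex in $\hat{U}$, then $B$ is a subanalytic subset of $\hat{\Delta}_n$ with the vertex in its closure, and the subanalytic Curve Selection Lemma (together with the reparametrization of Remark~\ref{rem:lvareduction} to make the selected arc l.v.a.) produces an arc $\gamma$ lying \emph{inside} $B$; the pair $p_i:=\sigma_i\comp\gamma$ then contradicts the hypothesis via Lemma~\ref{lem:arccharacterization}. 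Your reduction to arcs is the right idea, but you must let the bad set choose the arc (via curve selection) rather than restricting to the radial arcs through prescribed points.
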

\proof
Assume the contrary. Then $\sigma_2^{-1}(X\setminus\hat{U})$ is a subanalytic subset of $\hat{\Delta}_n$ having the vertex at its closure. By the subanalytic Curve Selection Lemma and Remark~\ref{rem:lvareduction} there exists a l.v.a subanalytic map $\gamma:[0,\epsilon)\to\hat{\Delta}_n$ such that $\gamma(t)$ is in $\sigma_2^{-1}(X\setminus\hat{U})$ for $t>0$. The arcs $p_i:=\sigma_i\comp\gamma$ give a contradiction.
\endproof

\begin{definition}
A $b$-cover $\{U_i\}_{i \in I}$ of $(Y_1,Y_2)$ is said to be a {\em closed (respectively open) $b$-cover} if any of the subsets $U_i$ is closed (respectively open) in $Y_1\setminus\{x_0\}$.
\end{definition}

The following proposition will be useful in reducing statements like Mayer-Vietoris and related results from open $b$-covers to closed $b$-covers.

\begin{prop}\label{prop:closed refinement} Let $\calU=\{U_1,...,U_k\}$ a finite open subanalytic covering of a subanalytic germ $(X,0)$. Then  there exists a subanalytic closed set $C_i$ contained in $U_i$ for every $i$ such that $ \{C_1,..,C_k\}$ is a closed covering.
\end{prop}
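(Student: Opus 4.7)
The plan is to shrink all the open sets at once using subanalytic distance functions. For each $i$, define
\[
\psi_i: X \to \RR, \qquad \psi_i(x) := d(x, X \setminus U_i),
\]
where $d$ denotes the Euclidean distance in the ambient $\RR^m$. The function $\psi_i$ is continuous and strictly positive on the open set $U_i$ and identically zero on $X\setminus U_i$. The first key fact to verify is that $\psi_i$ is subanalytic; this follows from the definability of infima in an o-minimal structure applied to the bounded representative of $X$ (we are working in the globally subanalytic, hence polynomially bounded o-minimal, setting).

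Put $\Psi(x) := \max_{1 \leq j \leq k} \psi_j(x)$, which is again continuous and subanalytic. Since $\{U_j\}_{j=1}^k$ covers $X$, for each $x\in X$ there is some $j$ with $x\in U_j$, and hence $\psi_j(x)>0$, so $\Psi(x) > 0$ everywhere on $X$. Now define
\[
C_i := \{\, x \in X : \psi_i(x) \geq \Psi(x) \,\}.
\]
Each $C_i$ is closed and subanalytic since it is cut out by a closed subanalytic inequality between two continuous subanalytic functions.

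It remains to check the two required properties. For $C_i \subseteq U_i$: if $x \notin U_i$ then $\psi_i(x) = 0$ while $\Psi(x) > 0$, so $x \notin C_i$. For the covering property: at each $x \in X$ the maximum $\Psi(x)$ is attained by some index $i(x)$, and then $x \in C_{i(x)}$. Hence $\bigcup_{i=1}^k C_i = X$, as desired.

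The only possible obstacle is the subanalyticity of the distance function $\psi_i$ to a closed subanalytic set. In the globally subanalytic o-minimal structure over $\RR$, the distance to a closed definable set is a continuous definable function, and for closed subanalytic sets inside a fixed compact representative of $X$ this distance is in fact subanalytic. If one is worried about this point in the non-compact ambient setting, one can restrict from the start to a bounded representative of $(X,0)$, which is harmless since the statement concerns germs.
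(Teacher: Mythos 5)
Your argument is correct, and it is a genuinely different (and more economical) route than the one in the paper. You use the classical shrinking-lemma device: set $\psi_i(x)=d(x,X\setminus U_i)$, $\Psi=\max_j\psi_j$, and $C_i=\{\psi_i\geq\Psi\}$; all the required properties ($C_i$ closed and subanalytic, $C_i\subseteq U_i$, $\bigcup_i C_i=X$) then follow from continuity, from $\psi_i>0$ exactly on $U_i$ (which uses that $U_i$ is open in $X$), and from the fact that the maximum is attained. The only real hypothesis you need is subanalyticity of the distance function to a (bounded) subanalytic set, which is standard in the polynomially bounded o-minimal setting and is in fact already used elsewhere in the paper (e.g. the function $\rho(z)=d(z,Q)$ in Lemma~\ref{lem:retracciondiscontinua}); your remark about passing to a bounded representative of the germ handles the only point where this could fail. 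The paper instead proceeds iteratively, shrinking one open set at a time via Lemma~\ref{lem:opencover}: it passes to the link by the conical structure theorem, builds a subanalytic tubular neighborhood of the boundary $\partial_{L_X}L_1$ controlled by a positive subanalytic function, and removes it; the lemma also guarantees the stronger property that the interiors $\mathring{C}_i$ still cover, which is what makes the induction run but is not required by the proposition itself. Your one-shot construction avoids the conical structure theorem and the tubular neighborhood machinery entirely; the paper's construction yields the extra open-interior refinement property as a by-product. Two cosmetic points you may wish to add: treat the degenerate case $U_i=X$ (where $X\setminus U_i=\emptyset$ and $\psi_i$ is not defined by your formula) by simply taking $C_i=X$, and note that ``closed'' here means closed in $X$ (or in $X\setminus\{0\}$ if the cover is of the punctured germ), which is what the small-chains machinery in the paper requires.
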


The proof is obtained by  repeatedly applying the following lemma:

\begin{lema}\label{lem:opencover}
Let $\calU=\{U_1,...,U_k\}$ be a finite open subanalytic covering of a subanalytic germ $(X,0)$. There exists a closed set $C_1$ contained in $U_1$ such that $ \{U_2,..,U_k,\mathring{C}_1\}$ is also an open covering of $(X,0)$ where $\mathring{C}_1$ is the interior of $C_1$. 
\end{lema}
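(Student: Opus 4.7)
The plan is to use subanalytic distance functions to ``shrink'' $U_1$ while preserving the cover property. First I would identify the essential part of $X$ that only $U_1$ covers: set
\[
V := X \setminus (U_2 \cup \cdots \cup U_k).
\]
Since each $U_i$ is open subanalytic in $X$, the set $V$ is closed subanalytic in $X$, and since $\calU$ is a cover, $V \subseteq U_1$. Thus the problem reduces to constructing a closed subanalytic set $C_1$ with $V \subseteq \mathring{C}_1$ and $C_1 \subseteq U_1$: the required cover property then follows, because every $x \in X$ either lies in some $U_i$ with $i\geq 2$, or else lies in $V \subseteq \mathring C_1$.

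For the construction, take a bounded representative of $(X,0)$ embedded in some $\RR^m$ and consider the two subanalytic continuous functions
\[
f(x) := d(x, V), \qquad g(x) := d(x, X \setminus U_1),
\]
where $d$ is the Euclidean distance in $\RR^m$. Both $V$ and $X \setminus U_1$ are closed bounded subanalytic sets, so $f$ and $g$ are globally subanalytic (this is where polynomial boundedness of the ambient O-minimal structure enters, via Section~\ref{sec:setting}). Note $f(x) = 0$ iff $x \in V$ (since $V$ is closed), and since $U_1$ is open in $X$, the set $X \cap \overline{X \setminus U_1}$ equals $X \setminus U_1$, so $g(x) > 0$ iff $x \in U_1$. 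Now define
\[
C_1 := \{\,x \in X : 2f(x) \leq g(x)\,\},
\]
which is closed subanalytic in $X$.

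It remains to verify the two key properties. For $C_1 \subseteq U_1$: if $x \in C_1 \setminus U_1$ then $g(x) = 0$, whence $f(x) = 0$ and so $x \in V \subseteq U_1$, a contradiction. For $V \subseteq \mathring{C}_1$: if $x \in V$ then $f(x) = 0$ and $g(x) > 0$, so by continuity of $f,g$ on $X$ there is an open neighborhood $W$ of $x$ in $X$ on which $f < g/2$, giving $W \subseteq C_1$ and hence $x \in \mathring{C}_1$.

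The only real subtlety is ensuring that $f$ and $g$ are subanalytic, but this holds because Euclidean distance functions to closed bounded subanalytic sets are globally subanalytic in the O-minimal structure we work with. No genuine obstacle arises; the proof is essentially the classical shrinking lemma for normal spaces, adapted to the subanalytic category by replacing the use of Urysohn-type functions with subanalytic distance functions to closed subanalytic sets.
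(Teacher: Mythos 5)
Your proof is correct, and it takes a genuinely different route from the paper's. The paper proves the lemma by first invoking the conical structure theorem to reduce to a covering of the compact link $L_X$, then identifying the boundary piece $K=\partial_{L_X}L_1$, building a definable tubular neighborhood $\calU(K,\eta)$ of $K$ from a stratification of $K$ (via the Definable Tubular Neighborhood Theorem) whose closure stays inside $L_2\cup\dots\cup L_k$, and setting $C_1$ to be the cone over $L_1\setminus\calU(K,\eta)$. You instead run the classical shrinking-lemma argument for normal spaces directly in the subanalytic category, replacing Urysohn functions by the distance functions $f=d(\cdot,V)$ and $g=d(\cdot,X\setminus U_1)$ and taking $C_1=\{2f\le g\}$; the verification that $C_1\subseteq U_1$ and $V\subseteq\mathring{C}_1$ is exactly as you give it. Your argument is shorter, avoids the conical structure theorem and all stratification/tubular-neighborhood machinery, and applies verbatim to any bounded definable set, not just to germs; the paper's construction has the cosmetic advantage of producing a conical $C_1$, but nothing in the subsequent application (producing a closed refinement for Mayer--Vietoris) uses that. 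Two small points: the definability of $d(\cdot,S)$ for definable $S$ holds in any o-minimal expansion of the real field and has nothing to do with polynomial boundedness --- what matters is only that you take a bounded representative so that everything is globally subanalytic; and you should dispose of the degenerate cases $V=\emptyset$ (take $C_1=\emptyset$) and $X\setminus U_1=\emptyset$ (take $C_1=X$), or adopt the convention $d(x,\emptyset)=+\infty$, under which your formula handles them automatically.
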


\begin{proof} Let $L_X$ be the link of $X$. By the conical Structure Theorem (see Remark~\ref{rem:germ1}) we can take a subanalytic homeomorphism $h: C(L_X)\to X$ for a small enough representative for $(X,0)$ compatible with the covering $\calU$. That is, any $U_i$ coincides with $h(L_i)$ for a certain subanalytic subset $L_i$ of $L_X$. 

We prove that given a finite open subanalytic covering  $\mathcal{L}=\{L_1,...,L_k\}$  of $L_X$, there exists a closed set $D_1$ contained in $L_1$ such that $ \{L_2,..,L_k,\mathring{D}_1\}$ is also an open covering of $L_X$ where $\mathring{D}_1$ is the interior of $D_1$. 

To finish the proof we will consider the covering given by $C_i:=h(C(D_i))$. 

Let us prove the statement for a covering of $L_X$. Given $Y\subset L_X$ we denote by 
$\partial_{L_X} Y$ the boundary set $\bar{Y}\setminus\mathring{Y}$ of $Y$ in $L_X$. 

Let $K$ be $\partial_{L_X}L_1\cap (L_2\cup ...\cup L_k)$. Note that, since $L_1$ is open, in fact $\partial_{L_X}L_1$ equals $K$. 

Consequently the subanalytic fuction $\theta: K\to \RR$ defined by $$\theta(x):=d_{out}(x,\partial_{L_X}(L_2\cup...\cup L_k)$$ is strictly positive. Choose another strictly positive subanalytic  function $\eta:K\to \RR$ such that $\eta(x)<\theta(x)$ for every $x\in K$. 

Let $\{K_i\}_{i\in I}$ be a stratification of $K$ by subanalytic submanifolds. 

For every $i\in I$, consider the following  subset in the normal bundle of $K_i$ 
$$W_i:=\{(x,v)\in NK_i: ||v||<\eta(x)\}.$$
Let $V_i$ be a neighbourhood of $K_i$ inside $NK_i$, whose existence follows the Definable Tubular Neighborhood Theorem (see Theorem 6.11 in \cite{Coste:1999}), such that $\pi|_{V_i}$ is a diffeomorphism and such that $\pi(V_i)$ is a subanalytic neighbourhood of $K_i$, where $\pi:NK_i\to X$ is defined by $\pi(x,v)=x+v$.

Define $\calU(K,\eta):=\cup_{i\in I}\pi(V_i\cap W_i)$. This is a globally subanalytic neighbourhood of $K$. By the definition of $\eta$, we have that the closure of $\calU(K,\eta)\cap X$ is contained in  $L_2\cup...\cup L_k$.

We define $C_1$ as $L_1\setminus \calU(K,\eta)$. This is a closed set since it coincides with $\overline{L}_1\setminus \calU(K,\eta)$. Moreover $\{\mathring{C}_1,U_2, ..., U_k\}$ covers $X$. 
\end{proof}

\subsection{The Mayer-Vietoris Exact Sequence}\label{sec:MV}
\begin{theo}\label{theo:MayerVietoris}
Let $(X,x_0, d_X)$ be a  metric  germ, $Y_1$, $Y_2$  subanalytic subgerms and $\{U, V\}$ either a closed or an open a $b$-cover of $(Y_1,Y_2)$. The single complex associated with the Mayer-Vietoris double complex 
$$MDC^b_{\bullet}(U\cap V, Y_2)_{rel X}\to MDC^b_{\bullet}(U, Y_2)_{rel X}\oplus MDC^b_{\bullet}(V, Y_2)_{rel X}$$ 
is quasi-isomorphic to $MDC^b_{\bullet}(Y_1,Y_2)_{rel X}$. As a consequence there is a Mayer-Vietoris long exact sequence as follows:

\begin{align}\label{long exact Mayer-Vietoris sequence}
\begin{split}
... &\rightarrow  MDH_n^b(U \cap V, Y_2)_{rel X}    \rightarrow  MDH_n^b(U,Y_2)_{rel X} \oplus MDH_n^b(V,Y_2)_{rel X}   \rightarrow MDH_n^b(Y_1,Y_2)_{rel X}         \rightarrow \\
& \rightarrow  MDH_{n-1}^b(U \cap V,Y_2)_{rel X}     \rightarrow ...
\end{split}
\end{align}
Note that we have omitted the coefficient group $A$ in the notation for brevity.
\end{theo}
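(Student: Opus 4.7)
My approach is to adapt the classical Mayer--Vietoris argument, using two principal tools: the relative small-chain isomorphism \eqref{eq:smallqisMV} to decompose chains according to the cover, and the $b$-cover property (Definition \ref{def:b-saturated}) to identify the intersection term. First, if $\{U, V\}$ is open I would apply Proposition \ref{prop:closed refinement} to find a closed subanalytic refinement $\calC = \{C_U, C_V\}$ with $C_U \subseteq U$ and $C_V \subseteq V$; if $\{U, V\}$ is already closed I just set $\calC = \{U, V\}$. By \eqref{eq:smallqisMV}, the inclusion $MDC^{b, \calC}_\bullet(Y_1, Y_2)_{rel\ X} \hookrightarrow MDC^b_\bullet(Y_1, Y_2)_{rel\ X}$ is an isomorphism, so I may restrict attention to chains whose individual simplices lie in $C_U$ or $C_V$.

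Next, at the pre-chain level the assignment $(c_U, c_V) \mapsto c_U + c_V$ fits in a tautological short exact sequence, because pre-chains are freely generated by simplices. Descending to $b$-chains and combining with the reduction above, I would aim to obtain a short exact sequence of complexes
\begin{equation*}
0 \to MDC^b_\bullet(U \cap V, Y_2)_{rel\ X} \to MDC^b_\bullet(U, Y_2)_{rel\ X} \oplus MDC^b_\bullet(V, Y_2)_{rel\ X} \to MDC^b_\bullet(Y_1, Y_2)_{rel\ X} \to 0
\end{equation*}
with first map $c \mapsto (c, -c)$ and second map $(c_U, c_V) \mapsto c_U + c_V$. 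Surjectivity of the second map at the chain level, after the small-chain reduction, is immediate: a chain in $MDC^{b, \calC}$ splits as the sum of a chain in $C_U \subseteq U$ and one in $C_V \subseteq V$. The $b$-cover retractions $r_{\{U\}}$, $r_{\{V\}}$ (clause (ii) of Definition \ref{def:b-saturated}) then let me pass between the subcomplexes of chains literally in $C_U$, $C_V$ and the $rel\ X$ subcomplexes up to quasi-isomorphism.

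The main obstacle will be proving exactness in the middle, namely that the kernel of the second map is precisely the image of the first. Given classes $[c_U]$ and $[c_V]$ with $[c_U + c_V] = 0$ in $MDC^b_\bullet(Y_1, Y_2)_{rel\ X}$, I need to produce a class in $MDC^b_\bullet(U \cap V, Y_2)_{rel\ X}$ mapping to $([c_U], [c_V])$. The plan is to combine Lemma \ref{lem:inclusionsimplex}, which says that simplices $b$-equivalent to a chain in $U_i$ sit in the $b$-enlargement $\hat{U}_i$, with the derived-inverse property of the retraction $r_{\{U, V\}}: \hat{U} \cap \hat{V} \to U \cap V$: using Lemma \ref{lema:banulacion}, the vanishing $c_U + c_V \sim_{S,b} 0$ reduces, after a subdivision compatible with $\calC$, to a collection of $b$-equivalence relations whose individual classes necessarily sit in $\hat{U} \cap \hat{V}$, and then the retraction $r_{\{U, V\}}$ produces the required chain in $U \cap V$. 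Once this identification is in place, the snake lemma applied to the resulting short exact sequence of complexes yields the long exact Mayer--Vietoris sequence \eqref{long exact Mayer-Vietoris sequence}.
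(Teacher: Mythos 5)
Your setup (closed refinement via Proposition~\ref{prop:closed refinement}, surjectivity of $(c_U,c_V)\mapsto c_U+c_V$ from the small-chain isomorphism~(\ref{eq:smallqisMV})) matches the paper, but the central step of your plan does not go through: the sequence
$0 \to MDC^b_\bullet(U\cap V,Y_2)_{rel\ X}\to MDC^b_\bullet(U,Y_2)_{rel\ X}\oplus MDC^b_\bullet(V,Y_2)_{rel\ X}\to MDC^b_\bullet(Y_1,Y_2)_{rel\ X}\to 0$
is \emph{not} exact in the middle at the chain level, so there is no short exact sequence of complexes to feed to the snake lemma. The reason is exactly the feature that makes this theory delicate: a simplex $\sigma$ contained in $U$ and a simplex $\sigma'$ contained in $V$ can satisfy $\sigma\sim_b\sigma'$, hence $[\sigma]-[\sigma']=0$ in $MDC^b_\bullet(Y_1,Y_2)_{rel\ X}$, without either simplex ever touching $U\cap V$; Lemma~\ref{lem:inclusionsimplex} only places them in $\hat U\cap\hat V$. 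So $\mathrm{Ker}(\alpha)$ is genuinely larger than the image of $MDC^b_\bullet(U\cap V,Y_2)_{rel\ X}$, and your attempt to repair this with the map $r_{\{U,V\}}:\hat U\cap\hat V\to U\cap V$ cannot restore chain-level exactness, because by Definition~\ref{def:b-saturated} that map is only required to invert the inclusion \emph{in the derived category}; it does not produce a chain in $U\cap V$ mapping on the nose to $([c_U],[c_V])$.

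The paper's proof is built precisely to avoid this. It keeps the short exact sequence with the abstract kernel $\mathrm{Ker}(\alpha)$ (so the single complex of the double complex $\mathrm{Ker}(\alpha)\to MDC^b_\bullet(U)\oplus MDC^b_\bullet(V)$ is quasi-isomorphic to $MDC^b_\bullet(Y_1,Y_2)_{rel\ X}$ for free), and then shows that the natural inclusion $\theta: MDC^b_\bullet(U\cap V,Y_2)_{rel\ X}\to\mathrm{Ker}(\alpha)$ is a quasi-isomorphism. This is done by comparing with the analogous sequence for the $b$-extension $\{\hat U,\hat V\}$: the induced map $\rho:\mathrm{Ker}(\alpha)\to\mathrm{Ker}(\hat\alpha)$ is a quasi-isomorphism because $\iota^{U,V}_*$ is, and $\rho$ factors as $g\circ f$ with $f:\mathrm{Ker}(\alpha)\to MDC^b_\bullet(\hat U\cap\hat V,Y_2)_{rel\ X}$ built from the splitting of Lemma~\ref{lema:banulacion} and Lemma~\ref{lem:inclusionsimplex}. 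The factorization gives $f$ a left inverse in the derived category, and the identity $f\circ\theta=\iota^{U\cap V}_*$ (a quasi-isomorphism by the second clause of the $b$-cover definition) gives $f$ a right inverse; hence $f$, and then $\theta$, are isomorphisms in the derived category. You have all the right ingredients in hand, but you need this two-sided-inverse argument in $\calD(Ab)^-$ in place of chain-level middle exactness and the snake lemma.
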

\begin{proof} 
We omit the coefficient group $A$ in the notation for brevity. 


We have the following short exact sequence, where $\alpha(\sigma, \tau) := \sigma - \tau$ is extended linearly:
\begin{equation}\label{auxiliary short exact sequence Mayer-Vietoros}
0\to \text{Ker}(\alpha)\to MDC^{b}_{\bullet}(U, Y_2)_{rel X}    \oplus MDC^{b}_{\bullet}(V, Y_2)_{rel X}    \xrightarrow{\alpha} MDC^{b}_{\bullet}(Y_1,Y_2)_{rel X}    \to 0
\end{equation}

If the $b$-cover is closed, surjectivity follows from the fact that (\ref{eq:smallqisMV}) is an isomorphism. If the cover $\{U, V\}$ is formed by open subsets, use Proposition~\ref{prop:closed refinement} to produce a closed subanalytic cover $\calC$ refining $\{U, V\}$ and use the isomorphism
$$
MDC^{b}_{\bullet}(Y_1,Y_2)_{rel X} \cong MDC^{b, \mathcal{C}}_{\bullet}(Y_1,Y_2)_{rel X}.
$$

As a consequence, the single complex associated with the double complex $$d:\text{Ker}(\alpha)\to MDC^{b}_{\bullet}(U, Y_2)_{rel X}\oplus MDC^{b}_{\bullet}(V, Y_2)_{rel X}$$ is quasi-isomorphic to $MDC^{b}_{\bullet}(Y_1,Y_2)_{rel X}$.

Let $\{ \hat{U}, \hat{V} \}$ be a $b$-extension of $\{U, V\}$. Consider the analogue of short exact sequence to~(\ref{auxiliary short exact sequence Mayer-Vietoros}).
\begin{equation}\label{auxiliary2}
0\to \text{Ker}(\hat\alpha)\to MDC^{b}_{\bullet}(\hat{U}, Y_2)_{rel X}    \oplus MDC^{b}_{\bullet}(\hat{V}, Y_2)_{rel X}    \xrightarrow{\alpha} MDC^{b}_{\bullet}(Y_1,Y_2)_{rel X}    \to 0.
\end{equation}
The inclusions $U \hookrightarrow \hat{U}$ and $V \hookrightarrow \hat{V}$ together induce a morphism 
$$\iota^{U,V}_* MDC^{b}_{\bullet}(U, Y_2)_{rel X}\oplus MDC^{b}_{\bullet}(V, Y_2)_{rel X} \to MDC^{b}_{\bullet}(\hat{U}, Y_2)_{rel X}\oplus MDC^{b}_{\bullet}(\hat{V}, Y_2)_{rel X}$$
that restricts to a morphism $\rho:\text{Ker}(\alpha) \to \text{Ker}(\hat{\alpha})$, and that together with the identity on $MDC^{b}_{\bullet}(Y_1,Y_2)_{rel X}$ produces a morphism from the short exact sequence (\ref{auxiliary short exact sequence Mayer-Vietoros}) to the short exact sequence (\ref{auxiliary2}). Since $\iota^{U,V}_*$ is a quasi-isomorphism by definition of $b$-cover, we conclude that $\rho$ is also a quasi-isomorphism.   . 

The morphism $\rho$ admits the following factorization:
\[
\text{Ker}(\alpha) \xrightarrow{f} MDC^{b}(\hat{U} \cap \hat{V}, Y_2)_{rel X} \xrightarrow{g} \text{Ker}(\hat{\alpha})
\]
where $g(\hat{\sigma}):= (\hat{\sigma}, \hat{\sigma})$ is extended linearly
and $f$ is defined as follows:


Let $([\sum_{i\in I}a_i\sigma_i],[\sum_{j\in J}b_i\psi_j])$ be an element of $MDC^{b}_{\bullet}(U, Y_2)_{rel X}\oplus MDC^{b}_{\bullet}(V, Y_2)_{rel X}$ such that $[\sum_{i\in I}a_i\sigma_i]+[\sum_{j\in J}b_i\psi_j]=0$ in $MDC^{b}_{\bullet}(Y_1,Y_2)_{rel X}$. After replacing the representatives by the ones obtained by sequences of $\to_\infty$-equivalences as in Lemma~\ref{lema:banulacion}, consider splittings $I=I_0\cup I_1\cup...,I_r$, $J=J_0\cup J_1\cup...,J_r$ as above, which satisfy that 

\begin{enumerate}
 \item $[\sigma_i]\in \text{Ker} (MDC_{\bullet}^b(U)_{rel X}\to MDC_{\bullet}^b(U,Y_2)_{rel X})$ for any $i\in I_0$, 
 \item $[\psi_j]\in \text{Ker} (MDC_{\bullet}^b(V)_{rel X}\to MDC_{\bullet}^b(V, Y_2)_{rel X})$ for any $j\in J_0$,
 \item $\sigma_i\sim_b \psi_j$ if $i\in I_k$ and $j\in J_k$ for a given $k\geq 1$,
\end{enumerate}
%

%

and that for any $k\geq 1$ we have 
$$\sum_{i\in I_k}a_i+\sum_{j\in J_k}b_k=0.$$
If $I_k$ and $J_k$ are non-empty, there is a $\tau \in MDC^{b}_{\bullet}(V,Y_2)_{rel X}$ in the same $b$-equivalence class as $\sigma_i$ for any $i \in I_k$. Observe that any l.v.a. simplex $b$-equivalent to a l.v.a. simplex in $V$ is contained in $\hat{V}$ by Lemma~\ref{lem:inclusionsimplex}, so $\sigma_i \in MDC^{b}_{\bullet}(\hat{U} \cap \hat{V},Y_2)_{rel X}$. We define 
\[
f(\sum_{i\in I}a_i[\sigma_i],\sum_{j\in J}b_i[\psi_j]) := \sum_{i \in I \setminus I_0} a_i [\sigma_i] = \sum_{j \in J \setminus J_0} b_j [\psi_j].
\]

We claim that $f$ is an isomorphism in the derived category. Indeed, since we have the factorization $\rho=g\comp f$ and $\rho$ is an quasi-isomorphism, we conclude that $f$ has a left inverse in the derived category. Let 
$$\theta:MDC^{b}_{\bullet}(U\cap V, Y_2)_{rel X}\to \text{ker}(\alpha)$$ be the natural inclusion of complexes. Let $\iota^{U\cap V}$ denote the inclusion $U \cap V \hookrightarrow \hat{U} \cap \hat{V}$. Then in the derived category the morphism
$$\iota^{U\cap V}_*:MDC^{b}(U\cap V,Y_2)_{rel X}\to MDC^{b}(\hat{U}\cap \hat{V},Y_2)_{rel X},$$
which is an isomorphism by definition of $b$-cover, equals the composition $f\comp\theta$. Therefore $f$ admits a right inverse in the derived category and the claim is proved.

Using that $f$ is an isomorphism in the derived category, and the isomorphism $\iota^{U\cap V}_*$ we conclude that $\theta$ is an isomorphism in the derived category. So in the derived category, the single complex associated with the double complex 
$$MDC^b_{\bullet}(U\cap V,Y_2)_{rel X} \to MDC^{b}_{\bullet}(U, Y_2)_{rel X}\oplus MDC^{b}_{\bullet}(V, Y_2)_{rel X}$$
is isomorphic to the double complex associated with $d$ which is isomorphic to the complex $MDC^b_{\bullet}(Y_1,Y_2)_{rel X}$.

\end{proof}

As a consequence we obtain the Excision Theorem.
\begin{cor}
\label{cor:excision}
Let $(X,x_0,d_X)$ be a metric  germ. Let $U\subset X\setminus\{x_0\}$ and $K\setminus \{x_0\}\subset U$ such that $\{U, X \setminus K \}$ is either an open or closed $b$-cover of $(X,U)$. Then the inclusion induces a quasi-isomorphism $MDC_{\bullet}^b(X\setminus K, U ;A)_{rel X} \to  MDC_\bullet^b(X, U;A)$. As a consequence for each $n$ we have an isomorphism
\begin{equation*}
MDH_n^b(X\setminus K, U ;A)_{rel X} \stackrel{\simeq }{\to}  MDH_n^b(X, U;A)
\end{equation*}
\end{cor}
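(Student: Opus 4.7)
The plan is to derive the Excision Theorem directly from the Mayer--Vietoris Theorem~\ref{theo:MayerVietoris} applied to the $b$-cover $\{U, X\setminus K\}$ of the pair $(Y_1,Y_2) := (X,U)$. Since $U\subset X\setminus\{x_0\}$, the family $\{U, X\setminus K\}$ covers $X\setminus\{x_0\} = Y_1\setminus\{x_0\}$ as required, and the hypothesis of the corollary says it is a $b$-cover of $(X,U)$.

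The next step is to verify that two of the terms in the Mayer--Vietoris double complex vanish. First, $MDC^b_\bullet(U,U)_{rel\ X} = 0$ directly from Definition~\ref{def:relX}: the numerator and denominator of the defining quotient coincide. Second, and this is the one point that uses the containment $K\setminus\{x_0\}\subset U$ only implicitly: the intersection piece satisfies $U\cap(X\setminus K)\subseteq U$. By Definition~\ref{def:relX}, $MDC^b_\bullet(U\cap(X\setminus K),x_0,d_X|_\star)$ embeds into $MDC^b_\bullet(X,x_0,d_X)$ as the subgroup generated by l.v.a.\ simplices $b$-equivalent to a representative in $U\cap(X\setminus K)$; every such simplex is in particular $b$-equivalent to a simplex in $U$, so this subgroup is contained in $MDC^b_\bullet(U,x_0,d_X|_U)$. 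Hence the numerator of the quotient defining $MDC^b_\bullet(U\cap(X\setminus K),U)_{rel\ X}$ equals its denominator, and the whole complex vanishes.

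Therefore the Mayer--Vietoris double complex collapses, and the single complex associated with it is just $MDC^b_\bullet(X\setminus K, U)_{rel\ X}$. By Theorem~\ref{theo:MayerVietoris}, the map from this complex to $MDC^b_\bullet(X,U)_{rel\ X}$ coming from the Mayer--Vietoris construction is a quasi-isomorphism; unwinding the definitions, this map is nothing but the one induced by the inclusion $X\setminus K\hookrightarrow X$. Finally, by Remark~\ref{re:equal_rel_X}, since $U\subset X$, we have the identification $MDC^b_\bullet(X,U)_{rel\ X} = MDC^b_\bullet(X,U)$, giving the desired quasi-isomorphism
\[
MDC^b_\bullet(X\setminus K, U;A)_{rel\ X}\stackrel{\simeq}{\longrightarrow} MDC^b_\bullet(X,U;A),
\]
and passing to homology yields the isomorphism in the statement. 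There is no real obstacle here: the whole argument is a clean bookkeeping of which summands of the Mayer--Vietoris complex are trivial, so the main work was already done in proving Theorem~\ref{theo:MayerVietoris}.
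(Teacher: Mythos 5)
Your proof is correct and follows exactly the paper's route: the paper's entire proof is ``apply Theorem~\ref{theo:MayerVietoris} to the $b$-cover $\{U, X\setminus K\}$,'' and you have simply made explicit the vanishing of $MDC^b_\bullet(U,U)_{rel\ X}$ and $MDC^b_\bullet(U\cap(X\setminus K),U)_{rel\ X}$ that collapses the Mayer--Vietoris sequence to the stated quasi-isomorphism, together with the identification $MDC^b_\bullet(X,U)_{rel\ X}=MDC^b_\bullet(X,U)$. One cosmetic slip: the fact that $\{U,X\setminus K\}$ covers $X\setminus\{x_0\}$ follows from $K\setminus\{x_0\}\subset U$ rather than from $U\subset X\setminus\{x_0\}$, but this is in any case built into the $b$-cover hypothesis.
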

\begin{proof}
Apply Theorem~\ref{theo:MayerVietoris} to the $b$-cover $\{U, X \setminus K \}$. 
\end{proof}

\subsection{The \texorpdfstring{\v{C}}{C}ech homology complexes}\label{sec:cech}
Let $Y_1,Y_2$ be subanalytic germs of a metric subanalytic germ $(X,x_0,d_X)$. 
Let $\calU=\{U_i\}_{i\in \{1,...,r\}}$ be a finite open or closed subanalytic cover of $Y_1$. Denote by $U_{i_1,...,i_r}$ the intersection $U_{i_1}\cap...\cap U_{i_r}$. The \v{C}ech double complex of $b$-MD homology of a pair $(Y_1,Y_2)$ associated with $\calU$ with coefficients in $A$ is defined by $$MDC^b(\calU,Y_1,Y_2;A)_{p,q}:=\bigoplus_{1\leq i_0<...<i_p\leq r}MDC^b_q(U_{i_0,...,i_p},Y_2;A)_{rel X},$$
with vertical differential equal to the $b$-MD differential and horizontal differential the usual \v{C}ech homology differential: 
$$MDC^b_q(U_{i_0,...,i_p},Y_2;A)_{rel X}\to \oplus_{k=0}^p MDC^b_q(U_{i_0,...,\hat{i}_k,...,i_p}, Y_2;A)_{rel X}$$
$$[\sigma]\mapsto \sum_{k=0}^p(-1)^k j^b_{i_k}([\sigma]),$$
where $j^b_{i_k}$ is the $b$-MD chain map associated to the inclusion $U_{i_0,...,i_p}\subset U_{i_0,...,\hat{i}_k,...,i_p}$. 

\begin{theo}
\label{theo:Cech}
Let $Y_1,Y_2$ be subanalytic subgerms of a metric germ $(X,x_0,d_X)$.
{If for any two disjoint finite subsets $I,J\subset \{1,...,r\}$ we have that $\{(\cap_{j\in J}U_j)\cap U_i\}_{i\in I}$ is a $b$-cover of $(\cup_{i\in I} U_i\cap (\cap_{j\in J}U_j),Y_2)$} and $\cup_{i=1}^r U_i=Y_1$, then the single complex associated with the \v{C}ech complex
$MDC^b_{\bullet,\bullet}(\calU,Y_1,Y_2;A)$ is quasi-isomorphic to $MDC^b_{\bullet}(Y_1,Y_2;A)_{rel X}$. Consequently there is a \v{C}ech spectral sequence abutting to $MDH^b_{*}(Y_1,Y_2;A)_{rel X}$ with $E^1$ page 
$$E[b]^1_{p,q}:=\bigoplus_{1\leq i_0<...<i_p\leq r}MDH^b_q(U_{i_0,...,i_p}, Y_2;A)_{rel X}.$$
\end{theo}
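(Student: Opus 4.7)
The plan is to induct on $r := |\calU|$. The case $r = 1$ is trivial and $r = 2$ is precisely Theorem~\ref{theo:MayerVietoris}. For the inductive step, set $V := U_1 \cup \cdots \cup U_{r-1}$ and consider the sub-covers $\calU' := \{U_1, \dots, U_{r-1}\}$ of $V$ and $\calU'' := \{U_i \cap U_r\}_{i < r}$ of $V \cap U_r$. Both inherit the hypothesis of the theorem: the first by restricting the indexing sets $I, J$ to $\{1, \dots, r-1\}$, and the second by moving $r$ into $J$ in the original hypothesis. The inductive hypothesis then produces quasi-isomorphisms
$$\mathrm{Tot}\,\check{C}(\calU') \xrightarrow{\simeq} MDC^b_\bullet(V, Y_2; A)_{rel X}, \qquad \mathrm{Tot}\,\check{C}(\calU'') \xrightarrow{\simeq} MDC^b_\bullet(V \cap U_r, Y_2; A)_{rel X}.$$

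Next I would decompose the \v{C}ech complex of $\calU$ as a short exact sequence of double complexes
$$0 \to \check{C}(\calU') \to \check{C}(\calU) \to A^{\bullet, \bullet} \to 0,$$
in which the quotient $A$ is generated by the summands indexed by tuples $(i_0 < \dots < i_p)$ containing $r$. A direct inspection of the \v{C}ech differential identifies $A^{0, q} = MDC^b_q(U_r, Y_2; A)_{rel X}$ and $A^{p, q} = \check{C}^{p-1, q}(\calU'')$ for $p \geq 1$, so that $\mathrm{Tot}\,A$ is the mapping cone of the natural augmentation $\mathrm{Tot}\,\check{C}(\calU'') \to MDC^b_\bullet(U_r, Y_2; A)_{rel X}$. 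Substituting the inductive quasi-isomorphisms, the exact triangle of total complexes coming from the short exact sequence above becomes the Mayer-Vietoris exact triangle of the pair $\{V, U_r\}$.

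To close the induction I would verify that $\{V, U_r\}$ is a $b$-cover of $(Y_1, Y_2)$, taking $\hat U_r$ from the $b$-extension of the original $\calU$ and setting $\hat V := \hat U_1 \cup \cdots \cup \hat U_{r-1}$, so that Theorem~\ref{theo:MayerVietoris} identifies the same Mayer-Vietoris total complex with $MDC^b_\bullet(Y_1, Y_2; A)_{rel X}$. The naturally defined augmentation $\mathrm{Tot}\,\check{C}(\calU) \to MDC^b_\bullet(Y_1, Y_2; A)_{rel X}$ (summing over the $p = 0$ column) fits into a map of exact triangles whose outer vertical maps are quasi-isomorphisms by the preceding steps, so the five-lemma in the derived category makes it a quasi-isomorphism. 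The \v{C}ech spectral sequence of the statement is then the standard column-filtration spectral sequence of the double complex, whose $E^1$ page is by construction $\bigoplus MDH^b_q(U_{i_0 \dots i_p}, Y_2; A)_{rel X}$ and which now abuts to $MDH^b_{p+q}(Y_1, Y_2; A)_{rel X}$.

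The hard part is this final verification that $\{V, U_r\}$ is a $b$-cover in the sense of Definition~\ref{def:b-saturated}: the retraction $\hat V \to V$ glued from the individual $r_i \colon \hat U_i \to U_i$ must induce a derived inverse on MD chain complexes. A natural approach is to apply the inductive form of the present theorem to $\{U_i\}_{i < r}$ as a cover of $V$ and to $\{\hat U_i\}_{i < r}$ as a cover of $\hat V$, and to compare the two \v{C}ech double complexes componentwise via the $b$-retractions coming from the $b$-cover structure of $\calU$. Arranging that $\{\hat U_i\}_{i < r}$ itself meets the \v{C}ech hypothesis may require a chain-level construction or an extra shrinking, and it is precisely here that the full strength of the hypothesis — asking the $b$-cover condition for all disjoint pairs $I, J$ — becomes essential.
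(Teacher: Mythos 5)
Your proof follows essentially the same route as the paper's: induction on $r$, applying the inductive hypothesis to the covers $\{U_i\}_{i<r}$ of $V$ and $\{U_i\cap U_r\}_{i<r}$ of $V\cap U_r$, and then Mayer--Vietoris for the two-set cover $\{V,U_r\}$; your short-exact-sequence/mapping-cone bookkeeping is just another way of stating the paper's identification of the total \v{C}ech complex of $\calU$ with the cone $S_\bullet\bigl(\tilde A_\bullet\to MDC_\bullet(U_r,Y_2)_{rel\ X}\oplus\tilde B_\bullet\bigr)$. The step you single out as the hard part --- verifying that $\{V,U_r\}$ is itself a $b$-cover of $(Y_1,Y_2)$ so that Theorem~\ref{theo:MayerVietoris} applies --- is passed over without comment in the paper's proof, so you are, if anything, more careful than the printed argument; but since you do not complete that verification either, your write-up is no more (and no less) complete than the paper's.
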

\proof
The case of a cover of 2 subsets is exactly Theorem~\ref{theo:MayerVietoris}. The general case runs by induction on the number of subsets, applying Mayer-Vietoris for the decomposition $V \cup U_r$ with $V:= U_1\cup...\cup U_{r-1}$ and the induction step for the decompositions $U_1\cup...\cup U_{r-1}$ and $(U_1\cap U_r)\cup...\cup (U_{r-1}\cap U_r)$: 

Let $\tilde{A}_\bullet$ and $\tilde{B}_\bullet$ denote the single complexes associated with the \v{C}ech complexes
$$
MDC^b_{\bullet,\bullet}(\{U_1 \cap U_r,\dots, U_{r-1}\cap U_r\},V \cap U_r,Y_2;A) \text{ and } MDC^b_{\bullet,\bullet}(\{U_1,\dots, U_{r-1}\},V,Y_2;A),
$$
respectively. By induction hypothesis, we get that $\tilde{A}_\bullet$ and $\tilde{B}_\bullet$ are quasi-isomorphic to 
$$
A_\bullet := MDC_\bullet^b(V \cap U_r,Y_2;A)_{rel X} \text{ and } B_\bullet := MDC_\bullet^b(V,Y_2;A)_{rel X},
$$
respectively. 

Let $S\bullet(D_{\bullet, \bullet})$ denote the single complex associated with a double complex $D_{\bullet, \bullet}$. Since we have quasi-isomorphisms $\tilde{A}_\bullet \to A_\bullet$ and $\tilde{B}_\bullet \to B_\bullet$, the morphism 
$$S_\bullet(\tilde{A}_\bullet \to MDC_\bullet(U_r, Y_2;A)_{rel X} \oplus \tilde{B}_\bullet )\to S_\bullet(A_\bullet \to MDC_\bullet(U_r, Y_2;A)_{rel X} \oplus B_\bullet )$$ 
is a quasi-isomorphism. 

By Mayer Vietoris Theorem (Theorem~\ref{theo:MayerVietoris}) applied to the cover $\{U_r, V\}$ we obtain a quasi-isomorphism 
$$S_\bullet(A_\bullet \to MDC_\bullet(U_r, Y_2;A)_{rel X} \oplus B_\bullet)\to MDC_\bullet(Y_1, Y_2;A)_{rel X}.$$

Now the statement follows from the fact that 
$$S_\bullet(\tilde{A}_\bullet \to MDC_\bullet(U_r, Y_2;A)_{rel X} \oplus \tilde{B}_\bullet )$$
is isomorphic to the single complex of the \v{C}ech complex $MDC^b_{\bullet,\bullet}(\calU,Y_1,Y_2;A)$.

\endproof

\begin{definition}
\label{def:nerve}
The nerve of the cover $\calU=\{U_i\}_{i\in \{1,...,r\}}$ is the simplicial complex which assigns a $p$-simplex to each non-empty intersection $U_{i_0,...,i_p}$, and identifies faces according to the inclusions $U_{i_0,...,i_p}\subset U_{i_0,...,\hat{i}_k,...,i_p}$.
\end{definition}

\begin{cor}
\label{cor:nerve}
In the setting of the last theorem, if $Y_1=X$ and $Y_2=\emptyset$ and for any finite set of indexes $U_{i_0,...,i_p}$ is either empty or has the $b$-MD homology of a point, the $b$-MD homology of $X$ coincides with the ordinary homology of the nerve of the cover with coefficients in $A$. 
\end{cor}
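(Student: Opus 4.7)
The plan is to apply the \v{C}ech spectral sequence of Theorem~\ref{theo:Cech} and observe that under the point-like hypothesis it collapses at $E^2$ to the ordinary simplicial homology of the nerve. Write $N(\calU)$ for the nerve. Let me denote a non-empty intersection $U_{i_0,\dots,i_p}$ by $U_\tau$, where $\tau=(i_0<\dots<i_p)$ is the corresponding $p$-simplex of $N(\calU)$.

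First I would unpack the $E^1$-page. By Theorem~\ref{theo:Cech} we have
\[
E[b]^1_{p,q} = \bigoplus_{1\leq i_0<\dots<i_p\leq r} MDH^b_q(U_{i_0,\dots,i_p};A),
\]
abutting to $MDH^b_{p+q}(X;A)$. By hypothesis each non-empty $U_\tau$ has the $b$-MD homology of a point, so by Proposition~\ref{prop:point} we have $MDH^b_q(U_\tau;A)=0$ for $q>0$ and $MDH^b_0(U_\tau;A)\cong A$; for empty $U_\tau$ the summand is zero. Hence $E[b]^1_{p,q}=0$ for $q>0$ and
\[
E[b]^1_{p,0} \;\cong\; \bigoplus_{\tau\in N(\calU)_p} A \;=\; C_p(N(\calU);A),
\]
the group of simplicial $p$-chains of the nerve.

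Second, I would identify the horizontal differential $d^1$ with the simplicial boundary $\partial$ of $N(\calU)$. By construction of the \v{C}ech double complex, $d^1$ on the component $MDH^b_0(U_{i_0,\dots,i_p};A)$ is the alternating sum $\sum_{k=0}^p(-1)^k(j^b_{i_k})_*$, where $(j^b_{i_k})_*$ is the map on $MDH^b_0$ induced by the inclusion $U_{i_0,\dots,i_p}\hookrightarrow U_{i_0,\dots,\widehat{i_k},\dots,i_p}$. For any non-empty inclusion of subanalytic subgerms of $X$ the induced map on $MDH^b_0$ is simply the canonical map between the two copies of $A$ provided by Proposition~\ref{prop:point}, i.e.\ the identity $A\to A$: indeed a generator of $MDH^b_0(U_\tau;A)$ is represented by any l.v.a.\ $0$-simplex in $U_\tau$, which remains the same l.v.a.\ $0$-simplex after inclusion. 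Consequently $d^1$ coincides, summand by summand and up to sign, with the usual simplicial boundary of $N(\calU)$.

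Third, since $E[b]^1_{\bullet,q}=0$ for $q>0$, the spectral sequence collapses at the $E^2$-page, giving
\[
E[b]^2_{p,0} \;\cong\; H_p\bigl(C_\bullet(N(\calU);A),\partial\bigr) \;=\; H_p(N(\calU);A),
\]
and $E[b]^2_{p,q}=0$ for $q>0$. The abutment yields the desired isomorphism $MDH^b_p(X;A)\cong H_p(N(\calU);A)$. The main (mild) obstacle is the verification of step~two, namely that $(j^b_{i_k})_*$ is the identity $A\to A$ under the canonical identification $MDH^b_0(U_\tau;A)\cong A$; this is a direct consequence of the explicit description of $MDH^b_0$ in Proposition~\ref{prop:point} together with naturality of that identification with respect to Lipschitz l.v.a.\ inclusions.
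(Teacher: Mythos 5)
Your proof is correct and follows essentially the same route as the paper: the paper's own argument is just the observation that in the \v{C}ech spectral sequence of Theorem~\ref{theo:Cech} one has $E[b]^1_{p,q}=0$ for $q>0$ and $E[b]^1_{p,0}=\bigoplus MDH^b_0(U_{i_0,\dots,i_p};A)$ with each nonzero summand isomorphic to $A$. Your additional verification that $d^1$ is the simplicial boundary of the nerve (via the description of $MDH^b_0$ by classes of l.v.a.\ $0$-simplices, really Proposition~\ref{prop:0-homology}) is a detail the paper leaves implicit, and is correct.
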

\proof
In the spectral sequence of Theorem~\ref{theo:Cech} we have $E[b]^1_{p,q}=0$ if $q>0$ and $E[b]^1_{p,0}=\bigoplus_{1\leq i_0<...<i_p\leq r}MDH^b_0(U_{i_0,...,i_p},A)$, where $MDH^b_0(U_{i_0,...,i_p};A)\cong A$ if and only if $U_{i_0,...,i_p}$ is not empty.
\endproof

\section{Moderately Discontinuous Homology in degree \texorpdfstring{$0$}{0}}

\begin{definition}
\label{def:b-eqcomponents} Let $(X,x_0)$ be a metric subanalytic germ. Two connected components $X^1$ and $X^2$ of $X\setminus\{x_0\}$ are $b$-equivalent if there exist two l.v.a. $0$-simplices $\sigma_i:\hat{\Delta}_0\to (X_i,x_0)$ which are $b$-equivalent. The equivalence classes are called $b$-connected components of $X$. The $\infty$-connected components are the usual connected components of $X\setminus\{x_0\}$.
\end{definition}

\begin{prop}
\label{prop:0-homology}
The $b$-moderately discontinuous homology $MDH^b_0(X;A)$ at degree $0$ is isomorphic to $A^{r(b,X)}$, where $r(b,X)$ is the number of $b$-connected components of $X$. A basis is given by the choice of a $0$-simplex in each $b$-connected component. For $b_1, b_2 \in (0, \infty]$, $b_1 \geq b_2$, the homomorphism $h_0^{b_1, b_2}$ is the projection that sends a base element $\alpha$ of $A^{r(b_1,X)}$ onto the base element of $A^{r(b_2,X)}$ that represents the $b_2$-connected component $\alpha$ lies in.
\end{prop}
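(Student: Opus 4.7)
The plan is to exhibit an explicit isomorphism $\bar\phi: MDH^b_0(X;A)\to A^{r(b,X)}$. First I would describe $MDC^b_0(X;A)$ concretely: any homological subdivision of $\hat{\Delta}_0$ reduces to a l.v.a.\ subanalytic reparametrization of the ray, which by Remark~\ref{rem:repar} yields an $\to_\infty$-equivalence. Combined with the $\sim_b$-relation, $MDC^b_0(X;A)$ becomes the free $A$-module generated by $\sim_b$-classes of l.v.a.\ $0$-simplices (equivalently, germs of l.v.a.\ subanalytic arcs emanating from $x_0$, modulo reparametrization and $b$-equivalence).

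Next I would define $\phi:MDC^b_0(X;A)\to A^{r(b,X)}$ by sending a l.v.a.\ $0$-simplex $p$ to the basis element indexed by the $b$-connected component of $X$ containing the connected component of $X\setminus\{x_0\}$ into which $p$ takes values for $t>0$. This is well-defined on $\sim_b$-classes by Definition~\ref{def:b-eqcomponents}. For any l.v.a.\ $1$-simplex $\tau$, the set $\hat{\Delta}_1\setminus\{(0,0,0)\}$ is connected and its image under $\tau$ lies in $X\setminus\{x_0\}$ by the l.v.a.\ condition; hence both endpoints $\tau\comp j_1^0$ and $\tau\comp j_1^1$ take values in the same connected component of $X\setminus\{x_0\}$, so $\phi(\partial\tau)=0$ and $\phi$ descends to $\bar\phi:MDH^b_0(X;A)\to A^{r(b,X)}$. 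Surjectivity of $\bar\phi$ is immediate from subanalytic curve selection together with Remark~\ref{rem:lvareduction}, which produce a l.v.a.\ subanalytic arc in any prescribed connected component.

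The heart of the argument is injectivity, which reduces to the key claim: two l.v.a.\ $0$-simplices $p,q$ whose images lie in the same connected component $X^1$ of $X\setminus\{x_0\}$ become homologous in $MDH^b_0$. To establish this I would invoke Remark~\ref{rem:germ1} to obtain a subanalytic conical structure $h:C(L_{X^1})\to X^1$ compatible with $\mathrm{im}(p)$ and $\mathrm{im}(q)$, which distinguishes points $v_p,v_q\in L_{X^1}$. Since $L_{X^1}$ is a connected compact subanalytic set, it admits a subanalytic triangulation, and hence a subanalytic path $\alpha:[0,1]\to L_{X^1}$ from $v_q$ to $v_p$ obtained by concatenating edges. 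Define $\tau:\hat{\Delta}_1\to X^1$ by
$$\tau(tx_0,tx_1,t):=h(t\alpha(x_1),t)\text{ for }t>0,\qquad \tau(0,0,0):=x_0.$$
Continuity, subanalyticity and the l.v.a.\ condition with constant $1$ all follow from the identity $\|h(sv,s)-x_0\|=s$ provided by the conical structure. The restrictions $\tau\comp j_1^0$ and $\tau\comp j_1^1$ are l.v.a.\ subanalytic reparametrizations of $p$ and $q$ respectively; hence by Remark~\ref{rem:repar} they equal $[p]$ and $[q]$ in $MDC^b_0(X;A)$, so $\partial\tau=[p]-[q]$.

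Injectivity of $\bar\phi$ then follows from the key claim together with the definition of $b$-connected components: two $0$-simplices in $b$-equivalent connected components are already identified in $MDC^b_0$ by $\sim_b$, and the transitive closure of these relations connects any two $0$-simplices within the same $b$-connected component. Finally, the description of $h^{b_1,b_2}_0$ for $b_1\geq b_2$ is immediate: the inequality $d/t^{b_2}\leq d/t^{b_1}$ for small $t$ shows that $\sim_{b_1}$ refines $\sim_{b_2}$, so $b_1$-connected components refine $b_2$-connected components, and the chain map naturally sends a basis element (a $0$-simplex $\sigma$) to the basis element indexed by the $b_2$-component containing $\sigma$. I expect the construction of the connecting $1$-simplex $\tau$ in the key claim to be the main technical step, as it hinges on extracting a l.v.a.\ subanalytic $1$-simplex out of the (non-Lipschitz) subanalytic conical structure together with subanalytic path-connectedness of the link.
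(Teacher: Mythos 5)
Your proof is correct and follows essentially the same route as the paper's: use the conical structure of Remark~\ref{rem:germ1}, connect two points of the link by a subanalytic path, cone it off to a l.v.a.\ $1$-simplex whose boundary realizes the homology, and invoke Remark~\ref{rem:repar} to absorb reparametrizations. The only (harmless) organizational difference is that the paper fixes one conical structure and separately shows every $0$-simplex is homologous to a ``straight'' one, whereas you re-choose the conical structure compatibly with $\mathrm{im}(p)$ and $\mathrm{im}(q)$ so that both become straight at once.
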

\proof
Let $L:=X\cap\SSS_\epsilon$ be the link of $X$ (where $\epsilon>0$ is small enough). Let $\theta:C^1_L\to X$ be a subanalytic homeomorphism preserving the distance to the origin (this exists by Remark~\ref{rem:germ1}). Let $\tau:\Delta_n\to L$ be a subanalytic map. The {\em straight} $n$-simplex with respect to $\theta$ associated with $\tau$ is defined to be the map germ $\sigma:\hat{\Delta}_n\to X$ given by $\sigma(tz,t):=\theta(\tau(z),t)$. 

Let $x_1,x_2$ be two points in the same connected component of $L$. Then there exists a subanalytic path $\gamma:[0,1]\to L$ joining $x_1$ and $x_2$. The boundary operator ``$\partial$'' applied to the straight simplex associated with $\gamma$ is the difference of the straight simplices associated with $x_i$. So, we conclude that two straight $0$-simplices in the same connected component of $X\setminus\{x_0\}$ are $b$-homologous for any $b$. 

Let $\sigma:\hat{\Delta}_0=[0,1)\to (X,x_0)$ be any $0$-simplex. Up to reparametrization (see Remark~\ref{rem:lvareduction}) we may assume that $||\sigma(t)||=t$. We can express the restriction  $\theta^{-1}\comp\sigma|_{\Delta_0\times (0,1)}$ as a pair $\theta^{-1}\comp\sigma|_{\Delta_0\times (0,1)}(t)=(\gamma(t),t)$, where $\gamma:(0,1)\to L$ is the germ at $0$ of a subanalytic path. We may choose the radius $\epsilon$ defining the link $L$ small enough so that $\epsilon$ is in the domain of definition of the germ $\sigma$, and hence of $\gamma$. The map $\tau:\hat{\Delta}_1 \to (X, x_0)$, where $\tau(ts,t):=\theta(\gamma(\epsilon+s(t-\epsilon)),t)$, defines a $1$-simplex whose boundary shows that $\sigma$ is $b$-homologous to a straight simplex.

We have proven that all $0$-simplices lying in the same connected component of $X\setminus\{x_0\}$ are $b$-homologous for any $b$. 

After this the proof is obvious.
\endproof

\section{The \texorpdfstring{$\infty$}{infinity}-MD Homology: comparison with the homology of the link}
\label{sec:link}

Let $(X,Y,\{x_0\},d_X)$ be a pair of closed metric subanalytic germs in $\mathbb{R}^n$. 
By Remark~\ref{rem:triang} there is a finite subanalytic triangulation $\alpha:|K|\to X\cap B_\epsilon$ of a representative $X\cap B_\epsilon$, which is compatible with $Y$ and $x_0$. By choosing $\epsilon$ sufficiently small and intersecting with $\SSS_\epsilon$ we obtain a subanalytic triangulation $\beta:|L|\to X\cap \SSS_\epsilon$ compatible with $Y\cap\SSS_\epsilon$ such that $(K,\alpha)$ is the cone over $(L,\beta)$. In other words: there exists a pair of simplicial
complexes $(L_1,L_2)$ and a subanalytic homeomorphism
$$h:(C(|L_1|),C(|L_2|))\to (X,Y,\{x_0\})\cap B_\epsilon$$
from the cones of the geometric realizations to the representative $(X,Y,\{x_0\})\cap B_\epsilon$. By the reparametrization trick of Remark~\ref{rem:lvareduction} we may assume that $||h(tx,t)||=t$.

Denote by $C^{\mathrm{Simp}}_{\bullet}(L_1,L_2;A)$ the simplicial homology complex for the pair $(L_1,L_2)$ with coefficients in $A$. The homeomorphism $h$ induces a morphism of complexes 
\begin{equation}
\label{eq:link}
c:C^{\mathrm{Simp}}_{\bullet}(L_1,L_2;A)\to MDC_{\bullet}^\infty(X,Y;A).
\end{equation}

\begin{theo}
\label{theo:link}
The morphism~(\ref{eq:link}) is a quasi-isomorphism. As a consequence we have an isomorphism between the singular homology  $H_{*}(X\setminus\{x_0\},Y\setminus\{x_0\};A)$ and $MDH^{\infty}_{*}(X,Y,x_0;A)$.
\end{theo}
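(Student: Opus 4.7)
First I would reduce to the case where $X = C(|L_1|)$, $Y = C(|L_2|)$, and $h$ is the identity. The $\infty$-MD chain complex is defined purely in terms of the subanalytic structure of the germ: Definitions~\ref{def:preinfhomology}, \ref{def:subdiv} and \ref{def:infty} never mention the metric $d_X$, hence every subanalytic l.v.a. homeomorphism of subanalytic germs induces an isomorphism of $\infty$-MD chain complexes. The conical homeomorphism $h:C(|L_1|)\to X$, normalized so that $\|h(tx,t)\|=t$, transports $c$ to the corresponding map on the cone, so it suffices to treat that case.

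Next I would compute $MDH^\infty_\bullet(C(|L_i|);A)$, $i=1,2$, by applying the \v{C}ech spectral sequence (Theorem~\ref{theo:Cech}) to an explicit cover. Refining the triangulation if needed, I pass to a subdivision $L_1'$ of $L_1$ in which $L_2$ is a full subcomplex. For each vertex $v$ of $L_1'$, let $V_v\subseteq |L_1|$ denote its open star; then $\{V_v\}$ is a finite open subanalytic cover of $|L_1|$ whose non-empty intersections $V_{v_0}\cap\cdots\cap V_{v_p}$ are open stars of simplices $\langle v_0,\ldots,v_p\rangle$, hence are contractible open subanalytic subsets admitting a continuous subanalytic straight-line contraction $H^W$ onto the barycenter by linear interpolation of barycentric coordinates. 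Lift to subanalytic subgerms
\[
U_v := \{(tx,t)\in C(|L_1|) : x\in V_v,\ t\in(0,1)\}
\]
of $(C(|L_1|),x_0)$. Since $b=\infty$ imposes no point equivalence, the choice $\hat U_v:=U_v$ trivially verifies both conditions of Definition~\ref{def:b-saturated}, so any finite subanalytic cover is an $\infty$-cover; the same construction using only vertices of $L_1'$ lying in $L_2$ provides an $\infty$-cover of $C(|L_2|)$.

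The key lemma is that every non-empty $U_{v_0,\ldots,v_p}$ has the $\infty$-MD homology of a point. Setting $W:=V_{v_0}\cap\cdots\cap V_{v_p}$ with barycenter $w$, define
\[
H:U_{v_0,\ldots,v_p}\times I\to U_{v_0,\ldots,v_p},\qquad H((tx,t),s):=(t\cdot H^W(x,s),\,t).
\]
The map $H$ is continuous subanalytic and extends continuously to the vertex because $H^W$ is bounded so $t\cdot H^W(x,s)\to 0$ as $t\to 0$; each slice $H_s$ preserves the second coordinate $t$ and hence is l.v.a. with a constant independent of $s$. Since $H_0=\mathrm{id}$ and $H_1$ retracts $U_{v_0,\ldots,v_p}$ onto the radial arc $\{(tw,t):t\in(0,1)\}$, Theorem~\ref{theo:homotopyinvariance}(1) identifies $MDC^\infty_\bullet(U_{v_0,\ldots,v_p};A)$ in $\calD(Ab)^-$ with the chain complex of this arc; the arc is a point in the sense of Definition~\ref{def:point}, so by Proposition~\ref{prop:point} (whose proof applies verbatim for $b=\infty$ since the $b$-equivalence condition in Remark~\ref{rem:checkindependence} becomes vacuous) the homology equals $A$ concentrated in degree $0$.

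Combining the key lemma with Corollary~\ref{cor:nerve} yields $MDH^\infty_\bullet(C(|L_i|);A)\cong H_\bullet(N_i;A)$, where $N_i$ is the nerve of the chosen cover; by construction $N_i$ is canonically isomorphic to $L_i'$, so its simplicial homology is $H^{\mathrm{Simp}}_\bullet(L_i;A)$. Naturality of the cone construction shows that the resulting isomorphism is induced by $c$. Applying the five-lemma to the long exact sequences of the pairs in both the simplicial and $\infty$-MD settings then yields the relative isomorphism $MDH^\infty_\bullet(X,Y,x_0;A)\cong H^{\mathrm{Simp}}_\bullet(L_1,L_2;A)\cong H_\bullet(X\setminus\{x_0\},Y\setminus\{x_0\};A)$, establishing that $c$ is a quasi-isomorphism. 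The main obstacle is the key lemma, which hinges on choosing the barycentric contraction carefully so that the resulting $H$ is genuinely subanalytic, extends continuously to the vertex, and satisfies the uniform l.v.a. hypothesis of Theorem~\ref{theo:homotopyinvariance}(1).
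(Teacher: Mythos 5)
Your proposal is correct and follows essentially the same strategy as the paper's proof: reduce to the absolute case by the five-lemma, cover the cone by cones over contractible pieces of the link, show each piece has the $\infty$-MD homology of a point via Theorem~\ref{theo:homotopyinvariance}(1) and Proposition~\ref{prop:point}, and conclude with Corollary~\ref{cor:nerve}. The only (harmless) variation is that you use the open-star cover of a subdivision, whose nerve is the complex itself, whereas the paper uses the closed cover by maximal simplices and identifies the nerve's homology with that of $L_1$.
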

\proof
By using the relative homology sequence and the $5$-lemma we reduce to the absolute case $Y=\emptyset$. The singular homology $H_{*}(X\setminus\{x_0\};A)$ is isomorphic to the singular homology of the link, by homotopy invariance, and the later is isomorphic with the simplicial homology of $L_1$.

A simplex of $L_1$ is called {\em maximal} if it is not strictly contained in another simplex. The collection $\{Z_i\}_{i\in I}$ of maximal simplices forms a closed cover of $|L_1|$ such that any finite intersection is a simplex, and hence, contractible. Then the simplicial homology of $L_1$ coincides with the homology of the nerve of the cover. 

The collection $\{h(C(Z_i))\}_{i\in I}$ is a closed subanalytic cover. Any finite intersection $\cap_{i\in J}h(C(Z_i))$ is of the form $h(C(T))$ where $T$ is a simplex in $L_1$. An immediate application of Assertion~(1) of Theorem~\ref{theo:homotopyinvariance} shows that $h(C(T))$ has the $\infty$-MD homology of a point. Since any closed subanalytic cover is an $\infty$-cover, by Corollary~\ref{cor:nerve} the homology $MDH^{\infty}_{*}(X\setminus\{x_0\};A)$ coincides with the homology with coefficients in $A$ of the nerve of the cover. This concludes the proof. 
\endproof

This Theorem extends to the case of non-closed subanalytic germs:

\begin{theo}
\label{theo:link_no_cerrado}
Let $(X,Y,\{x_0\},d_X)$ be a pair of metric subanalytic germs in $\RR^n$. We have an isomorphism between the singular homology  $H_{*}(X\setminus\{x_0\},Y\setminus\{x_0\};A)$ and $MDH^{\infty}_{*}(X,Y,x_0;A)$.
\end{theo}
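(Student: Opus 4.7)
By the relative long exact sequences in $\infty$-MD homology (Proposition~\ref{prop:relativehomologysequence}) and in singular homology, applied to the pair $(X,Y)$ and compared by the natural transformation induced by the conical structure, the five-lemma reduces the statement to the absolute case: it suffices to prove $MDH^{\infty}_{*}(X,x_0;A)\cong H_{*}(X\setminus\{x_0\};A)$ for every (not necessarily closed) subanalytic germ $(X,x_0)$.

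For the absolute case, I would take a subanalytic triangulation $h\colon |K|\to \overline{X}\cap B_{\epsilon}$ of a representative of the closure, compatible with $X$ and $\{x_0\}$ (Remark~\ref{rem:triang}), and then, using Remark~\ref{rem:lvareduction}, reparametrize so that $\|h(tx,t)-x_0\|=t$. This realizes $\overline{X}\cap B_\epsilon$ as a cone $C(|L|)$ over a simplicial triangulation of the closed link $\overline{X}\cap\SSS_\epsilon$, and $L_X:=X\cap\SSS_\epsilon$ is identified with a union of open simplices of $|L|$. Passing to the second barycentric subdivision $|L''|$, let $L_X^{**}\subseteq L_X$ denote the union of all closed $|L''|$-simplices contained in $L_X$; this is a closed subanalytic subcomplex of $|L|$ sitting inside $L_X$. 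The classical regular-neighborhood argument in PL topology then produces a continuous subanalytic retraction $r^{\mathrm{link}}\colon L_X\to L_X^{**}$ together with a continuous subanalytic homotopy $H^{\mathrm{link}}_s\colon L_X\to L_X$ from the identity to $\iota^{\mathrm{link}}\circ r^{\mathrm{link}}$, where $\iota^{\mathrm{link}}\colon L_X^{**}\hookrightarrow L_X$.

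Next I would cone this data up. Set $X^{**}:=h(C(L_X^{**}))$, which is a \emph{closed} subanalytic subgerm of $\overline{X}$ contained in $X$, and define
\[
r\big(h(ty,t)\big):=h\big(t\cdot r^{\mathrm{link}}(y),t\big),\qquad H_s\big(h(ty,t)\big):=h\big(t\cdot H_s^{\mathrm{link}}(y),t\big).
\]
Each $H_s$ preserves the distance to $x_0$, so the whole family is uniformly l.v.a.\ with constant $1$ and depends continuously and subanalytically on $s$. By Theorem~\ref{theo:homotopyinvariance}(1), $\mathrm{id}_X$ and $\iota\circ r$ induce the same morphism on $MDH^{\infty}_{*}(X;A)$; combined with $r\circ\iota=\mathrm{id}_{X^{**}}$, this shows that the inclusion $\iota\colon X^{**}\hookrightarrow X$ induces an isomorphism $MDH^{\infty}_{*}(X^{**};A)\cong MDH^{\infty}_{*}(X;A)$. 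Since $X^{**}$ is a closed subanalytic germ, Theorem~\ref{theo:link} gives $MDH^{\infty}_{*}(X^{**};A)\cong H_{*}(X^{**}\setminus\{x_0\};A)$. Finally, the punctured conical extension of $r^{\mathrm{link}}$ is an ordinary deformation retract of $X\setminus\{x_0\}$ onto $X^{**}\setminus\{x_0\}$, so $H_{*}(X^{**}\setminus\{x_0\};A)\cong H_{*}(X\setminus\{x_0\};A)$, and composing isomorphisms yields the theorem.

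\textbf{Main obstacle.} The subtle point is the construction of the subanalytic deformation retract $r^{\mathrm{link}}\colon L_X\to L_X^{**}$ together with the homotopy $H_s^{\mathrm{link}}$. Topologically this is the standard fact that a union of open simplices is a regular neighborhood of the largest closed subcomplex of the second barycentric subdivision contained in it; however, we need the retraction and homotopy to be genuinely subanalytic and to glue continuously across simplices. The natural approach is an induction on the dimension of the open simplices $\sigma\in L$ with $\sigma\not\subseteq L_X$: inside each closed $|L''|$-simplex intersecting $L_X$, one uses the linear radial ``push'' towards the face lying in $L_X^{**}$, and checks that these local pushes are subanalytic, preserve the simplicial stratification, and match on common faces. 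Once this PL-to-subanalytic translation is carried out, the conical extension step and the appeal to Theorem~\ref{theo:homotopyinvariance}(1) are routine, since the radial cone construction preserves $\|\cdot-x_0\|$ and hence l.v.a.-ness with uniform constant.
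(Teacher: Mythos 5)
Your argument is essentially correct, but it takes a genuinely different route from the paper. The paper's proof also reduces to the absolute case and then to Theorem~\ref{theo:link}, but it does so by an exhaustion rather than a retraction: setting $\partial X:=\overline{X}\setminus X$ and $X_k:=X\setminus\calH_{k,\eta}(\partial X;X)$, one gets an increasing sequence of \emph{closed} subanalytic subgerms with $X=\bigcup_k X_k$; since any l.v.a.\ simplex has subanalytic image avoiding $\partial X$ and hence lies in some $X_k$, the complex $MDC^{\infty}_{\bullet}(X)$ is the direct limit of the $MDC^{\infty}_{\bullet}(X_k)$, and homology commutes with direct limits. This sidesteps entirely the PL step that you correctly identify as the main obstacle of your approach. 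Your route buys something the paper's does not — a single closed subgerm $X^{**}\subseteq X$ that is simultaneously an $\infty$-MD and an ordinary deformation retract of $X$, which is a stronger and potentially reusable statement — at the cost of having to carry out the simplicial-neighborhood argument subanalytically. On that point, one caution: the clean classical lemma is that a union $U$ of open simplices of $|L|$ deformation retracts onto the \emph{full} subcomplex of the \emph{first} barycentric subdivision $|L'|$ spanned by the barycenters $\hat\sigma$ of those $\sigma$ with $\mathring\sigma\subseteq U$ (the retraction being the linear normalization of barycentric coordinates onto these vertices, which restricts to $U$ because the top element of the flag of any $x\in U$ is one of the retained barycenters). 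Phrasing it as ``the largest closed subcomplex of $|L''|$ contained in $L_X$'' requires extra care (e.g.\ that this subcomplex has the right homotopy type is not immediate without the spanning description), so I would recommend using the $|L'|$ formulation; the retraction and homotopy are then affine in barycentric coordinates, hence subanalytic after transport by $h$, and your coning-up and appeal to Theorem~\ref{theo:homotopyinvariance}(1) go through as you describe.
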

\proof
As in the previous proof we reduce to the absolute case $Y=\emptyset$. Let $\partial X:=\overline{X}\setminus X$. Fix a positive $\eta>0$, for any $k\in\NN$ define $X_k:=X\setminus \calH_{k,\eta}(\partial X; X)$ (recall that $\calH_{k,\eta}(\partial X; X)$ denotes the $k$-horn neighborhood of $\partial X$ in $X$. Each $X_k$ is a subanalytic metric subgerm of $X$ which is closed in $\RR^n$. We have the sequence of inclusions $X_1\subset X_2\subset ...\subset X_k\subset ...$, and the equality $X=\cup_{k\in\NN} X_k$. Hence the complex $MDC^{\infty}_{*}(X,x_0;A)$ is the direct limit of the complexes $MDC^{\infty}_{*}(X_k,x_0;A)$, and since homology commutes with direct limit the same happens for the homology. This reduces the result to Theorem~\ref{theo:link}.
\endproof

\section{Finiteness properties}
\label{sec:finitegeneration}

\subsection{Finite generation}
The idea to prove finite generation is to compare $MD$-homology with the usual homology of an increasing sequence of neighborhoods. The inspiration for this owes to conversations of the authors with L. Birbrair and A. Parusinski. 

\begin{theo}
\label{theo:finiteness}
Let $(X,O,d)$ be a metric subanalytic germ such that $d$ is either the inner metric $d_{inn}$ or the outer metric $d_{out}$. Then for any ring $A$, any  $b\in\BB$ and any $k$ the $A$-module $MDH^{b}_k(X,O,d;A)$ is finitely generated over $A$. 
\end{theo}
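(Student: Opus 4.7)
My strategy is to reduce finite generation of $MDH^b_k(X,O,d;A)$ to the finite generation of the singular homology of a compact triangulable definable set, by identifying it with the ordinary homology of a punctured definable horn neighborhood.

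By Theorem~2.1 of \cite{reembedding} (as used in Proposition~\ref{prop:b<1trivial}) together with Corollary~\ref{func_inner}, I may assume $d = d_{out}$ and fix a subanalytic embedding $X \subset \RR^n$ with $O = \underline{0}$. The case $b = \infty$ is immediate from Theorem~\ref{theo:link_no_cerrado}: $MDH^\infty_k(X,O,d_{out};A) \cong H_k(X \setminus \{O\};A)$ is finitely generated because $X \setminus \{O\}$ is homotopy equivalent to its compact triangulable subanalytic link. For $b \in (0,\infty)$, fix $\eta > 0$ and let $N := S\calH_{b,\eta}(X) \subset \RR^n$ be the spherical horn neighborhood of Definition~\ref{def:sphhorn}, definable in the polynomially bounded O-minimal structure $\RR_{an}^{\RR}$ by Remark~\ref{poly_bounded}, so that the MD-theory applies to $N$ verbatim. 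The core claim, announced in Corollary~\ref{cor:birbrair}, is the natural isomorphism
\[
MDH^b_k(X,O,d_{out};A) \;\cong\; H_k(N \setminus \{O\};A).
\]
I would establish it in two directions. The inclusion $\iota: X \hookrightarrow N$ induces a well-defined homomorphism $\iota_*: MDH^b_k(X;A) \to MDH^\infty_k(N;A)$: if $\sigma_1, \sigma_2$ are $\sim_b$-equivalent l.v.a. simplices in $X$ then $\|\sigma_1 - \sigma_2\| = o(t^b) < \eta t^b$, so the straight-line interpolation stays inside $N$ and provides an explicit $(n+1)$-chain in $MDC^\infty_\bullet(N)$ with boundary $\iota(\sigma_1 - \sigma_2)$. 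In the opposite direction, Hardt triviality applied to the ``nearest point in $X$'' selection (Lemma~\ref{lem:retracciondiscontinua}) yields a piecewise subanalytic linearly vertex approaching map $\pi: N \to X$ whose fibres over $x \in L_{X,t}$ are contained in balls of radius $\eta t^b$; checking the geometric criterion of Lemma~\ref{geometric b-maps} makes $\pi$ a legitimate $b$-map, and the $b$-map functoriality Proposition~\ref{prop:b-discfunctoriality} together with Theorem~\ref{theo:colapsediameter2} identifies $\pi_*$ as the inverse of $\iota_*$. Composing with Theorem~\ref{theo:link_no_cerrado} identifies $MDH^b_k(X;A)$ with $H_k(N \setminus \{O\};A)$; the latter is finitely generated as an $A$-module because $N \setminus \{O\}$ admits a definable conical structure (Remark~\ref{rem:germ1}) over a compact definable triangulable link.

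The main obstacle is showing that the piecewise nearest-point retraction $\pi$ genuinely inverts $\iota$ at the $b$-MD level. The fibres of $\pi$ have diameter $O(t^b)$, not $o(t^b)$, so $\iota \circ \pi$ is not naively $\sim_b$-equivalent to $\mathrm{id}_N$; the mismatch is handled either by working with a $b'$-horn for $b' > b$ arbitrarily close to $b$ together with a direct-limit argument, or, more robustly, by reducing via Mayer--Vietoris (Theorem~\ref{theo:MayerVietoris}) and the $b$-cover formalism of Section~\ref{sec:bcovers} to the case of straight cones, for which the computation is transparent. Verifying that the piecewise $\pi$ is itself a $b$-map amounts to checking that on the intersection of Hardt strata the two closest-point subanalytic choices differ by $o(t^b)$ after possibly shrinking $\eta$; this follows from polynomial boundedness of $\RR_{an}^{\RR}$ and a Curve Selection Lemma argument analogous to the one in the proof of Lemma~\ref{lem:arccharacterization}.
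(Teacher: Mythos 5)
Your overall strategy---identifying $MDH^b_k(X,O,d_{out};A)$ with the singular homology of a punctured horn neighborhood---is indeed the route the paper takes (this is the content of Theorem~\ref{theo:iso2}, Proposition~\ref{prop:comparaciondificil} and Corollary~\ref{cor:birbrair}), but as written your core claim is false and the step that actually produces finiteness is missing. The claim $MDH^b_k(X)\cong H_k(S\calH_{b,\eta}(X)\setminus\{O\};A)$ uses the horn of exponent \emph{exactly} $b$ and an arbitrary fixed $\eta$; Corollary~\ref{cor:birbrair} instead uses exponents $b''$ \emph{strictly larger} than $b$, lying in a suitable interval $(b,b')$, and this is not a cosmetic difference. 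Take $X=\{(t,0):t\geq 0\}\cup\{(t,t^b):t\geq 0\}\subset\RR^2$ with $b>1$ rational: the two branches have contact of order exactly $b$, so the corresponding points are not $b$-equivalent and $MDH^b_0(X;A)\cong A^2$ by Proposition~\ref{prop:0-homology}, whereas for $\eta=2$ the punctured set $S\calH_{b,\eta}(X)\setminus\{O\}$ is connected and has $H_0\cong A$. So the exponent-$b$ horn is too thick, and the answer depends on $\eta$; the correct statement only emerges after passing to horns of exponent $b'>b$ and taking the direct limit as $b'\downarrow b$ (which is also why your nearest-point retraction, whose fibres have diameter $O(t^b)$ rather than $o(t^b)$, fails the hypotheses of both Lemma~\ref{geometric b-maps} and Theorem~\ref{theo:colapsediameter2}, as you yourself observe; the paper never constructs such a retraction at level $b$, but instead builds a chain-level isomorphism $\rho$ from an auxiliary complex of chains $b$-close to $X$, and proves $MDH^{b'}_\bullet(S\calH_{b',\eta}(X))\cong MDH^{\infty}_\bullet(S\calH_{b',\eta}(X))$ by a countable nerve argument with horn neighborhoods of arcs---your alternative ``reduce to straight cones via Mayer--Vietoris'' is not carried out and is not obviously available).

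The second, and decisive, gap: even once you repair the above by the direct-limit argument you allude to, finite generation does not follow, because a direct limit of finitely generated $A$-modules need not be finitely generated. The heart of the paper's proof of Theorem~\ref{theo:finiteness} is showing that the limit of $H_k(S\calH_{b',\eta}(X)\setminus\{O\};A)$ over $b'>b$ \emph{stabilizes}: the family $\{S\calH_{b',\eta}(X)\}_{b'}$ is definable in the polynomially bounded structure $\RR_{an}^{\RR}$, Hardt triviality in the parameter $b'$ gives an interval $(b,b'')$ over which the family is topologically trivial, and an invertible-cobordism argument then shows the punctured horns have isomorphic homology for all $b'$ in that interval. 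Your proposal never addresses this stabilization, so it reduces finite generation to a statement (finite generation of a single fixed horn's homology) that has not been connected to $MDH^b_k(X)$.
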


It is enough to prove the Theorem for the outer metric, since due to (\cite{reembedding}, Theorem 2.1) the germ $(X,O,d_{inn})$ is bi-Lipschitz subanalitically homeomorphic
to a metric subanalytic subgerm with the outer metric.

To prove it for the outer metric some preparation is needed. Let us start introducing an auxiliary complex. 

\begin{definition}
\label{def:bclosechains}
Let $d_e$ denote the euclidean metric in $\RR^n$. Let $(X,O)$ be a metric subanalytic germ embedded in $\RR^n$. We denote by $\calN_bMDC^{pre,\infty}_k(X;A)$ the subgroup of $MDC^{pre,\infty}_k(\RR^n,O;A)$ spanned by the set of simplexes $\sigma:\hat{\Delta}_k\to (\RR^n,O)$ such that 
$$\lim_{t\to 0}\frac{max\{d_e(\sigma(tx,t),X):x\in\Delta_n\}}{t^b}=0.$$

The minimal subcomplex $\calN_{b}MDC^{b}_\bullet(X;A)$ of $MDC^{b}_\bullet(\RR^n,0;A)$ which is spanned by the groups $\calN_bMDH^{pre,\infty}_k(X;A)$ is called the complex of chains $b$-close to $X$.
\end{definition}

Consider the germ with the outer metric $(X,O,d_{out})$. Let us see that there is a well defined morphism of complexes
\begin{equation}\label{eq:rho}\rho:\calN_{b}MDC^{b}_\bullet(X;A)\to MDC^{b}_\bullet(X,d_{out};A).\end{equation}
Note that $d_{out}=d_e|_X$. 

Assume $X$ to be closed. Let $B$ be a small closed ball centered at the origin so that $B\cap X$ is compact. Apply Lemma~\ref{lem:retracciondiscontinua} to the pair $B\supset X$. Let $\calS:=\{S_i\}_{i\in I}$ and $\{f_i\}_{i\in I}$ be the partition and the subanalytic maps predicted in the Lemma. Then $\{\overline{S}_i\}_{i\in I}$ is a closed cover of $B$. Let $U$ be the union of the interiors of the sets $S_i$. Then $U$ is a dense subanalytic subset of $B$. By Proposition~\ref{prop:smallqis}, for any $b<\infty$ we have an isomorphism 
$$MDC^{b,\calS\cap U}_\bullet(B,d_e;A)\to MDC^{b}_\bullet(B,d_e;A).$$
This isomorphism restricts to an isomorphism
$$\calN_bMDC^{b,\calS\cap U}_\bullet(X;A)\to \calN_{b}MDC^{b}_k(X;A),$$
where $\calN_bMDC^{b,\calS\cap U}_\bullet(X;A)$ is the complex defined in analogy with Definition~\ref{def:smallchain}: define $\calN_bMDC^{pre,\infty,\calS\cap U}_\bullet(X;A)$ the complex analogous to $\calN_bMDC^{pre,\infty}_k(X;A)$, but containing only small chains with respect to $U$ and to the cover $\calZ$. According with Remark~\ref{rem:checkindependence small chains}, in order to define a morphism  $\rho$ as in (\ref{eq:rho}) it is enough to define it from $\calN_bMDC^{pre,\infty,\calS\cap U}_\bullet(X;A)$ and prove independence with respect to homological subdivisions and the $b$-equivalence relation. 

Given any simplex $\sigma\in \calN_bMDC^{pre,\infty,\calS\cap U}_\bullet(X;A)$ we choose an $i\in I$ such that the image of $\sigma$ is contained in $S_i$, and define $\rho(\sigma):=f_i\comp\sigma$. Suppose that we have $\sigma\sim_b\sigma'$ with $\sigma'\subset S_j$. Let's check that $f_i\comp(\sigma)\sim_b f_j\comp(\sigma')$. The triangle inequality gives
$$d(f_i(\sigma(tx,t),f_j(\sigma'(tx,t)))\leq d(f_i(\sigma(tx,t)),\sigma(tx,t))+d(\sigma'(tx,t),\sigma(tx,t))+d(f_j(\sigma'(tx,t)),\sigma'(tx,t)).$$
The middle term decreases faster than $t^b$ because $\sigma\sim_b\sigma'$. By Inequality~\ref{eq:closest} the first term is bounded by $d(\sigma(tx,t),X)$, which decreases faster than $t^b$ for being $\sigma$ contained in $\calN_bMDC^{pre,\infty,\calS}_\bullet(X;A)$. The third term is bounded analogously. This shows independence on the $\sim_b$-equivalence relation. The independence on homological subdivision is immediate, and so, the morphism $\rho$ is well defined.

\begin{prop}
\label{prop:rhoiso}
Let $(X,O,d_{out})$ be a closed metric subanalytic germ embedded in $\RR^n$ with the outer metric.
The morphism of complexes 
$$\rho:\calN_{b}MDC^{b}_k(X;A)\to MDC^{b}_\bullet(X,O,d_{out};A)$$
defined above is an isomorphism.
\end{prop}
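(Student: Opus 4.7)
My plan is to construct a two-sided inverse $\iota$ to $\rho$ induced by the inclusion $X\hookrightarrow \RR^n$, and verify the two composites by separate arguments. First I define $\iota\colon MDC^{b}_\bullet(X,O,d_{out};A)\to \calN_b MDC^{b}_\bullet(X;A)$ by sending an l.v.a.\ simplex $\sigma\colon\hat{\Delta}_k\to X$ to the same map regarded as landing in $\RR^n$. The resulting simplex lies in $\calN_b MDC^{\mathrm{pre},\infty}_k(X;A)$ because $d_e(\sigma(tx,t),X)=0$. Using Remark~\ref{rem:checkindependence}, the assignment commutes with the boundary and with homological subdivisions and preserves the $b$-equivalence relation, since $d_{out}=d_e|_X$ makes the condition identical for pairs of $X$-valued simplices; hence it descends to a morphism of complexes.

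Next I would verify $\iota\circ\rho=\mathrm{id}$ on $\calN_b MDC^{b}_\bullet(X;A)$. Given a class $[c]\in\calN_b MDC^{b}_\bullet(X;A)$, I take a small-chain representative $\sum a_j\tau_j$ with each $\tau_j$ having image in $\mathring{S}_{i_j}$, using the isomorphism $\calN_b MDC^{b,\calS\cap U}_\bullet(X;A)\cong \calN_b MDC^{b}_\bullet(X;A)$ recorded in the excerpt. By definition $\rho(c)=\sum a_j f_{i_j}\circ\tau_j$, and Equation~(\ref{eq:closest}) together with the defining property of $\calN_b MDC^{\mathrm{pre},\infty}$ give
\[
d_e\bigl(\tau_j(tx,t),\,f_{i_j}(\tau_j(tx,t))\bigr)=d_e(\tau_j(tx,t),X)=o(t^b)
\]
uniformly in $x\in\Delta_k$. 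Therefore $\tau_j\sim_b f_{i_j}\circ\tau_j$ in $MDC^{b}_\bullet(\RR^n,0;A)$, so $\iota(\rho(c))=[\sum a_j f_{i_j}\circ\tau_j]=[\sum a_j\tau_j]=[c]$ in $\calN_b MDC^{b}_\bullet(X;A)$.

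Finally I would show $\iota$ is injective, which combined with the surjectivity implied by $\iota\circ\rho=\mathrm{id}$ forces $\iota$ to be a bijection with $\rho$ as its two-sided inverse. Suppose $c\in MDC^{\mathrm{pre},\infty}_\bullet(X;A)$ satisfies $[\iota(c)]=0$ in $\calN_b MDC^{b}_\bullet(X;A)$; then also $[\iota(c)]=0$ in $MDC^{b}_\bullet(\RR^n,0;A)$, so by Lemma~\ref{lema:banulacion} there is a sequence of immediate equivalences $\iota(c)=z_0\to_\infty\dots\to_\infty z_r$ with $z_r\sim_b 0$ in $MDC^{b}_\bullet(\RR^n,0;A)$. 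Homological subdivisions of a chain supported in $X$ stay supported in $X$, so every $z_i$ is an $X$-chain. Since $d_{out}=d_e|_X$, two $X$-simplices are $\sim_b$-equivalent under $d_e$ if and only if they are $\sim_b$-equivalent under $d_{out}$; hence by Remark~\ref{rem:splitindexset} the coefficient-sum condition that expresses $z_r\sim_b 0$ coincides in the two complexes. Thus $z_r\sim_b 0$ in $MDC^{b}_\bullet(X,O,d_{out};A)$ as well, and a second application of Lemma~\ref{lema:banulacion} gives $[c]=0$ there.

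The main technical point is exactly this injectivity argument: one must rule out that the $\sim_{S,b}$-relation on chains supported in $X$ acquires extra identifications when computed inside the ambient complex $MDC^{b}_\bullet(\RR^n,0;A)$, where intermediate chains are allowed to leave $X$. The reduction above isolates this worry entirely in the $\sim_b$-classification of $X$-simplices, which is automatic from $d_{out}=d_e|_X$, so I expect no deeper obstruction beyond the careful bookkeeping of subdivisions and representative choices.
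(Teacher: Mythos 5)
Your proof is correct and follows essentially the same route as the paper: the inclusion $\iota$ is exhibited as a two-sided inverse of $\rho$, with $\iota\circ\rho=\mathrm{id}$ deduced exactly as in the text from Equation~(\ref{eq:closest}) together with the decay condition defining $\calN_{b}MDC^{\mathrm{pre},\infty}_\bullet(X;A)$. The only difference is that the paper gets the other composite for free --- since $f_i(z)=z$ for all $z\in X$ by Lemma~\ref{lem:retracciondiscontinua}, one has $\rho\circ\iota=\mathrm{id}$ on the nose --- so your separate injectivity argument for $\iota$ via Lemma~\ref{lema:banulacion} and Remark~\ref{rem:splitindexset}, while valid, can be bypassed.
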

\proof
The  inclusion  $\iota:MDC^{b}_\bullet(X,d_{out};A)\to \calN_{b}MDC^{b}_k(X;A)$ is a right inverse of $\rho$. Furthermore, by definition of $\calN_bMDC^{pre,\infty,\calZ}_\bullet(X;A)$ we have that $\iota(\rho(\sigma))\sim_b\sigma$; this shows that $\iota$ is also a left inverse. 
\endproof

Now, in order to prove finite generation of $MDH^b_k(X,d_{out};A)$, it is enough to prove finite generation of the homology groups of the complex $\calN_{b}MDC^{b}_k(X;A)$. With this purpose we study this complex further.

For the next lemma recall the definition of spherical horns (Definition~\ref{def:sphhorn}).

\begin{lema}
\label{lem:directlimit1}
For any $b'>b$ the complex $MDC^{pre,\infty}(S\calH_{b',\eta}(X))$ is a subcomplex of $\calN_bMDC^{pre,\infty}_k(X;A)$. Moreover $\calN_bMDC^{pre,\infty}_k(X;A)$ is the union of the subcomplexes $MDH^{\infty,pre}(S\calH_{b',\eta}(X))$ for any $b'>b$.
\end{lema}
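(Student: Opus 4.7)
The plan is to prove the two statements separately, the first being essentially formal while the second is the main content.

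For the subcomplex inclusion, I would first note that $\calN_b MDC^{pre,\infty}_\bullet(X;A)$ is stable under the face operator since restricting a simplex to a facet of $\hat{\Delta}_k$ only shrinks the $\max$ over $\Delta_n$ in the defining limit. Then, given a l.v.a.\ simplex $\sigma$ with image in $S\calH_{b',\eta}(X)$ and l.v.a.\ constant $K$, any point $y = \sigma(tx,t)$ admits some $z \in X$ with $\|z\| = \|y\|$ and $\|y-z\| < \eta \|y\|^{b'} \leq \eta K^{b'} t^{b'}$. Hence $d_e(\sigma(tx,t), X)/t^b \leq \eta K^{b'} t^{b'-b} \to 0$ uniformly in $x$, placing $\sigma$ in $\calN_b MDC^{pre,\infty}_k(X;A)$.

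For the union identity, given $\sigma \in \calN_b MDC^{pre,\infty}_k(X;A)$, I would introduce the subanalytic function
$$
\Psi(t) := \max\bigl\{\, d_e(\sigma(tx,t),\, X \cap \SSS_{\|\sigma(tx,t)\|}) : x \in \Delta_k \,\bigr\},
$$
since it is exactly $\Psi(t) \leq \eta \|\sigma(tx,t)\|^{b'}$ that governs membership of the image of $\sigma$ in $S\calH_{b',\eta}(X)$. By definability in the polynomially bounded O-minimal structure, $\Psi$ admits a Puiseux expansion $\Psi(t) \sim C t^{c}$ as $t \to 0^+$ for some $c \in \RR^+ \cup \{+\infty\}$. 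If one can show $c > b$, then any $b' \in (b,c)$ will do: the l.v.a.\ inequality $\|\sigma(tx,t)\| \geq t/K$ converts $\Psi(t) \lesssim t^{b'}$ into the required estimate $\Psi(t) < \eta \|\sigma(tx,t)\|^{b'}$ on some small interval $(0,\epsilon)$, hence the image of $\sigma$ lies in $S\calH_{b',\eta}(X)$.

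The main obstacle is comparing $\Psi$ to the hypothesis $\varphi(t) := \max_x d_e(\sigma(tx,t),X) = o(t^b)$: the latter measures unrestricted distance, while $\Psi$ requires an $X$-point on the same sphere. I would bridge this via the conical structure homeomorphism $h : C(L_X) \to X$ of Remark~\ref{rem:germ1} (satisfying $\|h(l,t)\|=t$): if $z \in X$ is a nearest point to $y = \sigma(tx,t)$ and $z = h(l_0, \|z\|)$, then $z'' := h(l_0, \|y\|)$ lies in $X \cap \SSS_{\|y\|}$, and uniform subanalytic continuity of $h$ in the radial variable, strengthened to a Lipschitz estimate using Kurdyka--Parusi\'nski-style Lipschitz subanalytic stratifications, bounds $\|z - z''\|$ linearly in $\bigl|\|z\| - \|y\|\bigr|\leq \varphi(t)$. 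The triangle inequality yields $\Psi(t) = O(\varphi(t)) = o(t^b)$, so the Puiseux exponent $c$ of $\Psi$ satisfies $c > b$, completing the proof.
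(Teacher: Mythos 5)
Your overall strategy is the same as the paper's: the first claim is the elementary horn-to-distance estimate you give, and the second is reduced, via a Puiseux expansion of a subanalytic distance function of $t$, to the existence of an exponent $c>b$, after which any $b'\in(b,c)$ works. Where you differ is that the paper applies the Puiseux argument directly to $\varphi(t)=\max_x d_e(\sigma(tx,t),X)$ and concludes immediately, silently identifying "close to $X$" with "close to $X$ on the same sphere". You are right to flag this: since $S\calH_{b',\eta}(X)=\bigcup_{x\in X}B(x,\eta\|x\|^{b'})\cap\SSS_{\|x\|}$, membership of $y$ requires a point of $X$ \emph{at the same distance from the origin as $y$}, so the passage from $\varphi$ to your $\Psi$ is genuinely needed and is in fact the only non-formal content of the lemma. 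Note also that a Łojasiewicz-type inequality would only give $\Psi\lesssim\varphi^{\theta}$ with $\theta\in(0,1)$, which does not keep the exponent above $b$; one really needs an exponent-preserving (e.g.\ linear) comparison, as you propose.

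That comparison, however, is precisely the step your proposal asserts rather than proves, and it is delicate: the radial arcs $t\mapsto h(lt,t)$ of an arbitrary conical structure need not be uniformly Lipschitz in $t$, so "uniform subanalytic continuity of $h$ in the radial variable, strengthened to a Lipschitz estimate" is the whole point, not a strengthening one gets for free. (Invoking Lipschitz stratifications would work, but is heavy machinery whose applicability here you would still have to justify.) A self-contained route: stratify $X$ near the origin and observe, by curve selection, that the stratum-wise gradient of $N=\|\cdot\|$ satisfies $\|\nabla(N|_{X_i})\|\ge c_0>0$ on every stratum meeting a small punctured ball — otherwise a subanalytic arc $s\mapsto z(s)$ in some stratum tending to $0$, parametrized by arclength, would have $\frac{d}{ds}\|z(s)\|\to\pm1$ while $|\frac{d}{ds}\|z(s)\||\le\|\nabla(N|_{X_i})(z(s))\|\to0$. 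Flowing from the nearest point $z\in X$ along $\pm\nabla(N|_{X_i})/\|\nabla(N|_{X_i})\|^2$, and passing to a frontier stratum (of strictly smaller dimension, so finitely many times) whenever the flow leaves a stratum, one reaches a point of $X\cap\SSS_{\|y\|}$ after travelling at most $c_0^{-1}\,\bigl|\,\|y\|-\|z\|\,\bigr|\le c_0^{-1}d_e(y,X)$. This yields $\Psi(t)\le(1+c_0^{-1})\varphi(t)=o(t^b)$, hence $c>b$, and your argument then closes. With this supplied your proof is complete — and more careful than the one printed in the paper, which also uses this comparison implicitly here and again in Corollary~\ref{cor:birbrair}.
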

\proof
The first part is obvious, since any l.v.a simplex in $S\calH_{b,\eta}(X))$ approaches $X$ at speed at least $b'$. The second part is also obvious: let $\sigma:\hat{\Delta}_n\to\RR^n$ be a l.v.a simplex  approaching  $X$ faster than $t^b$. Then we can write $max\{d_e(\sigma(tx,t):x\in\Delta_n\}=Ct^{b''}+o(t^{b''})$ for a certain $b''>b$. Choosing $b<b'<b''$ we obtain that $\sigma$ belongs to $MDH^{\infty,pre}(S\calH_{b',\eta}(X))$.
\endproof

\begin{lema}
\label{lem:directlimit2}
 For any fixed $\eta>0$, the complex $\calN_{b}MDC^{b}_k(X;A)$ is the direct limit of the complexes $MDC^{b'}(S\calH_{b',\eta}(X),d_{out})$ when $b'>b$.
\end{lema}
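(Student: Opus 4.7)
The statement asserts that the canonical map
$$\Phi\colon\varinjlim_{b'>b} MDC^{b'}(S\calH_{b',\eta}(X),d_{out};A) \longrightarrow \calN_bMDC^b_\bullet(X;A),$$
induced on each term $C^{b'}_\eta := MDC^{b'}(S\calH_{b',\eta}(X),d_{out};A)$ by the inclusion of pre-chain groups from Lemma~\ref{lem:directlimit1} together with the implication $\sim_{S,b'}\Rightarrow \sim_{S,b}$ (valid whenever $b'>b$), is an isomorphism of chain complexes. Compatibility with the transition morphisms of the direct system and with the boundary operators is routine. Surjectivity of $\Phi$ is immediate from Lemma~\ref{lem:directlimit1}: any chain in $\calN_bMDC^{pre,\infty}(X;A)$ already belongs to $MDC^{pre,\infty}(S\calH_{b',\eta}(X))$ for some $b'>b$, and its class in $\calN_bMDC^b$ is the image of its class in $C^{b'}_\eta$.

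The heart of the lemma is injectivity, which I would reduce to the following claim: if $z_1,z_2\in MDC^{pre,\infty}(S\calH_{b',\eta}(X))$ satisfy $z_1\sim_{S,b} z_2$ in $\calN_bMDC^{pre,\infty}(X;A)$, then there exists $b_0\in(b,b']$ with $z_1\sim_{S,b_0}z_2$ already in $MDC^{pre,\infty}(S\calH_{b_0,\eta}(X))$. To establish the claim, apply Proposition~\ref{prop:downupb} to write the equivalence via two ascending sequences $z_i=w_{i,0}\to_b w_{i,1}\to_b\cdots\to_b u$ for $i=1,2$. I would then replace each intermediate chain by its $\calN_b$-part: the simplices discarded at each stage fall into $\sim_b$-classes disjoint from $\calN_b$, and because the chain at the previous stage already lies in $\calN_b$, the coefficients on any such class must sum to zero; hence the truncation is compatible with the restricted subdivisions and produces an up-up $\to_b$ sequence wholly contained in $\calN_bMDC^{pre,\infty}(X;A)$, involving only finitely many simplices in total.

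On this finite collection, subanalyticity supplies leading Puiseux exponents: for each simplex $\sigma$ one has $\max_{x\in\Delta_n} d_e(\sigma(tx,t),X)=ct^{b_*(\sigma)}+o(t^{b_*(\sigma)})$ with $b_*(\sigma)>b$, and for each pair $(\sigma,\sigma')$ appearing in a $\sim_b$-relation of the sequence the distance function $\max_{x\in\Delta_n} d_e(\sigma(tx,t),\sigma'(tx,t))$ has an analogous leading exponent $b_{**}(\sigma,\sigma')>b$. Choosing $b_0\in\QQ\cap(b,b']$ strictly below the (finite) minimum of all such exponents, one checks that for a sufficiently small germ representative every simplex of the collection lies in $S\calH_{b_0,\eta}(X)$ (using the standard comparison between $d_e(-,X)$ and $d_e(-,X\cap\SSS_{\|\cdot\|})$ for l.v.a. subanalytic simplices already exploited in Lemma~\ref{lem:directlimit1}), and on this collection the relations $\sim_b$ and $\sim_{b_0}$ agree pair-by-pair. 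Consequently every $\sim_b$-equivalence of chains along the sequence is in fact a $\sim_{b_0}$-equivalence, the sequence becomes a $\to_{b_0}$-sequence in $MDC^{pre,\infty}(S\calH_{b_0,\eta}(X))$, and $z_1\sim_{S,b_0} z_2$ there, as required.

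The main obstacle is this truncation-plus-upgrade step. One must verify both that an arbitrary $\to_b$ sequence can be confined to $\calN_b$ without disturbing the relations (the crucial input being that non-$\calN_b$ $\sim_b$-classes appearing in a chain obtained from a $\calN_b$-chain by $\to_b$ necessarily carry zero total coefficient), and that on a fixed finite family of simplices, whose pairwise subanalytic distance functions have well-defined leading Puiseux exponents, the coarser equivalence $\sim_b$ is literally identical to the stricter $\sim_{b_0}$ for any rational $b_0$ chosen strictly between $b$ and the minimum of these exponents.
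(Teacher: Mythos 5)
Your proof is correct in substance and shares the paper's key idea: surjectivity straight from Lemma~\ref{lem:directlimit1}, and, for injectivity, the upgrade of $\sim_b$ to $\sim_{b_0}$ on a finite collection of simplices by choosing a rational $b_0$ strictly between $b$ and the minimum of the finitely many subanalytic leading exponents. Where you diverge is in how you produce and confine that finite collection. The paper reduces injectivity to the case $z\sim_{S,b}0$ (which suffices, since $\Phi$ is a homomorphism of abelian groups and only its kernel matters) and invokes Lemma~\ref{lema:banulacion}, which yields a sequence of pure subdivisions $z=z_0\to_\infty\cdots\to_\infty z_r$ followed by a single terminal relation $z_r\sim_b 0$; since homological subdivisions do not enlarge images, every intermediate chain automatically stays inside $S\calH_{b',\eta}(X)$ and inside $\calN_bMDC^{pre,\infty}_\bullet(X;A)$, and the leading-exponent argument need only be applied to the terminal chain $z_r$. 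You instead treat the general relation $z_1\sim_{S,b}z_2$ via Proposition~\ref{prop:downupb}, whose $\to_b$-sequences may introduce simplices far from $X$, and you then have to remove them by the truncation argument (non-$\calN_b$ classes carry zero total coefficient because $\calN_b$ is $\sim_b$-saturated, by the triangle inequality, and the previous chain, after subdivision, still lies in $\calN_b$). That argument does go through — it is essentially a re-proof of the inductive step inside the paper's proof of Lemma~\ref{lema:banulacion} — but it is the most delicate part of your write-up, you flag it rather than carry it out in full, and it is precisely what Lemma~\ref{lema:banulacion} was designed to let you skip. Specializing to $z_2=0$ and citing that lemma would shorten your injectivity argument considerably without changing its content.
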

\proof
There are obvious morphisms of complexes 
$$MDH^{b'}(S\calH_{b',\eta}(X),d_e)\to \calN_{b}MDC^{b}_k(X;A),$$ 
which induce a morphism from the direct limit to $\calN_{b}MDC^{b}_k(X;A)$. By the previous lemma such morphism is an epimorphism. 

To prove the injectivity, let $z$ be an element in $MDC^{pre,\infty}(S\calH_{b',\eta}(X))$ such that its image in $\calN_{b}MDC^{b}_k(X;A)$ vanishes. By Lemma~\ref{lema:banulacion} there is a sequence of immediate equivalences $z=z_0\to_\infty ...\to_\infty z_r$ such that we have that $z_r$ splits as finite sum of chains of the form $\sum_{j\in J}a_j\sigma_j$ with $\sum_{j\in J}a_j=0$ and $\sigma_j\sim_b\sigma_{j'}$ for any $j,j'\in J$. After this  the following argument suffices: let $\sigma$ and $\sigma'$ be two $n$-simplexes which represent elements in the direct limit and which are $\sim_b$ equivalent. Then $max\{d_e(\sigma(tx,t),\sigma'(tx,t):x\in\Delta_n\}=Ct^{b'}+o(t^{b'})$ for a $b'>b$. Then $\sigma\sim_{b''}\sigma'$ for any $b<b''<b'$ and hence $\sigma$ and $\sigma'$ represent the same element in $MDH^{b''}(S\calH_{b',\eta}(X),d_e)$.
\endproof

Since taking homology commutes with direct limits, combining the previous lemma with Proposition~\ref{prop:rhoiso} we obtain:

\begin{theo}
\label{theo:iso2}
Let $(X,O,d_{out})$ be a closed metric subanalytic germ embedded in $\RR^n$ with the outer metric. The $b$-MD Homology $MDH^{b}_\bullet(X,d_{out};A)$ is isomorphic to the direct limit $$\lim_{\stackrel{\longrightarrow}{b'>b}}MDH^{b'}_\bullet(S\calH_{b',\eta}(X),d_e;A).$$
\end{theo}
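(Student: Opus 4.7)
The plan is to assemble the theorem as a direct combination of the two immediately preceding results, Proposition~\ref{prop:rhoiso} and Lemma~\ref{lem:directlimit2}, together with the fact that homology commutes with filtered colimits.

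First, I would apply Proposition~\ref{prop:rhoiso}, which gives a chain-level isomorphism $\rho:\calN_{b}MDC^{b}_\bullet(X;A)\to MDC^{b}_\bullet(X,d_{out};A)$. Passing to homology, this yields an isomorphism
\[
H_\bullet\bigl(\calN_{b}MDC^{b}_\bullet(X;A)\bigr)\;\cong\;MDH^{b}_\bullet(X,d_{out};A).
\]
Next, I would invoke Lemma~\ref{lem:directlimit2}, which identifies the left-hand complex $\calN_{b}MDC^{b}_\bullet(X;A)$ with the filtered colimit of the complexes $MDC^{b'}_\bullet(S\calH_{b',\eta}(X),d_{out};A)$ as $b'$ ranges over $(b,\infty)$. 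Observe that for a subset of $\RR^n$, the outer metric coincides with the Euclidean metric $d_e$, so this colimit is exactly the one appearing in the theorem statement.

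Finally, I would use the standard categorical fact that filtered colimits are exact in the category of abelian groups, hence commute with the formation of kernels, images, and therefore with homology:
\[
H_\bullet\Bigl(\varinjlim_{b'>b} MDC^{b'}_\bullet(S\calH_{b',\eta}(X),d_e;A)\Bigr)\;\cong\;\varinjlim_{b'>b} MDH^{b'}_\bullet(S\calH_{b',\eta}(X),d_e;A).
\]
Chaining the three isomorphisms produces the desired identification. Since each ingredient has been established and the conclusion is a formal assembly, I do not anticipate any genuine obstacle; the only verification worth recording is that the structural maps of the filtered system $\{MDC^{b'}_\bullet(S\calH_{b',\eta}(X),d_e;A)\}_{b'>b}$ (induced by the inclusions of spherical horn neighborhoods and the natural epimorphisms $h^{b_1,b_2}$ from (\ref{eq:Cb_1-b_2})) agree with the maps used in Lemma~\ref{lem:directlimit2}, which is immediate from the definitions.
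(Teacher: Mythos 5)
Your proposal is correct and is essentially identical to the paper's own argument: the theorem is stated there precisely as the combination of Proposition~\ref{prop:rhoiso}, Lemma~\ref{lem:directlimit2}, and the fact that homology commutes with direct limits. The small verifications you flag (the $d_e$ versus $d_{out}$ identification and the compatibility of the structural maps) are exactly the routine checks implicit in the paper's one-line derivation.
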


\begin{prop}
\label{prop:comparaciondificil}
There is an isomorphism $$MDH^{b'}_\bullet(S\calH_{b',\eta}(X),d_e;A)\cong MDH^{\infty}_\bullet(S\calH_{b',\eta}(X),d_e;A).$$
\end{prop}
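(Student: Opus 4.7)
Since the $b'$-subdivision equivalence refines the $\infty$-subdivision equivalence, the quotient map $h^{\infty,b'}\colon MDC^{\infty}_{\bullet}(W)\to MDC^{b'}_{\bullet}(W)$ is defined at the level of complexes (with $W:=S\calH_{b',\eta}(X)$), and the task is to show that the induced map in homology is an isomorphism. The strategy is to produce a single closed subanalytic cover $\{U_{\alpha}\}$ of $W$ which is simultaneously an $\infty$-cover and a $b'$-cover, and every non-empty finite intersection of whose members has both the $\infty$-MD and the $b'$-MD homology of a point; then Corollary~\ref{cor:nerve} identifies both sides of the proposition with the singular homology of the same nerve and the natural map $h^{\infty,b'}_{*}$ with the identity on that nerve.

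First I would set up the cover. Using Remark~\ref{rem:germ1}, fix a subanalytic homeomorphism $h\colon C(L_{X})\to X$ preserving distance to the origin, compatible with a chosen subanalytic triangulation of the link $L_{X}$. Writing $\{Z_{\alpha}\}$ for the maximal simplices of this triangulation, set $C_{\alpha}:=h(C(Z_{\alpha}))$ and $U_{\alpha}:=S\calH_{b',\eta'}(C_{\alpha})\cap W$ for an appropriate $\eta'\leq \eta$ chosen so that the family $\{U_{\alpha}\}$ is a closed subanalytic cover of $W$. The intersections $U_{I}:=\bigcap_{\alpha\in I}U_{\alpha}$ concentrate around the intersection cones $C_{I}:=\bigcap_{\alpha\in I}C_{\alpha}=h(C(Z_{I}))$, where $Z_{I}=\bigcap_{\alpha\in I}Z_{\alpha}$ is itself a simplex, hence contractible.

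The heart of the argument is to verify that each non-empty $U_{I}$ has trivial reduced $\infty$-MD and $b'$-MD homology. The $\infty$-MD case is immediate from Theorem~\ref{theo:link_no_cerrado} together with the fact that the punctured representative of $U_{I}$ is a topological neighborhood of the contractible $C_{I}\setminus\{0\}$ in $W$. For the $b'$-MD case the plan is to construct a continuous subanalytic l.v.a.\ map $\varphi_{I}\colon U_{I}\to (0,\epsilon)$ (a ``point'' in the sense of Definition~\ref{def:point}) such that the fibre $\varphi_{I}^{-1}(t)$ has euclidean diameter $o(t^{b'})$; Theorem~\ref{theo:colapsediameter2} then gives an isomorphism with $MDH^{b'}_{\bullet}((0,\epsilon);A)$, which is trivial by Proposition~\ref{prop:point}. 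The map $\varphi_{I}$ is to be assembled from the distance-to-origin function composed with a suitable ``horizontal collapse'' inside each sphere that contracts the spherical horn onto its core $C_{I}$: the naive nearest-point projection is only a $\infty$-map in the sense of Definition~\ref{def:bmaps} (Lemma~\ref{lem:retracciondiscontinua}), and a continuous subanalytic substitute has to be produced using Hardt's Triviality Theorem together with an argument in the spirit of Lemma~\ref{lem:defopenstratum}; the subanalytic {\L}ojasiewicz inequality applied to the pair $(X,\cup C_{\alpha})$ is what guarantees that the fibre diameter can be taken strictly of order $o(t^{b'})$ rather than just $O(t^{b'})$.

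The last ingredient is to verify that $\{U_{\alpha}\}$ is a $b'$-cover in the sense of Definition~\ref{def:b-saturated}: define the $b'$-extensions $\hat{U}_{\alpha}:=S\calH_{b',\eta''}(C_{\alpha})\cap W$ with $\eta''>\eta'$ large enough that $\hat{U}_{\alpha}$ contains a $b'$-horn neighborhood of $U_{\alpha}$ in $W$, and observe that the inclusion induced map $MDC^{b'}_{\bullet}(U_{I})\to MDC^{b'}_{\bullet}(\hat{U}_{I})$ is a quasi-isomorphism, because by the previous paragraph both sides have the $b'$-MD homology of a point. Corollary~\ref{cor:nerve} then applies in both the $\infty$ and the $b'$ settings to produce the same nerve-level identification, and the naturality of the spectral sequence of Theorem~\ref{theo:Cech} in $b$ gives the required isomorphism. \emph{Main obstacle.} The construction of the continuous subanalytic collapse $\varphi_{I}$ with the strict bound $o(t^{b'})$ on the fibre diameter is delicate, since the thickness of the spherical horn is exactly of order $t^{b'}$, i.e.\ borderline for the $b'$-equivalence relation, and any continuous subanalytic representative of the nearest-point projection must genuinely improve this rate using the subanalytic structure of $X$.
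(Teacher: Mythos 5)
Your overall architecture (find a single closed cover that is simultaneously an $\infty$-cover and a $b'$-cover, show all finite intersections are $MD$-points for both parameters, and conclude via Corollary~\ref{cor:nerve}) is the right shape, but the key step fails as stated. You propose to prove that each $U_I=\bigcap_{\alpha\in I}S\calH_{b',\eta'}(C_\alpha)\cap W$ has the $b'$-MD homology of a point by producing an l.v.a.\ map $\varphi_I\colon U_I\to(0,\epsilon)$ whose fibres have diameter $o(t^{b'})$ and invoking Theorem~\ref{theo:colapsediameter2}. This is impossible: the fibre $\varphi_I^{-1}(t)$ of an l.v.a.\ map to $(0,\epsilon)$ is essentially the slice $U_I\cap\SSS_t$, which contains $C_I\cap\SSS_t\approx t\cdot Z_I$, a set of diameter comparable to $t$ (the $Z_\alpha$ are \emph{maximal}, hence top-dimensional, simplices of the link). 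Since $b'\geq 1$, the ratio $\mathrm{diam}/t^{b'}$ does not tend to $0$; no {\L}ojasiewicz argument can repair this, and independently the horn thickness is \emph{exactly} of order $\eta' t^{b'}$ by construction, so even the ``horizontal'' collapse onto $C_I$ is borderline and not $o(t^{b'})$. A further obstruction you would still face is that $C_\alpha=h(C(Z_\alpha))$ is only the image of a cone under a subanalytic homeomorphism, so its sphere slices are not geodesically convex and the nearest-point projection onto them is only piecewise continuous (Lemma~\ref{lem:retracciondiscontinua}); a continuous uniformly Lipschitz retraction of the horn onto $C_I$ is not available.

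The paper's proof resolves exactly these two difficulties differently. Instead of horns around cones over maximal simplices, it covers $S\calH_{b',\eta}(X)$ by \emph{countably many} thin spherical horns $U_{k,l}=\overline{S\calH_{b',\eta_l}(\Gamma_k)}$ around arcs $\Gamma_k=h(tx_k,t)$ through a dense set of directions $\{x_k\}$, with a ladder of amplitudes $\eta_l=(l/(l+1))\eta$. A horn around a single arc can be straightened by a family of rotations onto a horn around a half-line, which visibly admits a \emph{metric} deformation retraction onto its core arc; geodesic convexity of the sphere slices of finite intersections then makes these intersections $MD$-points for every $b$ at once via Theorem~\ref{theo:homotopyinvariance}(2) and Proposition~\ref{prop:point} — no $o(t^{b'})$ collapse is needed, only uniform Lipschitz constants. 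The borderline issue in the $b'$-cover condition is absorbed by taking the $b'$-extension of $U_{k,l}$ to be $U_{k,l+1}$ (strictly larger amplitude), and the countability of the cover is handled by writing the comparison morphism as a direct limit over finite subfamilies, to each of which Corollary~\ref{cor:nerve} applies. If you want to keep your finite cover by simplicial cones, you would have to replace the diameter-collapse argument by a genuinely different proof that $U_I$ is a $b'$-MD point, which is precisely the content the paper's arc-horn construction is designed to supply.
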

\begin{proof}
Let $L_X$ be the link of $X$ and let $h:C(L_X)\to X$  the subanalytic homeomorphism realizing the conical structure (see  Remark~\ref{rem:germ1}). Recall that we can assume that $||h(tx,t)||=t$. Choose $\{x_k\}_{k\in\NN}$ a dense denumerable set of points in $X$. For each $k\in\NN$ define the arc $\gamma_k(t):=h(tx_k,t)$; denote by $\Gamma_k$ the image of the arc. Define the sequence $\eta_l:=(l/l+1)\eta$. For any $k\in\NN$ we consider the subsets $U_{k,l}:=\overline{S\calH_{b',\eta_l}(\Gamma_k)}$. Observe that, by the density of the collection  $\{x_k\}_{k\in\NN}$, we obtain a denumerable cover $\calU=\{U_{k,l}\}_{k,l\in\NN}$ of $S\calH_{b',\eta}(X)$ by closed subsets. 

We need the following Lemma, whose proof we postpone:

\begin{lema}
\label{lem:sphbcover}
Let $b'>1$. For any finite subset of pairs of indexes $\{(k_i,l_i)\}_{i\in I}$ the following are true:
\begin{enumerate}
 \item we have the equalities $MDH^{b'}_k(\cap_{i\in I}U_{k_i,l_i},d_{out};A)=MDH^{\infty}_k(\cap_{i\in I}U_{k_i,l_i};A)=0$ if $k>0$ and $MDH^{b'}_0(\cap_{i\in I}U_{k_i,l_i},d_{out};A)\cong MDH^{\infty}_0(\cap_{i\in I}U_{k_i,l_i};A)\cong A$, where the isomorphism is realized by the natural homomorphism connecting both homologies. 
 \item The collection $\{U_{k_i,l_i}\}_{i\in I}$ is a $b'$-cover of the union $Z_I:=\cup_{i\in I}U_{k_i,l_i}$, where for each $i\in I$ we can choose the subsets $\hat{U}_{k_i,l_i}$ according to  Definition~\ref{def:b-saturated}, equal to 
$\hat{U}_{k_i,l_i+1}\cap Z_I$.
\end{enumerate}
\end{lema}

We continue with the proof of Proposition~\ref{prop:comparaciondificil}. 

Choose a bijection $n\mapsto (k_n,l_n)$ from $\NN$ to $\NN\times\NN$. Define $Z_m:=\sum_{n=1}^mU_{k_n,l_m}$. We have $X=\cup_{m\in\NN}X_m$. Then the natural homomorphism of complexes 
$$MDC^{\infty}_\bullet(S\calH_{b',\eta}(X);A)\to MDC^{b'}_\bullet(S\calH_{b',\eta}(X),d_e;A)$$
is the direct limit, when $m$ increases, of the homomorphisms of complexes 
\begin{equation}
\label{eq:homom}
MDC^{\infty}_\bullet(Z_m;A)\to MDC^{b'}_\bullet(Z_m;A)
\end{equation}
Since taking homology commutes with direct limits it is enough to prove that each of the homomorphisms above is a quasi-isomorphism.

Because of Lemma~\ref{lem:sphbcover}, and Corollary~\ref{cor:nerve}, for any $m$, each of the homology complexes involved in the homomorphism~(\ref{eq:homom}) coincides with the homology of the nerve of the cover $\{U_{k_n,l_n}\}_{n\leq m}$, and the homomorphism is an isomorphism.  
\end{proof}

\proof[Proof of Lemma~\ref{lem:sphbcover}]
Assume the following claim: 

\textsc{Claim}: Let $(Y,O,d_{out})$ be a metric subanalytic germ with the outer metric which satisfies:
\begin{enumerate}
 \item For every sufficiently small $\epsilon>0$ the intersection $Y\cap\SSS_\epsilon$ is geodesically convex.
 \item There exists an arc $\gamma:[0,\epsilon)\to Y$ such that $||\gamma(t)||=t$; let $\Gamma$ be the image of the arc. The set $Y$ is contained in the spherical horn neighborhood $S\calH_{b',\eta'}(\Gamma)$ for a certain $\eta'>0$.
\end{enumerate}
Then $\Gamma$ is a metric deformation retract of $Y$. 

Observe that for any finite index set $I$ as in Lemma~\ref{lem:sphbcover}, the set $\cap_{i\in I}U_{k_i,l_i}$ satisfies the properties of the set $Y$ of the claim above: geodesically convexity follows because a finite intersection of geodesically convex sets is geodesically convex. One can take $\eta'$ for example equal to $3\eta$. The the claim implies part (1) of Lemma~\ref{lem:sphbcover} as a direct application of Theorem~\ref{theo:homotopyinvariance} and Proposition~\ref{prop:point}. 

For part (2) of the lemma let $I$ a finite index set as required. By definition of the sets $U_{k_i,l_i}$ and $\hat{U}_{k_i,l_i}$ the first condition for $b'$-cover is immediately satisfied. For the second condition choose a finite subset $J\subset I$. Then both $\cap_{i\in J}\hat{U}_{k_i,l_i}$ and $\cap_{i\in J}U_{k_i,l_i}$ are subanalytic subsets satisfying the properties of $Y$ in the claim, and moreover the arc $\gamma$ can be chosen to be the same for both of them. Then, by the conclusion of the claim we can choose the map $r_J:\cap_{i\in J}\hat{U}_{k_i,l_i}\to \cap_{i\in J}U_{k_i,l_i}$ needed in the second property required in the definition of $b'$ cover to be the retraction to $\Gamma$. 

Finally we prove the claim. Let $B$ be the tangent half-line of $\Gamma$ at the origin. Define 
$\phi:S\calH_{b',\eta'}(\Gamma)\to S\calH_{b',\eta'}(B)$
in such a way that the restriction $\phi_t:=\phi|_{S\calH_{b',\eta'}\cap \SSS_t}$ is a the restriction of a rotation of the sphere $\SSS_t$, and such that the family of rotations $\phi_t$ with $t\in [0,\epsilon)$ is smooth and converges to the identity when $t$ approaches $0$. Then $\phi$ is a bi-Lipschitz homeomorphism. It is obvious that the spherical neighborhood $S\calH_{b',\eta'}(B)$ admits $B$ as a metric subanalytic deformation retract, and such a deformation retraction can be taken leaving invariant the geodesics in $\SSS_t$ starting at $B\cap\SSS_t$ for any $t$. Conjugating by $\phi$ we obtain that $S\calH_{b',\eta'}(\Gamma)$ admits $\Gamma$ as a metric subanalytic deformation retract, by a deformation retraction which leaves the geodesics in $\SSS_t$ starting at $\Gamma\cap\SSS_t$ invariant. Because $Y$ is contained in  $S\calH_{b',\eta'}(\Gamma)$ and $Y\cap\SSS_t$ is geodesically convex, the metric deformation retraction leaves $Y$ invariant, and shows that $Y$ admits $\Gamma$ as a metric subanalytic deformation retract.  
\endproof

In the proof of the main result of this section we need an straightforward adaptation of the concept of cobordism to the $O$-minimal case. We record it in a definition to avoid misundertandings.

\begin{definition}
\label{def:subancobor}
Fix an $O$-minimal structure defined over $\RR$. A {\em $O$-minimal cobordism} is a triple $(X,A,B)$ of definable closed subsets such that $A$ and $B$ are closed disjoint definable subsets of the frontier $\partial X$. A $O$-minimal cobordism is {\em trivial} if $(X,A,B)$ is $O$-minimal homeomorphic to $(A\times [0,1],A\times\{0\},A\times\{1\})$. An $O$-minimal cobordism $(X,A,B)$ is {\em homologically trivial} if the inclusions $A\hookrightarrow X$ and $B\hookrightarrow X$ induce isomorphisms in homology. Given two $O$-minimal cobordisms $(X,A_1,B_1)$ and $(X',A'_1,B'_1)$, a {\em composition} of $(X,A_1,B_1)$ with $(X',A'_1,B'_1)$ on the right is an $O$-minimal cobordism $(X'',A'',B'')$ together with a decomposition $X''=X''_1\cup X''_2$ in two closed subsets with the following property: there exists two definable continuous mappings 
$$\varphi:(X,A_1)\to (X'',A''),$$
$$\varphi':(X',B'_1)\to (X'',B''),$$
such that $\varphi$ is a homeomorphism onto $X''_1$, $\varphi'$ is a homeomorphism onto $X''_2$, and we have the equalities $X''_1\cap X''_2=\varphi(B_1)=\varphi'(A'_1)$. An $O$-minimal cobordism is {\em invertible} if there are compositions on the right and on the left such that the result is a trivial $O$-minimal cobordism in both cases. 
\end{definition}

\begin{remark}
\label{rem:inverhomtriv}
An invertible cobordism is homologically trivial.
\end{remark}

Now we can prove the Finiteness Theorem.

\proof[Proof of Theorem~\ref{theo:finiteness}]
As we said at the beginning of the section it is enough to prove the Theorem for the outer metric, since due to (\cite{reembedding}, Theorem 2.1) the germ $(X,O,d_{inn})$ is bi-Lipschitz subanalytically homeomorphic
to a metric subanalytic subgerm with the outer metric. By Proposition~\ref{prop:b<1trivial} we can assume $b\geq 1$. By Corollary~\ref{cor:closure} we can assume $X$ to be closed.

For the outer metric, by Corollary~\ref{theo:iso2}, Proposition~\ref{prop:comparaciondificil}, we obtain that the group $MDH^{b}_k(X,O,d_{out};A)$ is  isomorphic to the direct limit 
\begin{equation}
\label{eq:dirlim1}
\lim_{\stackrel{\longrightarrow}{b'>b}}MDH^{\infty}_k(S\calH_{b',\eta}(X);A).
\end{equation}

By Theorem~\ref{theo:link_no_cerrado} we obtain that if $\epsilon_{b'}>0$ is small enough so that $S\calH_{b',\eta}(X)$ is the cone over $S\calH_{b',\eta}(X)\cap\SSS_{\epsilon_{b'}}$ then the group $MDH^{\infty}_k(S\calH_{b',\eta}(X);A)$ is  isomorphic to the singular homology $H_k(S\calH_{b',\eta}(X)\cap B_{\epsilon_{b'}}\setminus\{O\};A)$. 

Fix $\epsilon>0$. By Remark \ref{poly_bounded} the set $Z\subset \RR^n\times (b,\infty)$ defined by the formula $Z:=\{(x,b'):x\in S\calH_{b',\eta}\cap B_\epsilon\}$ is definable in the $O$-minimal structure $\RR^\RR_{an}$. By Hardt's Triviality Theorem there exist $b''>b$ such that the projection $Z|_{(b,b'')}\to (b,b'')$ is a trivial fibration, with a trivialization preserving the origin. 

We claim that for any $b_1,b_2'\in (b,b'')$ we have the isomorphism 
$$H_k(S\calH_{b_1',\eta}(X)\cap B_{\epsilon_{b'_1}}\setminus\{O\};A)\cong H_k(S\calH_{b_2',\eta}(X)\cap B_{\epsilon_{b'_2}}\setminus\{O\};A).$$ 
If the claim is true the proof is finished because the direct limit~(\ref{eq:dirlim1}) stabilizes.

The pairs $(S\calH_{b_i',\eta}(X)\cap B_{\epsilon_{b'_i}},\{O\})$ and $(S\calH_{b_i',\eta}(X)\cap B_{\delta_i},\{O\})$ are $O$-minimal homeomorphic for any $\delta_i\leq \epsilon_{b'_i}$. Hence, in order to prove the claim notice that we can choose $\epsilon_{b'_i}$ smaller than $\epsilon$ for $i=1,2$.

Let $\phi:S\calH_{b_1',\eta}\cap B_\epsilon\to S\calH_{b_2',\eta}\cap B_\epsilon$ be the $O$-minimal homeomorphism predicted by Hardt's Triviality. Choose decreasing sequences $\{\delta_{1,n}\}_{n\in\NN}$ and $\{\delta_{2,n}\}_{n\in\NN}$ converging to $0$ and so that the following inclusions are satisfied 
$$\phi(S\calH_{b_1',\eta}\cap B_{\delta_{1,n}})\subset S\calH_{b_2',\eta}\cap B_{\delta_{2,n}},$$
$$\phi(S\calH_{b_1',\eta}\cap B_{\delta_{1,n-1}})\supset S\calH_{b_2',\eta}\cap B_{\delta_{2,n}}.$$
By the conical structure the cobordism 
$$(\overline{S\calH_{b_i',\eta}\cap B_{\delta_i,{n-1}}\setminus S\calH_{b_i',\eta}\cap B_{\delta_{i,n}}},S\calH_{b_i',\eta}\cap \SSS_{\delta_i,{n-1}},S\calH_{b_i',\eta}\cap \SSS_{\delta_{i,n}})$$
is trivial for any $i=1,2$ and $n\in\NN$. Hence the cobordism 
$$(\overline{S\calH_{b_2',\eta}\cap B_{\delta_i,{n}}\setminus \phi(S\calH_{b_1',\eta}\cap B_{\delta_{i,n}})},S\calH_{b_2',\eta}\cap\SSS_{\delta_i,{n}},\phi(S\calH_{b_1',\eta}\cap\SSS_{\delta_{i,n}})$$ is invertible, and therefore homologically trivial. This implies the claim.
\endproof

\begin{remark}
\label{rem:etadoesnotmatter}
As consequence of the proof above is that the direct limit~(\ref{eq:dirlim1}) is independent of $\eta$. Since this may seem surprising we include another proof below. 
\end{remark}
\proof
Let $\eta_1<\eta_2$. The inclusion $S\calH_{b',\eta_1}(X)\subset S\calH_{b',\eta_2}(X)$ gives a homomorphism 
$$\lim_{\stackrel{\longrightarrow}{b'>b}}MDH^{\infty}_k(S\calH_{b',\eta_1}(X);A)\to \lim_{\stackrel{\longrightarrow}{b'>b}}MDH^{\infty}_k(S\calH_{b',\eta_2}(X);A).$$
On the other hand we have the inclusion $S\calH_{b',\eta_2}(X)\subset S\calH_{(b+b')/2,\eta_2}(X)$, which gives a homomorphism 
$$\lim_{\stackrel{\longrightarrow}{b'>b}}MDH^{\infty}_k(S\calH_{b',\eta_2}(X);A)\to \lim_{\stackrel{\longrightarrow}{b'>b}}MDH^{\infty}_k(S\calH_{b',\eta_1}(X);A).$$
Both homomorphisms are inverse to each other, since, because of the previous proof there is an interval $(b,b'')$ such that the homologies 
$MDH^{\infty}_k(S\calH_{b',\eta_1}(X);A)$ and $MDH^{\infty}_k(S\calH_{b',\eta_2}(X);A)$ are independent of $b'$.
\endproof

\subsection{Birbrair conjecture and finiteness of jumping rates}

L. Birbrair conjectured (oral communication) that if $(X,O,d_{out})$ is a metric subanalytic subset equipped with the outer metric, then the $b$-MD Homology of $(X,O,d_{out})$ coincides with the homology of a certain punctured horn neighborhood. The proof of Theorem~\ref{theo:finiteness} confirms this conjecture if one uses spherical horn neighborhoods.

\begin{cor}
\label{cor:birbrair}
Let $(X,O,d_{out})$ is a closed metric subanalytic subset equipped with the outer metric. Let $b\in\BB$ finite and $\eta>0$. There exists a $b'$ satisfying $b<b'$ such that the $b$-MD homology $MDH^{b}_\bullet(X,O,d_{out};A)$ is isomorphic both to the singular homology of the punctured $b''$-spherical horn neighborhood $S\calH_{b'',\eta}(X)\setminus\{O\}$, and to the singular homology of the punctured $b''$-horn neighborhood $\calH_{b'',\eta}(X)\setminus\{O\}$, for any $b''\in (b,b')$. 
\end{cor}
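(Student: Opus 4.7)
The plan is to run the argument of Theorem~\ref{theo:finiteness} in parallel for spherical horns and for full horns: in both cases $MDH^b_\bullet(X,O,d_{out};A)$ will be identified with a direct limit of singular homologies of the corresponding punctured horn neighborhoods, and two applications of Hardt's Triviality Theorem will collapse those limits to constant values on some common interval $(b,b')$.

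For the spherical horn part, everything is already packaged inside the proof of Theorem~\ref{theo:finiteness}. Combining Theorem~\ref{theo:iso2}, Proposition~\ref{prop:comparaciondificil} and Theorem~\ref{theo:link_no_cerrado} yields
\[
MDH^b_\bullet(X,O,d_{out};A)\;\cong\;\lim_{\stackrel{\longrightarrow}{b''>b}} H_\bullet\!\bigl(S\calH_{b'',\eta}(X)\setminus\{O\};A\bigr).
\]
Fix $\epsilon>0$. By Remark~\ref{poly_bounded} the family
\[
Z:=\{(x,b'')\in\RR^n\times(b,\infty):x\in S\calH_{b'',\eta}(X)\cap B_\epsilon\}
\]
is definable in $\RR_{an}^{\RR}$, so Hardt's Triviality produces $b'_{sph}>b$ such that $Z|_{(b,b'_{sph})}\to(b,b'_{sph})$ is trivial (origin preserving); the invertible-cobordism argument from the proof of Theorem~\ref{theo:finiteness} then shows $H_\bullet(S\calH_{b'',\eta}(X)\setminus\{O\};A)$ is constant for $b''\in(b,b'_{sph})$ and equals the direct limit.

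For the full horn part I would run the same strategy step by step with $\calH_{b',\eta}(X)$ replacing $S\calH_{b',\eta}(X)$. The analogues of Lemmata~\ref{lem:directlimit1}~and~\ref{lem:directlimit2} hold verbatim, since an l.v.a.\ chain in $\RR^n$ approaching $X$ strictly faster than $t^b$ lies in some $\calH_{b',\eta'}(X)$ with $b'>b$; together with Proposition~\ref{prop:rhoiso} this realises $MDH^b_\bullet(X,O,d_{out};A)$ as the direct limit $\lim_{b'>b}MDH^{b'}_\bullet(\calH_{b',\eta}(X),d_e;A)$. A full-horn analogue of Proposition~\ref{prop:comparaciondificil}, based on covering $\calH_{b',\eta}(X)$ by closed tubes $\overline{\calH_{b',\eta_l}(\Gamma_k)}$ around radial arcs $\Gamma_k$ through a dense denumerable subset of the link, identifies this with $\lim_{b'>b}MDH^\infty_\bullet(\calH_{b',\eta}(X);A)$, and Theorem~\ref{theo:link_no_cerrado} in turn with $\lim_{b'>b}H_\bullet(\calH_{b',\eta}(X)\setminus\{O\};A)$. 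A second application of Hardt's Triviality to the definable family $\{(x,b''):x\in\calH_{b'',\eta}(X)\cap B_\epsilon\}$ gives $b'_{full}>b$ with stabilization on $(b,b'_{full})$, and one takes $b':=\min(b'_{sph},b'_{full})$.

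The main obstacle is the full-horn analogue of Proposition~\ref{prop:comparaciondificil} and of its supporting Lemma~\ref{lem:sphbcover}: one has to verify that the closed tubes $\overline{\calH_{b',\eta_l}(\Gamma_k)}$ form a $b'$-cover in the sense of Definition~\ref{def:b-saturated}, and that every non-trivial finite intersection has the $b'$- and $\infty$-MD homology of a point. In the spherical case the proof rested on geodesic convexity inside each sphere $\SSS_t$ together with a deformation retraction along great circles onto $\Gamma_k$. For the full horn I would replace this by an ambient straight-line retraction in $\RR^n$ onto $\Gamma_k$: for $b'>1$ the tube thickness $\eta\|y\|^{b'}$ decays much faster than $\|y\|$, so distinct radial arcs remain well separated and, after a Hardt-type stratification of the space of nearest points, the orthogonal projection onto $\Gamma_k$ is subanalytic and well defined on each tube and on their finite intersections. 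Once this technical step is in place, the remainder of the argument parallels the spherical case word by word.
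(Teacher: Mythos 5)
The spherical-horn half of your plan is exactly the paper's argument: Theorem~\ref{theo:iso2}, Proposition~\ref{prop:comparaciondificil} and Theorem~\ref{theo:link_no_cerrado} identify $MDH^b_\bullet(X,O,d_{out};A)$ with $\lim_{b''>b}H_\bullet(S\calH_{b'',\eta}(X)\setminus\{O\};A)$, and the Hardt-plus-invertible-cobordism step from the proof of Theorem~\ref{theo:finiteness} stabilizes the limit on an interval $(b,b')$. That part is fine.

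For the full horns, however, there is a genuine gap, and you have correctly located it yourself: the full-horn analogue of Lemma~\ref{lem:sphbcover} does not go through by the method you sketch. In the spherical case the whole argument rests on the fact that $\overline{S\calH_{b',\eta_l}(\Gamma_k)}\cap\SSS_t$ is a single geodesic ball in $\SSS_t$ (because $\Gamma_k$ meets each sphere in one point), so finite intersections are geodesically convex in each sphere and retract onto $\Gamma_k$ along geodesics. A full-horn tube $\calH_{b',\eta_l}(\Gamma_k)$ intersected with $\SSS_t$ is instead a union over $s$ of balls $B(\gamma_k(s),\eta_l s^{b'})\cap\SSS_t$ centered at \emph{different} points of the arc; it is neither a single ball nor obviously convex, and a Hardt-type stratified nearest-point projection onto $\Gamma_k$ is discontinuous across strata, so it does not furnish the deformation retraction you need to show that finite intersections have the MD homology of a point. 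The paper sidesteps this entirely with an interleaving argument you are missing: since $\calH_{b''',\eta}(X)\subset S\calH_{b'',\eta}(X)\subset\calH_{b'',\eta}(X)$ for $b'''>b''$, the two directed systems $\{MDH^\infty_k(S\calH_{b',\eta}(X);A)\}_{b'>b}$ and $\{MDH^\infty_k(\calH_{b',\eta}(X);A)\}_{b'>b}$ are cofinal in one another, hence have the same direct limit; one then only needs Hardt's Triviality for the full-horn family to see that this common limit is attained by $H_\bullet(\calH_{b'',\eta}(X)\setminus\{O\};A)$ for $b''$ in some $(b,b')$. I recommend you replace your full-horn analogue of Proposition~\ref{prop:comparaciondificil} by this cofinality observation; as written, that step of your plan is not established.
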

\proof
The assertion for spherical horn neighborhoods is an immediate consequence of the results in the previous section. The assertion for horn neighborhoods is proved as follows. Since we have the nesting $\calH_{b''',\eta}(X)\subset S\calH_{b'',\eta}(X)\subset  \calH_{b'',\eta}(X)$ for any $b'''>b''$, we have the isomorphism 
$$\lim_{\stackrel{\longrightarrow}{b'>b}}MDH^{\infty}_k(S\calH_{b',\eta_1}(X);A)\cong \lim_{\stackrel{\longrightarrow}{b'>b}}MDH^{\infty}_k(\calH_{b',\eta}(X);A).$$

It remains to show the existence of a $b'$ such that the second limit is isomorphic to the singular homology of the punctured $b''$-horn neighborhood $\calH_{b'',\eta}(X)\setminus\{O\}$, for any $b''\in (b,b')$, but this follows exactly in the same way than the same statement for spherical neighborhoods in the proof of Theorem~\ref{theo:finiteness}.
\endproof

\begin{definition}
\label{def:jumpingrate}
Let $(X,O,d_X)$ be a metric subanalytic germ. An element $b\in\BB$ is called a {\em jumping rate} of  $(X,O,d_X)$ if for any positive $\epsilon$ the natural homomorphism
$$MDH^{b+\epsilon}_\bullet(X,O,d_{X};A)\to MDH^{b-\epsilon}_\bullet(X,O,d_{X};A)$$
is not an isomorphism
\end{definition}

\begin{theo}
\label{theo:finitejumps}
Let $(X,O,d_X)$ be a closed metric subanalytic germ in $\RR^n$ equipped either with the inner or the outer metric. The set of jumping rates of  $(X,O,d_X)$ is finite. 
\end{theo}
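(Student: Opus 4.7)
The plan is to reduce to the outer metric case and then apply Hardt's triviality theorem globally in the parameter $b'$ to the definable family of spherical horn neighborhoods. First, I would use Theorem~2.1 of~\cite{reembedding} together with Corollary~\ref{func_inner} to reduce to the outer metric, assume $X$ closed in $\RR^n$ via Corollary~\ref{cor:closure}, and handle the range $b<1$ by Proposition~\ref{prop:b<1trivial}. The central identification used in the proof of Theorem~\ref{theo:finiteness}, obtained by combining Theorem~\ref{theo:iso2} and Proposition~\ref{prop:comparaciondificil}, then gives
$$MDH^b_\bullet(X,O,d_{out};A) \cong \varinjlim_{b' > b} H_\bullet(S\calH_{b',\eta}(X)\cap B_\epsilon \setminus \{O\}; A),$$
functorially in $b$, with the transition map $MDH^{b_1}_\bullet \to MDH^{b_2}_\bullet$ for $b_1 \geq b_2$ corresponding to the inclusion of directed sets $(b_1,\infty) \subset (b_2,\infty)$.

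The key step is to apply Hardt's triviality theorem in the $O$-minimal structure $\RR_{an}^\RR$ to the definable family
$$Z := \{(x,b') \in B_\epsilon \times (0,\infty) : x \in S\calH_{b',\eta}(X) \cap B_\epsilon\},$$
\emph{relative to} its closed definable subset $\{O\} \times (0,\infty)$, projected to the parameter axis $(0,\infty)$. This yields a finite set $0 < c_1 < \ldots < c_N < \infty$ of critical values such that on each open interval $(c_i, c_{i+1})$ (with $c_0 := 0$, $c_{N+1} := \infty$) the projection is trivial as a fibration of pairs preserving the vertex slice. Consequently $G_i := H_\bullet(S\calH_{b',\eta}(X) \cap B_\epsilon \setminus \{O\}; A)$ is independent of $b'\in (c_i, c_{i+1})$, and the inclusion $S\calH_{b'_1,\eta}(X) \hookrightarrow S\calH_{b'_2,\eta}(X)$ for $b'_1 > b'_2$ in this interval induces the identity on $G_i$ under the trivialization.

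For any $b \in (c_i, c_{i+1})$ the cofinal tail $(b, c_{i+1})$ of the directed set $(b,\infty)$ has all groups equal to $G_i$ with identity transitions, so the colimit stabilizes to $G_i$. For $\epsilon>0$ small enough that $b\pm\epsilon$ remain in $(c_i, c_{i+1})$, both $MDH^{b+\epsilon}_\bullet$ and $MDH^{b-\epsilon}_\bullet$ identify with $G_i$ via a common representative $b' \in (b+\epsilon, c_{i+1})$, and the natural map between them reduces to the identity on $G_i$. Hence $b$ is not a jumping rate, and so every jumping rate must lie in the finite set $\{c_1,\ldots,c_N\}$.

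The main obstacle I expect is arguing rigorously that this natural map is literally the identity on $G_i$ rather than merely an abstract isomorphism of groups; this is precisely why the Hardt application must be taken relative to the vertex subset $\{O\}\times(0,\infty)$, so that the resulting vertex-preserving trivialization allows a single chain-level representative over $b' \in (b+\epsilon, c_{i+1})$ to witness both colimits simultaneously and make the transition the identity at the level of representing cycles.
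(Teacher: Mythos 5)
Your overall strategy is the same as the paper's: reduce to the outer metric and to closed $X$, identify $MDH^b_\bullet(X,O,d_{out};A)$ with $\varinjlim_{b'>b}H_\bullet(S\calH_{b',\eta}(X)\cap B_\epsilon\setminus\{O\};A)$ via Theorem~\ref{theo:iso2} and Proposition~\ref{prop:comparaciondificil}, and apply Hardt's theorem in the structure $\RR_{an}^{\RR}$ to the definable family of spherical horn neighborhoods parametrized by $b'$ to produce finitely many candidate critical values. This is exactly how the paper argues (its proof consists of the Hardt step plus the phrase ``a corollary of the proof of Theorem~\ref{theo:finiteness}'').

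There is, however, one step you assert that does not follow from Hardt triviality alone, and it is where the paper does real work: the claim that for $b'_1>b'_2$ in a Hardt-trivial interval the inclusion $S\calH_{b'_1,\eta}(X)\hookrightarrow S\calH_{b'_2,\eta}(X)$ ``induces the identity on $G_i$ under the trivialization.'' A trivialization of the total family over $(c_i,c_{i+1})$ identifies each fiber with a fixed model, but it says nothing about how the inclusion of one fiber into another reads through these identifications; a priori this is just some self-map of the model, and you need it to be a homology isomorphism for the transition maps of the colimit --- and hence the maps $h^{b+\epsilon,b-\epsilon}_*$ --- to be isomorphisms (abstract isomorphy of the groups is not enough, by Definition~\ref{def:jumpingrate}). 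Your proposed remedy, taking the Hardt trivialization relative to $\{O\}\times(0,\infty)$, fixes the vertex but does not address this. The paper closes this gap in the proof of Theorem~\ref{theo:finiteness} by interleaving the sets $\phi(S\calH_{b'_1,\eta}\cap B_{\delta_{1,n}})$ with the sets $S\calH_{b'_2,\eta}\cap B_{\delta_{2,n}}$ along decreasing radii and showing the resulting $O$-minimal cobordisms are invertible, hence homologically trivial (Definition~\ref{def:subancobor} and Remark~\ref{rem:inverhomtriv}); that interleaving argument is the missing ingredient in your sketch. The reduction steps and the functoriality in $b$ of the direct-limit identification are otherwise in line with the paper.
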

\proof
By Theorem 2.1 of \cite{reembedding} there exists a subanalytic subgerm  $(X',x'_0)\subset \RR^m$ such that $(X,x_0,d)$ is bi-Lipschitz homeomorphic to $(X',x'_0)$ both with the inner and outer metric. This reduces the proof to the case of the outer metric. 

Fixed $\eta>0$ and a small $\epsilon>0$, consider the family $S\calH_{b,\eta}(X)\cap\epsilon$, parametrized by $b\in (0,\infty]$. This is a definable family in $\RR_{an}^{\RR}$ (see the definition in \cite{Miller}). By Hardt's Triviality Theorem, there are finitely many values $0=b_1<...<b_N=\infty$ in $(0,\infty]$ such that the family is trivial in the open intervals $(b_i,b_{i+1})$. A corollary of the proof of Theorem~\ref{theo:finiteness} implies that the set of jumping numbers is contained in $\{b_1,...,b_N\}$.  
\endproof 

The following Theorem is due to A. Parusinski. It is not true in more general O-minimal structures.

\begin{theo}[Rationality]
\label{theo:rat}
Let $(X,x_0)\subset(\RR^m,x_0)$ be a subanalytic germ and $d_X$ be either the inner or the outer metric. Then the jumping rates are rational numbers.
\end{theo}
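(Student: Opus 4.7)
By the re-embedding theorem of \cite{reembedding} we may assume $d_X = d_{out}$. Following Theorem~\ref{theo:finitejumps} and Corollary~\ref{cor:birbrair}, the jumping rates lie in the finite set of values $b$ at which the $\RR_{an}^{\RR}$-definable family $\{S\calH_{b,\eta}(X) \cap B_\epsilon\}_{b}$ fails to be Hardt-trivial; what must be added here is rationality of these critical values. The strategy is to re-express membership in a $b$-horn in terms of purely \emph{subanalytic} (i.e.\ $\RR_{an}$-definable) data and then exploit that subanalytic functions of one real variable admit Puiseux expansions with rational exponents.

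Consider the subanalytic fibered product
\[
Y := \{(p,q) \in X \times X : \|p - x_0\| = \|q - x_0\| > 0\},
\]
together with the subanalytic functions $t(p,q) := \|p-x_0\|$ and $\delta(p,q) := \|p-q\|$. The relation ``$p$ and $q$ lie in a common $b$-horn ball'' is precisely $\delta < 2\eta \, t^{b}$. By Hardt's triviality in the subanalytic category applied to $(\delta,t)\colon Y\to\RR_{\geq 0}^2$, combined with the Puiseux/preparation theorem for subanalytic functions of one variable, one obtains a finite subanalytic stratification of $Y$ near the central fibre on each stratum $S$ of which the function $\delta(p,q)\,/\,t(p,q)^{q_S}$ extends to a positive continuous subanalytic function at $t = 0$, with a \emph{single} rational exponent $q_S \in \QQ$. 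Let $Q \subset \QQ$ be the (finite) set of such exponents.

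For $b \in (0,\infty) \setminus Q$, on each stratum $S$ the inequality $\delta < 2\eta \, t^{b}$ is, for small $t$, either never or always satisfied, according to whether $b < q_S$ or $b > q_S$. Consequently the subanalytic germ $S\calH_{b,\eta}(X) \cap B_\epsilon$ (for $\epsilon$ small depending on $b$) has a subanalytic type that is constant on each connected component of $(0,\infty)\setminus Q$, and by Corollary~\ref{cor:birbrair} the $\BB$-MD homology is constant on each such component; hence every jumping rate belongs to $Q \subset \QQ$. The main obstacle will be the second paragraph: ensuring the stratification of $Y$ is fine enough that a single rational exponent $q_S$ really governs the asymptotics of $\delta/t^{q_S}$ on each stratum (rather than a nested Puiseux expansion with several exponents), and producing a subanalytic trivialisation uniform in $b$ across each rational interval. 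This is where the preparation theorem for subanalytic functions, and not merely definability in the polynomially bounded O-minimal structure $\RR_{an}^{\RR}$, is essential.
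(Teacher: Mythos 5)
Your reduction to the outer metric and your appeal to Corollary~\ref{cor:birbrair} match the paper, but the core of your argument runs on a different space than the paper's, and that is where it breaks. The paper prepares the single \emph{ambient} map $f(y)=(\|y-x_0\|,\,d(y,X))$ on $\RR^m$: Hardt triviality over a decomposition of $(\RR^2,0)$ whose separating curve germs have rational Puiseux leading exponents directly controls the sublevel sets $\{\,d(\cdot,X)<\eta\|\cdot\|^b\,\}$, i.e.\ the horn neighborhoods themselves. You instead prepare the contact function $\delta(p,q)=\|p-q\|$ on the fibered product $Y\subset X\times X$. That data records, for each $b$, which pairs of points \emph{of $X$} are within $2\eta t^b$ of each other; it says nothing about the points of $\RR^m\setminus X$ that make up the bulk of $S\calH_{b,\eta}(X)$. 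Hence the sentence ``Consequently the subanalytic germ $S\calH_{b,\eta}(X)\cap B_\epsilon$ \dots has a subanalytic type that is constant on each connected component of $(0,\infty)\setminus Q$'' does not follow from what precedes it: the horn neighborhood is a union of ambient balls $B(x,\eta\|x\|^b)\cap\SSS_{\|x\|}$, and constancy of the pairwise-contact pattern of their centers is not the same as constancy of the topology of their union. To get from the $X\times X$ data to the homology of $S\calH_{b,\eta}(X)$ you would need a nerve/\v{C}ech argument — a cover by pieces whose multiple intersections are contractible and whose combinatorics is governed by $\delta$ — which is exactly the nontrivial content of Proposition~\ref{prop:comparaciondificil} and Lemma~\ref{lem:sphbcover} and is not supplied here. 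So the main obstacle is not the uniform trivialisation you flag at the end; it is that your stratification lives on the wrong space.

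The repair closest to your plan is to run the same Hardt-plus-Puiseux argument on the ambient map $y\mapsto(\|y-x_0\|,\,d(y,X))$ (or its spherical variant $d(y,X\cap\SSS_{\|y-x_0\|})$): its level structure is precisely what separates $S\calH_{b,\eta}(X)$ from its complement, the separating sets project to finitely many subanalytic curve germs in $(\RR^2,0)$, and the rational leading exponents of those curves contain all jumping rates by Corollary~\ref{cor:birbrair} together with the proof of Theorem~\ref{theo:finitejumps}. This is the paper's proof, and it needs only Puiseux expansion of subanalytic functions of one variable rather than the full preparation theorem with parameters. Two smaller points: the equivalence you assert between ``$p$ and $q$ lie in a common $b$-horn ball'' and $\delta<2\eta t^b$ holds only up to a factor of $2$ in $\eta$ (the midpoint of $p$ and $q$ need not be near $X$), which is harmless for leading exponents but is not ``precisely''; and your case split is reversed — with $\delta\sim c\,t^{q_S}$ the inequality $\delta<2\eta t^b$ holds for all small $t$ when $b<q_S$ and fails for all small $t$ when $b>q_S$.
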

\proof
We reduce the proof to the case of the outer metric as in the previous Theorem. 

Consider the continuous subanalytic map $f:\RR^m\to\RR^2$ defined by $f(x):=(||x||,d(x,X))$. By Hardt Triviality Theorem there is a subanalytic decomposition of $\RR^2$ such that $f$ is trivial over each of the pieces. The germ at the origin of $\RR^2$ of such a decomposition consists of a finite number of separating subanalytic curve germs. Each of this curves can be parametrized by an expression of the form $(t,ct^b+ h.o.t)$, where $b$ is a rational number; we say that $b$ is the leading exponent of the curve. By Corollary~\ref{cor:birbrair} the set of jumping rates is the set of leading exponents of the jumping rates. 
\endproof

Observe that the Theorem above gives an alternative proof, which works for polynomially bounded O-minimal structures, for the finiteness of the set of jumping rates too.

\section{The \texorpdfstring{$b$}{b}-MD Homology of \texorpdfstring{$b$}{b}-cones with the outer metric}.\label{sec:stdcones}

Recall Definition~\ref{def:stdcones}. The purpose of this section is to prove the following proposition. 

\begin{prop} 
\label{prop:stdcones}
Let $(L,L_1)\subset \RR^k$ be a pair of subanalytic compact sets. {Given $b\in (0,+\infty)\cap\QQ$, consider the pair of $b$-cones $(C^b_L, C^b_{L_1},d_{out})$ with the outer metric $d_{out}$.}
\begin{itemize}
\item [(a)] If $b'<b$ then
$$
MDH_n^{b'}(C^b_L, C_{L_1}^b,d_{out};A)=\left\{\begin{array}{ll}
A&, \mbox{ if }n=0 \mbox{ and } L_1=\emptyset\\
0&, \mbox{ if }n\not =0 \mbox{ or } n=0 \mbox{ and } L_1\neq \emptyset.
\end{array}\right.
$$
\item [(b)] 
$$MDH_*^{b}(C_L^b, C_{L_1}^b,d_{out};A)=H_*(L,L_1;A),$$
where $H_*(L,L_1;A)$ is the ordinary homology with coefficients in $A$.
\end{itemize}
\end{prop}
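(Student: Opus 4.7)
My plan is to reduce (a) to the diameter-collapse principle and (b) to the \v{C}ech spectral sequence, after triangulating $L$ compatibly with $L_1$.

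\textbf{Part (a).} Two points $(t^b x_1, t)$ and $(t^b x_2, t)$ of $C_L^b$ at the same height parameter $t$ are at outer distance $t^b \|x_1 - x_2\|$, which is bounded by $t^b \cdot \mathrm{diam}(L)$. Hence $\mathrm{diam}(L_{C_L^b,t})/t^{b'} \to 0$ for every $b' < b$, so Corollary~\ref{cor:colapsdiam point} shows that $(C_L^b, d_{out})$ has the $b'$-MD homology of a point, and similarly for $C_{L_1}^b$ when non-empty. Combined with Proposition~\ref{prop:point}, the long exact sequence of the pair (Proposition~\ref{prop:relativehomologysequence}) gives (a).

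\textbf{Part (b).} Choose a finite subanalytic triangulation $\alpha:|K|\to L$ compatible with $L_1$ and let $K_1 \subseteq K$ be the subcomplex representing $L_1$. For each vertex $v_i$ of $K$ set $U_i := C^b_{\overline{St}(v_i)}$, where $\overline{St}(v_i)$ is the closed star. The strategy is to apply Theorem~\ref{theo:Cech} to the pair $(C_L^b, C_{L_1}^b)$ with the closed cover $\{U_i\}$, which requires three ingredients.

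First, verify that $\{U_i\}$ is a $b$-cover in the sense of Definition~\ref{def:b-saturated}. Taking $\hat{U}_i := C^b_{\overline{St}^\eta(v_i)}$ for a suitable subanalytic $\eta$-neighborhood $\overline{St}^\eta(v_i)\subseteq L$, the standard straight-line retraction $r:\overline{St}^\eta(v_i)\to \overline{St}(v_i)$ inside $L$ lifts to the Lipschitz l.v.a.\ metric homotopy $H_s(t^b x,t) = (t^b((1-s)x + s\, r(x)),t)$, with uniform constant thanks to $b\geq 1$; by Theorem~\ref{theo:homotopyinvariance} this provides the required quasi-isomorphism on every finite intersection. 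Second, each intersection $\bigcap_{\alpha\in J} U_\alpha = C^b_{\overline{St}(\sigma)}$ (where $\sigma$ is the simplex spanned by $\{v_\alpha\}_{\alpha\in J}$, or empty) is $b$-contractible onto the arc $t\mapsto(t^b p,t)$ for any interior point $p$ of $\sigma$, via the analogous linear homotopy; by Proposition~\ref{prop:point} this arc has the MD-homology of a point. The same argument handles the intersections with $C^b_{L_1}$. Third, plugging the resulting $E^1$ terms into Theorem~\ref{theo:Cech} collapses the spectral sequence to its bottom row, which is precisely the relative simplicial chain complex of the nerve of $\{\overline{St}(v_i)\}$ modulo that of $\{\overline{St}(v_i)\cap L_1\}$. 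The classical identification of the closed-star nerve of a simplicial complex with the complex itself then yields $MDH^b_*(C_L^b, C_{L_1}^b; A) \cong H_*(K, K_1; A) \cong H_*(L, L_1; A)$.

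\textbf{Main obstacle.} The technical core is the first step above: arranging the subanalytic neighborhoods $\overline{St}^\eta(v_i)$ so that their finite intersections faithfully reflect those of the closed stars while simultaneously admitting deformation retractions that lift to uniformly Lipschitz l.v.a.\ metric homotopies on the $b$-cones. Outer-metric control of these lifts is immediate for $b\geq 1$, to which I would restrict, since the regime $b<1$ interacts with Proposition~\ref{prop:b<1trivial}; the lift for $b\geq 1$ still requires a careful choice of $\eta$ relative to the given triangulation so that the combinatorics of the cover are preserved at every stage.
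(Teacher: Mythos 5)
Part (a) of your proposal is correct and is essentially the paper's argument: the link of $C^b_L$ at height $t$ has diameter $O(t^b)$, so Corollary~\ref{cor:colapsdiam point} applies for every $b'<b$, and the relative sequence finishes it. For part (b) your overall strategy (a \v{C}ech/nerve computation over a cone cover coming from a triangulation of $L$) is also the paper's, but two of your three ingredients have genuine gaps.

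First, the identity $\bigcap_{\alpha\in J}\overline{St}(v_\alpha)=\overline{St}(\sigma)$ is false for closed stars: a face $\tau$ lies in the left side as soon as, for each $\alpha$, it is a face of \emph{some} simplex containing $v_\alpha$, not necessarily one simplex containing all of them. For the boundary of a triangle, $\overline{St}(v_1)\cap\overline{St}(v_2)$ is the closed edge $v_1v_2$ together with the isolated vertex $v_3$, which is not contractible, and the nerve of the closed-star cover is a full $2$-simplex while $L\simeq S^1$. So the $E^1$ page does not collapse to the simplicial chain complex of $K$ as you claim. (The open-star nerve theorem is the correct statement, but open stars are not directly admissible here; the paper instead covers $L$ by its maximal closed simplices $Z_i$ and thickens them to open sets $V_i$ using two barycentric subdivisions, so that $V_J$ deformation retracts onto the genuinely contractible $Z_J$.) Second, your lift $H_s(t^bx,t)=(t^b((1-s)x+s\,r(x)),t)$ of the ``straight-line retraction'' is problematic: the segment from $x$ to $r(x)$ need not lie in $L$, so $H_s$ need not land in $C^b_L$, and even when it does, neither $r$ nor $H$ is known to be Lipschitz with a uniform constant, which is what Theorem~\ref{theo:homotopyinvariance} requires. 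You correctly identify this as the main obstacle, but the paper's resolution is structural rather than a careful choice of $\eta$: Lemma~\ref{lem:charcbeq} characterizes $b$-equivalence on $C^b_L$ purely by agreement of the height components to order $>b$ and equality of the limits of the link components, and Lemma~\ref{lem:extendedbhomotopy} then shows that \emph{any} continuous subanalytic map or homotopy $L\to L'$ (no Lipschitz condition at all) induces well-defined morphisms and chain homotopies on $b$-MD chains of the $b$-cones. With that lemma the required quasi-isomorphisms for the $b$-cover come from an ordinary topological deformation retraction inside $L$, and the Lipschitz issue you are wrestling with disappears. Without it, your plan as written does not close.
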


Before giving a proof we need some preparation.

Given a l.v.a. simplex $\sigma:\hat{\Delta}_n\to C^b_L$, we write it in terms of the explicit description of $C^b_L$ (see Definition~\ref{def:stdcones}) as $(\sigma(tx,t)=((\sigma_2(tx,t))^b\sigma_1(tx,t),\sigma_2(tx,t))$. 

\begin{lema}
\label{lem:charcbeq}
Let $((\sigma_2)^b\sigma_1,\sigma_2)$ and $((\sigma'_2)^b)\sigma'_1,\sigma'_2)$ be two l.v.a. $n$-simplexes in $(C^b_L,d_{out})$. Then $\sigma\sim_{b}\sigma'$ if and only if the following two conditions hold for any l.v.a. subanalytic  continuous arc $\gamma:[0,\epsilon)\to\hat{\Delta}_n$:
\begin{enumerate}
 \item $\lim\limits_{t\to 0^+} \frac{|\sigma_2(\gamma(t))-\sigma'_2(\gamma(t))|}{t^{b}}=0$;
 \item $\lim\limits_{t\to 0^+} \sigma_{1}(\gamma(t))= \lim\limits_{t\to 0^+}\sigma'_1(\gamma(t))$.
\end{enumerate}
\end{lema}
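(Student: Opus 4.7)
The natural tool is Lemma~\ref{lem:arccharacterization}: by the equivalence (i)$\Leftrightarrow$(iii), the $b$-equivalence $\sigma\sim_b\sigma'$ is equivalent to $d_{out}(\sigma(\gamma(t)),\sigma'(\gamma(t)))=o(t^b)$ along every l.v.a.\ subanalytic continuous arc $\gamma:[0,\epsilon)\to\hat{\Delta}_n$. Using the explicit embedding $C^b_L\subset\RR^k\times\RR$, the outer distance reads
\[
d_{out}(\sigma(\gamma(t)),\sigma'(\gamma(t)))^2 = |\sigma_2-\sigma'_2|^2 + \|\sigma_2^b\sigma_1-\sigma'_2{}^b\sigma'_1\|^2,
\]
with all functions evaluated at $\gamma(t)$. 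Condition (1) is then almost tautological: the first summand is bounded by $d_{out}$, yielding $\Rightarrow$; for $\Leftarrow$, (1) is one of the two hypotheses that will bound $d_{out}/t^b$ from above.

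The arguments for (2) rest on the algebraic splitting
\[
\sigma_2^b\sigma_1 - \sigma'_2{}^b\sigma'_1 \;=\; \sigma_2^b(\sigma_1-\sigma'_1) + (\sigma_2^b-\sigma'_2{}^b)\sigma'_1,
\]
combined with two observations about the outer metric on $C^b_L$. First, for $b\geq 1$ the expansion $\|(s^bx,s)\|^2=s^{2b}\|x\|^2+s^2$ with $\|x\|$ bounded on $L$ and $s^{2b}=O(s^2)$ force the l.v.a.\ condition on $\sigma$ and $\sigma'$ to give $\sigma_2(\gamma(t))\sim t \sim\sigma'_2(\gamma(t))$. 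Second, the mean-value estimate gives $|\sigma_2^b-\sigma'_2{}^b|\leq b\max(\sigma_2,\sigma'_2)^{b-1}|\sigma_2-\sigma'_2|=O(t^{b-1})\cdot o(t^b)=o(t^{2b-1})\subseteq o(t^b)$ once (1) is known.

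For the direction $\Rightarrow$: existence of the limits $\lim\sigma_1(\gamma(t))$ and $\lim\sigma'_1(\gamma(t))$ follows from subanalyticity of $\sigma_1,\sigma'_1$ along the arc together with compactness of $L$. From the splitting we obtain $\sigma_2^b\|\sigma_1-\sigma'_1\|\leq \|\sigma_2^b\sigma_1-\sigma'_2{}^b\sigma'_1\|+|\sigma_2^b-\sigma'_2{}^b|\|\sigma'_1\|=o(t^b)$, and dividing by $\sigma_2^b\sim t^b$ yields $\|\sigma_1-\sigma'_1\|\to 0$, which is (2). For the direction $\Leftarrow$: the same splitting bounds $\|\sigma_2^b\sigma_1-\sigma'_2{}^b\sigma'_1\|\leq (Kt)^b\|\sigma_1-\sigma'_1\|+|\sigma_2^b-\sigma'_2{}^b|\|\sigma'_1\|$; (2) makes the first term $o(t^b)$ (since $\sigma_1-\sigma'_1\to 0$) and the mean-value estimate together with (1) makes the second $o(t^b)$. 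Combined with (1), this gives $d_{out}=o(t^b)$, hence $\sigma\sim_b\sigma'$.

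The main technical subtlety is that $\sigma_2(\gamma(t))\sim t$ only when $b\geq 1$; for $b<1$ the l.v.a.\ condition admits simplices with $\sigma_2(\gamma(t))\sim t^{1/b}$ and the conclusion (2) can genuinely fail (for instance along the two arcs in $C^{1/2}_{[0,1]}$ given by $t\mapsto(t,t^2)$ and $t\mapsto(t,4t^2)$). However, Proposition~\ref{prop:b<1trivial} already gives that the $b$-MD homology is trivial for $b<1$, so the lemma is needed (and stated) in the regime $b\geq 1$, where the plan above applies. The hardest part of the argument is the uniformity of the limit in (2) over all l.v.a.\ arcs $\gamma$; this is precisely what is supplied by passing through the arc characterization of Lemma~\ref{lem:arccharacterization} rather than working with $\sim_b$ directly.
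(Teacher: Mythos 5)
Your proof is correct and follows essentially the same route as the paper's: reduction to arcs via Lemma~\ref{lem:arccharacterization}, the identical splitting $\sigma_2^b\sigma_1-\sigma'_2{}^b\sigma'_1=\sigma_2^b(\sigma_1-\sigma'_1)+(\sigma_2^b-\sigma'_2{}^b)\sigma'_1$, and the fact that the l.v.a.\ condition forces $\sigma_2(\gamma(t))\sim t$; the paper estimates the cross term by writing $\sigma_2=at+g$, $\sigma'_2=a't+g'$ and deducing $a=a'\neq 0$ from condition (1) and the l.v.a.\ hypothesis, which is equivalent in content to your mean-value bound. Your two side remarks are both accurate and welcome: the statement does implicitly require $b\geq 1$ (the paper's proof uses this when asserting $a\neq 0$, and your $C^{1/2}_{[0,1]}$ example shows condition (2) genuinely fails otherwise), and the existence of the limits in (2) follows from subanalyticity and compactness of $L$, a point the paper leaves implicit.
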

\proof
By the arc criterium (see Lemma~\ref{lem:arccharacterization}) and the definition of $C^b(L)$ we have that $\sigma\sim_{b}\sigma'$ if and only if for any l.v.a. subanalytic  continuous arc $\gamma:[0,\epsilon)\to\hat{\Delta}_n$ the first condition of the lemma is satisfied and also we have the vanishing of the following limit:
\begin{equation}
\label{eq:local1} 
\lim\limits _{t\to 0^+} \frac{||\sigma_2(\gamma(t))^b\sigma_1(\gamma(t))-\sigma'_2(\gamma(t))^b\sigma'_1(\gamma(t))||}{t^{b}}=0.
\end{equation}

We may write 
$$||\sigma_2(\gamma(t))^b\sigma_1(\gamma(t))-\sigma'_2(\gamma(t))^b\sigma'_1(\gamma(t))||=$$
$$=||\sigma_2(\gamma(t))^b\sigma_1(\gamma(t))-\sigma_2(\gamma(t))^b\sigma'_1(\gamma(t))+ \sigma_2(\gamma(t))^b\sigma'_1(\gamma(t))  -\sigma'_2(\gamma(t))^b\sigma'_1(\gamma(t))||=$$
$$=||\sigma_2(\gamma(t))^b(\sigma_1(\gamma(t))-\sigma'_1(\gamma(t))+ (\sigma_2(\gamma(t))^b  -\sigma'_2(\gamma(t))^b)\sigma'_1(\gamma(t))||.$$

Let $\sigma_2(\gamma(t))=at+g(t)$ and $\sigma'_2(\gamma(t))=a't+g'(t)$, where $g$ and $g'$ vanish to order strictly larger than $1$. The first condition of the lemma is equivalent to the equality $a=a'$. Using this and the fact that $\sigma'_1(\gamma(t))$ is bounded because $L$ is compact we conclude that $||(\sigma_2(\gamma(t))^b  -\sigma'_2(\gamma(t))^b)\sigma'_1(\gamma(t))||$ vanishes at $0$ to order strictly larger than $b$. Moreover, by the l.v.a condition we have that $a\neq 0$. Therefore the vanishing of the limit~(\ref{eq:local1}) is equivalent to the second condition of the lemma.
\endproof

Given a subanalytic map $f:L\to L'$ we define $C^b(f):C^b(L)\to C^b(L')$ by the formula $C^b(f)(t^bx,t)=(t^bf(x),t)$. In the next lemma we extend functoriality for $MD$ homology at discontinuity parameter $b$ to this kind of maps, and prove homotopy invariance. 

\begin{lema}
\label{lem:extendedbhomotopy}
Given any continuous subanalytic map $f:L\to L'$, the map $C^b(f)$ induces a morphism of $b$-moderately discontinuous chain complexes $$C^b(f)_*:MDC^b_\bullet(C^b(L),d_{out})\to MDC^b_\bullet(C^b(L'),d_{out})).$$ 

Let $f,g:L\to L'$ be two subanalytic maps between compact subanalytic subsets. Assume that there exists a subanalytic homotopy $H:L\times I\to L'$ such that $H_0=f$ and $H_1=g$. Then the morphism $C^b(f)_*$ and $C^b(g)_*$ are chain homotopic.
\end{lema}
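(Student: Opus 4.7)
The plan is to prove both assertions by exploiting the specific form of $C^b(f)$: it preserves the $t$-coordinate and only acts nontrivially on the link variable, where differences get multiplied by $t^b$. This is crucial because $f$ is only assumed continuous, not Lipschitz.

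For the first assertion, I would define $C^b(f)_*$ at the pre-chain level by $\sigma\mapsto C^b(f)\comp\sigma$, extended linearly, then verify the two conditions of Remark~\ref{rem:checkindependence}. Subanalyticity of $C^b(f)\comp\sigma$ is immediate (for rational $b$, or in the appropriate O-minimal structure otherwise), and l.v.a.\ is preserved because $(C^b(f)\comp\sigma)_2=\sigma_2$. Compatibility with homological subdivisions follows from associativity: $(C^b(f)\comp\sigma)\comp\rho_i=C^b(f)\comp(\sigma\comp\rho_i)$. For compatibility with $\sim_b$, suppose $\sigma\sim_b\sigma'$ and apply Lemma~\ref{lem:charcbeq}: for any l.v.a.\ subanalytic arc $\gamma$, condition (1) on the $\sigma_2$-coordinate is preserved verbatim since the $t$-coordinate is unchanged. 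For condition (2), note that $\sigma_1\comp\gamma$ and $\sigma'_1\comp\gamma$ are bounded subanalytic (hence monotone near $0$, so with limits in $L$), and continuity of $f$ gives $\lim f(\sigma_1(\gamma(t)))=f(\lim\sigma_1(\gamma(t)))=f(\lim\sigma'_1(\gamma(t)))=\lim f(\sigma'_1(\gamma(t)))$. Hence the characterization of $\sim_b$ via Lemma~\ref{lem:charcbeq} is satisfied for $C^b(f)\comp\sigma$ and $C^b(f)\comp\sigma'$. Compatibility with $\partial$ is formal.

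For the second assertion, I would mimic the prism construction from the proof of Theorem~\ref{theo:homotopyinvariance}, adapted to the $b$-cone setting. Given an l.v.a.\ simplex $\sigma=((\sigma_2)^b\sigma_1,\sigma_2)$, define the prism map
\[
\hat{\eta}(\sigma):\widehat{\Delta_n\times I}\to C^b(L'),\qquad (t(x,s),t)\mapsto \bigl((\sigma_2(tx,t))^b\,H(\sigma_1(tx,t),s),\,\sigma_2(tx,t)\bigr),
\]
which is subanalytic and l.v.a.\ with the same $t$-coordinate as $\sigma$. Choose an orientation-preserving homological subdivision $\{\rho_k\}_{k\in K}$ of $\widehat{\Delta_n\times I}$ by $(n{+}1)$-simplices (with respect to some subanalytic triangulation refining the canonical decomposition into top and bottom faces), and set $\eta_n(\sigma):=\sum_{k\in K}sgn(\rho_k)\hat\eta(\sigma)\comp\rho_k$. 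Independence of the subdivision choice follows by passing to a common refinement exactly as in Lemma~\ref{lem:Seqinversion}, and compatibility with immediate equivalences on $\sigma$ is formal: subdivisions of $\hat\Delta_n$ cone up to subdivisions of $\widehat{\Delta_n\times I}$.

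The key check is compatibility with $\sim_b$: given $\sigma\sim_b\sigma'$, I would again apply Lemma~\ref{lem:charcbeq} to $\hat\eta(\sigma)$ and $\hat\eta(\sigma')$. The $t$-coordinate is just $\sigma_2$ (resp.\ $\sigma'_2$), so condition (1) transfers directly. For condition (2), along any l.v.a.\ arc $\gamma$ in $\widehat{\Delta_n\times I}$, the parameter $s$ is a bounded subanalytic function of $t$ with a limit $s_0\in I$, and $\sigma_1$, $\sigma'_1$ applied to the projected arcs have common limit $\ell\in L$ by hypothesis; continuity of $H$ then yields $\lim H(\sigma_1,s)=H(\ell,s_0)=\lim H(\sigma'_1,s)$. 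Finally, the identity $\partial\eta_n+\eta_{n-1}\partial=C^b(g)_*-C^b(f)_*$ is verified by the standard cancellation of interior faces in the prism subdivision (as in the proof of Theorem~\ref{theo:homotopyinvariance}), together with the observation that at $s=0$ the map $\hat\eta(\sigma)$ restricts to $C^b(f)\comp\sigma$ and at $s=1$ to $C^b(g)\comp\sigma$. The main (and only) substantive obstacle is confirming that continuity of $f$ and $H$ suffices in the $\sim_b$-compatibility step; this is the essential content and is resolved by the $b$-cone structure via Lemma~\ref{lem:charcbeq}.
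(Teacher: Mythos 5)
Your proposal is correct and follows essentially the same route as the paper: define $C^b(f)_*$ by composition at the pre-chain level, verify the two conditions of Remark~\ref{rem:checkindependence} with the $\sim_b$-compatibility reduced to the two arc conditions of Lemma~\ref{lem:charcbeq} (where continuity of $f$ suffices precisely because the $b$-cone structure multiplies link differences by $t^b$), and then adapt the prism construction of Theorem~\ref{theo:homotopyinvariance}, with the only substantive new check again being the $\sim_b$-compatibility of the prism via Lemma~\ref{lem:charcbeq} and continuity of $H$. Your write-up is in fact somewhat more explicit than the paper's (e.g.\ spelling out the prism formula $\bigl((\sigma_2)^b H(\sigma_1,s),\sigma_2\bigr)$ and the existence of limits along subanalytic arcs), but the underlying argument is the same.
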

\proof
We define $C^b(f)_*:MDC^{pre,\infty}_\bullet(C^b(L))\to MDC^{pre,\infty}_\bullet(C^b(L'))$ by the formula $C^b(f)(\sigma_1^b\sigma_2,\sigma_2):=(\sigma_2^bf\comp\sigma_1,\sigma_2)$. Compatibility with immediate subdivision is obvious. Compatibility with the $\sim_b$ equivalence relation follows immediately by the continuity of $f$ and Lemma~\ref{lem:charcbeq}. Then by Remark~\ref{rem:checkindependence} $C^b(f)_*$ descends to a morphism from $MDC^b_\bullet(C^b(L))\to MDC^b_\bullet(C^b(L')).$

The homotopy assertion follows from the proof of Theorem~\ref{theo:homotopyinvariance} closely. The only difference is when proving that we have the equivalence $h_n(\sigma)\sim_b h_n(\sigma')$, provided that we have $\sigma\sim_b\sigma'$. Here one {has to check the two conditions in Lemma~\ref{lem:charcbeq}  for $h_n(\sigma)$ and $h_n(\sigma')$ using the continuity of the homotopy $H$.}

\endproof
\begin{proof}[Proof of Proposition~\ref{prop:stdcones}]
If $b'<b$, we use Corollary~\ref{cor:colapsdiam point}
for the map $X_L^b\to [0,\infty)$ which sends $(t^bx,t)\mapsto t$. If $L_1 = \emptyset$, the result follows from Proposition~\ref{prop:point}. If not, it follows from the relative homology sequence (see Proposition~\ref{prop:relativehomologysequence}).

For $b'=b$, we use the relative homology sequence and the $5$-lemma to reduce to the absolute case $L_1=\emptyset$.

Let $K$ be a simplicial complex and $\theta:|K|\to L$ be a subanalytic triangulation of $L$. Define 
$h:C(|K|)\to C^b_L$ by the formula $h(tx,t):=(t^b\theta(x),t)$. 
As in Section~\ref{sec:link} consider the closed subanalytic cover $\{Z_i\}_{i\in I}$ of $L$ given by the maximal triangles in $L$ of the triangulation $\theta$. The singular homology of $L$ is the homology of the nerve of the cover.

In order to compute $MDH_*^{b}(C_L^b;A)$ we produce a $b$-cover of $C_L^b$ as follows. Let $\theta_2:|K_2|\to L$ be the triangulation obtained by taking two iterated barycentric subdivisions of $K$. For each $i\in I$ we define $V_i$ to be the open subset obtained by taking the union of $Z_i$ together with the interior of all the simplices of $|K_2|$ which meet $Z_i$. For any finite set of indexes $J\subset I$ define $Z_J:=\cap_{i\in J}Z_i$ and $V_J:=\cap_{i\in J}V_i$. By construction there exists a subanalytic retraction $r_J:V_J\to Z_J$ and a subanalytic homotopy $H_J$ between $r_J$ and $Id_{V_J}$.

If we define $\hat{U}_i:=C_{V_i}^{b}$ we obtain that the collection $\{\hat{U}_i\}_{i\in I}$ is a $b$-extension of the closed subanalytic cover 
$\{C_{Z_i}^b\}_{i\in I}$. {Indeed, by Lemma~\ref{lem:extendedbhomotopy} it is enough to check that for any finite subset of indexes $J$ the inclusion $Z_J \hookrightarrow V_J$ admits a continuous subanalytic deformation retraction. This is true by construction.}

Hence Corollary~\ref{cor:nerve} can be applied and we obtain, as in the proof of Theorem~\ref{theo:link}, that $MDH_*^{b'}(C_L^b;A)$ coincides with the homology of the nerve of the cover. 


\end{proof}

\section{The MD homology for \texorpdfstring{$b=1$}{b=1}}  

In this section we study $b$-MD Homology for $b=1$, both for the outer and inner metric. We give a comfortable interpretation of $1$-maps in terms of spherical blow ups, prove that $1$-MD Homology coincides with the homology of the punctured tangent cone for the outer metric, and with the homology of the punctured Gromov tangent space for the inner metric. As an application we prove that if a complex analytic germ has the MD Homology of a smooth germ, then it has to be smooth.

\begin{definition}[See \cite{BirbrairFG:2017} and \cite{Sampaio:2017}]
Let $\rho_m:\SSS^{m-1}\times [0,\infty)\to\RR^m$, defined by $\rho_m(x,r):=rx$, be the {\em spherical blow up of the origin of $\RR^m$}. Let $X\subset\RR^m$ be a subanalytic set. The {\em spherical blow up} $X'$ of $X$ at the origin $O\in\RR^m$ is the  closure of $\rho_m^{-1}(X\setminus\{O\})$. The spherical exceptional divisor is defined to be the intersection $\partial X':=X'\cap (\SSS^{m-1}\times\{0\})$. Let $(X,O)\subset (\RR^m,O)$ and $(Y,O)\subset (\RR^k,O)$ be subanalytic germs. A {\em blow-spherical map} $\varphi:X\to Y$ is a subanalytic map that lifts to a continuous map from the spherical blow up of $X$ at $O$ to the spherical blow up of $Y$ at $O$. 
\end{definition}

\begin{remark}
It follows from the proof of Proposition 3.6 in \cite{Sampaio:2017} that a Lipschitz subanalytic l.v.a. map is a blow-spherical map. 
\end{remark}

Let $(X,x_0,d_{out})$ be a metric subanalytic subgerm of $\RR^m$ with the outer metric. Observe that two points $p,q:[0,\epsilon)\to X$ are $1$-equivalent if and only if their liftings to the spherical blow up meet at the same point of the exceptional divisor. This gives the following interpretation of $1$ maps:

\begin{remark}
\label{rem:extendfunct}
Let $(X,d_X)$ and $(Y,d_Y)$ be metric subanalytic germs, where $d_X$ and $d_Y$ are the outer metrics. A $1$-map between them is a finite collection $\{(C_i,f_i)\}_{i\in I}$, where $\{C_i\}_{i\in I}$ is a cover by closed subanalytic subsets of $X$ and $f_i:C_i\to Y$ are l.v.a. blow-spherical subanalytic maps satisfying the following: for any $1$-equivalent pair of points $p$, $q$ contained in $C_i$ resp. $C_j$, $f_i\circ p$ and $f_j\circ q $ have liftings to the spherical blow up of $Y$ meeting at the same point of the exceptional divisor.
\end{remark}

Let us remind the notion of tangent cone.
\begin{definition}
Let $X\subset \RR^m$ be a subanalytic set such that $x_0$ is a non-isolated point of $\overline{X}$.
We say that $v\in \RR^m$ is a tangent vector of $X$ at $x_0\in\RR^m$ if there are a sequence of points $\{x_i\}\subset X\setminus \{x_0\}$ tending to $x_0$ and sequence of positive real numbers $\{t_i\}$ such that 
$$\lim\limits_{i\to \infty} \frac{1}{t_i}(x_i-x_0)= v.$$
Let $T_{x_0}X$ be the set of all tangent vectors of $X$ at $x_0\in \RR^m$. We call $T_{x_0}X$ the {\em tangent cone of $X$ at $x_0$}.
\end{definition}

The tangent come is known to be subanalytic   in $\RR^m$ and $0\in T_{x_0}X$. Moreover, if $v\in T_{x_0}X$ then $\lambda v\in T_{x_0}X$ for every $\lambda\in\RR^+$. 

\begin{theo}
\label{th:speed1}
Let $(X,x_0,d_{out})\subset (\RR^m,x_0)$ be a closed subanalytic germ endowed with the outer metric. Then there is a natural isomorphism $$MDH^1_*(X,x_0,d_{out};A)\to MDH^1_*(T_{x_0}X;A)\to H_*(T_{x_0}X\setminus\{0\};A).$$
\end{theo}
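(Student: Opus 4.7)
The second isomorphism is immediate. Since the tangent cone $T_{x_0}X$ is a straight cone over its link $L := T_{x_0}X \cap \SSS^{m-1}_1$, Proposition~\ref{prop:stdcones}(b) gives $MDH^1_*(T_{x_0}X, d_{out}; A) \cong H_*(L; A)$, and the conical structure of $T_{x_0}X$ yields a deformation retraction $T_{x_0}X \setminus \{0\} \to L$, producing the stated identification with $H_*(T_{x_0}X \setminus \{0\}; A)$.

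For the first isomorphism, the plan is to build a $1$-map $\varphi \colon (X, x_0, d_{out}) \to (T_{x_0}X, 0, d_{out})$ using the spherical blow-up $X' \subset \SSS^{m-1} \times [0, \infty)$. First observe that $\partial X' \subset \SSS^{m-1}$ is exactly $L$, so $T_{x_0}X = \{t \cdot v : v \in \partial X', \, t \geq 0\}$. Apply Hardt's Triviality to the projection $\tau \colon X' \to [0, \epsilon)$ (after possibly shrinking $\epsilon$) compatibly with $\partial X'$, to obtain a finite subanalytic partition $\{S_\alpha\}$ of $X'$ such that each $\tau|_{S_\alpha}$ is trivial over $[0, \epsilon)$ with fibre $S_\alpha^0 := S_\alpha \cap \partial X'$. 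Each trivialization yields a subanalytic map $\pi_\alpha \colon S_\alpha \to S_\alpha^0$. Taking closed subanalytic pieces $C_\alpha \subseteq X$ projecting onto closures of the $S_\alpha$ (via the spherical blow-down $\rho_m$), define $f_\alpha \colon C_\alpha \to T_{x_0}X$ by $f_\alpha(x) := \|x - x_0\| \cdot \pi_\alpha(\rho_m^{-1}(x))$. Each $f_\alpha$ is l.v.a.\ subanalytic and blow-spherical (since the lift to spherical blow-ups is continuous by construction), and the collection $\{(C_\alpha, f_\alpha)\}$ satisfies the coherence condition of Remark~\ref{rem:extendfunct}: two $1$-equivalent points $p, q$ have liftings converging to the same point of $\partial X'$, and since $\pi_\alpha|_{S_\alpha^0} = \mathrm{id}_{S_\alpha^0}$ for every $\alpha$, the arcs $f_\alpha \circ p$ and $f_\beta \circ q$ converge spherically to the same direction in $\partial X'$, hence are $1$-equivalent. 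Thus $\varphi := \{(C_\alpha, f_\alpha)\}$ is a well-defined $1$-map, independent (as a $1$-map) of the stratification chosen.

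To prove that $\varphi_*$ is an isomorphism, the plan is to combine Corollary~\ref{cor:birbrair} with a geometric retraction. By Corollary~\ref{cor:birbrair} (and its proof via Theorem~\ref{theo:iso2}), there exists $b' > 1$ such that for any $\eta > 0$,
$$MDH^1_*(X, x_0, d_{out}; A) \cong H_*(S\calH_{b', \eta}(X) \setminus \{x_0\}; A),$$
with the isomorphism induced by inclusion-type maps at the level of chains close to $X$. I claim that for $b' > 1$ sufficiently close to $1$, the punctured spherical horn $S\calH_{b', \eta}(X) \setminus \{x_0\}$ admits a subanalytic deformation retraction onto $T_{x_0}X \setminus \{x_0\}$ that is compatible with the map $\varphi$. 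The key geometric fact is: since $b' > 1$, the rescaled slices $(1/t) \cdot (S\calH_{b', \eta}(X) \cap \SSS_t)$ converge in Hausdorff distance to $\partial X' = L$ as $t \to 0^+$, because the horn has radial thickness $O(t^{b'})$ which, after rescaling by $1/t$, becomes $O(t^{b'-1}) \to 0$. Using Hardt's Triviality applied to the family $\{S\calH_{b', \eta}(X) \cap \SSS_t\}_{t > 0}$ parametrised by $t$ (this is a definable family in $\RR_{an}^{\RR}$), one obtains a continuous subanalytic retraction $r \colon S\calH_{b', \eta}(X) \setminus \{x_0\} \to T_{x_0}X \setminus \{x_0\}$ fibrewise on spheres, together with a metric homotopy to the inclusion $\iota \circ r$ where $\iota \colon T_{x_0}X \hookrightarrow S\calH_{b', \eta}(X)$.

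The main obstacle is checking that $\varphi_*$ agrees (up to these retraction identifications) with the restriction of $r$. Concretely, one must verify that for a l.v.a.\ simplex $\sigma \colon \hat\Delta_n \to X$, the chain $\varphi_* [\sigma] \in MDC^1_*(T_{x_0}X; A)$ represents, under the identification $MDH^1_*(T_{x_0}X; A) \cong H_*(L; A)$, the same homology class as the simplex $\sigma$ viewed as a chain close to $X$ and retracted by $r$. This reduces to checking commutativity of a diagram of $1$-maps up to $1$-homotopy, which follows from the characterisation in Lemma~\ref{lem:charcbeq}: both $r \circ \sigma$ and $\varphi_* \sigma$ have the same limiting spherical direction at every l.v.a.\ arc approaching $x_0$, since both are constructed from trivializations of the spherical blow-up that coincide on the exceptional divisor. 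Combined with the identification of both sides with $H_*(L; A)$ already established (via Corollary~\ref{cor:birbrair} and Proposition~\ref{prop:stdcones}(b)), this yields that $\varphi_*$ is the claimed isomorphism.
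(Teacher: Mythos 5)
Your route differs substantially from the paper's, and it has two genuine gaps. First, the construction of the $1$-map $\varphi$ via Hardt's Triviality applied to $\tau\colon X'\to[0,\epsilon)$ does not work as stated: Hardt's theorem only gives triviality over the pieces of a partition of the \emph{base}, and $\tau$ is in general not trivial over any interval containing $0$, because the special fibre $\partial X'$ need not be homeomorphic to the nearby fibres $X'\cap\tau^{-1}(t)\cong L_{X,t}$ (for the cusp $y^2=x^3$ the link of $X$ is two points while $\partial X'$ is a single point). Hence there is no partition $\{S_\alpha\}$ of $X'$ with $\tau|_{S_\alpha}$ trivial over $[0,\epsilon)$ with fibre $S_\alpha\cap\partial X'$, and the retractions $\pi_\alpha$ you use to define $f_\alpha$ do not exist as described. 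The paper sidesteps this entirely: since $X\subset S\calH_{1,\eta}(T_{x_0}X)$ for a small enough representative, one takes a subanalytic retraction of $S\calH_{1,\eta}(T_{x_0}X)\cap\SSS^{m-1}$ onto $L=T_{x_0}X\cap\SSS^{m-1}$ (from a triangulation of the sphere compatible with $L$), extends it radially, and restricts to $X$; this yields a single continuous l.v.a.\ subanalytic collapsing map $c\colon X\to T_{x_0}X$ preserving the distance to the origin.

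Second, the isomorphism argument through Corollary~\ref{cor:birbrair} rests on the claim that $S\calH_{b',\eta}(X)\setminus\{x_0\}$ admits a subanalytic deformation retraction onto $T_{x_0}X\setminus\{x_0\}$ for $b'$ close to $1$. Hausdorff convergence of the rescaled slices does not give this: Hausdorff convergence carries no topological information about the slices (whose homotopy type is that of the link of $X$, not of $L$, for $b'$ large), so the claim needs its own proof, including the preliminary fact that $T_{x_0}X\subset S\calH_{b',\eta}(X)$ at all, which holds only for $b'$ sufficiently close to $1$ and requires the estimate $d(tv,X)=O(t^{1+\delta})$. The final compatibility check between $\varphi_*$, $r$, and the direct-limit identifications of Theorem~\ref{theo:iso2} is likewise only sketched. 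All of this detour is unnecessary: once one has the continuous map $c$ above, the tangent-cone estimate $\sup\{\mathrm{diam}(c^{-1}(y)\cup\{y\}):y\in T_{x_0}X\cap\SSS_t\}=o(t)$ holds, and Theorem~\ref{theo:colapsediameter2} immediately gives that $c_*$ is an isomorphism of $1$-MD chain complexes; the second isomorphism is then Proposition~\ref{prop:stdcones}, as you say.
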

\proof

Since we can find a triangulation of the unit sphere $\mathbb S^{m-1}$ centered at $x_0$ compatible with $X_0:= T_{x_0}X\cap \mathbb S^{m-1}$ (see \cite{Hardt:1976}), we have that if $\eta>0$ is small enough, then there exists a subanalytic retraction $$r\colon S\calH_{1,\eta}(T_{x_0}X)\cap\SSS^{m-1} \to T_{x_0}X\cap\SSS^{m-1}.$$
Thus, the subanalytic map $\tilde r\colon S\calH_{1,\eta}(T_{x_0}X)\to T_{x_0}X$ given by $\tilde r(ty)=tr(y)$, for any $y\in S\calH_{1,\eta}(T_{x_0}X)$ and $t\geq 0$, is a subanalytic retraction.

We have an inclusion of germs $(X,0)\subset S\calH_{1,\eta}(T_{x_0}X)$. Restricting $\tilde r$ to $(X,0)$ we obtain a subanalytic collapsing map $c\colon (X,0)\to (T_{x_0}X,0)$ preserving the distance to the origin. As both $r$ and the inclusion extend to the real blow up at $0$, the map $c$ extends as well. By Remark~\ref{rem:extendfunct} and functoriality for $1$-maps it induces a natural morphism of complexes $c_*\colon MDC^1_*(X;A)\to MDC^1_*(T_{x_0}X;A)$.

Note that the diameter condition 
\begin{equation}
\label{eq:diamcond}
 \lim\limits _{t\to 0^+}\frac{sup\{diam (c^{-1}(y)\cup \{y\}):y\in T_{x_0}X\cap\SSS_t\}}{t}=0
\end{equation}
holds. Then we apply Theorem~\ref{theo:colapsediameter2} and obtain the first isomorphism.

The second isomorphism is an application of Proposition~\ref{prop:stdcones}. 

\endproof

Now we clarify the $1$-MD Homology for germs with the inner metric. Let $(X,x_0,d_{inn})\subset (\RR^m,x_0)$ be a closed subanalytic germ endowed with the inner metric. Let $T^{Gromov}_{x_0}X$ be the Gromov tangent space of $X$ at $x_0$ and $C^{Alexandrov}_{x_0}X$ be the Alexandrov cone of $X$ at $x_0$ (see~\cite{Bernig}). In~\cite{Bernig} a tangent map $T^{Gromov}_{x_0}X\to T_{x_0}X$ from the Gromov tangent space to the ordinary tangent space is defined. We would like to thank A. Parusinski for suggesting to look in this direction. 

\begin{theo}
\label{th:speed1inner}
Let $(X,x_0,d_{inn})\subset (\RR^m,x_0)$ be a closed subanalytic germ endowed with the inner metric. Then there are natural isomorphisms $MDH^1_*(X,x_0,d_{inn};A)\cong H_*(T^{Gromov}_{x_0}X\setminus\{x_0\};A)\cong H_*(C^{Alexandrov}_{x_0}X\setminus\{x_0\};A)$. Moreover, under this isomorphism and the isomorphism proved in Theorem~\ref{th:speed1} the homomorphism $$MDH^1_*(X,x_0,d_{inn};A)\to MDH^1_*(X,x_0,d_{out};A)$$ coincides with the homomorphism in singular homology induced from the tangent map $T^{Gromov}_{x_0}X\to T_{x_0}X$.
\end{theo}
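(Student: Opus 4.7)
The plan is to mimic the proof of Theorem~\ref{th:speed1}, replacing the outer metric with the inner one and the ordinary tangent cone with the Gromov tangent cone $T^{Gromov}_{x_0}X$; the Alexandrov cone will appear automatically via Bernig's identification of the two. First, I would recall that for a subanalytic germ $(X,x_0,d_{inn})$ the Gromov tangent cone exists as a bona fide object of our geometric category: using the reembedding results of \cite{reembedding} together with Bernig's description, $T^{Gromov}_{x_0}X$ can be realized as a metric subanalytic germ whose metric is bi-Lipschitz to a subanalytic one (so it sits in our category via Remark~\ref{rem:Cat_inner}), and it comes with a natural collapsing map $c_{in}\colon (X,x_0,d_{inn})\to (T^{Gromov}_{x_0}X,0)$ which is Lipschitz l.v.a. subanalytic and preserves the distance to the vertex. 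Its fibers over spherical slices of the Gromov cone have inner-diameter $o(t)$ as $t\to 0^+$, as this is essentially the defining property of the Gromov blow-up.

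Second, I would establish the isomorphism $MDH^1_\bullet(X,x_0,d_{inn};A)\cong MDH^1_\bullet(T^{Gromov}_{x_0}X;A)$ induced by $c_{in}$. The cleanest route is to adapt Theorem~\ref{theo:colapsediameter2} to the inner metric setting; the argument there only uses Hardt's Triviality, the Curve Selection Lemma, and the diameter condition, and goes through verbatim once the target metric is subanalytic (or bi-Lipschitz to one). Alternatively, one can apply Corollary~\ref{cor:b-maps colapsediameter} after choosing local sections of $c_{in}$ on a suitable $1$-horn cover of $T^{Gromov}_{x_0}X$; the existence of such sections is another (subanalytic) manifestation of the blow-up construction of the Gromov tangent cone. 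In either implementation, the heart of the matter is the equivalence: for inner-metric arcs $p,q$ in $X$, $p\sim_1 q$ if and only if $c_{in}\circ p$ and $c_{in}\circ q$ define the same arc in $T^{Gromov}_{x_0}X$, which is essentially the definition of the Gromov tangent cone.

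Third, the Gromov tangent cone has the structure of a straight metric cone $C^1_L$ over its link $L$, since the inner metric on $T^{Gromov}_{x_0}X$ makes the rays from $0$ into geodesics and the slicing by distance to $0$ is trivial. Hence Proposition~\ref{prop:stdcones} applied with $b=1$ and $L_1=\emptyset$ gives $MDH^1_\bullet(T^{Gromov}_{x_0}X;A)\cong H_\bullet(T^{Gromov}_{x_0}X\setminus\{0\};A)$. Bernig's natural homeomorphism between $T^{Gromov}_{x_0}X$ and $C^{Alexandrov}_{x_0}X$ supplies the second isomorphism asserted in the statement. For the functoriality claim, note that the identity $Id\colon (X,x_0,d_{inn})\to (X,x_0,d_{out})$ is Lipschitz l.v.a. (since $d_{out}\leq d_{inn}$) and fits into a commutative square with $c_{in}$, the outer collapsing map $c$ from the proof of Theorem~\ref{th:speed1}, and the subanalytic tangent map $T^{Gromov}_{x_0}X\to T_{x_0}X$; applying MD-functoriality and the two collapsing isomorphisms identifies the comparison homomorphism with the map induced in singular homology by the tangent map.

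The main obstacle will be the first step: one must verify carefully that for a subanalytic germ with the inner metric the Gromov tangent cone can genuinely be treated as an object of the category of pairs of metric subanalytic germs, with a subanalytic l.v.a. Lipschitz collapsing map $c_{in}$ whose fibers satisfy the required $o(t)$ diameter estimate. This amounts to combining Bernig's metric-geometric construction with the subanalytic tools (Kurdyka--Orro approximation, Hardt triviality, reembedding) available in our setting; once this package is in place, the homological argument is a formal copy of the outer-metric proof.
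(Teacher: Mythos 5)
Your strategy is genuinely different from the paper's, and it leaves the hard part unresolved. You defer to a final ``package'' the claim that $T^{Gromov}_{x_0}X$ can be realized as a metric subanalytic germ receiving a subanalytic Lipschitz l.v.a.\ collapsing map $c_{in}$ with the $o(t)$ fiber-diameter property; but this is exactly the content one would need to prove, and none of the cited tools produces it directly. Worse, the proposed adaptation of Theorem~\ref{theo:colapsediameter2} is not ``verbatim'': that theorem and its proof live entirely inside a single ambient $\RR^m$ with the outer metric, so that a simplex $\sigma$ in $X$ and its image $\varphi\comp\sigma$ in $Y$ can be compared by the relation $\sigma\sim_b\varphi\comp\sigma$ \emph{in the ambient space}. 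For the inner metric, $X$ and a realization of $T^{Gromov}_{x_0}X$ are a priori different metric spaces with no common ambient metric, so the key step of the proof of Theorem~\ref{theo:colapsediameter2} (deducing $\varphi\comp\sigma_1\sim_b\varphi\comp\sigma_2\iff\sigma_1\sim_b\sigma_2$ from the diameter condition) does not even typecheck. The alternative via Corollary~\ref{cor:b-maps colapsediameter} likewise requires Lipschitz l.v.a.\ subanalytic sections of $c_{in}$, whose existence you assert but do not establish.

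The paper's proof avoids all of this by a reduction rather than a reconstruction: by Theorem~2.1 of~\cite{reembedding}, $(X,x_0,d_{inn})$ is subanalytically bi-Lipschitz homeomorphic to a normally embedded germ $(X',x'_0)$, so by Corollary~\ref{func_inner} one may compute $MDH^1_*$ for $(X',x'_0,d_{out})$ instead; Theorem~\ref{th:speed1} then gives the homology of the punctured tangent cone of $X'$, which for a normally embedded germ is homeomorphic (as a pointed space) to the Gromov tangent cone, and the Alexandrov cone statement is Theorem~1.2 of~\cite{Bernig}. The identification of the comparison homomorphism with the one induced by the tangent map $T^{Gromov}_{x_0}X\to T_{x_0}X$ then falls out of these identifications, essentially as in the commutative square you describe. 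If you want to keep your outline, the honest fix is to route your ``first step'' through this reembedding, at which point your argument collapses into the paper's.
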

\proof
By Theorem 2.1 of \cite{reembedding} there exists a subanalytic subgerm  $(X',x'_0)\subset \RR^m$ such that $(X,x_0,d)$ bi-Lipschitz homeomorphic to $(X',x'_0)$ both with the inner and outer metric. For $X'$ the Gromov tangent space and the ordinary tangent space are homeomorphic as pointed topological spaces. This shows the first isomorphism. The second holds by Theorem~1.2~of~\cite{Bernig}. The identification of the homomorphism $MDH^1_*(X,x_0,d_{inn};A)\to MDH^1_*(X,x_0,d_{out};A)$ with the homomorphism in singular homology induced from the tangent map is clear by the proof of the isomorphisms above. 
\endproof

Finally we prove the characterization of smooth points in terms of MD Homology for the outer metric.

\begin{theo}
\label{th:characsmooth}
Let $(X,x_0,d_{out})$ be a complex analytic germ endowed with the outer metric. If the moderately discontinuous homology $MDH^\bullet_*(X,x_0,d_{out};\ZZ)$ coincides with the moderately discontinuous homology of a smooth germ, then $(X,x_0)$ is a smooth germ.
\end{theo}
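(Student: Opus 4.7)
The plan is to extract the multiplicity $\mult_{x_0}(X)$ from the transition morphism $h^{\infty, 1}$ in top degree and conclude via the classical fact that a complex analytic germ of multiplicity $1$ is smooth. First I would compute the reference: since the smooth germ $(\CC^n, 0, d_{out})$ coincides with its own tangent cone and is Lipschitz normally embedded, Theorems \ref{theo:link_no_cerrado} and \ref{th:speed1} give $MDH^b_*(\CC^n, 0, d_{out}; \ZZ) \cong H_*(S^{2n-1}; \ZZ)$ for every $b \in (0, \infty]$, and all the connecting morphisms $h^{b, b'}$ coincide with the identity. The hypothesis therefore says that, for some $n$, the diagram $MDH^\bullet_*(X, x_0, d_{out}; \ZZ)$ is isomorphic to the constant diagram with value $H_*(S^{2n-1}; \ZZ)$. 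Comparing with $\infty$-MD homology via Theorem \ref{theo:link_no_cerrado} forces $n = \dim_\CC X$ and $X$ to be pure-dimensional of complex dimension $n$ at $x_0$ with connected link, hence with a unique irreducible component.

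Next, I would apply Theorem \ref{th:speed1} to translate $MDH^1_*(X, x_0, d_{out}; \ZZ) \cong H_*(S^{2n-1}; \ZZ)$ into $H_*(T_{x_0}X \setminus \{0\}; \ZZ) \cong H_*(S^{2n-1}; \ZZ)$. An argument using the fundamental classes of the links of the irreducible components of $T_{x_0}X$ shows that $H_{2n-1}(L_{T_{x_0}X}; \ZZ) = \ZZ$ forces the set-theoretic tangent cone to be irreducible, so the collapsing map $c \colon X \to T_{x_0}X$ used in the proof of Theorem \ref{th:speed1} is a finite surjection between irreducible complex analytic germs of the same complex dimension. Its restriction to the links is then a finite branched cover of degree $\mult_{x_0}(X)$, by the very definition of the multiplicity as the degree of a generic linear projection.

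The key step, and main obstacle, is the identification of $h^{\infty, 1}$ in top degree with the pushforward of $c$. The isomorphism $MDH^1_{2n-1}(X) \cong H_{2n-1}(L_{T_{x_0}X}; \ZZ)$ coming from Theorem \ref{th:speed1} is induced at the chain level by postcomposition with $c$, while the isomorphism $MDH^\infty_{2n-1}(X) \cong H_{2n-1}(L_X; \ZZ)$ from Theorem \ref{theo:link_no_cerrado} is the canonical one. Matching generators compatibly (which requires tracking the explicit chains representing the fundamental classes of $L_X$ and $L_{T_{x_0}X}$ through the proofs of these two theorems) identifies $h^{\infty, 1}$ with $c_* \colon H_{2n-1}(L_X; \ZZ) \to H_{2n-1}(L_{T_{x_0}X}; \ZZ)$, which on fundamental classes is multiplication by $\deg(c) = \mult_{x_0}(X)$. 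Since the hypothesis forces $h^{\infty, 1}$ to be an isomorphism of $\ZZ$, we conclude $\mult_{x_0}(X) = 1$, and the classical criterion that multiplicity $1$ implies smoothness for complex analytic germs yields that $(X, x_0)$ is smooth.
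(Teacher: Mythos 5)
Your overall strategy coincides with the paper's: identify $MDH^1_*$ with the homology of the punctured tangent cone via Theorem~\ref{th:speed1}, deduce irreducibility of $X$ and of $T_{x_0}X$ from the top homology groups, and read off a degree from the transition map $h^{\infty,1}$ in degree $2d-1$. The last step, however, contains a genuine error. The degree of the collapsing map $c\colon L_X\to L_{T_{x_0}X}$ is \emph{not} the multiplicity $\mult_{x_0}(X)$; it is the \emph{relative multiplicity} of $X$ along the (unique) component of its tangent cone. The multiplicity is the degree of a generic \emph{linear} projection onto a $d$-plane, and by the Kurdyka--Raby density formula one has $\mult_{x_0}(X)= m\cdot\deg(T_{x_0}X)$, where $m$ is the relative multiplicity and $\deg(T_{x_0}X)$ is the projective degree of the cone. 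These agree only when the tangent cone is a linear subspace. A concrete counterexample to your assertion: for $X$ the cone over a smooth plane curve of degree $k\geq 2$, the collapsing map is the identity (degree $1$) while $\mult_{x_0}(X)=k$. So from $h^{\infty,1}=\pm 1$ you may only conclude $m=1$, which does not yet imply smoothness.

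The missing ingredient is precisely the step you skipped when you used $MDH^1_*(X)\cong H_*(S^{2d-1};\ZZ)$ only in top degree to get irreducibility of the tangent cone. The full hypothesis says the link of $T_{x_0}X$ is an integral homology sphere, and the paper exploits this via the $\CC^*$-fibration $T_{x_0}X\setminus\{0\}\to\PP(T_{x_0}X)$: in the Leray (Gysin) spectral sequence the homology sphere condition forces every $d_2$ differential
$$\ZZ=H^0(\PP(T_{x_0}X),\ZZ)\longrightarrow H^2(\PP(T_{x_0}X),\ZZ)\longrightarrow\cdots\longrightarrow H^{2d-2}(\PP(T_{x_0}X),\ZZ)=\ZZ$$
to be an isomorphism, and the composite is multiplication by $\deg(T_{x_0}X)$. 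Hence $\deg(T_{x_0}X)=1$, i.e.\ the tangent cone is a linear subspace. Only then does relative multiplicity $1$ (your correct conclusion from $h^{\infty,1}$) give $\mult_{x_0}(X)=1\cdot 1=1$ and therefore smoothness. With this step inserted, your argument matches the paper's proof.
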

\proof
By Theorem\ref{th:speed1} we can identify  $MDH^1_*(X;\ZZ)$ with $MDH^1_*(T_{x_0}X;\ZZ)$, and by Proposition~\ref{prop:stdcones} we have an isomorphism $MDH^\infty_*(T_{x_0}X;\ZZ)\cong MDH^1_*(T_{x_0}X;\ZZ)$; combining this with Theorem~\ref{theo:link} and using that  $MDH^1_*(X;\ZZ)$ coincides with the moderately discontinuous homology of a smooth germ, we obtain the link of the tangent cone $T_{x_0}X$ is an integral homology sphere.  

$T_{x_0}X\setminus \{0\}$ is fibered over the projectivization $\PP(T_{x_0}X)$ with fibre $\CC^*$. Computing the integral cohomology via the Leray spectral sequence as in~\cite{FFS}, and imposing the integral homology sphere condition, we obtain that each of the $d_2$ differentials
$$\ZZ=H^0(\PP(T_{x_0}X),\ZZ)\stackrel{d_2}{\longrightarrow}H^2(\PP(T_{x_0}X),\ZZ)\stackrel{d_2}{\longrightarrow} ...\stackrel{d_2}{\longrightarrow} H^{2d-2}(\PP(T_{x_0}X),\ZZ)=\ZZ$$
is an isomorphism. The composition of all the differentials equals the degree. Hence the tangent cone is of degree $1$, and therefore smooth.

Since $MDH^\infty_*(X;\ZZ)$ coincides with the moderately discontinuous homology of a smooth germ of dimension $d$. Then by Theorem~\ref{theo:link} $MDH^\infty_*(X;\ZZ)$ is the usual homology of a sphere of dimension $2d-1$. This in particular shows that $X$ is irreducible.

The natural homomorphism $MDH^\infty_{2d-1}(X;\ZZ)\cong\ZZ \to MDH^1_*(X;\ZZ)\cong\ZZ$ is multiplication by an integer $k$. By the identification of $MDH^\infty_{2d-1}(X;\ZZ)$ with $H_{2d-1}(X\setminus\{x_0\};\ZZ)$, and of $MDH^1_*(X;\ZZ)$ with $H_{2d-1}(T_{x_0}X\setminus\{0\};\ZZ)$ the number $k$ equals the relative multiplicity of $X$ at the only component of its tangent cone (see for example \cite{KurRab}). Since the homomorphism $MDH^\infty_{2d-1}(X;\ZZ)\to MDH^1_*(X;\ZZ)$ coincides with the same homomorphism for the case of a smooth germ, we conclude the equality $k=1$.

So $X$ is irreducible, has smooth tangent cone, and its relative multiplicity equals $1$. This implies that $X$ is smooth.

\endproof

\section{Framed Moderately Discontinuous Homology}

In this section we enrich MD Homology by adding to it the choice of certain fundamental classes. Being a quite innocent improvement we will see later that it helps to capture much richer Lipschitz information. Although a more general formulation is possible, we choose to work here in the complex analytic setting and for the outer metric, since this suffices for the applications we have in mind. 

\begin{definition}
\label{def:frame}
A {\em $(b_1,...,b_l)$-framed graded $\BB$-abelian group at degree $k$} is given by a pair $(H^\bullet_*,\calB_{1},...,\calB_{l})$, where $H^\bullet_*$ be an element of $\BB-GrAb$ such that $H^{b_i}_k$ is free for any $i$ and $\calB_i$ is a basis of $H^{b_i}_k$. An {\em isomorphism of $(b_1,...,b_l)$-framed graded $\BB$-abelian groups} is an isomorphism of graded $\BB$-abelian group sending each basis at the source to the corresponding basis at the target.
\end{definition}

It is easy to produce frames at degree $0$, due to the description of the MD Homology at degree $0$. However the most useful frames for us are at maximal degree and use fundamental classes.

Let $(X,x_0,d_{out})$ be a complex analytic germ of dimension $d$ with the outer metric. Let $\calT$ be a subanalytic triangulation of $X$ compatible with $x_0$. Let $X=\cup_{i=1}^rX_i$ be the decomposition of $X$ in irreducible components. For any $i\leq r$ let $[X_i]$ be the unique homology class in  $MDH^\infty_{2d-1}(X,x_0,d_{out};\ZZ)$ given by the sum of l.v.a positively oriented parametrizations of the simplexes of dimension $2d$ contained in $X_i$ and which meet the vertex. By Theorem~\ref{theo:link} the group $MDH^\infty_{2d-1}(X,x_0,d_{out};\ZZ)$ is free and generated by the classes $\{[X_i]\}_{i=1}^r$. 

Let $T_{x_0}X=\cup_{i=1}^sY_i$ be the decomposition in irreducible components of the tangent cone of $X$ at $x_0$. By Theorems~\ref{th:speed1} and~\ref{theo:link} the group $MDH^1_{2d-1}(X,x_0,d_{out};\ZZ)$ is free and generated by the classes $\{[Y_i]\}_{i=1}^s$.

\begin{definition}
\label{def:framed}
Let $(X,x_0,d_{out})$ be a complex analytic germ of dimension $d$ with the outer metric. Its {\em $\{\infty,1\}$-framed MD Homology at degree $2d-1$} is the triple
$$(MDH^\bullet_*(X,x_0,d_{out},\ZZ),\{[X_i]\}_{i=1}^r,\{[Y_i]\}_{i=1}^s),$$
where the classes $[X_i]$ and $[Y_i]$ are defined in the discussion above. The {\em $\infty$-framed or $1$-framed MD Homology} is the same but only considering one of the basis.
\end{definition}

\begin{remark}
\label{rem:framedinv}
The $\{\infty,1\}$-framed MD Homology at degree $2d-1$ is invariant by subanalytic bi-Lipschitz isomorphism, and hence by complex analytic isomorphism. The same holds for the $\infty$-framed or $1$-framed MD Homology. 
\end{remark}
\proof
If $f:(X,x_0,d_{out})\to (Y,y_0,d_{out})$ is a subanalytic bi-Lipschitz isomorphism, then it takes the fundamental classes of the irreducible components of $X$ into the fundamental classes of the irreducible components of $Y$. This proves the $\infty$-framed case.

For the $1$-framed case, by Theorem 3.2 of~\cite{Sampaio:2016} (see also Proposition 3.6 of~\cite{Sampaio:2017} or Remark 2.2 of~\cite{Bernig}) there is a subanalytic homeomorphism from $(T_{x_0}X,0)$ to $(T_{y_0}Y,0)$, which send the fundamental classes of the irreducible components of $T_{x_0}X$ to the fundamental classes of the irreducible components of $T_{y_0}Y$.
\endproof

The following proposition and example illustrates the advantage of taking framed MD Homology.  


\begin{prop}
\label{cor:mixed_multiplicities}
Let $(X,x_0,d_{out})$ be an irreducible closed complex analytic germ of complex dimension $d$ endowed with the outer metric. Then, in terms of the basis given by $\{\infty,1\}$-framed MD Homology at degree $2d-1$, the morphism 
$$MDH^\infty_{2d-1}(X,x_0,d_{out};A)\to MDH^1_{2d-1}(X,x_0,d_{out};A)$$ can be identified with the morphism $A\to A^r$, sending $a\mapsto (m_1a,...,m_ra)$, where $r$ is the number of irreducible components of the tangent cone and $m_i$ are the relative multiplicities (see for example \cite{KurRab}). Consequently relative multiplicities are determined by $\{\infty,1\}$-framed MD Homology at degree $2d-1$.
\end{prop}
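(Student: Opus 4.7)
The plan is to translate the morphism $MDH^\infty_{2d-1}\to MDH^1_{2d-1}$ into a purely topological statement about links via the identifications already available in the paper, and then read off the coefficients from the local covering degree of the collapsing map over each irreducible component of the tangent cone.

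First I would reduce to the level of singular homology of links. By Theorem~\ref{theo:link}, $MDH^\infty_{2d-1}(X,x_0,d_{out};A)\cong H_{2d-1}(L_X;A)$, and since $X$ is irreducible and complex analytic of dimension $d$, this group is free of rank $1$ with generator the fundamental class $[L_X]$ of the link, which by construction is the class $[X]$ in Definition~\ref{def:framed}. By Theorem~\ref{th:speed1} combined with Proposition~\ref{prop:stdcones}, $MDH^1_{2d-1}(X,x_0,d_{out};A)\cong H_{2d-1}(L_{T_{x_0}X};A)\cong A^r$, with basis the fundamental classes $[L_{Y_i}]$ of the links of the irreducible components of the tangent cone, which are by construction the classes $[Y_i]$. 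Moreover, the natural morphism $MDH^\infty_{2d-1}\to MDH^1_{2d-1}$ corresponds under these isomorphisms to the map on $H_{2d-1}$ induced by the collapsing map $c\colon (X,x_0)\to (T_{x_0}X,0)$ constructed in the proof of Theorem~\ref{th:speed1} (the restriction to $X$ of the subanalytic retraction of a spherical horn neighbourhood of $T_{x_0}X$ onto $T_{x_0}X$).

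Next I would compute $c_*[L_X]$ in $H_{2d-1}(L_{T_{x_0}X};A)$ component by component. The key geometric input is the characterisation of relative multiplicities by Kurdyka--Raby \cite{KurRab}: for each irreducible component $Y_i$ of $T_{x_0}X$, there is a Zariski open dense subanalytic subset $Y_i^\circ\subset Y_i\setminus\bigcup_{j\neq i}Y_j$ over which $c^{-1}(Y_i^\circ)$ decomposes into exactly $m_i$ subanalytic sheets, each mapping by an orientation-preserving subanalytic homeomorphism onto $Y_i^\circ$. Intersecting with a small sphere and using Hardt triviality, the restriction of $c$ to $L_X\cap c^{-1}(Y_i^\circ\cap \mathbb{S}_\epsilon)$ is an $m_i$-sheeted orientation-preserving covering of the top stratum of $L_{Y_i}$. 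Since the complement of $\bigcup_i Y_i^\circ$ in $T_{x_0}X$ has real codimension at least $2$ in each $Y_i$, its intersection with $L_{Y_i}$ has real codimension at least $2$, and hence it contributes nothing to $H_{2d-1}(L_{Y_i};A)$. Summing the contributions we obtain
\[
c_*[L_X]=\sum_{i=1}^r m_i [L_{Y_i}],
\]
which under the above identifications is exactly the image $(m_1 a,\dots, m_r a)$ of the generator.

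The main obstacle I expect is making rigorous the degree computation, in particular verifying that the chain-level representative of $[L_X]$ supplied by the definition of the framing (a sum of positively oriented l.v.a.\ parametrisations of the top-dimensional simplices of a subanalytic triangulation of $X$ meeting the vertex) is pushed forward by $c$ to a cycle representing $\sum m_i [L_{Y_i}]$. This reduces, after choosing a triangulation of $X$ compatible with the stratification induced by $c$ and by the $Y_i^\circ$, to counting with sign the top-dimensional triangles of $X$ mapping onto each top-dimensional triangle of $L_{Y_i}$; by the orientation-preserving covering description above, this count is precisely $m_i$. The freeness of the groups and the absence of lower-dimensional contributions then yield the stated matrix form of the morphism.
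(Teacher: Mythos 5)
Your argument is correct and follows essentially the same route as the paper's proof: identify the morphism $MDH^\infty_{2d-1}\to MDH^1_{2d-1}$ with the map induced on the singular homology of links by the collapsing map of Theorem~\ref{th:speed1}, and then read off the coefficients as the covering degrees of $L_X$ over the links of the components of the tangent cone, which are by definition the relative multiplicities. Your write-up merely supplies more detail (the Kurdyka--Raby sheet description and the codimension argument) than the paper, which states the degree identification directly.
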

\proof
The natural morphism $MDH^\infty_*(X;A)\to MDH^1_*(X;A)$ has been identified above with the morphism $MDH^\infty_*(X;A)\to MDH^\infty_*(T_{x_0}X;A)$ induced by the collapsing map defined in the proof of Theorem~\ref{th:speed1}.
This in turn is identified with the usual homology homomorphism $H_*(L(X),A)\to H_*(L(T_{x_0}X),A)$, where $L(X)$ and $L(T_{x_0}X)$ are the links of $X$ and $T_{x_0}X$. Taking $*=2d-1$ we obtain the isomorphism $H_*(L(X),A)\cong A$, generated by the fundamental class, and the isomorphism $H_*(L(T_{x_0}X),A)\cong A^r$, generated by the fundamental classes of the links of each of the components. The relative multiplicities are the cover degree of $L$ into each of the links of the irreducible components.
\endproof

\begin{remark}Proposition~\ref{cor:mixed_multiplicities} gives a proof that the relative multiplicities of complex analytic sets are subanalytically bi-Lipschitz invariant. This was already proved in the proof of the Proposition 2.5 of~\cite{Valette:2010b} and a genenarization of that was proved in Proposition 1.6 of~\cite{FernandesS:2016}.
\end{remark}

Observe that for the above argument fixing the basis is important. The homomorphisms $\ZZ\to\ZZ^2$ given by $1\mapsto (1,2)$ and $1\mapsto (1,3)$ are conjugate by an automorphism of $\ZZ^2$ and therefore can not be distinguished algebraically unless one fixes a basis.

\section{MD Homology of plane curves with the outer metric.}\label{section:curves}

Throughout this subsection, whenever we say {\em curve germ}, we refer to a  complex algebraic plane curve germ in the origin equipped with the outer geometry. We are going to recall the definition of the Eggers-Wall tree of a curve germ. It uses the following correspondence between Puiseux pairs and Puiseux exponents: let $(m_1, k_1)\dots (m_l, k_l)$ denote all Puiseux pairs of a curve germ in order. Then the Puiseux exponents of that curve germ are given by $\frac{m_i}{\prod_{j=1}^i k_j}$ for $i=1,\dots l$. We call $(m_i, k_i)$ the {\em Puiseux pair corresponding to $\frac{m_i}{\prod_{j=1}^i k_j}$}.

Recall the following definition of the contact number between two branches:

\begin{definition}
Let $C$ be a curve germ. Let $f_i = \sum_{j=1}^{\infty}\alpha_{i,s} x^{s}$ 
be Puiseux parametrizations of the branches $C_i$ of $C$, where $i\in \{1,\dots,n\}$. Let $i \neq k \in I$. The {\em contact number $c(C_i, C_k)$} between $C_i$ and $C_k$ is defined as
$$
c(C_i, C_k) := \min \{ s : \alpha_{i, s} \neq \alpha_{k,s} \}
$$
\end{definition}

\begin{definition}
Let $C$ be a curve germ. In this definition, we are defining the {\em Eggers-Wall tree $\mathcal{G}_C$ of $C$}. Depending on the context, $\mathcal{G}_C$ can be interpreted either as a graph or as a topological space with a finite number of special points; we call these special points in the topological space {\em vertices} as they correspond to the vertices in the graph.

If $C$ is irreducible, we define the {\em Eggers-Wall tree $\mathcal{G}_C$ of $C$} to be the segment $[0,\infty]$ with a vertex at both ends and one vertex at each rational number in that segment that is a Puiseux exponent of $C$.
Every vertex is decorated by the corresponding value in $\mathbb{Q} \cup \{ \infty\}$. 
For two adjacent vertices at $q_1$ and $q_2$ respectively, with $q_1 < q_2$, the edge between them is weighted by the product $\prod_{i=0}^l k_i$, where $k_0 = 1$ and $(m_1,k_1),\dots, (m_l,k_l)$ are all Puiseux pairs corresponding to Puiseux exponents less than or equal to $q_1$. 

If $C$ is reducible, the {\em Eggers-Wall tree $\mathcal{G}_C$} is defined as follows. Let $C_n$ denote one of its branches and let $\hat{C}_n$ denote the union of all the other branches. Let $c$ be the greatest contact number that $C_n$ has with any of the other branches and let $C_k$ be one of the branches that $C_n$ has that contact number with. If $C$ has only two branches, we have $C_k = \hat{C}_n$.
If $\mathcal{G}_{C_n}$ does not have a vertex at $c$, add it in the following manner: let $q_1$ be the greatest vertex in $\mathcal{G}_{C_n}$ smaller than $c$ and $q_2$ the smallest one greater than $c$. We add $c$ as a vertex in $\mathcal{G}_{C_n}$ and give both edges $\{q_1, c\}$ and $\{c, q_2\}$ the weight the edge $\{q_1, q_2\}$ had before. Then, we do the same for the segment in $\mathcal{G}_{\hat{C}_n}$ corresponding to $\mathcal{G}_{C_k}$, if it does not contain $c$ as a vertex already.
Now, glue the segment from $0$ to $c$ in $\mathcal{G}_{C_n}$ to the segment from $0$ to $c$ in $\mathcal{G}_{C_k}$ by the identity on $[0,c]$. As $\mathcal{G}_{C_k}$ is naturally embedded in $\mathcal{G}_{\hat{C}_n}$, we have glued $\mathcal{G}_{\hat{C}_n}$ and $\mathcal{G}_{C_n}$ to one graph $\mathcal{G}_C$.

There is a natural map $r: \mathcal{G}_C \to [0,\infty]$ defined as follows: For a point $g \in \mathcal{G}_C$, let $C_g$ be one of the branches of $C$ for which $g$ is in the image of the natural inclusion $\mathcal{G}_{C_g} \hookrightarrow \mathcal{G}_C$. We assign to $g$ the point in the segment $[0, \infty]$ that is sent to $g$ by that inclusion.
\end{definition}

\begin{example}\label{example:EggersTree}
Let $C$ be the curve with the following four branches:
\begin{align*}
& C_1 = \{(x, y) \in \mathbb{C}^2 : y = x^{\frac{3}{2}} + x^{\frac{5}{2}}\}, \\
& C_2 = \{(x, y) \in \mathbb{C}^2 : y = x^{\frac{3}{2}} + x^{\frac{11}{4}}\},\\
& C_3 = \{(x, y) \in \mathbb{C}^2 : y = x^{\frac{3}{2}} + x^{\frac{11}{4}} + x^{\frac{37}{12}}\},\\
& C_4 = \{(x, y) \in \mathbb{C}^2 : y = x^{\frac{5}{2}} + x^{\frac{11}{4}}\}.
\end{align*}
We have visualized the Eggers-Wall tree $\mathcal{G}_C$ together with the function $r: \mathcal{G}_C \to [0,\infty]$ in Figure~\ref{figure:EggersTree}.

\begin{figure}[h!]\centering
    \includegraphics[width=.7\textwidth]{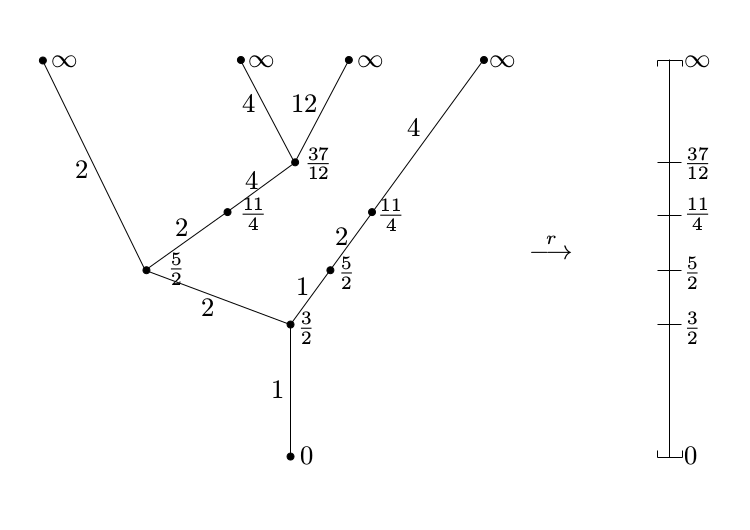}
 \caption{The Eggers-Wall tree $\mathcal{G}_C$ and $r: \mathcal{G}_C \to [0,\infty]$.}
 \label{figure:EggersTree}
\end{figure}
\end{example}

\begin{theo}\label{th:homology of curves}
Let $C$ be a curve germ. Let $\calG_C$ be the Eggers-Wall tree of $C$ with $r: \mathcal{G}_C \to [0,\infty]$ as defined above. Let $A$ be an abelian group. 
The MD homology of $C$ with respect to $A$ can be described as follows:
\begin{enumerate}
\item\label{item:curveshomgroups} For any $b \in [1, \infty]$, it is $MDH_0^b(C;A) \cong MDH_1^b(C;A) \cong A^{l_b}$, where $l_b$ is the number of points in $r^{-1}(b + \epsilon)$, where $\epsilon$ is so small that $r^{-1}((b, b + \epsilon])$ does not contain a vertex. For the case $b = \infty$, we consider $\infty + \epsilon = \infty$.
\item\label{item:curveshigherhomgroups} For any $b \in [1, \infty]$ and $n > 1$, it is $MDH_n^b(C;A) \cong \{0\}$.
\item\label{item: homomorphisms of homology of curves} For $b_1, b_2 \in [1,\infty]$ with $b_1 \geq b_2$, $h^{b_1, b_2}_0(C)$ and $h^{b_1, b_2}_1(C)$ are the homomorphisms given by multiplication with the following matrices $M_0$ and $M_1$, respectively: let $\epsilon$ be so small that $r^{-1}((b_1, b_1 + \epsilon])$ and $r^{-1}((b_2, b_2 + \epsilon])$ do not contain any vertices. Let $\mathcal{G}_1$,..., $\mathcal{G}_l$ be the connected components of $r^{-1}([b_2 + \epsilon, b_1 + \epsilon])$, where $l = l_{b_2}$. For $i \in \{1,\dots, l\}$, let $p_i$ be the unique point in $r^{-1}(b_2 + \epsilon) \cap \mathcal{G}_i$. Further, let $p_{1,i},\dots,p_{m_i, i}$ be the points in $r^{-1}(b_1 + \epsilon)\cap \mathcal{G}_i $. Notice that $\sum_{i=1}^l m_i = l_{b_1}$. We define
$$
M_0 := \left( 
   \overbrace{\begin{array}{ccc}
   1 & \dots & 1 \\
   0 & \dots & 0 \\
   0 & \dots & 0 \\
   & \vdots \\
   0 & \dots & 0
              \end{array} }^{m_1 \text{ times}}
   \overbrace{\begin{array}{ccc}
   0 & \dots & 0 \\
   1 & \dots & 1 \\
   0 & \dots & 0 \\
   & \vdots \\
   0 & \dots & 0
              \end{array} }^{m_2 \text{ times}}
              \dots
   \overbrace{\begin{array}{ccc}
   0 & \dots & 0 \\
   0 & \dots & 0 \\
   & \vdots \\
   0 & \dots & 0 \\
   1 & \dots & 1 
              \end{array} }^{m_l \text{ times}}
         \right)
$$
Now, for $i\in \{1,\dots,l\}$ and $j \in \{1,\dots, m_i\}$, let $k_{j,i} := \frac{w_{j,i}}{w_i}$, where $w_{j,i}$ and $w_i$ are the weights assigned to the edges on which $p_{j,i}$ respectively $p_i$ lie. We define
$$
M_1 := \left( 
   \begin{array}{ccc}
   k_{1,1} & \dots & k_{m_1,1} \\
   0 & \dots & 0 \\
   0 & \dots & 0 \\
   & \vdots \\
   0 & \dots & 0
              \end{array}
   \begin{array}{ccc}
   0 & \dots & 0 \\
   k_{1,2} & \dots & k_{m_2,2} \\
   0 & \dots & 0 \\
   & \vdots \\
   0 & \dots & 0
              \end{array}
              \dots
   \begin{array}{ccc}
   0 & \dots & 0 \\
   0 & \dots & 0 \\
   & \vdots \\
   0 & \dots & 0 \\
   k_{1,l} & \dots & k_{m_l,l} 
              \end{array}
         \right)
$$
\end{enumerate}

\end{theo}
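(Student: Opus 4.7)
My plan is to build a canonical closed $b$-cover of $C$ indexed by ``clusters'' of branches, use the \v{C}ech spectral sequence to reduce the problem to the computation of the MD homology of a single cluster, identify that with the singular homology of a horn neighborhood via Corollary~\ref{cor:birbrair}, and finally determine the homomorphisms by a Puiseux-theoretic winding-number calculation.

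For each $b\in[1,\infty]$ group the branches of $C$ under the equivalence $C_j\sim_b C_k\Leftrightarrow c(C_j,C_k)>b$; by construction of the Eggers--Wall tree these $b$-clusters biject with the points of $r^{-1}(b+\epsilon)$, giving exactly $l_b$ of them. Let $G_1,\dots,G_{l_b}$ be the corresponding unions of branches and set $U_i:=G_i\setminus\{0\}$. The collection $\{U_i\}$ is a closed subanalytic $b$-cover of $(C,\emptyset)$ in the sense of Definition~\ref{def:b-saturated} with $\hat U_i=U_i$: any l.v.a.\ arc $b$-equivalent to one in $U_i$ sits on a branch whose contact with some branch of $G_i$ exceeds $b$, hence lies in $G_i$, while the second condition is then trivial. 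Distinct branches meet only at $0$, so all intersections $U_i\cap U_j$ with $i\ne j$ are empty and Theorem~\ref{theo:Cech} collapses to
\[
MDH^b_*(C;A)\;\cong\;\bigoplus_{i=1}^{l_b}MDH^b_*(G_i;A).
\]

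For a single cluster $G_i$, let $c_{\min}(G_i):=\min_{C_j,C_k\subset G_i}c(C_j,C_k)>b$ and pick $b''\in(b,c_{\min}(G_i))$. The outer distance between two branches of $G_i$ at distance $t$ from $0$ decays as $t^{c(C_j,C_k)}\ll t^{b''}$, so each branch lies in the $b''$-horn of any other; hence the germ of $\calH_{b'',\eta}(G_i;C)$ coincides with that of $\calH_{b'',\eta'}(C_{j_0};C)$ for any fixed $C_{j_0}\in G_i$ and some $\eta'\geq\eta$. The latter is a subanalytic tubular neighborhood of the punctured disk $C_{j_0}\setminus\{0\}$ inside $\CC^2\setminus\{0\}$, hence homotopy equivalent to $S^1$. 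Combining Corollary~\ref{cor:birbrair} for finite $b$ with Theorem~\ref{theo:link} for $b=\infty$ we get $MDH^b_0(G_i;A)\cong MDH^b_1(G_i;A)\cong A$ and $MDH^b_n(G_i;A)=0$ for $n\geq 2$, establishing items (\ref{item:curveshomgroups}) and (\ref{item:curveshigherhomgroups}).

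For the homomorphisms in item (\ref{item: homomorphisms of homology of curves}), each $b_1$-cluster lies in a unique $b_2$-cluster, so $h^{b_1,b_2}_*$ is block-diagonal with respect to the two cluster decompositions. In degree $0$, Proposition~\ref{prop:0-homology} identifies generators with $b$-connected components and the map with the cluster-inclusion pattern, producing $M_0$. In degree $1$, the coefficient $k_{j,i}$ is the image of the generator of $MDH^{b_1}_1(G'_{j,i};A)$ inside $MDH^{b_2}_1(G_i;A)\cong A$, i.e.\ a winding number of one horn core inside the larger solid-torus horn. The crux of the proof is the key lemma that \emph{the generator of $MDH^b_1(G_i;A)$, viewed as a $1$-cycle in the solid-torus horn, has linking number $w_i$ with a generic transverse complex hyperplane in $\CC^2$}, where $w_i$ is the Eggers--Wall weight at level $b+\epsilon$; granting it, $k_{j,i}$ becomes the ratio $w_{j,i}/w_i$ of such linking numbers. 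The lemma is proved inductively along the path in the tree from $p_i$ to $p_{j,i}$: truncating a Puiseux parametrization of any branch of $G_i$ at level $b$ produces a degree-$w_i$ ``model curve'' over the tangent direction, whose link on a small sphere around the origin is a torus knot of multiplicity $w_i$, and $\calH_{b'',\eta}(G_i;C)$ (for $b''$ just above $b$) is a tubular neighborhood of this model. Crossing a Puiseux vertex $(m_l,k_l)$ extends the truncation by one pair and multiplies the link multiplicity by exactly $k_l$, matching the jump in the Eggers--Wall weight. The main obstacle is assembling this per-vertex calculation into a global formula valid for arbitrarily branched clusters; the essential input is the classical linking-number computation for torus knots inside solid tori, already exploited in Theorem~\ref{th:speed1} and Proposition~\ref{cor:mixed_multiplicities}.
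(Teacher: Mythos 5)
Your route is genuinely different from the paper's. The paper never decomposes $C$ into contact clusters; instead it covers $C$ by two angular sectors $U_1,U_2$ over the $x$-plane, uses Mayer--Vietoris to express $MDH^b_1$ in terms of $MDH^b_0$ of the sectors, identifies (via a commutative diagram and Proposition~\ref{prop:0-homology}) the maps $h^{b_1,b_2}_0$ with the $H_0$-maps induced by the truncation projections $C_{b_1}\to C_{b_2}$, and so reduces all of item (3) to the elementary topology of a covering map between disjoint unions of circles. Your cluster decomposition is correct (contact $>b$ is an ultrametric equivalence, the clusters are closed, pairwise disjoint off the origin, and $b$-saturated with $\hat U_i=U_i$), and combined with Corollary~\ref{cor:birbrair} and the sandwiching of $\calH_{b'',\eta}(G_i)$ between tubular neighborhoods of the common truncated branch it gives a clean, more geometric proof of items (1) and (2). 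Note also that the ``main obstacle'' you flag in your key lemma is not real: the linking number of the link of the truncated curve with $\{x=0\}$ is its intersection multiplicity with $\{x=0\}$, i.e.\ the denominator of the truncated Puiseux series, which is by definition the Eggers--Wall weight; no induction over vertices is needed.

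The genuine gap is in item (3) in degree $1$. Corollary~\ref{cor:birbrair} provides, for each fixed $b$ separately, an isomorphism $MDH^b_1(G)\cong H_1(\calH_{b'',\eta}(G)\setminus\{O\})$, but it says nothing about how these isomorphisms for different values of $b$ interact. Your computation of $k_{j,i}$ presupposes that under these identifications the structure map $h^{b_1,b_2}_1\colon MDH^{b_1}_1(G'_{j,i})\to MDH^{b_2}_1(G_i)$ becomes the map induced by the inclusion of the small solid torus into the large one; this is the load-bearing step (hidden in ``i.e.\ a winding number of one horn core inside the larger solid-torus horn'') and it is asserted, not proved. It is true, but establishing it requires reopening Section~\ref{sec:finitegeneration}: one has to check that the chain-level isomorphism $\rho$ of Proposition~\ref{prop:rhoiso} and the direct-limit identifications of Theorem~\ref{theo:iso2} and Proposition~\ref{prop:comparaciondificil} are natural in $b$, i.e.\ commute with the epimorphisms $h^{b_1,b_2}$ on one side and with the inclusions $S\calH_{b_1'',\eta}\subset S\calH_{b_2'',\eta}$ on the other. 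Until that naturality is verified, the matrix $M_1$ is not derived. A smaller point: to obtain the entries $+k_{j,i}$ rather than $\pm k_{j,i}$ you must fix generators coherently across all $b$, e.g.\ by the complex orientations of the truncated curves.
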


The data used in the statement of this proposition is visualized in Example~\ref{example:homologycurves}.
\begin{proof}
Let $f_i \in \mathbb{C}[[x^{\frac{1}{\kappa_i}}]]$, $i\in \{1,\dots,n\}$, 
be parametrizations of the branches of $C$.
For $f_i = \sum_{j=1}^{\infty}\alpha_{i,j} x^{\frac{j}{\kappa_i}}$, $b\in [1,\infty)$, let 
$f_{i,b}$ be the truncation $\sum_{j=1}^{\left \lfloor{b}\right \rfloor }\alpha_{i,j} x^{\frac{j}{\kappa_i}}$, where $\left \lfloor{b}\right \rfloor$ denotes the greatest integer smaller than or equal to $b$. In the case of $b = \infty$, we set $f_{i,b} := f_i$. 

The proof consists in an application of the Mayer-Vietoris Theorem, which resembles the computation of the singular homology of a circle in a certain way. The subsets involved in the Mayer-Vietoris decomposition are of the following form: we write $x \in \mathbb{C} \setminus \{0\}$ as $x = r_x e^{2\pi \varphi_x}$. Let $\phi_1, \phi_2, \phi_3, \phi_4 \in \mathbb{R}$ with $\phi_1 < \phi_2$ and $\phi_3 < \phi_4$ be fixed. For $b\in [1, \infty]$, we define the subgerm $(V_b, \underline{0})$ of $(\CC^2, \underline{0})$ by
\begin{align*}
V_b := (\{(x,y) \in \mathbb{C}^2 :y = f_{i,b}(x), (x = 0 \text { or } \exists n,m \in \mathbb{Z}: &\phi_1 < \varphi_x + 2 \pi n < \phi_2\\ \text{ or } &\phi_3 < \varphi_x + 2 \pi m < \phi_4)\}.
\end{align*}
Recall Definition~\ref{def:b-eqcomponents}. Because of Proposition~\ref{prop:0-homology}, for $b_1 \geq b_2 \geq b_3$ we have the following:
\begin{itemize}

\item The map $H_0^{b_1}(V_{b_3}, \mathbb{Z}) \rightarrow H_0^{b_2}(V_{b_3}, \mathbb{Z})$ is an isomorphism, as for any $b \geq b_3$ the $b$-connected components of $V_{b_3}$ are just its connected components.
\item The map $H_0^{b_3}(V_{b_1}; \mathbb{Z}) \rightarrow H_0^{b_3}(V_{b_2}; \mathbb{Z})$ induced by the natural projection $V_{b_1} \to V_{b_2}$ is an isomorphism, as there is a $1:1$ correspondence between the $b_3$-connected components of $V_b$ and the connected components of $V_{b_3}$ for any $b \geq b_3$.
\end{itemize} 

As a consequence, in the following commutative diagram we get the indicated isomorphisms:

\begin{tikzpicture}
  \matrix (m) [matrix of math nodes,row sep=3em,column sep=4em,minimum width=2em]
  {
     MDH_0^{b_2}(V_\infty;\mathbb{Z}) & MDH_0^{b_1}(V_\infty;\mathbb{Z}) & MDH_0^{\infty}(V_\infty;\mathbb{Z}) \\
 MDH_0^{b_2}(V_{b_1}; \mathbb{Z}) & MDH_0^{b_1}(V_{b_1}; \mathbb{Z}) & MDH_0^{\infty}(V_{b_1}; \mathbb{Z}) \\
 MDH_0^{b_2}(V_{b_2}; \mathbb{Z}) & MDH_0^{b_1}(V_{b_2}; \mathbb{Z}) & MDH_0^{\infty}(V_{b_2}; \mathbb{Z}) \\};
  \path[-stealth]
    (m-1-1) edge node [right] {$\cong$} (m-2-1)
    (m-2-1) edge node [right] {$\cong$} (m-3-1)
    (m-1-2) edge node [right] {$\cong$} (m-2-2)
    (m-2-2) edge (m-3-2)
    (m-1-3) edge  (m-2-3)
    (m-2-3) edge node [right] {$\psi_V$} (m-3-3)
    (m-1-2) edge node [above] {$\varphi_V$} (m-1-1)
    (m-2-2) edge  (m-2-1)
    (m-3-2) edge node [above] {$\cong$} (m-3-1)
    (m-2-3) edge node [above] {$\cong$} (m-2-2)
    (m-3-3) edge node [above] {$\cong$} (m-3-2)
    (m-1-3) edge (m-1-2);
\end{tikzpicture}


Therefore the natural map $\varphi_V:MDH_0^{b_1}(V_\infty;\mathbb{Z})\to MDH_0^{b_2}(V_\infty;\mathbb{Z})$ coincides with the natural map $\psi_V:MDH_0^{\infty}(V_{b_1}; \mathbb{Z})\to MDH_0^{\infty}(V_{b_2}; \mathbb{Z})$ up to concatenation with isomorphisms. By Theorem~\ref{theo:link}, up to isomorphisms the latter is the same as 
 $$\hat{\psi}_V: H_0(V_{b_1} \setminus \{\underline{0} \}, \mathbb{Z}) \to H_0(V_{b_2} \setminus \{\underline{0} \}, \mathbb{Z}),$$
  where $H_0$ denotes the singular homology.

Now we introduce the specific $b$-cover that we use to apply the Mayer-Vietoris Theorem. Let $U_1 = V_\infty$ with $\phi_1 = \frac{1}{4}\pi$ and $\phi_2 = \frac{7}{4}\pi$ and $\phi_3 = \phi_4$; and let $U_2 = V_\infty$ with $\phi_1 = \frac{3}{2}\pi$ and $\phi_2 = \frac{5}{2}\pi$ and $\phi_3 = \phi_4$. We have that $U_1 \cap U_2 = V_\infty$ with $\phi_1 = \frac{1}{4} \pi$ and $\phi_2 = \frac{1}{2} \pi$ and $\phi_3 = \frac{3}{2} \pi$ and $\phi_4 = \frac{7}{4} \pi$. Note that $\{U_1, U_2\}$ is a $b$-cover of $C$ for any $b \geq 1$. The $n$-th $b$-moderately discontinuous homology groups of $U_1$ and $U_2$ and $U_1\cap U_2$ are trivial for any $b \geq 1$, if $n \geq 1$. 
So, by the Mayer-Vietoris Theorem (Theorem \ref{theo:MayerVietoris}), for any $n > 1$, the $n$-th $b$-moderately discontinuous homology of $C$ is trivial. This completes the proof of statement~(\ref{item:curveshigherhomgroups}).

Furthermore, by the Mayer-Vietoris Theorem for $b_1 \geq b_2$ this gives us the following diagram with exact rows, in which we have omitted $\mathbb{Z}$:

\begin{tikzpicture}
  \matrix (m) [matrix of math nodes,row sep=3em,column sep=0.5em,minimum width=2em]
  {
     0 & MDH_1^{b_1}(C) & MDH_0^{b_1}(U_1 \cap U_2) & MDH_0^{b_1}(U_1) \oplus MDH_0^{b_1}(U_2) & MDH_0^{b_1}(C) & 0\\
 0 & MDH_1^{b_2}(C) & MDH_0^{b_2}(U_1 \cap U_2) & MDH_0^{b_2}(U_1) \oplus MDH_0^{b_2}(U_2) & MDH_0^{b_2}(C) & 0 \\};
  \path[-stealth]
    (m-1-1) edge (m-2-1)
    (m-1-2) edge node [right] {$h_1^{b_1, b_2}$} (m-2-2)
    (m-1-3) edge node [right] {$\varphi_{U_1 \cap U_2}$} (m-2-3)
    (m-1-4) edge node [right] {$\varphi_{U_1} \oplus \varphi_{U_2}$} (m-2-4)
    (m-1-5) edge node [right] {$h_0^{b_1, b_2}$} (m-2-5)
    (m-1-6) edge (m-2-6)
    (m-1-1) edge (m-1-2)
    (m-1-2) edge (m-1-3)
    (m-1-3) edge (m-1-4)
    (m-1-4) edge (m-1-5)
    (m-1-5) edge (m-1-6)
    (m-2-1) edge (m-2-2)
    (m-2-2) edge (m-2-3)
    (m-2-3) edge (m-2-4)
    (m-2-4) edge (m-2-5)
    (m-2-5) edge (m-2-6);
    
\end{tikzpicture}



We have shown above that we can replace $MDH_0^{b_i}(V; \mathbb{Z})$ by $H_0(V_{b_i}; \mathbb{Z})$ for $i \in \{1,2\}$ and $V \in \{U_1\cap U_2, U_1, U_2, C \}$, and that we can replace $\varphi_V$ by $\hat{\psi}_V$ for $V \in \{U_1\cap U_2, U_1, U_2\}$. Comparing the result with the analogous Mayer-Vietoris sequence in singular homology, we get that $MDH_1^{b_i}(C; \mathbb{Z}) \cong H_1(C_{b_i}; \mathbb{Z})$ for $i\in \{1,2\}$ and that for $j \in \{0,1\}$ the homomorphism $h_j^{b_1, b_2}$ is the morphism induced on the $j$-th singular homology by the projection $\rho : C_{b_1} \to C_{b_2}$ which is the following covering map: the base space $C_{b_2}$ is the disjoint union of $l_{b_2}$ circles. The covering space $C_{b_1}$ is the disjoint union of $l_{b_1}$ circles. Let $l := l_{b_2}$. For $i \in \{1, \dots, l\}$ and $j \in \{1, \dots, m_i)$, let $\rho_{i,j}$ be the $k_{i,j}:1$ covering map from the circle to itself. For $i \in \{1, \dots, l\}$, let $\rho_i : \coprod_{j=1}^{m_i} \mathbb{S}^1 \to \mathbb{S}^1$ be the morphism that all $\rho_{i,j}$ together induce on the coproduct $\amalg_{j=1}^{m_i} \mathbb{S}^1$. Concretely, $\rho_i$ sends an element $x$ in the $j$-th copy of $\mathbb{S}^1$ to $\rho_{i,j}(x)$. Then, 
$$
\rho : \coprod_{i = 1}^l \coprod_{j=1}^{m_i} \mathbb{S}^1 \to \coprod_{i = 1}^l \mathbb{S}^1
$$ 
is is given by $\coprod_{i = 1}^l \rho_i$. Concretely, $\rho$ sends an element $x$ in $\coprod_{j=1}^{m_i} \mathbb{S}^1$ by $\rho_i$ into the $i$-th copy of $\mathbb{S}^1$ in $\coprod_{i = 1}^l \mathbb{S}^1$. The proof of statement~(\ref{item:curveshomgroups}) is completed by the well-known fact of how the $0$-th and first singular homology groups of $C_b$ look like. The proof of statement~(\ref{item: homomorphisms of homology of curves}) is completed by the well-known fact of how the morphism on the $0$-th and first singular homology groups induced by $\rho$ looks like.
\end{proof}

\begin{example}\label{example:homologycurves}
We continue Example~\ref{example:EggersTree} to visualize the data of the statement of Theorem~\ref{th:homology of curves}. Let $b_1 \in [\frac{11}{4}, \frac{37}{12})$ and $b_2 \in [\frac{3}{2}, \frac{5}{2})$. In Figure~\ref{figure:prophomologycurves}, we have pictured $\mathcal{G}_i$ and $p_i$ and $p_{j,i}$ for that choice of $b_1$ and $b_2$. There, $\mathcal{G}_1$ is the graph on the left hand side and $\mathcal{G}_2$ is the graph on the right hand side.

\begin{figure}[h!]\centering
\includegraphics[width=.7\textwidth]{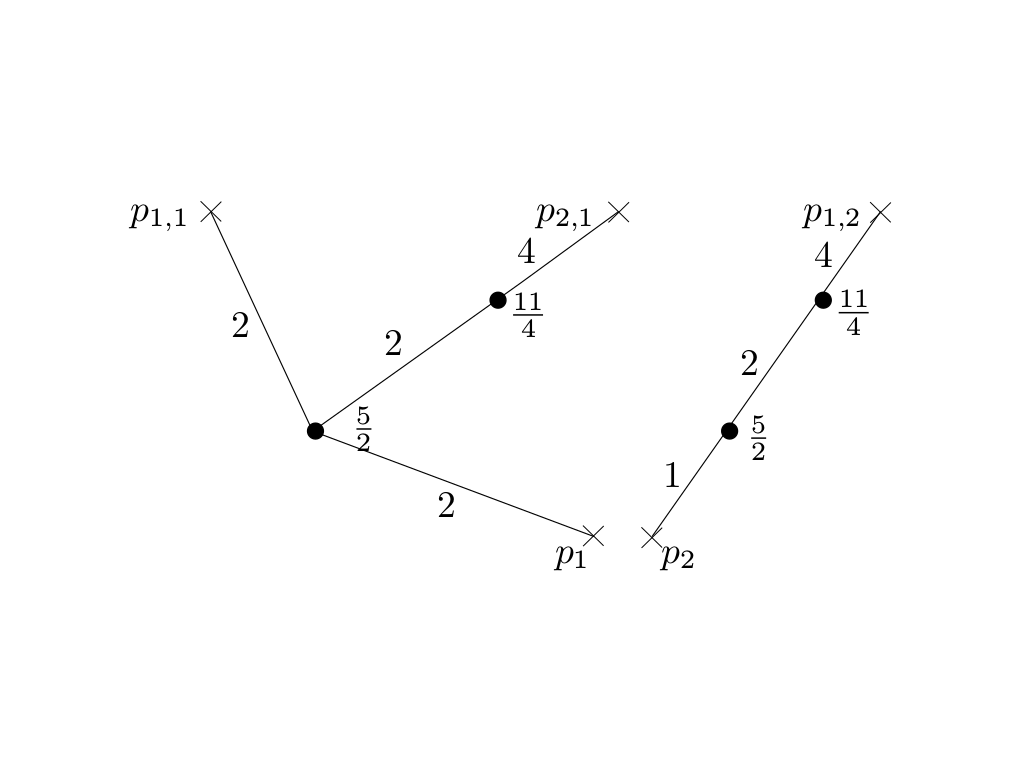}
 \caption{The data of Theorem~\ref{th:homology of curves}.}
 \label{figure:prophomologycurves}
\end{figure}

It is 
\begin{itemize}
\item $l_{b_1} = 3$ and $l := l_{b_2} = 2$,
\item $m_1 = 2$ and $m_2 = 1$,
\item $k_{1,1} = 1 $ and $k_{2,1} = 2 $ and $k_{1,2} = 4 $.
\end{itemize}

\end{example}

\begin{cor}\label{cor:homology of irreducible curves}
Let $C$ be an irreducible curve germ. We use the same notation as in Theorem \ref{th:homology of curves}. The MD homology of $C$ with respect to $A$ is as follows:
\begin{enumerate}
\item For any $b\in [1,\infty]$, it is $MDH_0^b(C;A) \cong MDH_1^b(C;A) \cong A$ and \\
$MDH_n^b(C;A) \cong \{0\}$, if $n > 1$.
\item For any $b_1, b_2 \in [1, \infty]$, $b_1 \geq b_2$, it is $h_0^{b_1, b_2} = \text{id}_A$.
\item For $b_1, b_2 \in [1, \infty]$, $b_1 \geq b_2$, it is 
\begin{itemize}
\item $h_1^{b_1, b_2} = \text{id}_A$, if $(b_2, b_1]$ does not contain any Puiseux exponent,
\item $h_1^{b_1, b_2}(x) = kx$, if $(b_2, b_1]$ contains one Puiseux exponent with corresponding Puiseux pair $(m,k)$ for some $m \in \mathbb{N}$.
\end{itemize}
If $(b_2, b_1]$ contains more than one Puiseux exponent, $h_1^{b_1, b_2}$ can be determined by concatenation.
\end{enumerate}
\end{cor}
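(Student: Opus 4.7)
The plan is to derive Corollary \ref{cor:homology of irreducible curves} as a direct specialization of Theorem \ref{th:homology of curves} to the irreducible case, where the Eggers-Wall tree degenerates into a single segment and the combinatorics simplifies drastically.

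First, I would observe that if $C$ is irreducible, then by definition $\mathcal{G}_C$ is the segment $[0,\infty]$ with vertices at $0$, $\infty$, and at each Puiseux exponent of $C$, and the map $r:\mathcal{G}_C\to [0,\infty]$ is the identity. Consequently, for any $b\in[1,\infty]$ and $\epsilon>0$ small enough that $r^{-1}((b,b+\epsilon])$ contains no vertex, the preimage $r^{-1}(b+\epsilon)$ is a single point, giving $l_b=1$. Applying parts (\ref{item:curveshomgroups}) and (\ref{item:curveshigherhomgroups}) of Theorem \ref{th:homology of curves} immediately yields statement (1): $MDH_0^b(C;A)\cong MDH_1^b(C;A)\cong A$ and $MDH_n^b(C;A)=0$ for $n>1$.

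Next, for $b_1\geq b_2$ in $[1,\infty]$, since $l_{b_1}=l_{b_2}=1$, the set $r^{-1}([b_2+\epsilon,b_1+\epsilon])$ has one connected component $\mathcal{G}_1$ (a subsegment of $[0,\infty]$), containing a unique point $p_1$ over $b_2+\epsilon$ and a unique point $p_{1,1}$ over $b_1+\epsilon$, so $m_1=1$. The matrix $M_0$ from part (\ref{item: homomorphisms of homology of curves}) of Theorem \ref{th:homology of curves} is then the $1\times 1$ identity matrix, proving statement (2). For statement (3), the single entry of $M_1$ is $k_{1,1}=w_{1,1}/w_1$, the ratio of the edge weights at $p_{1,1}$ and $p_1$. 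If $(b_2,b_1]$ contains no Puiseux exponent, then $p_1$ and $p_{1,1}$ lie on the same edge of $\mathcal{G}_C$, so $w_{1,1}=w_1$ and $h_1^{b_1,b_2}=\mathrm{id}_A$. If $(b_2,b_1]$ contains exactly one Puiseux exponent with corresponding Puiseux pair $(m,k)$, then by the definition of the edge weights in $\mathcal{G}_C$ (cumulative product of the $k_j$'s), we have $w_{1,1}/w_1=k$, giving $h_1^{b_1,b_2}(x)=kx$.

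Finally, the case where $(b_2,b_1]$ contains several Puiseux exponents is handled by functoriality of the maps $h_1^{b_1,b_2}$ in $b$ (Proposition \ref{prop:functoriality}): choosing intermediate values $b_2<c_1<\dots<c_{r-1}<b_1$ so that each interval $(c_i,c_{i+1}]$ contains exactly one Puiseux exponent (with corresponding Puiseux pair $(m_i,k_i)$), the composition $h_1^{b_1,b_2}=h_1^{c_1,b_2}\circ h_1^{c_2,c_1}\circ\dots\circ h_1^{b_1,c_{r-1}}$ becomes multiplication by $\prod_i k_i$. No step here is expected to present a real obstacle; the argument is bookkeeping on top of Theorem \ref{th:homology of curves}.
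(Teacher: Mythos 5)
Your proposal is correct and matches the paper's approach: the paper's own proof consists of the single sentence that the corollary follows directly from Theorem~\ref{th:homology of curves}, and your argument is exactly that specialization, with the bookkeeping ($l_b=1$, $M_0=(1)$, $k_{1,1}=w_{1,1}/w_1$) spelled out. Nothing further is needed.
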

\begin{proof}
This corollary follows directly from Theorem \ref{th:homology of curves}.
\end{proof}

By \cite{Teissier:1982} (see also \cite{Neumann:2014} and \cite{Fernandes:2003}), the classification of curve germs by its outer bi-Lipschitz geometry coincides with the classification of curve germs by its embedded topology. Therefore, we get the following corollary:

\begin{cor}\label{cor:puiseuxpairscuvres}
Let $C$ be a irreducible plane curve germ. The MD homology for the outer metric of $C$ with respect to $\mathbb{Z}$ detects all Puiseux pairs of $C$. Therefore the integral MD Homology for the outer geometry determines the outer geometry and the embedded topology of irreducible plane curve singularities.
\end{cor}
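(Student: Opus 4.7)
The plan is to read off the Puiseux pairs directly from the $\BB$-graded structure described in Corollary~\ref{cor:homology of irreducible curves}, and then invoke the known equivalence between Puiseux data, embedded topology, and outer Lipschitz geometry for irreducible plane curve singularities.

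First, I would identify the set of Puiseux exponents of $C$ intrinsically from $MDH^\bullet_\bullet(C,0,d_{out};\ZZ)$. By parts (1) and (3) of Corollary~\ref{cor:homology of irreducible curves}, $MDH^b_1(C;\ZZ)\cong\ZZ$ for every $b\in[1,\infty]$, and the connecting homomorphism $h^{b_1,b_2}_1$ is the identity on $\ZZ$ precisely when the interval $(b_2,b_1]$ contains no Puiseux exponent. Hence, in the sense of Definition~\ref{def:jumpingrate}, the set of jumping rates of $MDH^\bullet_1(C;\ZZ)$ is exactly the set $\{q_1<q_2<\dots<q_l\}$ of Puiseux exponents of $C$ (the rate $b=\infty$ is never jumping for an irreducible curve, since only one component exists). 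This is an intrinsic piece of information extracted from MD homology.

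Next, for each Puiseux exponent $q_i$, choose $\varepsilon>0$ small enough that $(q_i-\varepsilon,q_i+\varepsilon)$ contains no other Puiseux exponent. Then the homomorphism
\[
h^{q_i+\varepsilon,q_i-\varepsilon}_1:MDH^{q_i+\varepsilon}_1(C;\ZZ)\to MDH^{q_i-\varepsilon}_1(C;\ZZ)
\]
is, by Corollary~\ref{cor:homology of irreducible curves}, multiplication by the integer $k_i$, where $(m_i,k_i)$ is the Puiseux pair corresponding to $q_i$. Since both source and target are canonically $\ZZ$ (with a preferred generator coming from the choice of a single $0$-simplex, cf.\ Proposition~\ref{prop:0-homology}, compatible via $h_0^{b_1,b_2}=\mathrm{id}$), the value $k_i\in\NN$ is intrinsically recovered. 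Using the relation $q_i=m_i/\prod_{j=1}^{i}k_j$, we then recover $m_i=q_i\cdot\prod_{j=1}^{i}k_j\in\NN$. Thus the full ordered list of Puiseux pairs $(m_1,k_1),\dots,(m_l,k_l)$ is determined by the integral MD homology $\BB$-diagram.

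For the second assertion, by the classical Zariski theorem the embedded topological type of an irreducible plane branch is equivalent to the data of its Puiseux pairs, and by the result of Pham--Teissier--Fernandes cited just before the statement (\cite{Teissier:1982,Neumann:2014,Fernandes:2003}) the outer bi-Lipschitz geometry of an irreducible plane branch is also equivalent to its Puiseux pairs. Hence the integral $\BB$-graded MD homology with the outer metric determines both the embedded topological type and the outer Lipschitz geometry of irreducible plane curve germs. The only mildly delicate step is the intrinsic character of the integers $k_i$: one has to check that the canonical generator of $MDH^b_1(C;\ZZ)$ is well defined up to sign (e.g.\ as the image, under the natural map from singular homology of the link granted by Theorem~\ref{theo:link}, of the fundamental class of the link circle), so that the multiplication factors $k_i$ are unambiguous positive integers; this is immediate once one fixes an orientation of the link.
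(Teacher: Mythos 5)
Your proposal is correct and follows essentially the same route as the paper: the paper's proof is simply the observation that the set of Puiseux exponents equals the jumping set $\{b:\ h_1^{b,b-\delta}$ is not an isomorphism for any $\delta>0\}$ given by Corollary~\ref{cor:homology of irreducible curves}, after which one invokes the cited equivalence of Puiseux data, embedded topology, and outer Lipschitz geometry. Your additional recovery of each $k_i$ as the index of the image of $h_1^{q_i+\varepsilon,q_i-\varepsilon}$ (which is basis-independent, so the orientation worry is moot) is a harmless refinement; the paper instead recovers the pairs from the exponent set alone via $k_1\cdots k_i=\mathrm{lcm}$ of the denominators of $q_1,\dots,q_i$.
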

\begin{proof}
We use the same notation as in Theorem \ref{th:homology of curves}. By Corollary \ref{cor:homology of irreducible curves}, the set $P$ of all Puiseux exponents of $C$ can be described as follows:
\begin{equation}\label{eq:set of Puiseux exponents}
P = \{b \in (1, \infty) : \text{there is no } \delta > 0 \text{ such that } h_1^{b, b - \delta} \text{ is an isomorphism} \}.
\end{equation}
\end{proof}

Now we analyze what information is detected in the case of reducible plane curve singularities.

\begin{remark}
\label{rem:reduciblecurves}
If $C$ is reducible, the union of set of all Puiseux exponents of all branches of $C$ with the set of all contact exponents is the set of values $b$ such that there is no $\delta > 0$ such that $h_1^{b, b - \delta}$ is an isomorphism. However, in general we do not know how this set of Puiseux exponents and contact orders distribute among the different branches, as the following example shows.
\end{remark}

\begin{example}\label{example:reducible curve}
We use the same notation as in Theorem \ref{th:homology of curves}.
Let $C$ and $D$ be the curves with the following five components respectively:
\begin{align*}
& C_1 = \{(x,y) \in \mathbb{C}^2 : y = x + x^2 + x^{\frac{5}{2}} \}, 
&& D_1 = \{(x,y) \in \mathbb{C}^2 : y = x + x^2 \},\\
& C_2 = \{(x,y) \in \mathbb{C}^2 : y = x + 2x^2  \}, 
&& D_2 = \{(x,y) \in \mathbb{C}^2 : y = x + 2x^2  \},\\
& C_3 = \{(x,y) \in \mathbb{C}^2 : y = 2x +  x^2 \},
&& D_3 = \{(x,y) \in \mathbb{C}^2 : y = 2x +  x^2 \},\\
& C_4 = \{(x,y) \in \mathbb{C}^2 : y = 2x +  2x^2 \},
&& D_4 = \{(x,y) \in \mathbb{C}^2 : y = 2x +  2x^2 \},\\
& C_5 = \{(x,y) \in \mathbb{C}^2 : y = 2x +  3x^2 \},
&& D_5 = \{(x,y) \in \mathbb{C}^2 : y = 2x +  3x^2 + x^{\frac{5}{2}} \}
\end{align*}
The embedded topological types of the two curves do not coincide since their Eggers-Wall trees are not isomorphic as trees. 
But their MD homology with respect to $\mathbb{Z}$ are isomorphic: we denote the morphisms $h_{\ast}^{b_1, b_2}$ of the MD homology of $C$ and $D$ by $h_{\ast}^{b_1, b_2}(C)$ and $h_{\ast}^{b_1, b_2}(D)$ respectively. Having a look at their Eggers-Wall trees, it becomes clear that the $0$-th and first $b$-moderately discontinuous homology groups coincide for any $b$ and so do the morphisms $h_0^{b_1, b_2}(D)$ and $h_0^{b_1, b_2}(D)$ for any $b_1 \geq b_2$. As the Eggers-Wall trees of $C$ and $D$ coincide on $r^{-1}([0, \frac{5}{2}))$ and  $r^{-1}((\frac{5}{2}, \infty])$, $h_1^{b_1, b_2}(C)$ and $h_1^{b_1, b_2}(D)$ also coincide, if $b_1, b_2 < \frac{5}{2}$ or $b_1, b_2 > \frac{5}{2}$. If $b_1 \geq \frac{5}{2}$ and $b_2 < \frac{5}{2} $, $h_1^{b_1, b_2}(C)$ and $h_1^{b_1, b_2}(D)$ are the same up to concatenation with isomorphisms on the right and on the left. For example, if $b_1 \geq \frac{5}{2}$ and $b_2 \in [1, 2)$, $h_1^{b_1, b_2}(C)$ and $h_1^{b_1, b_2}(D)$ are given by matrix multiplication with $M_1(C)$ and $M_1(D)$, respectively, where
$$
M_1(C) := \left( 
   \begin{array}{ccccc}
   2 & 1 & 0 & 0 & 0 \\
   0 & 0 & 1 & 1 & 1 
              \end{array}
         \right),         
M_1(D) := \left( 
   \begin{array}{ccccc}
   1 & 1 & 0 & 0 & 0 \\
   0 & 0 & 1 & 1 & 2 
              \end{array}
         \right).
$$
\end{example}

\begin{cor}\label{cor:fremeeggers}
Let $(C,O)$ be a plane curve singularity. Then the $\infty$-framed MD Homology at degree $1$ determines the Eggers tree and hence the embedded topology of $(C,O)$.
\end{cor}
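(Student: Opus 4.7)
The plan is to recover the Eggers-Wall tree $\mathcal{G}_C$, with all its weights and distinguished points, from the $\infty$-framed MD Homology at degree $1$ by following, for each branch $C_i$, the trajectory $b\mapsto h^{\infty,b}_1([C_i])\in MDH^b_1(C,O,d_{out};\ZZ)$ as $b$ decreases from $\infty$ to $1$. The $\infty$-framing provides a distinguished labelled basis $\{[C_i]\}_{i=1}^n$ of $MDH^\infty_1(C,O,d_{out};\ZZ)$, so these trajectories are intrinsically defined invariants.

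By Theorem~\ref{th:homology of curves}(\ref{item: homomorphisms of homology of curves}), the matrix $M_1$ representing $h^{\infty,b}_1$ in the tree-basis has exactly one non-zero entry in every column; hence
$$
h^{\infty,b}_1([C_i]) \;=\; \pm\, w_i(b)\cdot e_i(b),
$$
where $e_i(b)$ is a generator of a rank-one direct summand of $MDH^b_1$ and $w_i(b)\in\NN$ equals the weight ratio $w(\infty)/w(b+\epsilon)$ along the segment $\mathcal{G}_{C_i}\subset\mathcal{G}_C$. The positive integer $w_i(b)$ is canonically attached to $[C_i]$ (as the index of $\ZZ\cdot h^{\infty,b}_1([C_i])$ in its ambient rank-one summand), so the step function $w_i\colon[1,\infty]\to\NN$ and the equivalence relation ``$e_i(b)$ is proportional to $e_j(b)$'' are intrinsic to the $\infty$-framed invariant.

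The first step I would carry out is to recover the contact numbers. By the construction of $\mathcal{G}_C$, two branches $C_i,C_j$ lie in the same connected component of $r^{-1}([b+\epsilon,\infty])$ if and only if $c(C_i,C_j)>b$, which by the description above amounts to $h^{\infty,b}_1([C_i])$ and $h^{\infty,b}_1([C_j])$ being $\QQ$-proportional; therefore
$$
c(C_i,C_j) \;=\; \sup\bigl\{b\in[1,\infty)\,:\, h^{\infty,b}_1([C_i]),\, h^{\infty,b}_1([C_j])\text{ are }\QQ\text{-proportional}\bigr\}.
$$
The second step is to extract the Puiseux pairs of each branch $C_i$ from $w_i$. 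By the weighting convention of $\mathcal{G}_{C_i}$, one computes $w_i(b)=\prod_{s:\,q_s>b}k_s$, where $(m_s,k_s)$ are the Puiseux pairs of $C_i$ with Puiseux exponents $q_s=m_s/\prod_{r\leq s}k_r$. Hence the jump locus of $w_i$ is exactly the set of Puiseux exponents of $C_i$, the corresponding multiplicative jump factors recover the integers $k_s$, and the numerators $m_s=q_s\prod_{r\leq s}k_r$ follow; this determines all Puiseux pairs of $C_i$.

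Finally I would assemble $\mathcal{G}_C$ by taking the weighted segment of each branch produced in the second step and gluing the segments of $C_i$ and $C_j$ along their common initial subsegment $[0,c(C_i,C_j)]$ obtained in the first step; the required agreement of weights on the shared subsegment, $w_i(b)=w_j(b)$ for $b<c(C_i,C_j)$, is automatic from $h^{\infty,b}_1([C_i])$ and $h^{\infty,b}_1([C_j])$ being (up to sign) the same multiple of the same primitive class. The equivalence between the Eggers-Wall tree and the embedded topological type of a plane curve singularity (\cite{Teissier:1982}, \cite{Neumann:2014}, \cite{Fernandes:2003}) then yields the statement. The main point requiring care is the consistent bookkeeping of the primitive classes $e_i(b)$ across the various jumping rates, especially at the shared contact levels where several branches merge; the single-non-zero-entry-per-column structure of $M_1$ is exactly what guarantees that this bookkeeping is unambiguous and that $w_i(b)$ is genuinely well-defined from the abstract MD-homological data together with the $\infty$-framing.
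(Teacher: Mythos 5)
Your proposal is correct and follows exactly the route the paper intends: the paper's proof simply declares the corollary an immediate consequence of Assertion~(3) of Theorem~\ref{th:homology of curves} once the $\infty$-frame $\{[C_i]\}$ is fixed, and your argument supplies precisely the reconstruction that makes this ``immediate'' step explicit (contacts from $\QQ$-proportionality of the $h^{\infty,b}_1([C_i])$, Puiseux pairs from the jumps of the contents $w_i(b)$, then regluing the weighted segments). The only point worth recording is the harmless edge case $c(C_i,C_j)=1$, where your supremum is over the empty set and should be read as $1$ by convention since the theory is trivial for $b<1$.
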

\proof
Once taken the basis for $MD$ Homology for $b=\infty$ in degree $1$, the statement is an immediate consequence of~Assertion (3) in Theorem~\ref{th:homology of curves}.
\endproof

\section{Final Remarks and Open Questions.}
\subsection{On the Lipschitz Normally Embedded problem}

A germ $(X,x_0)\subset (\RR^m,x_0)$  is said to be \emph{Lipschitz Normally Embedded} (LNE for short) if $(X,x_0,d_{inn})$ and $(X,x_0,d_{out})$ are bi-Lipschitz equivalent. The LNE Problem tries to characterize the singularities that are LNE. 

Since the identity map $Id:(X,x_0,d_{in})\to (X,x_0,d_{out})$ is a l.v.a Lipschitz subanalytic morphism we have a homomorphism of $\BB$-groups
\begin{equation}
\label{eq:obsLNE}
MDH_\bullet^*(X,x_0,d_{in};A)\to MDH_\bullet^*(X,x_0,d_{out})
\end{equation}
which is an isomorphism if $X$ is LNE. In this sense MD Homology is an invariant obstructing the LNE property.  



We do not have counter-examples for a positive answer to the following question:

\begin{problem}
Let $X\subset\RR^m$ be a metric subanalytic subset. Suppose that for any point $x_0\in X$ the homomorphism~(\ref{eq:obsLNE}) is an isomorphism. Is $X$ LNE?
\end{problem}

Note that it is known that if $X$ is compact and locally normally embedded at any point then it is LNE (see \cite{reembedding}).

\end{document}